\documentclass{article}

\usepackage[top=1in, bottom=1.25in, left=1.4in, right=1.25in]{geometry}

\usepackage[utf8]{inputenc}
\usepackage[english]{babel}

\usepackage[breaklinks]{hyperref}
\hypersetup{%
  pdftitle={\@title},            
  pdfauthor={\@author},          
  unicode=true,           
  colorlinks=true,       
  linkcolor=blue,         
  citecolor=blue,      
  filecolor=green,        
  urlcolor=blue           
}
\usepackage{xr-hyper}


\usepackage[usenames,dvipsnames]{xcolor}


%
\usepackage[square,numbers]{natbib}

%
%
%
%
\usepackage{amsmath,amsfonts,amssymb,amsthm}
\usepackage{bm}
\usepackage{mathrsfs}
\usepackage{stmaryrd}
\usepackage{bibunits}
\usepackage{titling}
\usepackage{tikz}

\theoremstyle{plain}
\newtheorem{theorem}{Theorem}[section]
\newtheorem{proposition}[theorem]{Proposition}
\newtheorem{lemma}[theorem]{Lemma}

\theoremstyle{remark}
\newtheorem{definition}[theorem]{Definition}

\tikzset{vertex/.style = {shape=circle,draw,minimum size=1.5em, thick}}
\tikzset{edge/.style = {->, thick}}
\usetikzlibrary{arrows.meta,
                decorations.pathreplacing,
                calligraphy,
                positioning}
\usetikzlibrary{fit}

\tikzstyle{boldvertex}=[draw, circle, minimum size=1.3em, inner sep=0pt, line width=1.3pt]
\tikzstyle{dottedvertex}=[draw, circle, minimum size=1.3em, inner sep=0pt, dash pattern=on 2pt off 2pt]
\tikzstyle{doublevertex}=[draw, circle, minimum size=1.3em, inner sep=0pt, double, line width=0.5pt]
\tikzstyle{normalvertex}=[draw, circle, minimum size=1.3em, inner sep=0pt]



\newcommand{\V}{\mathrm{V}}
\newcommand{\G}{G}

\newcommand{\PP}{\mathbb{P}}
\newcommand{\FF}{\mathbb{F}}
\newcommand{\PPn}{\mathbb{P}^{n}_{0}}
\newcommand{\PPa}{\mathbb{P}^{n}_{1}}
\newcommand{\EE}{\mathbb{E}}
\newcommand{\1}{\mathbf{1}}

\DeclareMathOperator*{\argmax}{arg\,max}

\newcommand{\bigO}[1]{\ensuremath{\mathop{}\mathopen{}O\mathopen{}\left(#1\right)}}
\newcommand{\smallO}[1]{\ensuremath{\mathop{}\mathopen{}o\mathopen{}\left(#1\right)}}

\newcommand{\g}{\mathfrak{g}}
\newcommand{\ug}{\mathfrak{u}}
\newcommand{\VS}{\mathsf{V}}
\newcommand{\ES}{\mathsf{E}}
\newcommand{\Par}{\mathsf{P}}
\newcommand{\Chi}{\mathsf{C}}
\newcommand{\Din}{\mathsf{d}^{\mathrm{in}}}
\newcommand{\Dout}{\mathsf{d}^{\mathrm{out}}}
\newcommand{\D}{\mathsf{d}}
\newcommand{\intd}{\mathrm{d}}
\newcommand{\NNInts}{\mathbb{Z}_{+}}
\newcommand{\Nats}{\mathbb{N}}

\newcommand{\PartB}{\tilde{\mathcal{V}}}



\externaldocument[main-]{_arxiv}
\externaldocument[supp-]{_arxiv}

\begin{document}

\defaultbibliography{Bibliography}


\begin{bibunit}[plain]

  \title{On the impossibility of detecting a late change-point in the preferential attachment random graph model}

\author{%
  Ibrahim Kaddouri, Zacharie Naulet, {\'Elisabeth Gassiat}\\
  Université Paris-Saclay, CNRS, Laboratoire de mathématiques d’Orsay\\
  91405, Orsay, France}
\date{}

\maketitle

\begin{abstract}
  We consider the problem of late change-point detection under the preferential attachment random graph model with time dependent attachment function. This can be formulated as a hypothesis testing problem where the null hypothesis corresponds to a preferential attachment model with a constant affine attachment parameter $\delta_0$ and the alternative corresponds to a preferential attachment model where the affine attachment parameter changes from $\delta_0$ to $\delta_1$ at a time $\tau_n = n - \Delta_n$ where $0\leq \Delta_n \leq n$ and $n$ is the size of the graph. It was conjectured in \cite{BBCH23} that when observing only the unlabeled graph, detection of the change is not possible for $\Delta_n = o(n^{1/2})$. In this work, we make a step towards proving the conjecture by proving the impossibility of detecting the change when $\Delta_n = o(n^{1/3})$. We also study change-point detection in the case where the labeled graph is observed and show that change-point detection is possible if and only if $\Delta_n \to \infty$, thereby exhibiting a strong difference between the two settings.


\end{abstract}

\section{Introduction}
\label{sec:introduction}

Empirical studies carried out on networks modeling different types of interactions have revealed striking similarities between them. In many situations, these networks are scale-free, i.e. their empirical degree distribution generally follows a power law. This was observed in many networks such as citation networks \cite{BJNRSV02, N01}, internet \cite{FFF99} and the World Wide Web \cite{AH00}. On the other hand, the typical distances between vertices in these networks are small (see the books \cite{Watts06, Watts99}). This is generally referred to as the small-world phenomenon. Motivated by these observations, the preferential attachment random graph model was proposed to mathematically model scale-free networks. It provides a simple and intuitive mechanism for generating networks with a power-law degree distribution. The model helps in understanding how networks evolve over time by showing that vertices with higher degrees tend to attract more links, leading to the rich-get-richer phenomenon. This mirrors many real-world situations where popular entities tend to become even more popular over time. The first preferential attachment model to emerge was the Barabási-Albert model \cite{BA99}. In this model, new vertices are added to the network one at a time, and each new vertex gets attached to existing vertices with a probability proportional to their current degree. In \cite{BBCC03, CWR23, GVCH17}, variants of this model were proposed and they depend mainly on the attachment function which can be linear, nonlinear, constant in time or time-varying. Recently, there has been notable interest in investigating time-varying networks \cite{ZCL19, WYR18, MCG11, HS19}, i.e. networks where the attachment function is not constant over time. These networks usually involve a set of parameters that describe the time evolution of the network. Within this framework, an important question is to understand the effect of abrupt changes in these parameters on the degree distribution and how these changes can be detected and localized. Our work focuses on the situation where the growth dynamics of the network might undergo at most one change at some point of time. To model this, a time-inhomogeneous affine preferential attachment model is used. In this model, a new vertex entering the graph at time $t\in\llbracket 2, n\rrbracket$ connects to an existing vertex with degree $k$ with probability proportional to $f(k)=k+\delta(t)$ where $\delta(t)$ is the parameter likely to change at a given time. In particular, we are interested in late change-point detection, specifically when the change-point is given by $\tau_n = n - \Delta_n$ with $\Delta_n = o(n)$. This scenario is important for detecting changes as quickly as possible. Understanding this context will highlight the fundamental limits of change point detection and provide an estimate of the minimum number of vertices that must be observed between the moment the change took place and the moment it is detected. In \cite{BBCH23}, the authors built a test based on low degree vertices which was shown to detect the change only when $\frac{\Delta_n}{n^{1/2}} \to \infty$. They conjectured that when $\Delta_n = o(n^{1/2})$ and based only on the unlabeled graph, detection of the change is not possible. In light of this framework, this paper has two goals: (i) Prove the conjecture holds at least for $\Delta_n = o(n^{1/3})$, and (ii) Study the problem of change-point detection in the situation where the labeled graph is observed. More precisely, below is an informal statement of our main results.

\begin{theorem}[Informal]
  \label{thm:informal:contig}
    Using the unlabeled preferential attachment random graph, detection of the change-point is not possible when $\Delta_n = o(n^{1/3})$.
\end{theorem}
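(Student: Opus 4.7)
The plan is to apply the second moment method to the likelihood ratio on unlabeled graphs. Set $L_n=\intd P_1/\intd P_0$, so that $d_{\mathrm{TV}}(P_0,P_1)^2\leq(\EE_0[L_n^2]-1)/4$ and it suffices to establish $\EE_0[L_n^2]\to 1$ whenever $\Delta_n=o(n^{1/3})$. On the labelled graph, the model is Markovian and its likelihood ratio factorises over the last $\Delta_n$ arrivals as
\[R_n \;=\; \prod_{t=\tau_n+1}^{n}\frac{k_{v_t}(t-)+\delta_1}{k_{v_t}(t-)+\delta_0}\cdot\frac{Z_0(t)}{Z_1(t)},\]
with $v_t$ the endpoint chosen at step $t$ and $Z_i(t)$ the partition functions. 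Marginalising the labels yields $L_n(G)=\EE_0[R_n\mid G]$, and the standard two-copy identity gives
\[\EE_0[L_n^2]\;=\;\EE_0\bigl[R_n(\Pi)R_n(\Pi')\bigr],\]
where, conditionally on the unlabeled graph $G$, $\Pi$ and $\Pi'$ are two i.i.d.\ labellings drawn from the $P_0$-posterior.

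The next step is to exploit a tractable description of this posterior, which admits an explicit P\'olya-urn / size-biased recursive tree representation (the conditional probability of a labelling is an explicit function of the degree sequence). Using it, I would analyse the random subsets $A=\Pi^{-1}(\llbracket\tau_n{+}1,n\rrbracket)$ and $A'=(\Pi')^{-1}(\llbracket\tau_n{+}1,n\rrbracket)$ of ``late'' vertices selected under each labelling: one expects them to be approximately uniform $\Delta_n$-subsets of $V(G)$, so that $|A\cap A'|$ is asymptotically Poisson with intensity of order $\Delta_n^2/n$. Decomposing $\EE_0[R_n(\Pi)R_n(\Pi')]$ according to $A\cap A'$ and repeatedly using the martingale identity $\EE_0[q_t\mid\mathcal{F}_{t-}]=1$ (where $q_t$ denotes the factor appearing in the product above), the contributions from $A\cap A'=\emptyset$ collapse to $1$ up to lower order corrections, while each vertex of $A\cap A'$ contributes a factor $\EE_0[q_t^2\mid\mathcal{F}_{t-}]$ bounded by a constant.

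Combining these overlap contributions with their Poisson multiplicities and with the cubic corrections arising from third-order terms in the expansion of $R_n(\Pi)R_n(\Pi')$ yields an estimate of the form $1+O(\Delta_n^2/n)+O(\Delta_n^3/n)$ on $\EE_0[L_n^2]$, in which the second term dominates when $\Delta_n\to\infty$ and forces the threshold $\Delta_n=o(n^{1/3})$. The principal technical obstacle I anticipate is the joint analysis of the two posterior labellings: one must control not only $\EE_0|A\cap A'|$ but also its tails and, more delicately, the joint law of the attachment-time degrees $k_{v_t}(t-)$ for $t\in A\cup A'$, before carrying the expansion to third order with uniform bounds. Reaching the conjectured $\Delta_n=o(n^{1/2})$ would presumably require cancellations at third order that are invisible to a bare second moment argument, and so appears to lie beyond this approach.
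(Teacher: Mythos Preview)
Your approach differs from the paper's and has a genuine gap at its core. The two-copy identity $\EE_0[L_n^2]=\EE_0[R_n(\Pi)R_n(\Pi')]$ is correct, and under $H_0$ the posterior over labelings is indeed uniform over valid labelings (since the labeled likelihood depends only on the degree sequence, Lemma~\ref{lem:lkl:null}). But ``valid labeling'' here means a linear extension of the partial order induced by the directed structure, and this is exactly the object the paper flags as intractable in Section~\ref{sec:diff-solv-probl}: there is no ``explicit P\'olya-urn / size-biased recursive tree representation'' for uniform linear extensions of an affine PA graph with general $\delta_0$ and $m\geq 1$. Your claim that the late sets $A,A'$ are ``approximately uniform $\Delta_n$-subsets of $V(G)$'' fails already at the level of the support --- the last $\Delta_n$ vertices in any valid labeling must have degree exactly $m$ and satisfy further ordering constraints through their children --- and the overlap heuristic $|A\cap A'|\approx\mathrm{Poisson}(\Delta_n^2/n)$ is unjustified. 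The cubic correction $O(\Delta_n^3/n)$ that you say drives the $n^{1/3}$ threshold is never derived; if your overlap picture were correct, the leading term would be $O(\Delta_n^2/n)$ and you would have proved the full $n^{1/2}$ conjecture.

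The paper circumvents all of this by \emph{not} working with the posterior. It introduces an intermediate observation $\pi_n(G_n)$, where $\pi_n$ is a random permutation that shuffles only a carefully chosen subset $\PartB(G_n)$ of ``bold'' vertices (late, of minimal degree, and the unique late parent of each of their children); any such permutation preserves the support $\mathfrak{S}_n$ (Lemma~\ref{lem:perm:invariant-support}), so the permuted likelihood ratio can be written explicitly (Lemma~\ref{lem:lkl:perm}). Contiguity for $\pi_n(G_n)$ then implies contiguity for $s(G_n)$ by Lemma~\ref{lem:increasing_difficulty}. The $n^{1/3}$ threshold arises not from a cubic term in a moment expansion but from a trade-off in choosing the size $\Delta_n'$ of the shuffle set: the second-moment bound (Proposition~\ref{prop:contiguity_intermediate_regime}) needs $\Delta_n^2/\Delta_n'$ bounded, while the event that all last $\Delta_n$ vertices are bold (Proposition~\ref{pro:eventBn}) needs $\Delta_n\Delta_n'/n$ small; optimizing over $\Delta_n'$ forces $\Delta_n^3=o(n)$.
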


\begin{theorem}[Informal]
  \label{thm:informal:labeled}
  Using the labeled preferential attachment random graph, detection of the change-point is possible if and only if $\Delta_n \to \infty$.
\end{theorem}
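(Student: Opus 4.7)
The plan mirrors the ``if and only if'' structure, and both halves rely on the same factorization of the law of the labeled graph. Writing $v_t$ for the (set of) vertex(/vertices) to which the newly arriving vertex $t$ attaches, conditioning on the arrival order gives
\[
  P_\delta(\mathbf{G}_n) \;=\; \prod_{t=2}^{n} p_{\delta(t)}(v_t \mid G_{t-1}), \qquad p_\delta(v \mid G) \;\propto\; \D_v(G) + \delta.
\]
Since $\delta(t) = \delta_0$ for $t \le \tau_n$ under both hypotheses, the log-likelihood ratio localises on the post-change window:
\[
  \Lambda_n \;:=\; \log \frac{dP^{(1)}}{dP^{(0)}}(\mathbf{G}_n) \;=\; \sum_{t=\tau_n+1}^{n} \xi_t, \qquad \xi_t \;:=\; \log \frac{p_{\delta_1}(v_t \mid G_{t-1})}{p_{\delta_0}(v_t \mid G_{t-1})}.
\]
Because $\D_v(G_{t-1}) \ge 1$ and the normalizer satisfies $\sum_v(\D_v + \delta) \asymp t(2+\delta)$, each $\xi_t$ is uniformly bounded in absolute value by a constant $C = C(\delta_0, \delta_1)$.

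\textbf{Impossibility when $\Delta_n \not\to \infty$.} I would pass to a subsequence along which $\Delta_{n_k} \le K$. Uniform boundedness of $\xi_t$ then yields $\Lambda_{n_k} \in [-CK, CK]$, so the likelihood ratio $L_{n_k} := e^{\Lambda_{n_k}}$ lies in $[e^{-CK}, e^{CK}]$ almost surely. The total variation distance between $P^{(0)}_{n_k}$ and $P^{(1)}_{n_k}$ is therefore at most $1 - e^{-CK} < 1$, which forces the sum of type-I and type-II errors of any test to be bounded below by a positive constant along the subsequence, ruling out consistent detection.

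\textbf{Detection when $\Delta_n \to \infty$.} I would use the log-likelihood ratio test $\{\Lambda_n > 0\}$. Azuma--Hoeffding applied to the bounded martingale differences $\xi_t - \mathbb{E}^{(i)}[\xi_t \mid G_{t-1}]$ controls the fluctuations of $\Lambda_n$ at order $\sqrt{\Delta_n \log \Delta_n}$, so it is enough to show that the conditional drift $|\mathbb{E}^{(i)}[\xi_t \mid G_{t-1}]|$ is bounded below by a positive constant $c_\star$ with high probability, uniformly in $t \in (\tau_n, n]$. Grouping vertices by degree, this drift equals, up to sign, the Kullback--Leibler divergence between the two probability mass functions $k \mapsto \pi_k(G_{t-1})(k+\delta_j)/(2+\delta_j)$ for $j \in \{0,1\}$, where $\pi_k(G)$ is the empirical degree distribution. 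Standard concentration results for preferential attachment give $\pi_k(G_{t-1}) \to \pi_k^\star$ almost surely as $t \to \infty$; since $\delta_0 \ne \delta_1$, the limiting KL is a strictly positive constant, which provides the required drift. Combining drift and fluctuation yields $\Lambda_n \to \mp\infty$ in $P^{(i)}$-probability, and the test separates.

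\textbf{Main obstacle.} The bulk of the technical work lies in obtaining a quantitative, uniform-in-$t$ lower bound on the conditional KL throughout the window $(\tau_n, n]$. The null side is cleaner because the degree distribution is governed by $\delta_0$ at $\tau_n$; under $P^{(1)}$ one must additionally argue that the small perturbation of $\pi_k(G_{t-1})$ produced by the $\Delta_n = o(n)$ vertices added with parameter $\delta_1$ is $o(1)$ at first order and therefore harmless. Minor conventions of the model (self-loops, number of edges per new vertex) only enter through the numerical values of $C$ and $c_\star$ and do not affect the structure of the argument.
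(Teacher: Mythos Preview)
Your proposal is correct and follows essentially the same route as the paper. Both halves match: for the impossibility direction the paper also observes that each increment $\xi_t$ is uniformly bounded, so when $\limsup\Delta_n<\infty$ the likelihood ratio stays bounded away from $0$ and $\infty$, giving contiguity; for the detection direction the paper also uses the likelihood-ratio test, controls fluctuations via Hoeffding--Azuma, and identifies the drift through convergence of the empirical degree distribution $N_k(G_{t-1})/t\to p_k(\delta_0)$, proving $\frac{1}{\Delta_n}\Lambda_n$ converges in $\PP_0^n$-probability to a strictly negative constant and in $\PP_1^n$-probability to a strictly positive one. The only cosmetic difference is that you phrase the drift as a conditional KL divergence and aim for a uniform-in-$t$ lower bound, whereas the paper computes the limit of the normalized sum directly via a three-term decomposition; the key technical inputs (Azuma, degree-distribution convergence, and the observation that the $o(n)$ post-change vertices perturb $N_k$ only by $O(\Delta_n)$) are identical.
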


The formal statement of Theorem~\ref{thm:informal:contig} is given later in the paper by Theorem~\ref{thm:main}, while the formal statement of Theorem~\ref{thm:informal:labeled} is given by Theorem~\ref{thm:labeled:change_detection}.

In what follows, Section~\ref{Section_2} introduces the notations and defines the model and the attachment mechanism. Section~\ref{Section_3} presents the main results of the paper. Section~\ref{Section_4} is devoted to the discussions and perspectives while Appendices~\ref{sec:likel-likel-ratio} and~\ref{Section_5} detail the proofs for the unlabeled model. The article comes with a supplementary material \cite{kng:supplement} that contains the proofs and additional results in the case of labeled observations.

\subsection{Related work}

This work is a continuation of \cite{BBCH23} where the problem of late change-point detection ($\tau_n = n - \lfloor cn^{\gamma}\rfloor$) was studied and a test was built for detecting the change when $\gamma\in\left(1/2, 1\right)$. The idea behind this test is that the variations in the number of vertices with minimal degree around its asymptotic value exhibit different magnitudes under the null hypothesis compared to the alternative hypothesis. They also conjectured that no test is capable of detecting the change when $\gamma\in\left(0, 1/2\right)$. In \cite{BJN18, BBC23}, the authors considered the problems of change-point detection and localization, but they focused mainly on the situation of early change-point, that is when changes occur at $\tau_n = \alpha n$ with $\alpha\in\left(0, 1\right)$. Detection of the change was shown to be always possible in this setting and a non-parametric consistent estimator of $\alpha$ was devised, allowing in addition to detection for localization of the change-point. Similarly, a likelihood-based methodology for change-point localization was proposed in \cite{CWZ23}. In \cite{BBC23}, a different regime of early change-detection was studied. It corresponds to the situation where $\tau_n = \lfloor cn^{\gamma}\rfloor$ with $\gamma\in\left(0, 1\right)$ and $c > 0$. Unlike the case of late change, the test used in this regime is based on maximal degrees. This is because, while the asymptotic degree distribution does not depend on the parameter $\gamma$, the distribution of the maximal degree does. A similar phenomenon was noted in \cite{BMR15}, which demonstrated that the influence of the seed graph (the initial subgraph from which the preferential attachment graph originates) persists as the number of vertices increases to infinity.  In the absence of any change-point, the general problem of estimation of general attachment functions was already studied in \cite{GVCH17}. This problem reduces to a simple parametric estimation in the case of affine preferential attachment. The estimation can be done using the MLE as shown in \cite{BBCH23}. Consistency and asymptotic normality of this estimator were proved in the more general setting of random initial degrees in \cite{GV17}. Finally, let us mention that some ideas underlying our proof are reminiscent to the arguments employed by \cite{briend2024estimating} to derive minimax lower bounds for the problem of estimating the order of arrival of the vertices in the unlabeled preferential attachment tree.

\section{Setting, definitions and notations}
\label{Section_2}

\subsection{Labeled versus unlabeled graphs, structure}
\label{sec:label-vers-unlab}

The preferential attachment mechanism introduced in Section~\ref{sec:introduction} (see also next section for more details) defines a sequence of random multigraphs $(G_t)_{t\geq 1}$ on vertex sets $\{0,\dots,t\}$. There is no loss of generality in assuming that these graphs are directed, using the convention that the arrows go from vertices with largest labels to vertices with smallest labels. To be somewhat more precise, in the next, a \textit{labeled graph} refers to the following definition:

\begin{definition}[Labeled graph]
  \label{def:labeled_graph} 
  A labeled (multi)graph $\g$ is a couple $\left(\mathcal{V}, \mathcal{E}\right)$ where $\mathcal{V}$ is the set of vertices and $\mathcal{E}\subset\mathcal{V}^{2}$ is the multiset of directed edges, with no loop allowed. For an edge $(u,v) \in \mathcal{E}$, we use the convention that the arrow goes from $u$ to $v$, and we write for simplicity $u\to_{\g} v$ for $(u,v)\in \mathcal{E}$.
\end{definition}

Given a labeled graph $\g = (\mathcal{V},\mathcal{E})$, we define for convenience $\VS(\g) = \mathcal{V}$ the vertex set of $\g$, and $\ES(\g) = \mathcal{E}$ the edge multiset of $\g$. Note that in a multigraph, two vertices can be connected by more than one edge. We count the multiplicity of edges via the function $\mu_{\g}: \VS(\g)^2 \to \NNInts$, such that $\mu_{\g}(u,v) = k$ means that there are $k$ directed edges $u\to v$ in $\g$ (with possibly $k=0$). The set $\Par_{\g}(u) = \{v\in \VS(\g) \;:\; v \to_{\g} u\}$ are the in-neighbors of vertex $u\in \VS(\g)$ (\textit{aka}. parents) and $\Chi_{\g}(u) = \{v\in \VS(\g) \;:\; u \to_{\g} v\}$ are the out-neighbors of vertex $u\in \VS(\g)$ (\textit{aka}. children). The in-degree of $u \in \VS(\g)$ is written $\Din_{\g}(u) = \sum_{v\in \Par_{\g}(u)}\mu_{\g}(v,u)$, the out-degree is $\Dout_{\g}(u) = \sum_{v\in \Chi_{\g}(u)}\mu_{\g}(u,v)$, and the degree is  $\D_{\g}(u) = \Din_{\g}(u) + \Dout_{\g}(u)$. For a subset $S \subset \V(\g)$ we denote by $\g[S]$ the induced sub(multi)graph of $S$, \textit{ie}. $\g[S] = (S,\mathcal{E}[S])$ where $\mathcal{E}[S]$ is the multiset obtained from $\ES(\g)$ by deleting edges that have an endpoint not in $S$.

In order to define unlabeled graphs, we require the following definition of an isomorphism of multigraphs.
\begin{definition}[Graph isomorphism]
  Let $\g$ and $\g'$ be two labeled graphs. An isomorphism $\phi$ between $\g$ and $\g'$ is a bijective map $\phi : \VS(\g) \mapsto \VS(\g')$ that preserves the set of neighbors of each vertex. More precisely, for vertices $v, w\in \VS(\g)$, and $k\in \NNInts$:
  \begin{equation*}
    \mu_{\g}(u, v) = k%
    \iff%
    \mu_{\g'}(\phi(u), \phi(w)) = k.
  \end{equation*}%
\end{definition}
In the next, $\g \cong \g'$ will denote the fact that $\g$ and $\g'$ are isomorphic, \textit{ie.} there exists an isomorphism between $\g$ and $\g'$. We are now in position to define \textit{unlabeled graphs}.
\begin{definition}[Unlabeled graph]
  \label{def:graph:unlabeled}
  An unlabeled graph $\ug$ is an isomorphism class of labeled graphs (for the relation $\cong$ defined above).
\end{definition}

An important aspect in our work is that we consider the model where only the unlabeled version of the preferential attachment is observed; \textit{ie}. only the \textit{structure} of the graph is available to the statistician:

\begin{definition}[Structure]
  \label{def:structure}
  Let $\g$ be a labeled graph. The unlabeled graph associated to $\g$, which will be denoted $s(\g)$, is the equivalence class of labeled graphs that are isomorphic to $\g$, ie.
  \begin{equation*}
    s(\g) = \{\g^{\prime}\;:\; \g^{\prime} \cong \g\}.
  \end{equation*}
\end{definition}

\subsection{Formal statement of the problem}
\label{sec:PA_mechanism}

Using the vocabulary defined in Section~\ref{sec:label-vers-unlab}, the preferential attachment model produces a sequence $(G_t)_{t\geq 1}$ of random labeled graphs, which we now intend to define rigorously. Let $m\in\mathbb{N} = \{1,2,\dots\}$ and $\delta\colon \Nats \to (-m, +\infty)$. The process $(G_t)_{t\geq 1}$ of interest is better described by introducing the intermediate process $((G_{t,i})_{i=0}^m)_{t\geq 1}$, constructed as follows. For $t=1$ let $G_{1,0}$ be the graph consisting of two isolated vertices labeled $0$ and $1$. Then for $i=1,\dots,m$, $G_{1,i}$ is obtained from $G_{1,i-1}$ by adding an edge between vertices $0$ and $1$. For $t\geq 2$, the sequence $(G_{t,i})_{i=0}^m$ is obtained by letting $G_{t,0}$ be the graph $G_{t-1,m}$ together with an isolated vertex with label $t$; and, for $i=1,\dots,m$, $G_{t,i}$ is obtained from $G_{t,i-1}$ by adding an edge directed from $t$ towards a randomly chosen vertex $V_{t,i}$ in $\{0,\dots,t-1 \}$ sampled according to the probabilities that $V_{t,i} = v$ (conditionally to $G_{t,i-1}$) given by
\begin{equation}
  \label{PA_model}
  \frac{\D_{\G_{t, i-1}}(v) + \delta(t)}{\sum_{v^{\prime}=0}^{t-1}\big(\D_{\G_{t, i-1}}(v^{\prime}) + \delta(t)\big)} = \frac{\D_{\G_{t, i-1}}(v) + \delta(t)}{2m(t-1) + \delta(t) t + (i-1)}.
\end{equation}
Finally, the process $(G_t)_{t\geq 1}$ is obtained from the process $((G_{t,i})_{i=0}^m)_{t\geq 1}$ by setting $G_t = G_{t,m}$ for each $t\geq 1$. Otherwise said, the process $(G_t)_{t\geq 1}$ is obtained from the intermediate process by forgetting the order of arrivals of the $m$ edges added at every time step $t\geq 1$.

The aim of this work is to find evidence in the preferential attachment graph that the value of $\delta$ has changed at a given time or not, using solely the information contained in the unlabeled graph $s(G_n)$ at time $n$. We are interested in the situation where the value of $\delta$ changes at most once. This can be formulated as a simple hypothesis testing problem:
\begin{equation*}
  (H_0)\;:\;\delta(t) = \delta_0,\qquad\qquad (H_1)\;:\;\delta(t) = \delta_0\1_{t\leq \tau_n} + \delta_1\1_{t>\tau_n}
\end{equation*}
where $1 \leq \tau_n\leq n$, $\delta_0 \in (-m,+\infty)$ and $\delta_1 \in (-m,+\infty)$ are known. As in \cite{BBCH23}, we are interested only in the situation of late change-points, that is the situation where $\tau_n = n - \Delta_n$ for $\Delta_n = o(n)$. \cite{BBCH23} constructed a sequence of tests $(\phi_n)_{n\geq 1}$ with vanishing Type I and Type II errors when $\frac{\Delta_n}{n^{1/2}}\to \infty$. They conjectured that using \textit{only the unlabeled random graph}, change point detection becomes impossible when $\Delta_n = o(n^{1/2})$. This work proves the conjecture holds at least for $\Delta_n = o(n^{1/3})$. We prove that even if the model parameters $\tau_n$, $\delta_0$ and $\delta_1$ are known, detection of the change is still not possible (and hence also impossible when they are unknown).

In the sequel, for each $n\geq 1$, $(\Omega_n,\mathcal{F}_n,\PP_0^n)$ (respectively $(\Omega_n,\mathcal{F}_n,\PP_1^n)$) is a probability space that is rich enough to define the beginning of the sequence of intermediate graphs $((G_{t,i})_{i=0}^m)_{t=1}^n$ under the hypothesis $H_0$ (resp. $H_1$). Expectation under $\PP_0^n$ (respectively $\PP_1^n$) is denoted by $\EE_0^n$ (resp. $\EE_1^n$).

\subsection{Further Notations}

Besides the notations and conventions defined in previous sections, we make use of the following. For real numbers $x,y$ we write $x\wedge y = \min(x,y)$ and $x\vee y = \max(x,y)$. For sequences of real numbers, $a_n\sim b_n$ means that $a_n/b_n$ converges to $1$, $a_n =o(b_n)$ means that $a_n/b_n $ converges to $0$,  $a_n = O(b_n)$ means that $a_n/b_n$ is asymptotically bounded, $a_n \lesssim b_n$ is equivalent to $a_n = O(b_n)$ and $a_n \asymp b_n$ means that $\left(\exists \alpha, \beta > 0\right)~ \alpha a_n \leq b_n \leq \beta a_n$. We write $\sigma(X_1,\dots,X_n)$ the $\sigma$-field generated by random variables $(X_1,\dots,X_n)$.

\section{Main results}
\label{Section_3}

\subsection{The observation is the unlabeled graph}

We first consider the situation where only the unlabeled graph is observed. The following theorem establishes the conjecture in some regimes of the parameters $\Delta_n$ and $(\delta_0, \delta_1)$ which are assumed to be known.

\begin{theorem}
  \label{thm:main}
  If $\delta_0 > 0$ and $\Delta_n = o(n^{1/3})$ [or $\delta_0 = 0$ and $\Delta_n = o\big(\frac{n^{1/3}}{\log(n)} \big)$], then for every sequence of events $\left(A_{n}\right)_{n\geq 1}$ with $A_n \in \sigma( s(G_n) )$ for all $n\geq 1$, 
  \begin{equation*}
     \PP^{n}_{0}(A_n) \to 0 \implies \PP^{n}_{1}(A_n) \to 0.
   \end{equation*}
\end{theorem}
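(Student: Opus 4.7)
The conclusion is precisely the statement that $(\PP_1^n)$ is contiguous to $(\PP_0^n)$, both viewed as measures on the $\sigma$-field $\sigma(s(G_n))$. By Le Cam's first lemma, a sufficient criterion is that the likelihood ratio
\[
L_n \;:=\; \frac{d\PP_1^n}{d\PP_0^n}\bigg|_{\sigma(s(G_n))}
\]
satisfies $\EE_0^n[L_n^2] = O(1)$. My plan is to establish this second moment bound and to read off the threshold $\Delta_n = o(n^{1/3})$ from its quantitative form.

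The natural starting point is the \emph{labeled} likelihood ratio $\tilde L_n$, computed on the full $\sigma$-field generated by $((G_{t,i})_{i\leq m})_{t\leq n}$. Since $(G_t)_{t\leq \tau_n}$ has the same law under both hypotheses, the Radon--Nikodym derivative depends only on the edges added at times $\tau_n+1,\dots,n$, and factorises into $m\Delta_n$ local terms:
\[
\tilde L_n \;=\; \prod_{t=\tau_n+1}^{n}\prod_{i=1}^{m} \frac{\D_{G_{t,i-1}}(V_{t,i})+\delta_1}{\D_{G_{t,i-1}}(V_{t,i})+\delta_0}\cdot \frac{2m(t-1)+\delta_0 t + (i-1)}{2m(t-1)+\delta_1 t + (i-1)}.
\]
The unlabeled likelihood ratio is the conditional expectation $L_n = \EE_0^n[\tilde L_n \mid s(G_n)]$. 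A direct use of conditional Jensen would give $\EE_0^n[L_n^2]\leq \EE_0^n[\tilde L_n^2]$, but this is far too lossy: the labeled second moment diverges already as soon as $\Delta_n\to\infty$, in line with Theorem~\ref{thm:informal:labeled} which asserts that in the labeled model the change can be detected as soon as $\Delta_n\to\infty$. The gain therefore must genuinely come from the averaging over labelings consistent with $s(G_n)$.

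My plan is to expand $\log\tilde L_n$ in powers of $\delta_1-\delta_0$, isolating a deterministic drift (contributing a factor $e^{-c\Delta_n}$ to the mean) and a centred stochastic term $M_n$ depending on the sequence of parents $(V_{t,i})_{t>\tau_n}$. The variance of $M_n$ grows linearly in $\Delta_n$, which by itself only yields the labeled barrier $n^{1/2}$. The additional averaging in $L_n = \EE_0^n[\tilde L_n \mid s(G_n)]$ amounts to re-randomising, conditionally on $s(G_n)$, the identities of the parents $V_{t,i}$ of the late vertices. I would implement this via the Pólya-urn / sequential-sampling representation of preferential attachment (the same mechanism underlying the de Finetti-type exchangeability of PA models), with the goal of showing that the conditional expectation of $M_n$ has variance of order $\Delta_n^3/n$, so that $\EE_0^n[L_n^2]\leq \exp(C\Delta_n^3/n)$, which is bounded precisely when $\Delta_n=o(n^{1/3})$.

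The hardest step will be making the conditional averaging rigorous: there is no closed form for the law of $G_n$ given its isomorphism class $s(G_n)$, and the relabellings that preserve $s(G_n)$ mix old, high-degree vertices with recent ones in an intricate way. I expect to need a combinatorial identity (most likely a Pólya-urn reversal or a coupling with an exchangeable surrogate) to evaluate the conditional means of the individual terms in $M_n$ and to extract cancellations across the $m\Delta_n$ factors. The $\log n$ penalty in the case $\delta_0=0$ should enter when controlling low-degree parents: when $\delta_0=0$, the ratio $(\D+\delta_1)/(\D+\delta_0)$ can be as large as $1+\delta_1/m$, so the individual LR factors have heavier tails and controlling the contribution of the $O(n)$ vertices of minimal degree costs an extra logarithmic factor.
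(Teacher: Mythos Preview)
Your high-level shape is right --- second moment contiguity, with the gain coming from averaging over relabelings --- but the proposal stops exactly where the real work begins, and one of your heuristics is off.

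\textbf{The central gap.} You propose to work directly with $L_n=\EE_0^n[\tilde L_n\mid s(G_n)]$ and to argue, via some ``P\'olya-urn reversal or exchangeable coupling'', that the conditional averaging brings the variance down to $\Delta_n^3/n$. But you yourself note there is no closed form for the conditional law given $s(G_n)$, and the paper singles this out (Section~\ref{sec:diff-solv-probl}) as the precise obstacle: for an arbitrary unlabeled graph it is very hard to determine which labeled representatives lie in the support of the PA law, so the sum in \eqref{eq:likelihood:unlabeled} is intractable. Your claim that the conditional variance is of order $\Delta_n^3/n$ is reverse-engineered from the target $n^{1/3}$; nothing in the proposal suggests \emph{why} this should hold or how to prove it. The paper's entire technical contribution is a mechanism to bypass this: Lemma~\ref{lem:increasing_difficulty} reduces to proving contiguity for an \emph{intermediate} observation $\pi_n(G_n)$, where $\pi_n$ is a carefully engineered random permutation (uniform over permutations fixing everything outside the set $\PartB(G_n)$ of late, leaf-like vertices). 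This choice guarantees that $\pi_n(G_n)$ stays in the PA support (Lemma~\ref{lem:perm:invariant-support}), so the permuted likelihood ratio has an explicit, tractable expression (Lemma~\ref{lem:lkl:perm}). The second moment is then controlled on a high-probability event $B_n$ by hand (Proposition~\ref{prop:contiguity_intermediate_regime}), and an auxiliary time $\tau_n'$ with $\Delta_n'\asymp n^{2/3}$ has to be introduced and optimised over --- none of which appears in your plan.

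\textbf{The $\log n$ at $\delta_0=0$.} Your explanation is not correct: the individual factors $(\D+\delta_1)/(\D+\delta_0)$ are uniformly bounded for any fixed $\delta_0>-m$, so their tails are no heavier at $\delta_0=0$ than at $\delta_0>0$. In the paper the logarithm arises in Proposition~\ref{pro:eventBn}, through the sum $\sum_{v}\EE_0^n[(\D_{G_{\tau_n'}}(v)+\delta_0)^2]$ computed in Lemmas~\ref{lem:expec2momentdeg}--\ref{lem:estimatescoeffdegree}: when $\delta_0=0$ the degree law has tail exponent exactly $3$, so this sum is $\asymp \tau_n'\log\tau_n'$ rather than $\asymp\tau_n'$. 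The $\log n$ is thus a degree-moment boundary effect controlling the size of $\PartB(G_n)$, not a feature of the likelihood-ratio factors themselves.
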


In other words, under the assumptions of the theorem, the laws of $(s(G_n))_{n\geq 1}$ under $H_1$ are \textit{contiguous} to those under $H_0$. By Le Cam's first lemma \cite[Section~6.2]{vaart98}, no (eventually randomized) test made on the basis of observing $s(G_n)$ is capable of controlling both Type I and Type II error rates simultaneously: if $(\phi_n)_{n\geq 1}$ is a sequence of $s(G_n)$-measurable tests such that $\EE_0^n(\phi_n) \to 0$ then $\EE_1^n(\phi_n) \to 0$ as well. Note that a consequence of this result is that even if the model parameters are known, detection is still not possible which is a stronger result than if the model parameters are unknown. A sketch of the proof of the theorem is given in the next section.

\subsection{Sketch of proof of Theorem \ref{thm:main}}\label{sec:sketch}

\subsubsection{Difficulties in proving contiguity}
\label{sec:diff-solv-probl}

Let us for simplicity denote $Q_j^{n,s}  = \PP_j^n\circ (s \circ G_n)^{-1}$ the law of $s(G_n)$ under hypothesis $H_{j}$. The statement in Theorem~\ref{thm:main} is equivalent to the contiguity of $(Q_1^{n,s})_{n\geq 1}$ with respect to $(Q_0^{n,s})_{n\geq 1}$. A well-known sufficient condition for establishing contiguity is that the second moment of the likelihood ratio $\frac{\intd Q_1^{n,s}}{\intd Q_0^{n,s}}$ remains bounded as $n\to \infty$. Understanding this likelihood ratio is, however, not a simple task. To see why, observe that for a given unlabeled graph $\ug_n$ on $n+1$ vertices we do have
\begin{equation}
  \label{eq:likelihood:unlabeled}
    \PP^n_{\ell}(s(G_n) = \ug_n) = \sum_{\substack{\g \in \mathfrak{u}_n\\\VS(\g) = \llbracket 0,n\rrbracket}}\PP^{n}_{\ell}(G_n = \g),\qquad \ell=0,1.
\end{equation}
Though $\PP_{\ell}^n(G_n = \g)$ is easy to evaluate when $\PP_{\ell}^n(G_n = \g) > 0$ (see lemmas~\ref{lem:lkl:null} and~\ref{lem:lkl:alt}), it is much more delicate for an arbitrary unlabeled graph $\ug_n$ to understand which of the terms in the summation of \eqref{eq:likelihood:unlabeled} is non-zero. Indeed, if $\PP^{n}_{\ell}(G_n = \g) > 0$ and there is an edge $u \to_{\g} v$, then the graph $\g^{\prime}$ obtained from $\g$ by swapping the labels $u$ and $v$ has the same structure as $\g$ while $\PP^{n}_{\ell}(\G_n = \g^{\prime}) = 0$. This is because in the preferential attachment mechanism, arrows can only go from the largest label to the smallest. So to understand the likelihood of $s(G_n)$, it is required to understand the intersection of $\ug_n$ with the support of the law of $G_n$, which turns out to be rather challenging. Instead, we prefer to reduce the problem to a simpler one, as we explain in the next section.

\subsubsection{Problem reduction}
\label{sec:reduction}

Informally, problem reduction consists of analyzing a simpler problem where the observation is richer than the structure, but where detection is still not possible. The first natural reduction to examine is the situation where the labeled graph $G_n$ is observed. Unfortunately, we will show in Section \ref{sec:labeled} that in this case, change detection is always possible whenever $\Delta_n\xrightarrow{}+\infty$ and $n-\Delta_n\xrightarrow{}+\infty$ and that this reduction is therefore useless for our proof. Consequently, We are bound to look for an intermediate problem where the observation is richer than the structure, but not as informative as the labeled graph. The main idea of the proof is that if we show that change-point detection is impossible in this (easier) problem, then this should imply that it is also impossible in the original problem where only the structure is observed. Such an intermediate problem is offered by certain random permutations of the graph. We use the following definition of a permuted graph.

\begin{definition}[Permutation of a labeled graph]
  Let $\g$ be a labeled graph and $\pi$ a permutation of $\VS(\g)$. We call $\pi(\g)$ the labeled graph obtained by the application of permutation $\pi$ to the vertices of the graph $\g$. In other words $\VS(\pi(\g)) = \VS(\g)$ and for vertices $u,v\in \VS(\g)$ and $k\in \NNInts$
  \begin{equation*}
    \mu_{\g}(u,v) = k \iff \mu_{\pi(\g)}(\pi(u), \pi(v)) = k.
  \end{equation*}
\end{definition}
One might question the reasoning behind reducing the problem to a random permutation of the graph. In fact, for $\mathfrak{g}$ a preferential attachment graph, the likelihood of $s(\mathfrak{g})$ coincides up to a multiplicative term with that of $\pi(\mathfrak{g})$ which is the graph $\mathfrak{g}$ to which a random uniform permutation $\pi$ is applied. Of course reducing to a uniform permutation of the graph is helpless since it is statistically equivalent to observe the unlabeled graph, but this motivates the use of other random, simpler, permutations. In particular, our random permutation will be chosen to be uniform over a distinguished subset of vertices. Permuting part of the vertices is statistically equivalent to hiding their labels, which is the purpose sought through the reduction of the original problem.

From now on, it is assumed that the spaces $(\Omega_n,\mathcal{F}_n,\PP_0^n)$ and $(\Omega_n,\mathcal{F}_n,\PP_1^n)$ are rich enough to define $((G_{t,i})_{i=0}^m)_{t=1}^n$ jointly with a random permutation $\pi_n$ of $\llbracket 0,n\rrbracket$; the details of which are given below. The situations where one observe $G_n$, $\pi_n(G_n)$, or $s(G_n)$, are of increasing difficulty since one observes less and less information related to the labeled graph. Detection of the change should become more and more difficult. The following lemma confirms this insight, provided the conditional distributions of $\pi_n$ given $G_n$ are the same under $\PP_0^n$ and $\PP_1^n$.

\begin{lemma}
  \label{lem:increasing_difficulty}
  Let $\mathcal{G}_n$ denote the set of all labeled graphs on vertex set $\llbracket 0,n\rrbracket$ and $\mathcal{S}_n$ denote the set of all permutations of $\llbracket 0,n\rrbracket$. Suppose there is a Markov Kernel $K_n : \mathcal{G}_n \times 2^{\mathcal{S}_n} \to [0,1]$ such that both $\PP_0^n$ and $\PP_1^n$ admit $K_n$ as conditional distribution of $\pi_n$ given $G_n$. Consider the following propositions:
  \begin{enumerate}
    \item For every sequence $(A_n)_{n\geq 1 }$ of $G_n$-measurable sets, $\PP^{n}_{0}(A_n)\to 0 \implies \PP^{n}_{1}(A_n)\to 0$.
    \item For every sequence $(A_n)_{n\geq 1}$ of $\pi_n(G_n)$-measurable sets, $\PP^{n}_{0}(A_n)\to 0 \implies \PP^n_{1}(A_n) \to 0$.
    \item For every sequence  $(A_n)_{n\geq 1}$ of $s(G_n)$-measurable sets, $\PP^{n}_{0}(A_n) \to 0 \implies \PP^n_{1}(A_n) \to 0$.
  \end{enumerate}
  Then $1\implies 2 \implies 3$.
\end{lemma}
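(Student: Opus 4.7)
The plan decomposes into two implications, which are rather different in nature. The implication $2 \implies 3$ is a pure measurability observation: since isomorphism is preserved under any relabeling of vertices, one has $s(\pi_n(G_n)) = s(G_n)$ identically, so $\sigma(s(G_n)) \subseteq \sigma(\pi_n(G_n))$, and any $s(G_n)$-measurable sequence $A_n$ is automatically $\pi_n(G_n)$-measurable. Hypothesis 2 then delivers the conclusion immediately.

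For $1 \implies 2$, the strategy is to integrate the permutation out of $\pi_n(G_n)$ so as to produce a $G_n$-measurable event that controls the original one, and then invoke hypothesis 1. The assumption that the conditional law of $\pi_n$ given $G_n$ is the \emph{same} Markov kernel $K_n$ under both $\PP_0^n$ and $\PP_1^n$ is decisive here; without it, the disintegrations of $\PP_0^n(A_n)$ and $\PP_1^n(A_n)$ would involve two different functions of $G_n$ and the Markov step below could not be transferred from one to the other via hypothesis 1.

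Concretely, any $A_n \in \sigma(\pi_n(G_n))$ can be written as $A_n = \{\pi_n(G_n) \in B_n\}$ for some $B_n \subseteq \mathcal{G}_n$. Setting
\begin{equation*}
  h_n(g) = K_n(g, \{\sigma \in \mathcal{S}_n : \sigma(g) \in B_n\}),
\end{equation*}
one obtains $\PP_\ell^n(A_n) = \EE_\ell^n[h_n(G_n)]$ for $\ell \in \{0,1\}$ by conditioning on $G_n$ and using the common-kernel hypothesis. Fix $\epsilon > 0$ and introduce the $G_n$-measurable level set $C_n^\epsilon = \{h_n(G_n) \geq \epsilon\}$. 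Markov's inequality under $\PP_0^n$ gives $\PP_0^n(C_n^\epsilon) \leq \PP_0^n(A_n)/\epsilon$, so if $\PP_0^n(A_n) \to 0$ then $\PP_0^n(C_n^\epsilon) \to 0$, and hypothesis 1 upgrades this to $\PP_1^n(C_n^\epsilon) \to 0$. Splitting the expectation according to $C_n^\epsilon$ and bounding $h_n \leq \epsilon$ off $C_n^\epsilon$ and $h_n \leq 1$ on $C_n^\epsilon$ yields $\PP_1^n(A_n) \leq \epsilon + \PP_1^n(C_n^\epsilon)$, so $\limsup_n \PP_1^n(A_n) \leq \epsilon$, and letting $\epsilon \downarrow 0$ concludes.

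I do not anticipate a genuine obstacle. Measurability questions concerning the sections $\{\sigma : \sigma(g) \in B_n\}$ and the function $h_n$ are vacuous because $\mathcal{G}_n$ and $\mathcal{S}_n$ are finite at each fixed $n$. The only subtle point to flag clearly in the write-up is the role of the common-kernel assumption: it is what allows a single function $h_n$ to represent both $\PP_0^n(A_n)$ and $\PP_1^n(A_n)$, which is precisely what turns a $\pi_n(G_n)$-measurable event into a genuine $G_n$-measurable test of the same null hypothesis.
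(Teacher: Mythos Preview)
Your proof is correct and follows essentially the same approach as the paper's: for $1\Rightarrow 2$ you condition on $G_n$ to express $\PP_\ell^n(A_n)=\EE_\ell^n[h_n(G_n)]$ via the common kernel, pass to the super-level set $\{h_n(G_n)\geq\varepsilon\}$ by Markov, apply hypothesis~1, and split the expectation; for $2\Rightarrow 3$ you use $s(\pi_n(G_n))=s(G_n)$ to embed $\sigma(s(G_n))\subseteq\sigma(\pi_n(G_n))$. The only cosmetic difference is that you make the function $h_n$ explicit where the paper writes the slightly informal $K_n(G_n,A_n)$.
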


See Appendix~\ref{sec:proof-lemma-increasing-difficulty} for the proof of Lemma~\ref{lem:increasing_difficulty}.
In what follows, we will consider the reduction where one observes $\pi_n(\G_{n})$ in place of $s(G_n)$. Letting $Q_0^{n,p}$ (respectively $Q_1^{n,p})$ denote the law of $\pi_n(G_n)$ under the null hypothesis (resp. the alternative hypothesis), a mere change of variable followed by an application of Cauchy-Schwarz shows that for any events $A_n,B_n \in \sigma(\pi_n(G_n))$
\begin{equation*}
  \PP_1^n(A_n)%
  \leq \PP_1^n(B_n^c) + \PP_0^n(A_n)^{1/2}\EE_0^n\Bigg[\Bigg(\frac{\intd Q_1^{n,p}}{\intd Q_0^{n,p}}(\pi_n(G_n)) \Bigg)^2\1_{B_n} \Bigg]^{1/2}.
\end{equation*}
Hence, if we build a sequence of kernels $(K_n)_{n\geq 1}$ and events $(B_n)_{n\geq 1}$ in $\sigma(\pi_n(G_n))$ such that
\begin{gather*}
  \PP_1^n(B_n^c) \to 0,\qquad \mathrm{and},\qquad
  \limsup_{n\to \infty} \EE_0^n\Bigg[\Bigg(\frac{\intd Q_1^{n,p}}{\intd Q_0^{n,p}}(\pi_n(G_n)) \Bigg)^2\1_{B_n} \Bigg] < +\infty,
\end{gather*}
then $2$ of Lemma~\ref{lem:increasing_difficulty} holds, which by said lemma implies the validity of our theorem. We build $(K_n)_{n\geq 1}$ and $(B_n)_{n\geq 1}$ in the next section.

\subsubsection{Construction of the Markov kernel \texorpdfstring{$K_n$}{} and the event \texorpdfstring{$B_n$}{}}

We first remark that, when building $K_n(\g_n,\cdot)$, it is enough to consider $\g_n$ in $\mathfrak{S}_n = \{\g_n' \;:\; \PP_0^n(G_n = \g_n') \ne 0 \} = \{\g_n' \;:\; \PP_1^n(\G_n = \g_n') \ne 0 \}$. We give a characterization of the set $\mathfrak{S}_n$ in Lemma~\ref{lem:support}. Remark that all graphs in $\mathfrak{S}_n$ have vertex set $\llbracket 0,n\rrbracket$.

To construct $K_n$ and $B_n$, we first define the following set of vertices of a labeled graph $\g_n$, which corresponds to the vertices illustrated in \textit{bold} in Figure~\ref{fig:PA:example_1}:
\begin{equation*}
  \PartB(\g_n)%
  = \Big\{ v\in \llbracket \tau_n'+1,n\rrbracket \;:\; \D_{\g_n}(v) = m,\ %
  \forall w\in \Chi_{\g_n}(v)\ w\leq\tau_n' \text{ and } \Par_{\g_n}(w)\backslash \{v\} \subset \llbracket 0,\tau_n'\rrbracket\Big\}.
\end{equation*}
In the previous definition $(\tau_n')_{n\geq 1}$ is a sequence of integer numbers to be chosen accordingly later, but satisfying $0 \leq \tau_n' < \tau_n$. In other words, $\PartB(G_n)$ contains the late vertices of $G_n$ which have minimal degree and are the unique late parent of their children. We then consider permutations which leave invariant the labels not in $\PartB(G_n)$; \textit{ie.} letting $\mathcal{S}_n$ the set of all permutations of $\llbracket 0,n\rrbracket$ we define
\begin{equation*}
  \Pi_n(\g_n) = \left\{\pi\in\mathcal{S}_{n}\;:\; \forall i \notin \PartB(\g_n),\ \pi(i) = i \right\},
\end{equation*}
and we define $K(\g_n,\cdot)$ as the uniform distribution over $\Pi_n(\g_n)$, for all $\g_n\in \mathfrak{S}_n$. We note that one of the advantages of this permutation scheme is that $\pi(\g_n) \in \mathfrak{S}_n$ for any $\g_n\in \mathfrak{S}_n$ and $\pi \in \Pi_n(\g_n)$ (see Lemma~\ref{lem:perm:invariant-support}), which precludes the issues mentioned in Section~\ref{sec:diff-solv-probl}. Then, we consider the sequence of events
\begin{equation*}
  B_n%
  = \Big\{ |\PartB(G_n)| \geq \Delta_n'\Big(1 - \frac{\alpha_n \Delta_n'}{\tau_n'} \Big),\quad%
  \llbracket \tau_n+1,n\rrbracket \subset \PartB(G_n) \Big\}
\end{equation*}
for a sequence $(\alpha_n)_{n\geq 1}$ diverging slowly to infinity, and where $\Delta_n' = n - \tau_n'$. We note that by construction $\PartB(G_n) = \PartB(\pi_n(G_n))$, so that $B_n \in \sigma(\pi_n(G_n))$ as required (see previous section). On the event $B_n$ all the late vertices of $G_n$ are eventually permuted and are indistinguishable from the earlier vertices in $\PartB(G_n)$. This informally tells why the change-point cannot be detected. More formally, the Theorem~\ref{thm:main} is an immediate consequence of the two following propositions, choosing $\Delta_n' \asymp n^{2/3}$ (implying $\tau_n' \sim n$), $\Delta_n = O\big(\frac{n^{1/3}}{\alpha_n}\big)$, and $\alpha_n\to \infty$ arbitrarily slowly if $\delta_0>0$ or $\frac{\alpha_n}{\log(n)} \to \infty$ arbitrarily slowly if $\delta_0 = 0$.

\begin{proposition}
  \label{prop:contiguity_intermediate_regime}
  There exist constants $c_1,c_2 > 0$ depending only on $\delta_0$, $\delta_1$, and $m$, such that for all $n\geq 4$, if $3\leq \tau_n' < \tau_n$, $\Delta_n' \leq \tau_n$ , $\frac{\alpha_n\Delta_n'}{\tau_n'} \leq \frac{1}{2}$ and $\frac{\Delta_n}{\Delta_n'} \leq \frac{1}{4}$ then
  \begin{equation*}
    \log\EE_0^n\Bigg[\Bigg(\frac{\intd Q_1^{n,p}}{\intd Q_0^{n,p}}(\pi_n(G_n)) \Bigg)^2\1_{B_n} \Bigg]%
    \leq \frac{4\alpha_n\Delta_n\Delta_n'}{\tau_n'}%
    + \frac{22m \Delta_n^2}{\Delta_n'}%
    + \frac{2}{3\Delta_n'}%
    + \sqrt{\frac{c_1 \Delta_n^2}{\Delta_n'}}e^{\frac{c_2 \Delta_n^2}{\Delta_n'}}.
  \end{equation*}

\end{proposition}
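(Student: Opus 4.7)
My plan is to exploit a symmetry of $\PP_0^n$ under the action of $\Pi_n(\g)$ on $\mathfrak{S}_n$. Since every vertex of $\PartB(\g)$ has zero in-degree and is the unique late parent of each of its children, swapping two labels $u,v \in \PartB(\g)$ does not change the transition probabilities of the preferential attachment dynamics when $\delta$ is constant. This gives $\PP_0^n(G_n = \g) = \PP_0^n(G_n = \pi(\g))$ for every $\pi \in \Pi_n(\g)$ and $\g\in\mathfrak{S}_n$, hence $Q_0^{n,p}(\g) = \PP_0^n(G_n = \g)$, while by the tautological decomposition $Q_1^{n,p}(\g) = |\Pi_n(\g)|^{-1}\sum_{\pi \in \Pi_n(\g)} \PP_1^n(G_n = \pi(\g))$. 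Combining these yields the averaged-likelihood form
\[
  \frac{\intd Q_1^{n,p}}{\intd Q_0^{n,p}}(\g) \;=\; \frac{1}{|\Pi_n(\g)|}\sum_{\pi \in \Pi_n(\g)} L(\pi(\g)), \qquad L := \frac{\intd \PP_1^n}{\intd \PP_0^n}.
\]

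Plugging this into the second moment, using that $\pi_n(G_n) \equalDist G_n$ under $\PP_0^n$ together with the invariance of $\PP_0^n$ and of $B_n$ under $\Pi_n(G_n)$, and reindexing $\sigma = \pi_1^{-1}\pi_2$ inside the resulting double sum, I obtain the central identity
\[
  \EE_0^n\!\Bigg[\Bigg(\frac{\intd Q_1^{n,p}}{\intd Q_0^{n,p}}(\pi_n(G_n))\Bigg)^{\!2}\!\1_{B_n}\Bigg] \;=\; \EE_0^n\!\Bigg[\frac{L(G_n)}{|\Pi_n(G_n)|}\sum_{\sigma \in \Pi_n(G_n)}\!L(\sigma(G_n))\, \1_{B_n}\Bigg].
\]
From the explicit formulas in Lemmas~\ref{lem:lkl:null}--\ref{lem:lkl:alt}, I factor $L = L_{\mathrm{det}}\,L_{\mathrm{ran}}$: a deterministic ratio of normalizing constants $Z_{t,i}^1/Z_{t,i}^0$ for $t > \tau_n$ whose log Taylor-expands to produce the remainder $2/(3\Delta_n')$ after summation over $t$, and a random product of degree ratios $(\D_{G_{t,i-1}}(v)+\delta_1)/(\D_{G_{t,i-1}}(v)+\delta_0)$.

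The heart of the proof is then to bound the pairing $\EE_0^n[L(G_n) L(\sigma(G_n))\,\1_{B_n}]$ and sum over $\sigma \in \Pi_n(G_n)$. I would partition the sum by the number $k$ of true-late labels in $\llbracket \tau_n+1,n\rrbracket$ that $\sigma$ displaces outside that window into the auxiliary pool $\PartB(G_n)\setminus \llbracket\tau_n+1,n\rrbracket$: on $B_n$ that pool has size $\geq \Delta_n'(1-\alpha_n\Delta_n'/\tau_n') - \Delta_n$, so the combinatorial weight of each stratum is of order $(\Delta_n/\Delta_n')^k$ times a uniformly bounded factor, and the most dispersed permutations ($k=\Delta_n$) produce the leading term $\exp(4\alpha_n\Delta_n\Delta_n'/\tau_n')$. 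Comparing $L_{\mathrm{ran}}(G_n)$ with $L_{\mathrm{ran}}(\sigma(G_n))$ yields the quadratic correction $22m\Delta_n^2/\tau_n'$ through sharp bounds on $\D_{G_{v,i-1}}(u)$ at times $v \sim \tau_n'$, and a Polya-urn-martingale moment bound supplies the exponential factor $\sqrt{c_1\Delta_n^2/\Delta_n'}\exp(c_2\Delta_n^2/\Delta_n')$.

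The principal obstacle is controlling $L_{\mathrm{ran}}(\sigma(G_n))$ uniformly over permutations $\sigma$ that transport a late label $v>\tau_n$ to an earlier auxiliary position $v' \in \llbracket\tau_n'+1,\tau_n\rrbracket$: the relevant degrees $\D_{G_{v',i-1}}$ are random and must be controlled to within a multiplicative $O(1)$ envelope after averaging. This requires combining a martingale concentration argument for the degree process at times near $\tau_n'$ with explicit Gamma-ratio moment computations specific to the affine preferential attachment urn; it is precisely the step where the constants $c_1,c_2$ enter the bound. The event $B_n$ is designed so that there are always enough spare $\PartB$-vertices to dilute the $\Delta_n$ true-late labels into a \emph{generic} configuration where these estimates are effective, which is what forces the scaling $\Delta_n^2/\Delta_n'=O(1)$ (hence $\Delta_n=o(n^{1/3})$) under the calibration $\Delta_n'\asymp n^{2/3}$.
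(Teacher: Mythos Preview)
Your opening—the $\PP_0^n$-invariance under $\Pi_n(\g)$ and the averaged-likelihood formula $Y_n=|\Pi_n(G_n)|^{-1}\sum_\pi L(\pi(G_n))$—is correct and matches Lemma~\ref{lem:lkl:perm}. Your central identity is also valid, though the paper does not use it; the paper works directly with the double sum for $Y_n^2$.

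Where your plan goes astray is the second-moment bound itself. The paper does \emph{not} stratify over $\sigma$ by displacement count, and it needs neither pointwise degree estimates near $\tau_n'$ nor Gamma-ratio urn moments. Instead it bounds $Y_n^2\1_{B_n}$ by a direct factorization: on $B_n$ every $t\in\llbracket\tau_n{+}1,n\rrbracket$ lies in $\PartB(G_n)$, so the double sum over $\pi,\bar\pi\in\Pi_n(G_n)$ becomes a sum over ordered \emph{distinct} tuples $(k_t),(k'_t)\in\PartB(G_n)^{\Delta_n}$; dropping distinctness, the sum factorizes into
\[
\frac{(|\PartB(G_n)|-\Delta_n)!^2}{|\PartB(G_n)|!^2}\Big(\sum_{k\in\PartB(G_n)} W_k\Big)^{2\Delta_n},\qquad W_k=\prod_{i=1}^m\frac{\D_{G_{k,i-1}}(V_{k,i})+\delta_1}{\D_{G_{k,i-1}}(V_{k,i})+\delta_0}.
\]
Stirling on the factorial ratio, combined with the $B_n$-constraint $|\PartB(G_n)|\geq\Delta_n'(1-\alpha_n\Delta_n'/\tau_n')$, is the source of \emph{both} $4\alpha_n\Delta_n\Delta_n'/\tau_n'$ and $2/(3\Delta_n')$ (the latter from the $e^{1/(12n)}$ Stirling correction, not from Taylor-expanding a normalizing constant as you suggest). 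The term $\sqrt{c_1\Delta_n^2/\Delta_n'}\,e^{c_2\Delta_n^2/\Delta_n'}$ comes from bounding $\EE_0^n\big[Z_n^{2\Delta_n}\big]$ for $Z_n=(\Delta_n')^{-1}\sum_{k>\tau_n'} W_k$ via Hoeffding--Azuma concentration (Lemma~\ref{lem:lr:martingale}) followed by tail integration (Lemma~\ref{lem:boundintegral}); and $22m\Delta_n^2/\tau_n'$ collects the deterministic bounds on $m_n=\EE_0^n[Z_n]$ and on the $S_{t,i-1}$-ratios (Lemmas~\ref{lem:boundSti} and~\ref{lem:boundmn}). The ``principal obstacle'' you worry about---uniform control of $L(\sigma(G_n))$ in $\sigma$---is precisely what the factorization trick circumvents: once distinctness is dropped there is no $\sigma$ left to control, only the single random sum $Z_n$.
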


See Appendix~\ref{sec:proof-prop-second-moment} for the proof of Proposition~\ref{prop:contiguity_intermediate_regime}. 

  \begin{proposition}
    \label{pro:eventBn}
    There exists a constant $C > 0$ depending only on $\delta_0$, $\delta_1$, and $m$, such that for all $2 \leq \tau_n' \leq n$
    \begin{equation*}
      \PP_1^n(B_n^c)%
      \leq \frac{C}{\alpha_n}\Big(1 + \frac{\alpha_n\Delta_n\Delta_n'}{\tau_n'} \Big)%
      \cdot%
      \begin{cases}
        \log(\tau_n') &\mathrm{if}\ \delta_0 = 0,\\
        1 &\mathrm{if}\ \delta_0 > 0.
      \end{cases}
    \end{equation*}
  \end{proposition}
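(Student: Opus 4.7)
The plan is to split $B_n^c = A_1 \cup A_2$ with $A_1 = \{|\PartB(G_n)| < \Delta_n'(1 - \alpha_n\Delta_n'/\tau_n')\}$ and $A_2 = \{\llbracket \tau_n+1, n\rrbracket \not\subset \PartB(G_n)\}$, and to reduce both events to a single pointwise estimate. Letting $p_v := \PP_1^n(v \notin \PartB(G_n))$ for $v \in \llbracket \tau_n'+1, n\rrbracket$, a union bound gives $\PP_1^n(A_2) \leq \sum_{v=\tau_n+1}^n p_v$, while Markov's inequality applied to $X := \Delta_n' - |\PartB(G_n)| = \sum_{v=\tau_n'+1}^n \1\{v \notin \PartB(G_n)\}$ yields $\PP_1^n(A_1) \leq \tau_n' \sum_{v=\tau_n'+1}^n p_v / (\alpha_n (\Delta_n')^2)$. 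The proposition then reduces to the pointwise bound $p_v \leq C\Delta_n'/v$, augmented by a factor $\log v$ when $\delta_0 = 0$; using $\sum_{a < v \leq b} 1/v \leq (b-a)/a$ and $\tau_n' < \tau_n$, the two summations produce directly the two terms of the announced bound.

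To establish the pointwise bound, observe that by definition of $\PartB$, $v \notin \PartB(G_n)$ iff some $u \in \llbracket \tau_n'+1, n\rrbracket \setminus \{v\}$ satisfies $u \to_{G_n} w$ for some $w \in \{v\} \cup \Chi_{G_n}(v)$. I would split according to whether $u > v$ (a late vertex points into the neighbourhood of $v$) or $\tau_n' < u < v$ (a ``cherry'' configuration in which $u$ and $v$ share a common child $c < u$). For the former, conditioning on $G_v$ fixes $S_v := \{v\} \cup \Chi_{G_v}(v)$; using the PA attachment probability \eqref{PA_model} and a martingale argument on the total degree $\Sigma_v(u) := \D_{G_u}(S_v)$, which yields $\EE_1^n[\Sigma_v(u) \mid G_v] \leq C\Sigma_v(v)(u/v)^{\beta}$ with $\beta = m/(2m + \min(\delta_0, \delta_1)) \leq 1$, together with the concavity inequality $(n/v)^\beta - 1 \leq \beta\Delta_n'/v$, I obtain a contribution of order $(1 + \EE_1^n[\Sigma_v(v)]) \cdot \Delta_n'/v$. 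The cherry case is bounded by the expected number of common children of $u$ and $v$: decomposing by $c$ and factoring $\PP_1^n(u \to c, v \to c)$ via the tower property, one absorbs the correlation into the same degree growth factor and obtains a term of the same order.

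The remaining task is to estimate $\EE_1^n[\Sigma_v(v)] = 2m + \EE_1^n[\sum_{w \in \Chi_{G_v}(v)\text{ distinct}} \D_{G_{v-1}}(w)]$. Via the formula for the expected degree of a preferentially chosen vertex at step $v$, this is controlled up to constants by $\EE_1^n[\sum_{w<v}\D_{G_{v-1}}(w)^2]/v$. I would then invoke classical moment estimates for the PA degree sequence (see e.g.~\cite{BBCH23, GV17}): since the asymptotic degree distribution is a power law with exponent $3 + \delta_0/m$, one has $\EE_1^n[\sum_w \D_{G_v}(w)^2] = O(v)$ when $\delta_0 > 0$, whereas for $\delta_0 = 0$ the critical exponent $3$ causes a logarithmic divergence and the sum of squared degrees is $O(v\log v)$. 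Substituting these bounds yields the claimed pointwise estimate and, combined with the first paragraph, the announced bound.

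The main obstacle is the cherry case $u < v$, because the children of $v$ are determined only at step $v > u$, so one cannot simply condition on $G_{u-1}$ to evaluate the attachment probability at step $u$ for a ``future child of $v$''; the tower-property decomposition above must be set up carefully to ensure that the correlations between steps $u$ and $v$ are correctly absorbed into the degree-moment estimate rather than double-counted. A secondary subtlety, in the $\delta_0 = 0$ case, is the uniform tracking of constants in the presence of the logarithmic divergence of the second moment of the degree sequence, and verifying under $H_1$ that the short $\delta_1$-phase between $\tau_n$ and $v$ only contributes a lower-order correction to the moment estimates used above.
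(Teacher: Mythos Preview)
Your reduction to the pointwise estimate $p_v = \PP_1^n(v\notin\PartB(G_n))$ via Markov on $A_1$ and a union bound on $A_2$ is exactly how the paper begins, and the final input in both arguments is the same second--moment estimate $\sum_{w}\EE[\D_{G_t}(w)^2] = O(t)$ for $\delta_0>0$ and $O(t\log t)$ for $\delta_0=0$. The route to the pointwise bound, however, is organised differently. You work from the complement, splitting according to whether the disqualifying late vertex $u$ satisfies $u>v$ (handled by the degree--growth martingale on $\Sigma_v$) or $\tau_n'<u<v$ (the cherry case). The paper instead conditions once at time $\tau_n'$: it writes $\1\{j\in\PartB(G_n)\}$ as a full sum over the possible children configurations $(x_1,\dots,x_\ell)$ of $j$ and computes the exact conditional probability of the product event $\{\text{no late }k\to x_i,\ k\ne j\}\cap\{\text{no }k>j\to j\}$ given $\mathcal F_{\tau_n'}$, then lower--bounds via $\prod(1-p)\geq 1-\sum p$. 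The resulting sum over children configurations is closed by interpreting it as an expectation under an auxiliary PA process $\tilde G^j$ started from $G_{\tau_n'}$, which makes the sum of the ``$1$'' terms equal to $1$ and reduces the remainder to $\EE\bigl[\sum_{c\in\Chi(\tau_n'+1)}\D_{G_{\tau_n'}}(c)\bigr]$, which is exactly the second--moment quantity you also need.

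What the paper's route buys is that the cherry obstacle you flag never arises: because everything is conditioned at $\tau_n'$ and the children configuration of $j$ is summed over explicitly, the ``$u<v$ connecting to a future child of $v$'' issue dissolves into the product formula. Your route is more hands--on and does work (your sketch for the cherry term is essentially correct: tower at $G_{v-1}$, then $G_u$, and the degree--growth factor $(v/u)^\beta$ together with the concavity bound on $\sum_{\tau_n'<u<v}(v/u)^\beta/v$ produces the same $\Delta_n'/v$), but it duplicates the second--moment computation at two different time scales and requires the bookkeeping you describe. The paper also derives the needed squared--degree bound from scratch via an explicit martingale recursion rather than by citation; this is where the $\log(\tau_n')$ factor at $\delta_0=0$ is pinned down cleanly.
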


See Appendix~\ref{sec:proof-prop-eventBn} for the proof of Proposition~\ref{pro:eventBn}. 

We believe the Theorem~\ref{thm:main} can not be improved beyond $\Delta_n = o(n^{1/3})$ using the permutation $\pi_n$ we designed. In fact, we think that the upper bound in Proposition~\ref{prop:contiguity_intermediate_regime} is essentially tight and could only be made bounded by choosing a smaller event $B_n$ if $\Delta_n$ is made larger than $o(n^{1/3})$. However, using a smaller event $B_n$ makes unlikely that a result such as Proposition~\ref{pro:eventBn} hold.

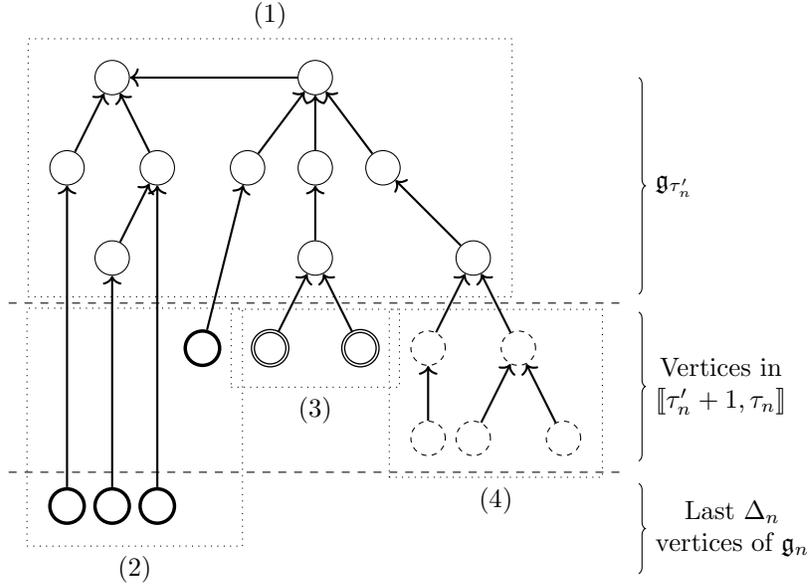
\begin{figure}[!htbp]
    \centering
\begin{tikzpicture}[scale=0.6]

\tikzstyle{boldvertex}=[draw, circle, minimum size=1.3em, inner sep=0pt, line width=1.3pt]
\tikzstyle{dottedvertex}=[draw, circle, minimum size=1.3em, inner sep=0pt, dash pattern=on 2pt off 2pt]
\tikzstyle{doublevertex}=[draw, circle, minimum size=1.3em, inner sep=0pt, double, line width=0.5pt]
\tikzstyle{normalvertex}=[draw, circle, minimum size=1.3em, inner sep=0pt]

    \node[normalvertex] (A) at (-2,3) {};
    \node[normalvertex] (B) at (2.5,3) {};
    \node[normalvertex] (C) at (-3,1) {};
    \node[normalvertex] (D) at (-1,1) {};
    \node[normalvertex] (E) at (1,1) {};
    \node[normalvertex] (F) at (2.5,1) {};
    \node[normalvertex] (G) at (4,1) {};
    \node[normalvertex] (H) at (-2,-1) {};
    \node[normalvertex] (I) at (2.5,-1) {};
    \node[normalvertex] (J) at (6,-1) {};
    \node[boldvertex] (K) at (-3,-6.5) {};
    \node[boldvertex] (L) at (-2,-6.5) {};
    \node[boldvertex] (M) at (-1,-6.5) {};
    \node[boldvertex] (N) at (0,-3) {};
    \node[doublevertex] (O) at (1.5,-3) {};
    \node[doublevertex] (P) at (3.5,-3) {};
    \node[dottedvertex] (Q) at (5,-3) {};
    \node[dottedvertex] (R) at (7,-3) {};
    \node[dottedvertex] (S) at (5,-5) {};
    \node[dottedvertex] (T) at (6,-5) {};
    \node[dottedvertex] (U) at (8,-5) {};

    \node[draw, dotted, fit=(A)(B)(C)(D)(E)(F)(G)(H)(I)(J), inner sep=8pt, label={above:(1)}] {};
    \node[draw, dotted, fit=(K)(L)(M)(N), inner sep=8pt, label={below:(2)}] {};
    \node[draw, dotted, fit=(O)(P), inner sep=8pt, label={below:(3)}] {};
    \node[draw, dotted, fit=(Q)(R)(S)(U), inner sep=8pt, label={below:(4)}] {};
    
    \draw[edge] (B) to (A);
    \draw[edge] (C) to (A);
    \draw[edge] (D) to (A);
    \draw[edge] (E) to (B);
    \draw[edge] (F) to (B);
    \draw[edge] (G) to (B);
    \draw[edge] (H) to (D);
    \draw[edge] (I) to (F);
    \draw[edge] (J) to (G);
    \draw[edge] (R) to (J);
    \draw[edge] (Q) to (J);
    \draw[edge] (P) to (I);
    \draw[edge] (O) to (I);
    \draw[edge] (K) to (C);
    \draw[edge] (L) to (H);
    \draw[edge] (M) to (D);
    \draw[edge] (N) to (E);
    \draw[edge] (S) to (Q);
    \draw[edge] (T) to (R);
    \draw[edge] (U) to (R);
    \node at (-4.5,-2) (left) {};
    \node at (9.5,-2) (right) {};
    \draw (left) -- (right) [dashed];
    \node at (-4.5,-5.75) (left) {};
    \node at (9.5,-5.75) (right) {};
    \draw (left) -- (right) [dashed];
    \draw[decorate,decoration={brace,raise=0.1cm}]
(9.5,3) -- (9.5,-1.8) node[midway, right, xshift=.2cm] {$\g_{\tau_n'}$};
    \draw[decorate,decoration={brace,raise=0.1cm}]
(9.5,-2.2) -- (9.5,-5.5) node[midway,right,xshift=.2cm, align = center] {Vertices in\\ $\llbracket \tau_n'+1,\tau_n\rrbracket$};
    \draw[decorate,decoration={brace,raise=0.1cm}]
(9.5,-6) -- (9.5,-8) node[midway,right,xshift=.2cm, align = center] {Last $\Delta_n$\\ vertices of $\g_n$};
\end{tikzpicture}
\caption{Typical preferential attachment graph $\g_n$ with $m = 1$ when $\Delta_n = o(n^{1/3})$. Four types of vertices emerge: normal vertices (1), bold vertices (2), double circle vertices (3) and dotted vertices (4). Our random permutation $\pi_n$ is built to permute only vertices represented in bold.}
    \label{fig:PA:example_1}
\end{figure}

\subsection{The observation is the labeled graph}\label{sec:labeled}

We consider now the model where the observation is the labeled graph $G_n$. The main purpose of this section is to emphasize the difference between the labeled and the unlabled model, by showing that in the labeled model the change-point can be detected as soon as $\Delta_n \to \infty$; in contrast with the unlabeled model for which $\frac{\Delta_n}{n^{1/2}}\to \infty$ is sufficient by \cite{BBCH23} and $\frac{\Delta_n}{n^{1/3}} \to \infty$ is necessary by our previous result. This also shows that a reduction scheme to a problem where the labeled graph undergoes a transformation is unavoidable to obtain a non trivial lower bound in the unlabeled model.

We assume that the model parameters $(\delta_0, \delta_1)$ and $\tau_n$ are known to be consistent with our Theorem~\ref{thm:main}. We however state additional results in the supplemental~\cite{kng:supplement}, covering the case where $(\delta_0,\delta_1)$ are unknown as well as the localization of the change-point (\textit{ie}. estimating $\tau_n$). In particular these additional results show that not knowing the parameters does not affect the capability of detecting the change-point as soon as $\Delta_n \to \infty$.

\begin{theorem}
  \label{thm:labeled:change_detection}
    Let $Q^{n}_{0} = \PP_0^n(G_n \in \cdot)$ and $Q^{n}_{1} = \PP_1^n(G_n \in \cdot)$. If $\tau_n \to \infty$ and $\Delta_n \to \infty$, detection of the change is possible: the likelihood-ratio test $T_n = \1\big(\frac{\intd Q_1^n}{\intd Q_0^n}(G_n) > 1\big)$ satisfies
    \begin{equation*}
        \EE_0^n(T_n) + \EE_1^n(1-T_n) \to 0.
    \end{equation*}
    When $\limsup_{n\to\infty}\Delta_n < +\infty$ detection of the change is not possible: $(Q_1^n)_{n\geq 1}$ is contiguous with respect to $(Q_0^n)_{n\geq 1}$.
\end{theorem}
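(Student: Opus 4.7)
The plan is to analyze the log-likelihood ratio $L_n := \log(\intd Q_1^n/\intd Q_0^n)(G_n)$ directly. Since $H_0$ and $H_1$ agree on times $t\leq \tau_n$, and using the explicit PA likelihood (Lemmas~\ref{lem:lkl:null} and~\ref{lem:lkl:alt}), $L_n$ decomposes as
\begin{equation*}
  L_n = \sum_{t=\tau_n+1}^{n}\sum_{i=1}^{m}\ell_{t,i},
  \qquad
  \ell_{t,i} = \log\frac{(\D_{G_{t,i-1}}(V_{t,i}) + \delta_1)/Z_1(t,i)}{(\D_{G_{t,i-1}}(V_{t,i}) + \delta_0)/Z_0(t,i)},
\end{equation*}
where $Z_\ell(t,i) = 2m(t-1) + \delta_\ell t + (i-1)$. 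The increments are uniformly bounded, $|\ell_{t,i}| \leq c_0$, with $c_0 = c_0(m,\delta_0,\delta_1)$: indeed $\D_{G_{t,i-1}}(V_{t,i}) \geq m$ together with $m+\delta_\ell > 0$ controls the first ratio, and $t\mapsto Z_1(t,i)/Z_0(t,i)$ is monotone in $t$ with positive finite limit $(2m+\delta_1)/(2m+\delta_0)$.

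The contiguity claim is then immediate: if $M := \limsup_n \Delta_n < +\infty$, then for $n$ large the sum above has at most $mM$ terms, so $|L_n| \leq c_0 m M$, and
\begin{equation*}
  Q_1^n(A_n) = \EE_0^n\Bigl[\tfrac{\intd Q_1^n}{\intd Q_0^n}(G_n)\,\1_{A_n}\Bigr] \leq e^{c_0 m M}\,Q_0^n(A_n),
\end{equation*}
proving contiguity of $(Q_1^n)$ with respect to $(Q_0^n)$.

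For the detection claim, I would prove $L_n \to -\infty$ in $\PP_0^n$-probability and $L_n \to +\infty$ in $\PP_1^n$-probability, which suffices to conclude $\EE_0^n[T_n] + \EE_1^n[1-T_n] \to 0$ for $T_n = \1(L_n > 0)$. Under $\PP_0^n$ the conditional mean of each increment is $\EE_0^n[\ell_{t,i}\mid\mathcal{F}_{t,i-1}] = -D_{\mathrm{KL}}(p_0^{t,i}\,\|\,p_1^{t,i})$, where $p_\ell^{t,i}$ denotes the attachment kernel at step $(t,i)$ under $H_\ell$. Restricting to the set $A = \{v : \D(v)=m\}$, a short calculation yields
\begin{equation*}
  |p_0^{t,i}(A) - p_1^{t,i}(A)| = \frac{N_m(G_{t,i-1})\,|\delta_1-\delta_0|\,|mt-2m+i-1|}{Z_0(t,i)\,Z_1(t,i)} \;\gtrsim\; \frac{N_m(G_{t,i-1})}{t},
\end{equation*}
where $N_m(\cdot)$ is the number of minimum-degree vertices; Pinsker's inequality then gives $D_{\mathrm{KL}}(p_0^{t,i}\,\|\,p_1^{t,i}) \geq c\,(N_m(G_{t,i-1})/t)^2$ for a constant $c = c(m,\delta_0,\delta_1) > 0$. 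The classical concentration of the degree distribution for affine PA yields $\inf_{t\geq \tau_n} N_m(G_t)/t \geq \beta$ with $\PP_0^n$-probability tending to $1$ (using $\tau_n\to\infty$), for some $\beta > 0$ depending on $\delta_0,m$. On this event the total drift of $L_n$ is at most $-c\beta^2 m\Delta_n$, which dominates the Azuma-Hoeffding fluctuations of order $\sqrt{m\Delta_n}$ of the martingale part as soon as $\Delta_n \to \infty$; hence $\PP_0^n(L_n > 0) \to 0$. The $\PP_1^n$ side is symmetric: $G_{\tau_n}$ has the same law under both hypotheses so the initial concentration of $N_m$ is identical, and the mixed-regime PA on $(\tau_n, n]$ satisfies an analogous lower bound because the attachment weights remain bounded away from $0$ throughout the process.

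The main technical obstacle is the uniform high-probability lower bound $\inf_{t\geq \tau_n}N_m(G_t)/t \geq \beta$. This is classical for affine PA: $\EE[N_m(G_t)]/t$ converges to $p_m(\delta_0) > 0$, the one-step increments of $t\mapsto N_m(G_t)$ are bounded by $m+1$, so Azuma-Hoeffding applied to the Doob martingale of $N_m(G_t)$ combined with a union bound over $t \geq \tau_n$ delivers the desired estimate since $\tau_n \to \infty$.
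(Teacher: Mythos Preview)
Your contiguity argument coincides with the paper's (Section~S4.2): bounded increments and boundedly many of them give a uniformly bounded likelihood ratio. For detection your route is genuinely different. The paper (Propositions~S4.1 and~S4.2) computes the \emph{exact} limits $\frac{1}{\Delta_n}L_n \to -\ell_\infty^0$ under $\PP_0^n$ and $\to \ell_\infty^1$ under $\PP_1^n$: it splits the log-likelihood ratio into three pieces, feeds in the convergence $N_k(G_{t,i-1})/t \to p_k(\delta_0)$ for every $k$ via dominated convergence, and then proves $\ell_\infty^0,\ell_\infty^1 > 0$ through a strict Jensen inequality on the concave map $x\mapsto x\log(1+c/x)$ applied to $X\sim p(\delta_0)$ with $\EE X = 2m$. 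You bypass all of this by lower-bounding each conditional $D_{\mathrm{KL}}$ via Pinsker restricted to the single event $\{v:\D(v)=m\}$, so that only the concentration of $N_m(G_t)/t$ is needed and the strict positivity of the drift is automatic. Your argument is more elementary---no exact limit, no Jensen trick, no sum over $k$---at the price of yielding no quantitative rate for $L_n$. One point to make explicit: on the $\PP_1^n$ side, under the paper's standing assumption $\Delta_n = o(n)$, the bound $\inf_{\tau_n < t \leq n} N_m(G_t)/t \geq \beta$ follows directly from the deterministic estimate $N_m(G_t) \geq N_m(G_{\tau_n}) - (m-1)\Delta_n$ combined with the concentration of $N_m(G_{\tau_n})/\tau_n$ (same law under both hypotheses); your phrase about attachment weights being bounded away from zero is vaguer than necessary here, and without $\Delta_n = o(n)$ a genuine drift argument for the mixed regime would be required---but the paper's own proof also tacitly uses $\Delta_n = o(n)$ through the sandwich $|N_k(G_{t,i-1}) - N_k(G_n)| \leq (m+1)\Delta_n$.
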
 

See the supplementary material \cite{kng:supplement} for the proof of Theorem~\ref{thm:labeled:change_detection}. Observe that Theorem~\ref{thm:labeled:change_detection} identifies the exact phase transition for detection when the labeled graph is observed and the model parameters are known.

\section{Discussions and perspectives}\label{Section_4}

While the original conjecture in \cite{BBCH23} had two parts, one concerning the impossibility of detection using the sequence of degrees and the other concerning the impossibility of detection using the unlabeled graph, our work focuses only on the second part, which is more general as it implies the impossibility of detection using the degrees. Although we believe the conjecture to be true, our proof of Theorem \ref{thm:main} does not cover all the regimes of the conjecture in terms of $\Delta_n$ and $(\delta_0,\delta_1)$. As explained in Section~\ref{sec:sketch}, the main step of our proof of Theorem~\ref{thm:main} resides in showing that the second moment of the likelihood-ratio of the permuted graph is bounded by an absolute constant. We were able to exhibit such a bound only in the regime where $\Delta_n = o(n^{1/3})$ and $\delta_0 \geq 0$. To put it simply, our proof works when all the last $\Delta_n$ vertices are in $\PartB(G_n)$ (\textit{ie}. bold in the Figure~\ref{fig:PA:example_1}): the expression of the likelihood-ratio is easier to handle in this case and its second moment can be bounded by an absolute constant. This situation is illustrated in Figure \ref{fig:PA:example_1} by a typical example. However, in the regime $n^{1/3} \lesssim \Delta_n \lesssim n^{1/2}$ and as illustrated in Figure \ref{fig:PA:example_2}, ``double circle'' and ``dotted vertices'' start appearing amongst the last $\Delta_n$ vertices, making it more difficult to choose an appropriate permutation. If we keep the same permutation (the one modifying only the labels of bold vertices) in the regime $n^{1/3} \lesssim \Delta_n \lesssim n^{1/2}$, the labels of the ``dotted'' and ``double circle'' vertices appearing amongst the last $\Delta_n$ vertices will be kept invariant and the second moment of the likelihood-ratio will diverge to infinity. One possible way of generalizing the result to the remaining regime is to construct a permutation that modifies the labels of almost $O(n)$ vertices, including all the last $\Delta_n$ vertices, while at the same time still be able to uniformly bound the second moment of the likelihood-ratio. There is a trade-off between the complexity of the chosen permutation (how many labels are modified and how they are modified) and the ease in bounding the second moment of the likelihood-ratio. For a similar reason, the regime $\delta_0 < 0$ was not covered in the proof. The main shortcoming of our proof is that we choose permutations that modify only labels in $\PartB(G_n)$. The reason behind this choice is that given a preferential attachment random graph, every permutation affecting only $\PartB(G_n)$ results in a labeled graph having positive probability under preferential attachment (Lemma \ref{lem:perm:invariant-support}). This facilitates the explicit writing of the likelihood-ratio. However, if we were to allow the permutations to modify the labels of ``dotted'' and ``double circle'' vertices, then one needs to be much more careful to ensure that after the application of the permutation, the labeled graph still has positive probability under preferential attachment; or find another way to circumvent the issues discussed in Section~\ref{sec:diff-solv-probl}.

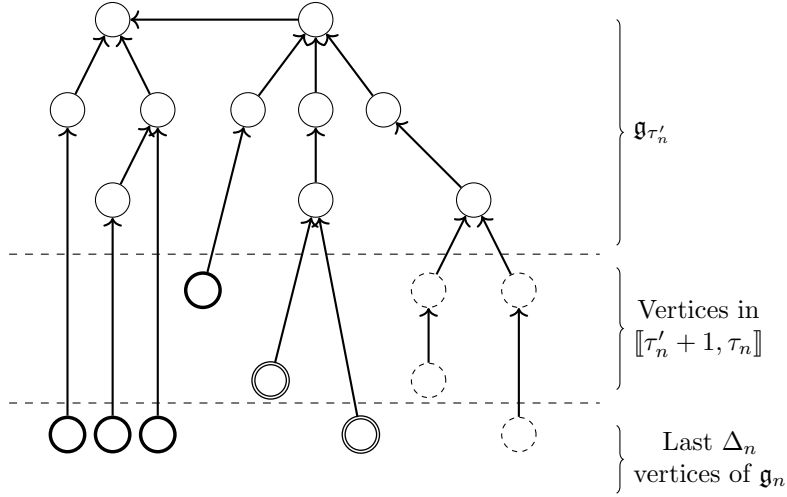
\begin{figure}[!htbp]
    \centering
\begin{tikzpicture}[scale=0.6]
    \node[normalvertex] (A) at (-2,3) {};
    \node[normalvertex] (B) at (2.5,3) {};
    \node[normalvertex] (C) at (-3,1) {};
    \node[normalvertex] (D) at (-1,1) {};
    \node[normalvertex] (E) at (1,1) {};
    \node[normalvertex] (F) at (2.5,1) {};
    \node[normalvertex] (G) at (4,1) {};
    \node[normalvertex] (H) at (-2,-1) {};
    \node[normalvertex] (I) at (2.5,-1) {};
    \node[normalvertex] (J) at (6,-1) {};
    \node[boldvertex] (K) at (-3,-6.2) {};
    \node[boldvertex] (L) at (-2,-6.2) {};
    \node[boldvertex] (M) at (-1,-6.2) {};
    \node[boldvertex] (N) at (0,-3) {};
    \node[doublevertex] (O) at (1.5,-5) {};
    \node[doublevertex] (P) at (3.5,-6.2) {};
    \node[dottedvertex] (Q) at (5,-3) {};
    \node[dottedvertex] (R) at (7,-3) {};
    \node[dottedvertex] (S) at (5,-5) {};
    \node[dottedvertex] (U) at (7,-6.2) {};
    \draw[edge] (B) to (A);
    \draw[edge] (C) to (A);
    \draw[edge] (D) to (A);
    \draw[edge] (E) to (B);
    \draw[edge] (F) to (B);
    \draw[edge] (G) to (B);
    \draw[edge] (H) to (D);
    \draw[edge] (I) to (F);
    \draw[edge] (J) to (G);
    \draw[edge] (R) to (J);
    \draw[edge] (Q) to (J);
    \draw[edge] (P) to (I);
    \draw[edge] (O) to (I);
    \draw[edge] (K) to (C);
    \draw[edge] (L) to (H);
    \draw[edge] (M) to (D);
    \draw[edge] (N) to (E);
    \draw[edge] (S) to (Q);
    \draw[edge] (U) to (R);
    \node at (-4.5,-2.2) (left) {};
    \node at (9,-2.2) (right) {};
    \draw (left) -- (right) [dashed];
    \node at (-4.5,-5.5) (left) {};
    \node at (9,-5.5) (right) {};
    \draw (left) -- (right) [dashed];
    \draw[decorate,decoration={brace,raise=0.1cm}]
(9,3) -- (9,-2) node [midway,right,xshift=.2cm]{$\g_{\tau_n'}$};
    \draw[decorate,decoration={brace,raise=0.1cm}]
(9,-2.5) -- (9,-5.2) node[midway,right,xshift=.2cm, align = center] {Vertices in\\ $\llbracket \tau_n'+1,\tau_n\rrbracket$};
\draw[decorate,decoration={brace,raise=0.1cm}]
(9,-6) -- (9,-7.5) node[midway,right,xshift=.2cm, align = center] {Last $\Delta_n$\\ vertices of $\g_n$};
\end{tikzpicture}
\caption{Typical preferential attachment graph $\g_n$with $m = 1$ when $n^{1/3} \lesssim \Delta_n \lesssim n^{1/2}$.}
    \label{fig:PA:example_2}
\end{figure}

%
\section{Proof elements common to both labeled and unlabeled graphs}
\label{sec:likel-likel-ratio}

\subsection{A result on the support of the general preferential attachment model}
\label{sec:result-supp-gener}

Anticipating that we will need to compute the likelihood under both the null hypothesis and the alternative, we first derive the likelihood in the most general case of a Preferential Attachment Model (PAM) with an arbitrary parameter function $\delta_n : \mathbb{N} \to (-m,+\infty)$, which is allowed to change with $n$. We let $\FF_{\delta_n}^n$ denote the distribution of a partial sequence of random graph $(G_0,G_1,\dots,G_n)$ distributed according to the PAM with parameter $\delta_n$.%

\begin{lemma}
    \label{lem:support}
    For $n\geq 0$, let
    \begin{equation*}
      \mathfrak{S}_n%
      = \Big\{ \g_n\;:\; \VS(\g_n) = \llbracket 0,n\rrbracket,\ \Chi_{\g_n}(0) = \varnothing,\  \forall v \in  \llbracket 1,n \rrbracket\ \Chi_{\g_n}(v) \subset \llbracket 0, v-1\rrbracket\ \mathrm{and}\  \Dout_{\g_n}(v) = m \Big\}.
    \end{equation*}
    Then $\FF_{\delta_n}^n(G_n = \g_n) > 0 \iff \g_n \in \mathfrak{S}_n$. Furthermore, for any $\g_n \in \mathfrak{S}_n$,
    \begin{equation*}
      \FF_{\delta_n}^n(G_n = \g_n)%
      = C(\g_n)%
        \frac{\prod_{j=2}^n\prod_{w\in \Chi_{\g_n}(j)}\prod_{k=1}^{\mu_{\g_n}(j,w)}\big( \D_{\g_n[\llbracket 0,j-1\rrbracket]}(w) + k-1 + \delta_n(j) \big)}{\prod_{j=2}^n\prod_{i=1}^mS_{j,i-1}(\delta_n(j))}
      \end{equation*}
      where $C(\g_n) = \frac{(m!)^{n-1}}{\prod_{j=2}^n\prod_{w\in \Chi_{\g_n}(j)}\mu_{\g_n}(j,w)!}$ and $S_{j,i-1}(\delta) = 2m(j-1) + i-1 + \delta j$ (defined as in \cite{GV17}).
  \end{lemma}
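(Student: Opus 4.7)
The plan is to prove the two claims (support characterization and explicit formula) simultaneously by writing out the law of the intermediate process $((G_{t,i})_{i=0}^m)_{t=1}^n$ and then marginalizing over the order in which the $m$ edges are attached at each time step.

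\textbf{Support characterization.} For the direction $\FF_{\delta_n}^n(G_n = \g_n) > 0 \Rightarrow \g_n \in \mathfrak{S}_n$, I would observe that the three conditions defining $\mathfrak{S}_n$ are enforced deterministically by the construction: at time $j$ the new vertex $j$ is added first, and the $m$ edges subsequently drawn always go from $j$ to $\{0,\dots,j-1\}$, so $\VS(G_n) = \llbracket 0,n\rrbracket$, $\Chi_{G_n}(0) = \varnothing$, $\Chi_{G_n}(v)\subset \llbracket 0,v-1\rrbracket$ and $\Dout_{G_n}(v) = m$ hold almost surely. For the converse, given $\g_n \in \mathfrak{S}_n$ I would choose any ordering of its edges (compatible with time) and argue inductively that at every step the target vertex has positive selection probability: since $\delta_n(j) > -m$ and each $v \in \llbracket 0,j-1\rrbracket$ has $\D_{G_{j,i-1}}(v) \geq m$ (by the base case at $t=1$, where the $m$ edges between $0$ and $1$ are added, and by the out-degree-$m$ property for $v\geq 1$), we have $\D_{G_{j,i-1}}(v) + \delta_n(j) > 0$; hence any history realizing $\g_n$ occurs with strictly positive probability.

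\textbf{Explicit formula.} I would then compute $\FF_{\delta_n}^n(G_n = \g_n)$ by the chain rule applied to the intermediate process, and sum over orderings. Fix $\g_n \in \mathfrak{S}_n$. A \emph{history} is a sequence of choices $(V_{j,i})_{2\leq j\leq n,\,1\leq i\leq m}$ such that, at each time $j$, the multiset $\{V_{j,1},\dots,V_{j,m}\}$ equals the multiset of children of $j$ in $\g_n$. For a fixed history, the conditional probability is
\begin{equation*}
\prod_{j=2}^{n}\prod_{i=1}^{m}\frac{\D_{G_{j,i-1}}(V_{j,i}) + \delta_n(j)}{S_{j,i-1}(\delta_n(j))}.
\end{equation*}
The denominator depends only on $j,i$, so it factors out as $\prod_{j=2}^n\prod_{i=1}^m S_{j,i-1}(\delta_n(j))$ regardless of the ordering. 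For the numerator, I would observe that at time $j$, the vertex $V_{j,i}$ has degree in $G_{j,i-1}$ equal to $\D_{\g_n\cap \llbracket 0,j-1\rrbracket}(V_{j,i})$ plus the number of times $V_{j,i}$ has already been selected among $V_{j,1},\dots,V_{j,i-1}$. Thus, for any ordering of the edges at time $j$, as $V_{j,i}$ ranges over the multiset of children, the corresponding product of numerators telescopes to $\prod_{w\in \Chi_{\g_n}(j)}\prod_{k=1}^{\mu_{\g_n}(j,w)}(\D_{\g_n\cap \llbracket 0,j-1\rrbracket}(w) + k-1 + \delta_n(j))$, independent of the ordering. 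The total number of histories realizing $\g_n$ equals $\prod_{j=2}^n \frac{m!}{\prod_{w\in \Chi_{\g_n}(j)}\mu_{\g_n}(j,w)!}$, which gives the prefactor $C(\g_n)$.

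\textbf{Expected obstacle.} The only delicate point is the telescoping of the numerator: one must verify that whichever order the children of $j$ are picked, the contribution $\D_{G_{j,i-1}}(V_{j,i})+\delta_n(j)$ summed over $i$ rearranges into the promised product indexed by $(w,k)$. This follows from the fact that multiplicity of $w$ in the prefix $V_{j,1},\dots,V_{j,i-1}$ determines the additive shift, so that each child $w$ contributes exactly the values $k=1,\dots,\mu_{\g_n}(j,w)$ once in some order. Once this bookkeeping is made explicit, combining the three ingredients (denominator, numerator, ordering count) yields the stated formula.
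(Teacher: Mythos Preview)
Your proposal is correct and follows essentially the same approach as the paper: both argue the support characterization from the construction (the paper phrases it as an induction on the joint law of $(G_0,\dots,G_j)$, you more directly from positivity of each transition), and both derive the formula by summing over edge orderings at each time step, observing that the numerator telescopes to the same product regardless of ordering while the denominator and the multinomial count of orderings give the remaining factors.
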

\begin{proof}
    Suppose $n\geq 2$, otherwise the result is trivial. By construction $\FF_{\delta_n}^n((G_0,G_1) = (\g_0,\g_1)) =1$ iff $\g_0$ is the labeled graph with a unique vertex with label zero and no edge, and $\g_1$ is the graph with two vertices zero and one with $m$ edges going from one to zero. Let $1\leq j \leq n$ and suppose that
    \begin{equation}
      \label{eq:inducsupport}
      \forall k\in \llbracket 0,j\rrbracket,\ %
      \g_k \in \mathfrak{S}_k\ \mathrm{and}\ \g_k[ \llbracket 0,k-1\rrbracket] =  \g_{k-1}%
      \iff \FF_{\delta_n}^n\big((G_0,\dots,G_j) = (\g_0,\dots,\g_j)\big) > 0,
    \end{equation}
    which has been shown to be verified for $j=1$. The graph $G_j$ is obtained from $G_{j-1}$ by sampling $m$ edges according to the PA rule. In other word
    \begin{equation*}
      \FF_{\delta_n}^n((G_0,\dots,G_j) = (\g_0,\dots,\g_j)) = \FF_{\delta_n}^n((G_0,\dots,G_{j-1}) = (\g_0,\dots,\g_{j-1}))K_{\delta_n,j}(\g_j\mid\g_{j-1}) 
    \end{equation*}
    for a Markov kernel $K_{\delta_n,j}(\g_j \mid \g_{j-1} )$ that assigns non-zero probability to $\g_j$ iff $\VS(\g_j) = \llbracket 0,j\rrbracket$ and $\g_j[\llbracket 0,j-1\rrbracket] = \g_{j-1}$ and $\Dout_{\g_j}(j) = m$ and $\Chi_{g_j}(j) \subset \llbracket 0,j-1\rrbracket$. By induction \eqref{eq:inducsupport} is then verified for all $1\leq j \leq n$. Observe that \eqref{eq:inducsupport} implies that the law of $(G_0,\dots,G_n)$ is entirely determined by $G_n$ since it must be that $G_k = G_n[\llbracket 0,k\rrbracket]$ $\FF_{\delta_n}^n$-almost-surely for all $k \in \llbracket 0,n\rrbracket$.

    Next, let $(\g_{j-1},\g_{j})\in \mathfrak{S}_{j-1}\times \mathfrak{S}_j$ with $\g_{j-1} = \g_j[\llbracket 0,j-1\rrbracket]$. A rapid computation using equation~\eqref{PA_model} shows that if we enumerate $v_1 < \dots < v_{\ell}$ the elements of $\Chi_{\g_j}(j)$ and denote by $\mu_1,\dots,\mu_{\ell}$ the associated edge multiplicities:
    \begin{align*}
      K_{\delta_n,j}(\g_j\mid\g_{j-1}) %
      &= \sum_{(e_1,\dots,e_m)}\prod_{i=1}^{m}\frac{\D_{\g_{j-1}}(v_{e_i}) + \sum_{1\leq k < i}\1_{e_k = v_{e_i}} + \delta_n(j) }{\sum_{w=0}^{j-1}\big(\D_{\g_{j-1}}(w) + \sum_{1\leq k < i}\1_{e_k = w} + \delta_n(j) \big) }\\
      &= \sum_{(e_1,\dots,e_m)}\frac{\prod_{w\in \Chi_{\g_j}(j)}\prod_{k=1}^{\mu_{\g_j}(j,w)}\big( \D_{\g_{j-1}}(w) + k-1 + \delta_n(j) \big)}{\prod_{i=1}^m\sum_{w=0}^{j-1}\big(\D_{\g_{j-1}}(w) + \sum_{1\leq k < i}\1_{e_k = w} + \delta_n(j) \big) }
    \end{align*}
    where the summation over $(e_1,\dots,e_m)$ is understood over sequences in $\llbracket 1,\ell \rrbracket^m$ with $\mu_k$ elements equal to $k$ for each $k=1,\dots,\ell$ (\textit{ie}. over all the possible ways of assigning the $m$ edges to the $\ell$ children with the multiplicity constraint taken into account). We observe that exactly $m$ edges are added at each step of the construction, so
    \begin{equation*}
      \sum_{w=0}^{j-1}\Big(\D_{\g_{j-1}}(w) + \sum_{1\leq k < i}\1_{e_k = w} + \delta_n(j)\Big)%
      =  2m(j-1) + i-1 + j\delta_n(j)%
      = S_{j,i-1}(\delta_n(j)).
    \end{equation*}
    It follows that
    \begin{equation*}
      K_{\delta_n,j}(\g_j \mid \g_{j-1})%
      = \frac{m!}{\prod_{w\in \Chi_{\g_j}(j)}\mu_{\g_j}(j,w)!}%
      \frac{\prod_{w\in \Chi_{\g_j}(j)}\prod_{k=1}^{\mu_{\g_j}(j,w)}\big( \D_{\g_{j-1}}(w) + k-1 + \delta_n(j) \big)}{\prod_{i=1}^mS_{j,i-1}(\delta_n(j)) }.
    \end{equation*}
    Consequently, for all $\g_n \in \mathfrak{S}_n$, writing abusively $\g_j = \g_n[\llbracket 0,j\rrbracket]$ (which is justified by the above discussion),
    \begin{align*}
      \FF_{\delta_n}^n(G_n = \g_n)%
      &= \prod_{j=2}^n\frac{m!}{\prod_{w\in \Chi_{\g_j}(j)}\mu_{\g_j}(j,w)!}%
        \frac{\prod_{w\in \Chi_{\g_j}(j)}\prod_{k=1}^{\mu_{\g_j}(j,w)}\big( \D_{\g_{j-1}}(w) + k-1 + \delta_n(j) \big)}{\prod_{i=1}^mS_{j,i-1}(\delta_n(j)) }\\
      &= C(\g_n)%
        \frac{\prod_{j=2}^n\prod_{w\in \Chi_{\g_n}(j)}\prod_{k=1}^{\mu_{\g_n}(j,w)}\big( \D_{\g_{j-1}}(w) + k-1 + \delta_n(j) \big)}{\prod_{j=2}^n\prod_{i=1}^mS_{j,i-1}(\delta_n(j))}.
    \end{align*}
    This concludes the proof.
  \end{proof}

\subsection{The likelihood of a labeled graph under the null and the alternative hypotheses}
\label{sec:likel-label-graph}

In this section we compute the likelihood of the labeled graph under the null hypothesis (Lemma~\ref{lem:lkl:null}), under the alternative hypothesis (Lemma~\ref{lem:lkl:alt}), as well as the likelihood-ratio (Lemma~\ref{lem:lkl:ratio}). %

\begin{lemma}
  \label{lem:lkl:null}
  Let $\mathfrak{S}_n$, $C(\g_n)$, and $S_{t,i-1}(\delta)$ as defined in Lemma~\ref{lem:support}. Then for all $\g_n \in \mathfrak{S}_n$
  \begin{equation*}
    \PP_0^n\left(G_n = \g_n\right)
    = 
      C(\g_n)\frac{\prod_{v=0}^{n-1}\prod_{k=0}^{\Din_{\g_n}(v)-1}(m+\delta_0 + k)}{\prod_{t=2}^{n}\prod_{i=1}^{m}S_{t,i-1}(\delta_0)}
    = C(\g_n)\frac{\prod_{k=m}^{nm}(k+\delta_0)^{N_{>k}(\g_n)}}{\prod_{t=2}^{n}\prod_{i=1}^{m}S_{t,i-1}(\delta_0)}
  \end{equation*}
  where $N_{>k}(\g_n)$ is the number of vertices in $\g_n$ which have degree strictly greater than $k$.%
\end{lemma}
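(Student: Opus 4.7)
My plan is to derive Lemma~\ref{lem:lkl:null} as a direct corollary of Lemma~\ref{lem:support}, applied with the constant parameter $\delta_n(j)\equiv\delta_0$. The entire argument amounts to a reindexing of the triple product appearing in the numerator of the general formula, followed by a standard swap of products for the second expression.

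For the first equality, the key idea is to reorganize the factors of
\[
\prod_{j=2}^n\prod_{w\in\Chi_{\g_n}(j)}\prod_{k=1}^{\mu_{\g_n}(j,w)}\bigl(\D_{\g_n\cap\llbracket 0,j-1\rrbracket}(w)+k-1+\delta_0\bigr)
\]
by target vertex $w$ rather than by source vertex $j$. Fixing $w\in\llbracket 0,n-1\rrbracket$ and enumerating its parents in $\g_n$ chronologically as $j_1<\cdots<j_p$ with multiplicities $\mu_s:=\mu_{\g_n}(j_s,w)$, a short bookkeeping based on the PA construction shows that the degree of $w$ in the subgraph induced on $\llbracket 0,j_s-1\rrbracket$ is exactly $m+\sum_{r<s}\mu_r$: every vertex enters the graph with degree $m$ (its $m$ out-edges for $w\geq 1$, or the $m$ deterministic in-edges from vertex $1$ for $w=0$) and accrues one unit of degree per incoming edge thereafter. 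Concatenating the factors across $k$ and across $s$ telescopes into the rising factorial $\prod_{\ell=0}^{\Din_{\g_n}(w)-1}(m+\ell+\delta_0)$, and multiplying over all targets $w$ yields the first expression in the lemma.

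For the second equality, I would simply swap the order of products, using the indicator identity $\1_{\ell\leq \Din_{\g_n}(v)-1}=\1_{\Din_{\g_n}(v)>\ell}$, which gives
\[
\prod_{v=0}^{n-1}\prod_{\ell=0}^{\Din_{\g_n}(v)-1}(m+\ell+\delta_0)=\prod_{\ell\geq 0}(m+\ell+\delta_0)^{\#\{v\,:\,\Din_{\g_n}(v)>\ell\}}.
\]
The identity $\D_{\g_n}(v)=\Din_{\g_n}(v)+m$ then rewrites $\{v:\Din_{\g_n}(v)>\ell\}$ as $\{v:\D_{\g_n}(v)>\ell+m\}$, whose cardinality is $N_{>\ell+m}(\g_n)$. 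A change of variable $k=\ell+m$ combined with the trivial bound $N_{>k}(\g_n)=0$ for $k\geq nm$ (since the total degree of any vertex is at most $nm$) produces exactly the second expression of the lemma.

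The whole argument is essentially bookkeeping and I do not foresee any serious obstacle. The only point requiring slight care is the verification of the chronological degree evolution at each target vertex $w$, but this is immediate from the PA construction once one observes that out-degrees are frozen at the moment a vertex enters the graph.
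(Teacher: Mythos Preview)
Your proposal is essentially identical to the paper's proof: both start from Lemma~\ref{lem:support} with constant parameter $\delta_0$, reindex the numerator by target vertex rather than source vertex, telescope into rising factorials, and then swap products via the indicator trick to reach the $N_{>k}$ form.

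One small caveat worth flagging (and shared verbatim with the paper's own argument): the identity $\D_{\g_n}(v)=\Din_{\g_n}(v)+m$ that you invoke in the last step fails at $v=0$, since $\Dout_{\g_n}(0)=0$. Correspondingly, the $m$ deterministic edges from vertex $1$ to vertex $0$ count toward $\Din_{\g_n}(0)$ but do \emph{not} appear in the product over $j\geq 2$ from Lemma~\ref{lem:support}, so the telescoped rising factorial at $w=0$ should have $\Din_{\g_n}(0)-m$ factors rather than $\Din_{\g_n}(0)$, and the first displayed expression of the lemma is off at $v=0$. These two discrepancies exactly compensate in the passage to the second expression (which is correct, since for every vertex $w$ the factors are $(d+\delta_0)$ for $d$ running from $m$ up to $\D_{\g_n}(w)-1$), so the final $N_{>k}$ formula stands. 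Your explicit remark about vertex $0$ shows you were aware the case is special; the paper glosses over it entirely.
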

\begin{proof}
    The first expression comes from swapping the product over parents and children in the expression given in Lemma~\ref{lem:support} and using that the parameter is constant over time:
    \begin{align*}
      \PP_0^n(G_n = \g_n)%
      &= C(\g_n)\frac{\prod_{j=2}^n\prod_{w\in \Chi_{\g_n}(j)}\prod_{k=1}^{\mu_{\g_n}(j,w)}\big( \D_{\g_n[\llbracket 0,j-1\rrbracket]}(w) + k-1 + \delta_0 \big)}{\prod_{j=2}^n\prod_{i=1}^mS_{j,i-1}(\delta_0)}\\
      &= C(\g_n)\frac{\prod_{t=0}^{n-1}\prod_{s\in \Par_{\g_n}(t)}\prod_{k=1}^{\mu_{\g_n}(s,t)}\big( \D_{\g_n[\llbracket 0,s-1\rrbracket]}(t) + k-1 + \delta_0 \big)}{\prod_{j=2}^n\prod_{i=1}^mS_{j,i-1}(\delta_n(s))}.
    \end{align*}
    Now for each vertex $t$ contributing to the above product, order its parents in increasing time of arrivals and see that the product over $s$ and $k$ is in fact equal to $(m+\delta_0)(m+1+\delta_0)\dots(m+ \Din_{\g_n}(t) - 1 + \delta_0)$. Thus,
    \begin{align*}
      \PP_0^n(G_n = \g_n)%
      &= C(\g_n)\frac{\prod_{v=0}^{n-1}\prod_{k=0}^{\Din_{\g_n}(v)-1}(m+\delta_0 + k)}{\prod_{t=2}^{n}\prod_{i=1}^{m}S_{t,i-1}(\delta_0)}
    \end{align*}
    which is the first expression in the statement of the Lemma. For the second expression, notice that
  \begin{align*}
    \prod_{v=0}^{n-1}\prod_{k=0}^{\Din_{\g_n}(v)-1}(m+\delta_0 + k)
    &= \prod_{v=0}^{n-1}\prod_{k=0}^{(n-1)m}(m+\delta_0+k)\1_{k\leq \Din_{g_n}(v)-1}\\
    &= \prod_{k=0}^{(n-1)m}(m+\delta_0+k)^{\sum_{v=0}^{n-1}\1_{\Din_{\g_n}(v)>k}}\\
    &= \prod_{k=m}^{nm}(k+\delta_0)^{\sum_{v=0}^{n-1}\1_{\Din_{\g_n}(v) + m > k}}\\
    &= \prod_{k=m}^{nm}(k+\delta_0)^{\sum_{v=0}^{n-1}\1_{\D_{\g_n}(v) > k}}.
  \end{align*}
  Hence the result.
\end{proof}

Note that under the null hypothesis, the likelihood of the graph does not depend on the labels of the vertices. It depends only on the structure $s(\g_{n})$ since $N_{>k}(\cdot)$ is constant over $s(\g_n)$.

\begin{lemma}
  \label{lem:lkl:alt}
  Let $\mathfrak{S}_n$, $C(\g_n)$, and $S_{t,i-1}(\delta)$ as defined in Lemma~\ref{lem:support}. Also define  $H_{\g_n}^{\leq\tau_n}(v) = \sum_{u\in \Par_{\g_n}(v)} \mu_{\g_n}(u, v)\1_{u\leq \tau_n}$ and $H_{\g_n}^{>\tau_n}(v) = \sum_{u\in \Par_{\g_n}(v)} \mu_{\g_n}(u, v)\1_{u> \tau_n}$. Then for all $\g_n \in \mathfrak{S}_n$
  \begin{align*}
    \PP_1^n(G_n = \g_n)
    &= C(\g_n) \frac{\prod_{v=0}^{n-1}\big[\prod_{k=0}^{H_{\g_n}^{\leq \tau_n}(v)-1}(m+\delta_0 + k) \prod_{k = H_{\g_n}^{\leq \tau_n}(v))}^{H_{\g_n}^{\leq \tau_n}(v))+ H_{\g_n}^{> \tau_n}(v) -1}(m+\delta_1 + k) \big]}{\prod_{t=2}^{n}\prod_{i=1}^{m}S_{t,i-1}(\delta(t))}\\
    &=C(\g_n) \frac{\prod_{k=m}^{nm}\left(k+\delta_0\right)^{N_{>k}(\g_{\tau_n})}}{\prod_{t=2}^{\tau_n}\prod_{i=1}^{m}S_{t,i-1}(\delta_0)}\frac{\prod_{k=m}^{nm}(k+\delta_1)^{N_{>k}(\g_n)-N_{>k}(\g_{\tau_n})}}{\prod_{t=\tau_n +1}^{n}\prod_{i=1}^{m}S_{t,i-1}(\delta_1)}
  \end{align*}
  with $\g_{\tau_n} = \g_n[\llbracket 0,\tau_n\rrbracket]$ and $\delta(t) = \delta_0\1_{t\leq \tau_n} + \delta_1\1_{t > \tau_n}$.
\end{lemma}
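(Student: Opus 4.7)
The plan is to apply Lemma~\ref{lem:support} with the time-inhomogeneous parameter $\delta_n(t) = \delta_0 \1_{t\leq \tau_n} + \delta_1 \1_{t > \tau_n}$, then massage the resulting formula following the same two-step pattern used in the proof of Lemma~\ref{lem:lkl:null}: first swap the product ``over $j$ and its children'' into a product ``over $v$ and its parents'', and then convert the per-vertex products into products over degree thresholds via the counting identity $\prod_{v} \prod_{k=0}^{a_v - 1} f(k) = \prod_k f(k)^{|\{v \,:\, a_v > k\}|}$.

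First I would note that the denominator $\prod_{j=2}^n \prod_{i=1}^m S_{j,i-1}(\delta_n(j))$ factorises immediately into $\prod_{j=2}^{\tau_n} \prod_{i=1}^m S_{j,i-1}(\delta_0)$ times $\prod_{j=\tau_n+1}^n \prod_{i=1}^m S_{j,i-1}(\delta_1)$, which matches the denominators appearing in the second form of the lemma. For the numerator, rewriting $\prod_{j=2}^n \prod_{w\in \Chi_{\g_n}(j)} \prod_{k=1}^{\mu_{\g_n}(j,w)}(\D_{\g_n \cap \llbracket 0,j-1\rrbracket}(w) + k - 1 + \delta_n(j))$ as a product over vertices $v$ and their parents $s$ yields a product of factors $(\D_{\g_n\cap\llbracket 0,s-1\rrbracket}(v) + k - 1 + \delta_n(s))$. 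For each fixed $v$, ordering its parents by increasing time of arrival $s$, the quantity $\D_{\g_n\cap \llbracket 0,s-1\rrbracket}(v) + k - 1$ takes exactly the successive values $m, m+1, \dots, m + \Din_{\g_n}(v) - 1$, as in the null case. The key new observation is that because parents with $s\leq \tau_n$ arrive strictly before parents with $s > \tau_n$, the first $H_{\g_n}^{\leq \tau_n}(v)$ of these values are multiplied by $(m+\delta_0+k)$-type factors while the remaining $H_{\g_n}^{>\tau_n}(v)$ are multiplied by $(m+\delta_1+k)$-type factors. This produces the first displayed expression of the lemma.

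To obtain the second expression I would apply the threshold-counting identity separately to the two ranges. For the $\delta_0$ block, after shifting $k \mapsto k+m$, the exponent of $(k+\delta_0)$ becomes $|\{v \,:\, H_{\g_n}^{\leq \tau_n}(v) + m > k\}|$. Since edges in the PA model point from larger to smaller labels, $H_{\g_n}^{\leq \tau_n}(v) = \Din_{\g_{\tau_n}}(v)$ for $v\leq \tau_n$ and equals $0$ for $v > \tau_n$, so that adding the out-degree contribution $m$ yields $\D_{\g_{\tau_n}}(v)$, and the exponent collapses to $N_{>k}(\g_{\tau_n})$. Similarly, for the $\delta_1$ block the exponent of $(k+\delta_1)$ counts vertices $v$ satisfying $H_{\g_n}^{\leq \tau_n}(v) + m \leq k < m + \Din_{\g_n}(v) = \D_{\g_n}(v)$, which after a telescoping equals $N_{>k}(\g_n) - N_{>k}(\g_{\tau_n})$. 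Combining these with the factorised denominator yields the second expression.

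The main obstacle is purely bookkeeping: one must carefully exploit the fact that parents of $v$ with label $\leq \tau_n$ arrive before those with label $>\tau_n$ in order to align the $\delta_0$-factors with the lower degree increments and the $\delta_1$-factors with the higher ones, and one must handle the slight boundary subtlety that vertex $0$ has no out-edges (which, as in Lemma~\ref{lem:lkl:null}, only affects the accounting harmlessly). No new analytic ingredient is needed beyond Lemma~\ref{lem:support} and the counting identity already used in the null case.
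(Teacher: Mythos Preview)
Your proposal is correct and follows essentially the same strategy as the paper. For the first expression both you and the paper invoke Lemma~\ref{lem:support} and then reuse the parent/child swap argument from Lemma~\ref{lem:lkl:null}, with the added observation that parents arriving at times $\leq \tau_n$ come before those arriving after $\tau_n$, so the $\delta_0$-factors attach to the lower degree increments and the $\delta_1$-factors to the higher ones.

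The only (minor) deviation is in deriving the second expression. The paper does not apply the threshold-counting identity separately to the two blocks; instead it first completes the $\delta_1$ product to the full range $k=0,\dots,\Din_{\g_n}(v)-1$ and divides by the overlap, obtaining
\[
\prod_{v}\prod_{k=0}^{H^{\leq\tau_n}(v)-1}\frac{m+\delta_0+k}{m+\delta_1+k}\;\prod_{k=0}^{\Din_{\g_n}(v)-1}(m+\delta_1+k),
\]
and then applies the counting identity from Lemma~\ref{lem:lkl:null} twice (once to the ratio and once to the full product), which yields $(k+\delta_0)^{N_{>k}(\g_{\tau_n})}(k+\delta_1)^{N_{>k}(\g_n)-N_{>k}(\g_{\tau_n})}$ after simplification. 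Your direct count of the set $\{v : H^{\leq\tau_n}(v)+m\leq k < \D_{\g_n}(v)\}$ gives the same answer but requires an explicit case split between $v\leq\tau_n$ and $v>\tau_n$ (since $H^{\leq\tau_n}(v)=0$ for the latter) to reduce it to $N_{>k}(\g_n)-N_{>k}(\g_{\tau_n})$; the paper's ratio trick sidesteps that split. Either way the bookkeeping is routine and the arguments are equivalent.
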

\begin{proof}
  The first expression comes from the Lemma~\ref{lem:support} and using the same arguments as in Lemma~\ref{lem:lkl:null}. For second expression, notice that
  \begin{align*}
    \prod_{v=0}^{n-1}\left[\prod_{k=0}^{H^{\leq \tau_n}(v)-1}(m+\delta_0 + k) \prod_{k = H^{\leq \tau_n}(v)}^{H(v) -1}(m+\delta_1 + k) \right]
    &=
      \prod_{v=0}^{n-1}\prod_{k=0}^{H^{\leq \tau_n}(v)-1}(m+\delta_0 + k) \frac{\prod_{k = 0}^{\Dout_{\g_n}(v)-1}(m+\delta_1 + k)}{\prod_{k=0}^{H^{\leq \tau_n}(v)-1}(m+\delta_1 + k)}\\
    &= \prod_{v=0}^{n-1}\left(\prod_{k=0}^{H^{\leq \tau_n}(v)-1}\frac{m + \delta_0 + k}{m + \delta_1 + k}\prod_{k = 0}^{\Dout_{\g_n}(v)-1}(m+\delta_1 + k)\right)\\
    &= \prod_{k=m}^{nm}\left(\frac{k+\delta_0}{k+\delta_1}\right)^{N_{>k}(\g_{\tau_n})}(k+\delta_1)^{N_{>k}(\g_n)}\\
    &= \prod_{k=m}^{nm}\left(k+\delta_0\right)^{N_{>k}(\g_{\tau_n})}(k+\delta_1)^{N_{>k}(\g_n)-N_{>k}(\g_{\tau_n})}
  \end{align*}
  which concludes the proof.
\end{proof}

\begin{lemma}
  \label{lem:lkl:ratio}
  Let $Q_{\ell}^n = \PP_{\ell}^n(G_n \in \cdot)$ for $\ell=0,1$.  Let $S_{t,i-1}(\delta)$ as defined in Lemma~\ref{lem:support}. Then, for every $\g_n \in \mathfrak{S}_n$
  \begin{equation*}
    \frac{\intd Q_1^n}{\intd Q_0^n}(\g_n)%
    = \prod_{t = \tau_n + 1 }^{n}\prod_{i = 1}^{m}\frac{S_{t,i-1}(\delta_0)}{S_{t,i-1}(\delta_1)}\prod_{k=m}^{nm}\left(\frac{k+\delta_1}{k+\delta_0}\right)^{N_{>k}(\g_n) - N_{>k}(\g_{n}[\llbracket 0,\tau_n\rrbracket])}.
  \end{equation*}
  Furthermore, almost-surely under $\PP_0^n$
  \begin{equation*}
    \frac{\intd Q_1^n}{\intd Q_0^n}(G_n)%
    =
    \prod_{t = \tau_n + 1 }^{n}\prod_{i = 1}^{m}\frac{S_{t,i-1}(\delta_0)}{S_{t,i-1}(\delta_1)}\left(\frac{\D_{G_{t,i-1}}(V_{t,i}) + \delta_1}{\D_{G_{t,i-1}}(V_{t,i}) + \delta_0}\right).
  \end{equation*}
  where $V_{t,i}$ is defined as in Section \ref{sec:PA_mechanism}.
\end{lemma}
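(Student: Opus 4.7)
The statement consists of two identities, and the proof naturally splits accordingly. For the first identity, which is purely algebraic, the plan is to divide the second expression from Lemma~\ref{lem:lkl:alt} by the second expression from Lemma~\ref{lem:lkl:null}. The combinatorial prefactor $C(\g_n)$ cancels; the normalization factors $S_{t,i-1}(\delta_0)$ for $t\leq \tau_n$ cancel as well, leaving exactly $\prod_{t=\tau_n+1}^n\prod_{i=1}^m S_{t,i-1}(\delta_0)/S_{t,i-1}(\delta_1)$; and the degree-count factors $(k+\delta_0)^{N_{>k}(\g_{\tau_n})}(k+\delta_1)^{N_{>k}(\g_n)-N_{>k}(\g_{\tau_n})}$ from the alternative divided by $(k+\delta_0)^{N_{>k}(\g_n)}$ from the null recombine into $\bigl((k+\delta_1)/(k+\delta_0)\bigr)^{N_{>k}(\g_n)-N_{>k}(\g_{\tau_n})}$ by elementary exponent manipulations. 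This step is routine bookkeeping.

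For the second identity I would not manipulate the first one but rather work at the level of the intermediate Markov chain $((G_{t,i})_{i=0}^m)_{t=1}^n$, whose transition kernel is given by~\eqref{PA_model}. Under both $\PP_0^n$ and $\PP_1^n$ the seed is deterministic and the one-step transitions coincide for $t\leq \tau_n$; they differ only for $t>\tau_n$, through the substitution $\delta_0 \leftrightarrow \delta_1$. Consequently, on the $\sigma$-field generated by the full intermediate process, a straightforward telescoping of one-step likelihood ratios yields
\begin{equation*}
  \frac{\intd \PP_1^n}{\intd \PP_0^n}
  = \prod_{t=\tau_n+1}^n \prod_{i=1}^m \frac{S_{t,i-1}(\delta_0)}{S_{t,i-1}(\delta_1)}\cdot \frac{\D_{G_{t,i-1}}(V_{t,i}) + \delta_1}{\D_{G_{t,i-1}}(V_{t,i}) + \delta_0}.
\end{equation*}

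The only genuinely non-trivial remaining step, which I expect to be the main obstacle, is to show that the right-hand side above is $\sigma(G_n)$-measurable; once this is granted, the tower property identifies it $\PP_0^n$-almost surely with $\intd Q_1^n/\intd Q_0^n(G_n)$. The measurability follows from the same factorization used in the proof of Lemma~\ref{lem:support}: for each fixed $t$, the inner product $\prod_{i=1}^m(\D_{G_{t,i-1}}(V_{t,i})+\delta)$ rearranges into $\prod_{w\in \Chi_{G_n}(t)}\prod_{k=0}^{\mu_{G_n}(t,w)-1}(\D_{G_{t,0}}(w)+k+\delta)$, an expression that depends only on $G_n\cap \llbracket 0,t-1\rrbracket$ and on the multiset $\Chi_{G_n}(t)$ with multiplicities $\mu_{G_n}(t,\cdot)$, and therefore only on $G_n$. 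With this in hand, the stated identity is immediate.
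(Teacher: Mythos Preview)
Your proof is correct. For the first identity your argument coincides with the paper's: both simply divide the second expression of Lemma~\ref{lem:lkl:alt} by that of Lemma~\ref{lem:lkl:null} and observe the obvious cancellations.

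For the second identity you take a genuinely different route. The paper stays at the level of $G_n$: it plugs the random graph $G_n$ into the first identity and then invokes the counting identity
\[
  N_{>k}(G_n) - N_{>k}(G_{\tau_n})
  = \sum_{t=\tau_n+1}^{n}\sum_{i=1}^{m}\1\big(\D_{G_{t,i-1}}(V_{t,i}) = k\big),
\]
after which the product over $k$ collapses to the desired product over $(t,i)$. You instead compute the Radon--Nikodym derivative $\intd\PP_1^n/\intd\PP_0^n$ directly on the $\sigma$-field of the intermediate process $((G_{t,i})_{i=0}^m)_{t=1}^n$, and then argue that the result is $\sigma(G_n)$-measurable via the rearrangement $\prod_{i=1}^m(\D_{G_{t,i-1}}(V_{t,i})+\delta)=\prod_{w\in\Chi_{G_n}(t)}\prod_{k=0}^{\mu_{G_n}(t,w)-1}(\D_{G_{t-1}}(w)+k+\delta)$ from the proof of Lemma~\ref{lem:support}. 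Both arguments are valid; the paper's is shorter because it reuses the first identity, while yours is more self-contained and makes transparent why the order in which the $m$ edges at step $t$ arrive is irrelevant to the likelihood ratio.
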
%
\begin{proof}
  The first expression in the statement of the lemma is an immediate consequence of Lemmas~\ref{lem:lkl:null} and~\ref{lem:lkl:alt}. Regarding the second statement, it suffices to observe that $N_{>k}(G_n)$ depends only on $s(G_n)$, so that [recall $G_t = G_{t,m}$]
  \begin{equation*}
    N_{>k}(G_n) - N_{>k}(G_{\tau_n})%
    = \sum_{t=\tau_n + 1}^{n}\sum_{i=1}^{m}\1_{\D_{G_{t,i-1}}(V_{t,i}) = k}.
  \end{equation*}
  It follows that
\begin{equation*}
  \begin{split}
    \prod_{k=m}^{nm}\left(\frac{k+\delta_1}{k+\delta_0}\right)^{N_{>k}(G_n) - N_{>k}(G_{\tau_n})}%
      &= \prod_{t = \tau_n + 1}^{n}\prod_{i=1}^{m}\prod_{k=m}^{nm}\left(\frac{k+\delta_1}{k+\delta_0}\right)^{\1_{\D_{G_{t,i-1}}(V_{t,i}) = k}}\\
        &= \prod_{t = \tau_n + 1}^{n}\prod_{i=1}^{m}\prod_{k=m}^{nm}\left(\frac{\D_{G_{t,i-1}}(V_{t,i}) + \delta_1}{\D_{G_{t,i-1}}(V_{t,i}) + \delta_0}\right)^{\1_{\D_{G_{t,i-1}}(V_{t,i}) = k}}\\
        &= \prod_{t = \tau_n + 1}^{n}\prod_{i=1}^{m}\left(\frac{\D_{G_{t,i-1}}(V_{t,i}) + \delta_1}{\D_{G_{t,i-1}}(V_{t,i}) + \delta_0}\right).
    \end{split}
  \end{equation*}
  This concludes the proof.
\end{proof}

  The following lemma will also be used several times when analyzing likelihood ratios.

  \begin{lemma}
    \label{lem:boundSti}
    Suppose $\tau_n \geq 3$.  Let $S_{t,i-1}(\delta)$ as defined in Lemma~\ref{lem:support}. Then for every $\delta_0,\delta_1 > -m$
    \begin{equation*}
      e^{-\frac{6m\Delta_n}{\tau_n}} \Big(\frac{2m+\delta_0}{2m+\delta_1}\Big)^{m\Delta_n}\leq%
      \prod_{t=\tau_n+1}^n\prod_{i=1}^m\frac{S_{t,i-1}(\delta_0)}{S_{t,i-1}(\delta_1)}%
      \leq e^{\frac{6m\Delta_n}{\tau_n}}\Big(\frac{2m+\delta_0}{2m+\delta_1}\Big)^{m\Delta_n}
    \end{equation*}
  \end{lemma}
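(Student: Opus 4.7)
The plan is to take logarithms and bound each factor individually after isolating its asymptotic value. The starting point is the algebraic identity
\[
  S_{t,i-1}(\delta) \;=\; 2m(t-1) + (i-1) + t\delta \;=\; t(2m+\delta) - c_i,
  \qquad c_i \;:=\; 2m - i + 1 \,\in\, [m+1,\,2m],
\]
which rewrites each factor of the target product as
\[
  \frac{S_{t,i-1}(\delta_0)}{S_{t,i-1}(\delta_1)}
  \;=\; \frac{2m+\delta_0}{2m+\delta_1}\,\cdot\,
  \frac{1 - c_i/[t(2m+\delta_0)]}{1 - c_i/[t(2m+\delta_1)]}.
\]
Multiplying over $t\in\llbracket\tau_n+1,n\rrbracket$ and $i\in\llbracket 1,m\rrbracket$ separates out the announced leading factor $\bigl(\tfrac{2m+\delta_0}{2m+\delta_1}\bigr)^{m\Delta_n}$ times a residual product $R_n$, and the task is reduced to proving $|\log R_n|\le 6m\Delta_n/\tau_n$.

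The hypothesis $\delta_\ell>-m$ gives $2m+\delta_\ell>m$, so each argument $x_{t,i,\ell} := c_i/[t(2m+\delta_\ell)]$ lies in $[0,2/t]$, and the assumption $\tau_n\ge 3$ forces $t\ge 4$, whence $x_{t,i,\ell}\le 1/2$. A standard convexity estimate (the ratio $x\mapsto -\log(1-x)/x$ is nondecreasing on $[0,1)$) yields $|\log(1-x)| \le 2\log(2)\,x \le \tfrac{3}{2}\,x$ on $[0,1/2]$. Applying this bound termwise and summing,
\[
  |\log R_n|
  \;\le\; \tfrac{3}{2}\sum_{t=\tau_n+1}^n\sum_{i=1}^m c_i
    \Big(\tfrac{1}{t(2m+\delta_0)} + \tfrac{1}{t(2m+\delta_1)}\Big)
  \;\le\; \tfrac{3}{2}\cdot\tfrac{\Delta_n}{\tau_n}\cdot\tfrac{m(3m+1)}{2}\cdot\tfrac{2}{m}
  \;=\; \tfrac{3(3m+1)}{2}\cdot\tfrac{\Delta_n}{\tau_n},
\]
where I have used $\sum_{i=1}^m c_i = m(3m+1)/2$, $1/(2m+\delta_\ell)\le 1/m$, and $\sum_{t=\tau_n+1}^n 1/t \le \log(n/\tau_n) \le \Delta_n/\tau_n$. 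The elementary inequality $3(3m+1)/2 \le 6m$, valid for all $m\ge 1$, completes the proof.

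There is no real conceptual obstacle: the statement simply records that each $S_{t,i-1}(\delta_0)/S_{t,i-1}(\delta_1)$ differs multiplicatively from $(2m+\delta_0)/(2m+\delta_1)$ by a factor $1 + O(1/t)$ and that summing these errors gives $O(\Delta_n/\tau_n)$. The only care needed is arithmetic: one chooses the log-Taylor constant ($3/2$ here) together with the uniform bound $1/(2m+\delta_\ell)\le 1/m$ precisely so that the resulting factor $3(3m+1)/2$ fits under $6m$ for every $m\ge 1$. The hypothesis $\tau_n\ge 3$ enters only to keep $x_{t,i,\ell}\le 1/2$, ensuring the Taylor estimate is applied on a safe interval away from the singularity of $\log(1-x)$.
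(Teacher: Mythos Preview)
Your proof is correct and follows essentially the same approach as the paper: factor out $(2m+\delta_0)/(2m+\delta_1)$ from each ratio and bound the residual product using that each correction term is $O(1/t)$. The only cosmetic difference is that the paper bounds each residual factor uniformly by $1-2/\tau_n$ and $1$ (then uses $\log(1-2/\tau_n)\ge -6/\tau_n$ for $\tau_n\ge 3$), whereas you take logarithms and use the harmonic-sum estimate $\sum_{t>\tau_n}1/t\le \Delta_n/\tau_n$; both routes land on the same constant $6m$.
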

  \begin{proof}
    By definition of $S_{t,i-1}$
    \begin{equation*}
      \prod_{t=\tau_n+1}^n\prod_{i=1}^m\frac{S_{t,i-1}(\delta_0)}{S_{t,i-1}(\delta_1)}%
      = \Big(\frac{2m+\delta_0}{2m+\delta_1}\Big)^{m\Delta_n}%
      \prod_{t=\tau_n+1}^n\prod_{i=1}^m \frac{1 + \frac{-2m+i-1}{t(2m+\delta_0)}}{1 + \frac{-2m+i-1}{t(2m+\delta_1)}}
    \end{equation*}
    But for $j=0,1$, $\tau_n+1\leq t\leq n$,~ $1\leq i \leq m$ and $\delta_j > -m$
    \begin{equation*}
      1 - \frac{2}{\tau_n}%
      \leq 1 + \frac{-2m+i-1}{t(2m+\delta_j)} \leq 1.
    \end{equation*}
    Thus,
    \begin{equation*}
      \Big(1 - \frac{2}{\tau_n}\Big)^{m\Delta_n}%
      \leq%
      \prod_{t=\tau_n+1}^n\prod_{i=1}^m \frac{1 + \frac{-2m+i-1}{t(2m+\delta_0)}}{1 + \frac{-2m+i-1}{t(2m+\delta_1)}}
      \leq \Big(\frac{1}{1-2/\tau_n}\Big)^{m\Delta_n}.
    \end{equation*}
    The conclusion follows because $\log(1-2/\tau_n) \geq -\frac{2}{\tau_n-2} \geq -\frac{6}{\tau_n}$ when $\tau_n \geq 3$.
  \end{proof}

\section{Proofs when the observation is the unlabeled graph}
\label{Section_5}

\subsection{Proof of Lemma \ref{lem:increasing_difficulty}}
\label{sec:proof-lemma-increasing-difficulty}

$1\implies 2$. Let $(A_n)_{n\geq 1}$ a sequence of $\pi_n(G_n)$-measurable sets such that $\PP_0^n(A_n) \to 0$ and let $\varepsilon > 0$ arbitrary. Because $\PP_0^n(A_n) = \EE_0^n[ \EE_0^n(\1_{A_n} \mid G_n) ]  = \EE_0^n[K_n(G_n,A_n)]$, it must be that $\PP_0^n(K_n(G_n,A_n) > \varepsilon) \to 0$. But $\{\omega \in \Omega_n\;:\, K_n(G_n(\omega),A_n) > \varepsilon\} \in \sigma(G_n)$, so by 1 $\PP_1^n(K_n(G_n,A_n) > \varepsilon) \to 0$. Since $\PP_1^n(A_n) = \EE_1^n[K_n(G_n,A_n)] \leq \varepsilon + \PP_1^n(K_n(G_n,A_n) > \varepsilon)$, and since $\varepsilon$ is arbitrary, the result follows.

$2\implies 3$. Let $(E_n)_{n\geq 1}$ be a sequence such that $\PP_0^n(s(G_n) \in E_n) \to 0$. Remark that $\PP_0^n(s(G_n) \in E_n) = \PP_0^n(s(\pi_n(G_n)) \in E_n) = \PP_0^n(\pi_n(G_n) \in s^{-1}(E_n))$. So $\PP_1^n(s(G_n) \in E_n) = \PP_1^n(s(\pi_n(G_n)) \in E_n) =  \PP_1^n(\pi_n(G_n) \in s^{-1}(E_n))$ goes to zero by 2.

\subsection{Proof of Proposition~\ref{prop:contiguity_intermediate_regime}}
\label{sec:proof-prop-second-moment}

\subsubsection{Derivation of the expression of the likelihood-ratio}

In this section we determine the expression of the likelihood ratio $\frac{\intd Q_1^{n,p}}{\intd Q_0^{n,p}}$.

  \begin{lemma}
    \label{lem:lkl:perm}
    Let $S_{t,i-1}$ as defined in Lemma~\ref{lem:support}. $\PP_0^n$-almost-surely:
    \begin{equation*}
      Y_n = \frac{\intd Q_1^{n,p}}{\intd Q_0^{n,p}}(\pi_n(G_n))%
      = \frac{1}{|\Pi_{n}(G_n)|}\sum_{\Bar{\pi}\in \Pi_{n}(G_n)}\prod_{t=\tau_n+1}^n\prod_{i = 1}^{m}\frac{S_{t,i-1}(\delta_0)}{S_{t,i-1}(\delta_1)} %
  \frac{\D_{G_{\Bar{\pi}(t),i-1}}(V_{\Bar{\pi}(t),i})+\delta_1}{\D_{G_{\Bar{\pi}(t),i-1}}(V_{\Bar{\pi}(t),i})+\delta_0}.
    \end{equation*}
  \end{lemma}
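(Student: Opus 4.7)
The plan is to first disintegrate the law of $\pi_n(G_n)$ to express $Y_n$ as an average of labeled-graph likelihood ratios, and then identify each summand using Lemma~\ref{lem:lkl:ratio}. Starting from the definition of the kernel $K_n$, for any labeled graph $h\in\mathfrak{S}_n$ I write
\[
Q_\ell^{n,p}(\{h\}) = \sum_{\g\in\mathfrak{S}_n}\PP_\ell^n(G_n=\g)\cdot\frac{|\{\pi\in\Pi_n(\g):\pi(\g)=h\}|}{|\Pi_n(\g)|}.
\]
Lemma~\ref{lem:perm:invariant-support} implies $\PartB(\pi(\g))=\PartB(\g)$ (and hence $\Pi_n(\pi(\g))=\Pi_n(\g)$) whenever $\pi\in\Pi_n(\g)$, so I can reparametrise the inner double sum by setting $\g=\sigma(h)$ with $\sigma\in\Pi_n(h)$, obtaining $Q_\ell^{n,p}(\{h\}) = |\Pi_n(h)|^{-1}\sum_{\sigma\in\Pi_n(h)}\PP_\ell^n(G_n=\sigma(h))$. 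Since the null likelihood is structure-invariant by Lemma~\ref{lem:lkl:null}, every term with $\ell=0$ equals $\PP_0^n(G_n=h)$, and so $\frac{\intd Q_1^{n,p}}{\intd Q_0^{n,p}}(h) = |\Pi_n(h)|^{-1}\sum_{\sigma\in\Pi_n(h)}\frac{\intd Q_1^n}{\intd Q_0^n}(\sigma(h))$. Evaluating at $h=\pi_n(G_n)$, using $\Pi_n(\pi_n(G_n))=\Pi_n(G_n)$, and setting $\bar\pi:=\sigma\circ\pi_n$ (a bijection from $\Pi_n(G_n)$ to itself because $\Pi_n(G_n)$ is a group containing $\pi_n$) yields
\[
Y_n = \frac{1}{|\Pi_n(G_n)|}\sum_{\bar\pi\in\Pi_n(G_n)}\frac{\intd Q_1^n}{\intd Q_0^n}(\bar\pi(G_n)).
\]

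To match the product form in the statement, I compute $\frac{\intd Q_1^n}{\intd Q_0^n}(\bar\pi(G_n))$ for each $\bar\pi$ via the second (PA-construction) expression of Lemma~\ref{lem:lkl:ratio}, applied to the canonical decomposition of $\bar\pi(G_n)\in\mathfrak{S}_n$. Two structural observations are needed. First, the children of any $t\in\llbracket\tau_n+1,n\rrbracket$ in $\bar\pi(G_n)$ coincide with the children of $\bar\pi^{-1}(t)$ in $G_n$, because children of $\PartB$-vertices lie in $\llbracket 0,\tau_n'\rrbracket$ and are therefore fixed by $\bar\pi$; with a coherent ordering of the out-edges, the $i$-th child of $t$ in $\bar\pi(G_n)$ is $V_{\bar\pi^{-1}(t),i}$. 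Second, the intermediate in-degrees coincide, namely $\D_{(\bar\pi(G_n))_{t,i-1}}(V_{\bar\pi^{-1}(t),i}) = \D_{G_{\bar\pi^{-1}(t),i-1}}(V_{\bar\pi^{-1}(t),i})$: the only in-neighbour of $V_{\bar\pi^{-1}(t),i}$ in $G_n$ outside $\llbracket 0,\tau_n'\rrbracket$ is $\bar\pi^{-1}(t)$ itself (by the very definition of $\PartB$), so all other in-neighbours are fixed by $\bar\pi$ and the counts of already-present in-edges match on both sides. Combining the two observations gives
\[
\frac{\intd Q_1^n}{\intd Q_0^n}(\bar\pi(G_n)) = \prod_{t=\tau_n+1}^n\prod_{i=1}^m\frac{S_{t,i-1}(\delta_0)}{S_{t,i-1}(\delta_1)}\cdot\frac{\D_{G_{\bar\pi^{-1}(t),i-1}}(V_{\bar\pi^{-1}(t),i})+\delta_1}{\D_{G_{\bar\pi^{-1}(t),i-1}}(V_{\bar\pi^{-1}(t),i})+\delta_0}.
\]

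Finally, since $\Pi_n(G_n)$ is closed under inversion, the substitution $\bar\pi\leftrightarrow\bar\pi^{-1}$ in the outer average is a bijection on $\Pi_n(G_n)$ and leaves the sum unchanged, while converting $\bar\pi^{-1}(t)$ into $\bar\pi(t)$ inside the product, producing exactly the formula claimed in the lemma. The main obstacle is the intermediate-degree identification of the second paragraph: this is the one place where the defining property $\Par(w)\setminus\{v\}\subset\llbracket 0,\tau_n'\rrbracket$ of $\PartB$ enters the computation directly, and it is what guarantees that the PA decompositions of $G_n$ and $\bar\pi(G_n)$ see the same in-degrees at the relevant moments—without it, per-$\bar\pi$ equality would fail and one would have to rely on a more delicate aggregation argument over $\Pi_n(G_n)$.
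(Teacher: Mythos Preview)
Your argument is correct in structure and follows the same route as the paper: express $Y_n$ as a $\Pi_n(G_n)$-average of labeled likelihood ratios, then identify each summand by matching in-degrees at the relevant PA steps (the paper packages the latter as Lemma~\ref{lem:labels:consistency_general}, and ends up with $\bar\pi^{-1}(G_n)$ rather than $\bar\pi(G_n)$, so no final inversion is needed there). There is, however, a genuine gap in your degree-identification step.

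You justify ``the only in-neighbour of $V_{\bar\pi^{-1}(t),i}$ in $G_n$ outside $\llbracket 0,\tau_n'\rrbracket$ is $\bar\pi^{-1}(t)$ itself'' by invoking the definition of $\PartB$, but that definition only applies when $\bar\pi^{-1}(t)\in\PartB(G_n)$. The lemma is asserted $\PP_0^n$-almost-surely, not merely on $B_n$, so you must also treat $t\in\llbracket\tau_n+1,n\rrbracket\setminus\PartB(G_n)$, where $\bar\pi^{-1}(t)=t$ and the child $w=V_{t,i}$ may well have \emph{several} parents in $\llbracket\tau_n'+1,n\rrbracket$. The fix is short: if some parent of $w$ lay in $\PartB(G_n)$ it would be the unique late parent of $w$, contradicting $t\notin\PartB(G_n)$; hence every parent of $w$ is fixed by $\bar\pi$, and the equality $\D_{(\bar\pi(G_n))_{t,i-1}}(w)=\D_{G_{t,i-1}}(w)$ follows directly. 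The paper makes exactly this two-case split in Lemma~\ref{lem:labels:consistency_general}. A related slip: your claim that ``children of $\PartB$-vertices lie in $\llbracket 0,\tau_n'\rrbracket$'' is not implied by the definition of $\PartB$ (which constrains the \emph{parents} of those children, not their labels); the correct reason that children are fixed by $\bar\pi$ is simply that any child has degree at least $m+1$ and hence lies outside $\PartB(G_n)$.
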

  \begin{proof}
    Let $\g_n\in \mathfrak{S}_n$ and $\pi_0\in \Pi_{n}(\g_n)$. Then, for $j=0,1$
    \begin{align*}
      \PP^{n}_{j}\big(\pi_n(G_n) = \pi_0(\g_n)\big)%
      &= \EE_j^n\left[\EE_0^n\Bigg(\sum_{\Bar{\pi}\in \Pi_{n}(\G_n)}\1_{\Bar{\pi}(\G_n) = \pi_{0}(\g_n), \pi_n = \Bar{\pi}} \Big\vert G_n\Bigg)\right]\\
      &= \EE_j^n\left[\sum_{\Bar{\pi}\in \Pi_{n}(\G_n)}\1_{\Bar{\pi}(\G_n) = \pi_{0}(\g_n)}\PP_0^n(\pi_n= \Bar{\pi} \mid G_n)\right]\\
      &=\EE_j^n\left[\frac{1}{|\Pi_{n}(\G_n)|}\sum_{\Bar{\pi}\in \Pi_{n}(\G_n)}\1_{\Bar{\pi}(\G_n) = \pi_{0}(\g_n)}\right]
    \end{align*}
    Now remark that $\Bar{\pi} \in \Pi_n(G_n)$ leaves invariant $\PartB(G_n)$ and $\pi_0 \in \Pi_n(\g_n)$ leaves invariant $\PartB(\g_n)$, thus
    \begin{align*}
        \Bar{\pi}(\G_n) = \pi_0(\g_n)&\implies \PartB(\Bar{\pi}(\G_n)) = \PartB(\pi_0(\g_n))\\
        &\implies\PartB(\G_n) = \PartB(\g_n)\\
        &\implies \Pi_{n}(\G_n) = \Pi_{n}(\g_n).
    \end{align*}
    It follows
    \begin{align*}
        \PP^{n}_{j}\big(\pi_n(G_n) = \pi_0(\g_n)\big)%
      &= \EE_j^n\left[\frac{1}{|\Pi_{n}(\g_n)|}\sum_{\Bar{\pi}\in \Pi_{n}(\g_n)}\1_{\Bar{\pi}(\G_n) = \pi_{0}(\g_n)}\right]\\
        &= \frac{1}{|\Pi_{n}(\g_n)|}\sum_{\Bar{\pi}\in \Pi_{n}(\g_n)} \PP_j^n\left(\Bar{\pi} (\G_n) = \pi_{0}(\g_n)\right)\\
        &= \frac{1}{|\Pi_{n}(\g_n)|}\sum_{\Bar{\pi}\in \Pi_{n}(\g_n)} \PP^{n}_j\left(\Bar{\pi} (\G_n) = \g_n\right)\\
        &= \frac{1}{|\Pi_{n}(\g_n)|}\sum_{\Bar{\pi}\in \Pi_{n}(\g_n)} \PP_j^n\big( \G_n = \Bar{\pi}^{-1}(\g_n)\big)
\end{align*}
To simplify the notations in what follows, we denote respectively $\bar{\pi}^{-1}(G_n)$ and $\bar{\pi}^{-1}(\g_n)$ by $\bar{G}_n$ and $\bar{\g}_n$. Note that the advantage of permuting only bold vertices is that the set $\Pi_{n}(\G_n)$ is a group, which makes the expression of the likelihood ratio easier to handle. As shown in Lemma~\ref{lem:lkl:null}, the likelihood of the labeled graph $\bar{\g}_n$ under the null hypothesis does not depend on the permutation $\Bar{\pi}$ when $\Bar{\pi}\in\Pi_{n}(\g_n)$. It follows that $\PP^{n}_{0}(\pi_n(G_n) = \pi_0(\g_n)) = \PP^n_{0}\big(\G_n = \bar{\g}_n\big)$ for every $\Bar{\pi}\in\Pi_{n}(\g_n)$. Furthermore, the Lemma~\ref{lem:perm:invariant-support} below guarantees that $\bar{\g}_n\in\mathfrak{S}_n$ whenever $\g_n \in \mathfrak{S}_n$. Then by Lemma~\ref{lem:lkl:ratio}
\begin{align*}
  Y_n
  &= \frac{1}{|\Pi_{n}(G_n)|}\sum_{\Bar{\pi}\in \Pi_{n}(G_n)} \frac{\intd Q_1^n}{\intd Q_0^n}(\Bar{G}_n) \\
  &=
  \frac{1}{|\Pi_{n}(G_n)|}\sum_{\Bar{\pi}\in \Pi_{n}(G_n)}\prod_{t=\tau_n+1}^n\prod_{i = 1}^{m}\frac{S_{t,i-1}(\delta_0)}{S_{t,i-1}(\delta_1)} \prod_{k=m}^{nm}\left(\frac{k+\delta_1}{k+\delta_0}\right)^{N_{>k}(\Bar{G}_n) - N_{>k}((\Bar{G}_n)_{\tau_n})}%
\end{align*}
with $(\bar{G}_n)_t \equiv \bar{G}_n[ \llbracket 0,t \rrbracket]$ for all $t\in \llbracket 1,n\rrbracket$. Let $\Bar{\pi}\in \Pi_n(\g_n)$ be arbitrary. Then,
\begin{align*}
  N_{>k}(\bar{G}_n) - N_{>k}(\left(\bar{G}_n\right)_{\tau_n})%
  &= \sum_{t=\tau_n+1}^n\sum_{s\in \Chi_{(\bar{G}_n)_t}(t)}\1\Big(k + 1 - \mu_{(\bar{G}_n)_t}(t,s) \leq \D_{(\bar{G}_n)_{t-1}}(s) \leq k \Big).
\end{align*}
Remark that for $s\in \Chi_{(\bar{G}_n)_t}(t)$ is must be that $\D_{G_n}(s) > m$ and hence $\bar{\pi}(s) = s$. In particular $\Chi_{(\bar{G}_n)_t}(t) = \Chi_{G_{\Bar{\pi}(t)}}(\Bar{\pi}(t))$ and $\mu_{(\bar{G}_n)_t}(t,s) = \mu_{G_{\Bar{\pi}(t)}}(\Bar{\pi}(t),s)$. In addition, using the Lemma~\ref{lem:labels:consistency_general}, we deduce that
\begin{align*}
  N_{>k}(\bar{G}_n) - N_{>k}(\left(\bar{G}_n\right)_{\tau_n})%
  &= \sum_{t=\tau_n+1}^n\sum_{s\in \Chi_{G_{\Bar{\pi}(t)}}(\bar{\pi}(t))}\1\Big(k + 1 - \mu_{G_{\Bar{\pi}(t)}}(\Bar{\pi}(t),s) \leq \D_{G_{\Bar{\pi}(t)-1}}(s) \leq k \Big)\\
  &= \sum_{t=\tau_n+1}^n\sum_{i=1}^m\1\Big( \D_{G_{\Bar{\pi}(t),i-1}}(V_{\Bar{\pi}(t),i}) = k \Big)
\end{align*}
where the last line follows because the fact that vertex $\Bar{\pi}(t)$ has a child whose degree is $\leq k$ at instant $\Bar{\pi}(t)-1$ but $>k$ at instant $\Bar{\pi}(t)$ is equivalent to the fact that vertex $\Bar{\pi}(t)$ choose a vertex $V_{\Bar{\pi}(t),i}$ of degree $k$ in $G_{\Bar{\pi}(t),i-1}$ for some $i=1,\dots,m$. Consequently
\begin{equation*}
  Y_n%
  =
  \frac{1}{|\Pi_{n}(G_n)|}\sum_{\Bar{\pi}\in \Pi_{n}(G_n)}\prod_{t=\tau_n+1}^n\prod_{i = 1}^{m}\frac{S_{t,i-1}(\delta_0)}{S_{t,i-1}(\delta_1)} %
  \frac{\D_{G_{\Bar{\pi}(t),i-1}}(V_{\Bar{\pi}(t),i})+\delta_1}{\D_{G_{\Bar{\pi}(t),i-1}}(V_{\Bar{\pi}(t),i})+\delta_0}.%
  \qedhere
\end{equation*}
  \end{proof}

The following lemma shows that the permutations appearing in the proof of Lemma \ref{lem:lkl:perm} leave invariant the support of preferential attachment graphs. 
  \begin{lemma}
    \label{lem:perm:invariant-support}
    Let $\g_n \in \mathfrak{S}_n$ and let $\bar{\pi}\in \Pi_n(\g_n)$. Then $\bar{\pi}^{-1}(\g_n) \in \mathfrak{S}_n$.
  \end{lemma}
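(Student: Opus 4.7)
The plan is to check each of the three defining conditions of $\mathfrak{S}_n$ for $\bar{\pi}^{-1}(\g_n)$. From the definition of a permuted graph one reads off the transfer rule $\mu_{\bar{\pi}^{-1}(\g_n)}(a,b)=\mu_{\g_n}(\bar{\pi}(a),\bar{\pi}(b))$, which yields $\Chi_{\bar{\pi}^{-1}(\g_n)}(b)=\bar{\pi}^{-1}\bigl(\Chi_{\g_n}(\bar{\pi}(b))\bigr)$ and $\Dout_{\bar{\pi}^{-1}(\g_n)}(b)=\Dout_{\g_n}(\bar{\pi}(b))$. Because $\PartB(\g_n)\subset\llbracket \tau_n'+1,n\rrbracket$, the permutation $\bar{\pi}$ fixes $0$ and restricts to a bijection of $\PartB(\g_n)$; in particular $\bar{\pi}(v)\geq 1$ whenever $v\geq 1$. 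The conditions $\Chi_{\bar{\pi}^{-1}(\g_n)}(0)=\bar{\pi}^{-1}(\Chi_{\g_n}(0))=\varnothing$ and $\Dout_{\bar{\pi}^{-1}(\g_n)}(v)=\Dout_{\g_n}(\bar{\pi}(v))=m$ for every $v\geq 1$ then follow immediately, and the vertex set is obviously $\llbracket 0,n\rrbracket$.

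The substantive step is showing $\Chi_{\bar{\pi}^{-1}(\g_n)}(v)\subset\llbracket 0,v-1\rrbracket$ for each $v\geq 1$. Fix such $v$ and $w\in\Chi_{\bar{\pi}^{-1}(\g_n)}(v)$, so that $\bar{\pi}(w)\in\Chi_{\g_n}(\bar{\pi}(v))$ and hence $\bar{\pi}(w)<\bar{\pi}(v)$ by the hypothesis $\g_n\in\mathfrak{S}_n$. The goal is to deduce $w<v$, and I would split on whether $v$ is bold. If $v\notin\PartB(\g_n)$ then $\bar{\pi}(v)=v$; the crucial observation is that every element $u$ of $\PartB(\g_n)$ has $\D_{\g_n}(u)=m$ equal to its forced out-degree $\Dout_{\g_n}(u)=m$, forcing $\Din_{\g_n}(u)=0$, so $u$ cannot be a child of any vertex in $\g_n$. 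This rules out the possibility $w\in\PartB(\g_n)$, whence $\bar{\pi}(w)=w<v$. If instead $v\in\PartB(\g_n)$, write $V=\bar{\pi}(v)\in\PartB(\g_n)$ and $W=\bar{\pi}(w)\in\Chi_{\g_n}(V)$; the definition of $\PartB$ guarantees that every parent of $W$ other than $V$ lies in $\llbracket 0,\tau_n'\rrbracket$. Combined with the acyclic directedness of $\mathfrak{S}_n$ (any parent of $W$ has label strictly larger than $W$), this pins $W$ into $\llbracket 0,\tau_n'\rrbracket$, so $W\notin\PartB(\g_n)$, $w=\bar{\pi}^{-1}(W)=W\leq\tau_n'<v$.

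The delicate point I expect in the above plan is the very last inference, namely concluding $W\leq\tau_n'$ for a child $W$ of a bold vertex $V$. The argument I have in mind excludes $W>\tau_n'$ by observing that in that case $\Par_{\g_n}(W)\subset\llbracket W+1,n\rrbracket$, which intersected with $\{V\}\cup\llbracket 0,\tau_n'\rrbracket$ reduces $\Par_{\g_n}(W)$ to $\{V\}$; one then has to verify that this configuration is incompatible with the precise form of the $\PartB$ condition on $V$, so that necessarily $W\leq\tau_n'$. Once this point is dispatched, every step of the case analysis is just bookkeeping on the action of $\bar{\pi}$, which is the identity off $\PartB(\g_n)$, and the conclusion $\bar{\pi}^{-1}(\g_n)\in\mathfrak{S}_n$ is immediate.
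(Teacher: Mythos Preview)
Your strategy matches the paper's exactly: both arguments reduce to checking that every arrow in $\bar{\pi}^{-1}(\g_n)$ goes from larger to smaller label, and both hinge on the claim that any child of a vertex in $\PartB(\g_n)$ lies in $\llbracket 0,\tau_n'\rrbracket$. Your case $v\notin\PartB(\g_n)$ is clean and correct (the key observation that bold vertices have in-degree zero, hence are never children, is the right one). The paper's proof simply asserts that a bold vertex ``must \dots\ have all of its children $c_1,\dots,c_k$ in $\llbracket 0,\tau_n'\rrbracket$'' and proceeds from there; you are more careful in flagging this as the delicate step.

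Unfortunately the verification you sketch for that step cannot succeed: with the definition of $\PartB(\g_n)$ as written in the paper, the configuration $W>\tau_n'$ and $\Par_{\g_n}(W)=\{V\}$ is \emph{compatible} with $V\in\PartB(\g_n)$, since the condition on $V$ only requires $\Par_{\g_n}(W)\setminus\{V\}\subset\llbracket 0,\tau_n'\rrbracket$, which is then vacuous. Concretely, take $m=1$, $\tau_n'=10$, and any $\g_n\in\mathfrak{S}_n$ containing the edges $11\to 3$ and $15\to 12$ with $11,12,15$ all childless; then $11,15\in\PartB(\g_n)$ by the stated definition, yet the transposition $\bar{\pi}=(11\ 15)\in\Pi_n(\g_n)$ produces in $\bar{\pi}^{-1}(\g_n)$ the edge $11\to 12$, so $\bar{\pi}^{-1}(\g_n)\notin\mathfrak{S}_n$. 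Thus the lemma is false under the literal definition, and both your argument and the paper's share the same gap.

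The resolution is that the paper's formal definition of $\PartB(\g_n)$ omits a clause it clearly intends: looking at Figure~1 (where bold vertices connect only into $\g_{\tau_n'}$, unlike the dotted ones) and especially at the proof of Proposition~3.5, where $|\PartB(G_n)|$ is expanded with the explicit condition ``$\forall k\in\Chi_{G_n}(j),\ k\le\tau_n'$'', the intended definition also requires $\Chi_{\g_n}(v)\subset\llbracket 0,\tau_n'\rrbracket$. With that clause added, your case $v\in\PartB(\g_n)$ becomes immediate ($W\le\tau_n'<v$) and the rest of your write-up goes through without change.
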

  \begin{proof}
    In view of Lemma~\ref{lem:support}, the set $\mathfrak{S}_n$ is the set of directed labeled graphs on vertex set $\llbracket 0,n\rrbracket$ where each non-zero vertex has out-degree exactly $m$ and arrows are all directed from largest to smallest label. Since $\bar{\pi} \in \Pi_n(\g_n)$, it permutes only the labels of vertices in $\PartB(\g_n)$. But any $v \in \PartB(\g_n)$ must satisfy $v > \tau_n'$ and have all of its children $c_1,\dots,c_k$ in $\llbracket 0,\tau_n'\rrbracket$. So $\Bar{\pi}^{-1}(v) > \tau_n'$ as well and $\Bar{\pi}^{-1}(c_j) = c_j$ for all its children. In other words, the out-degree of any vertex in $\bar{\pi}^{-1}(\g_n)$ is also $m$ and the arrows are all directed from largest to smallest label, as required.
  \end{proof}

  The following lemma shows that the degrees of the children of the $n-\tau_n$ late vertices remains invariant after the application of permutation $\bar{\pi}$.
\begin{lemma}
  \label{lem:labels:consistency_general}
  Let $\g_n \in \mathfrak{S}_n$, $\Bar{\pi} \in \Pi_n(\g_n)$ and let $\Bar{\pi}^{-1}(\g_n)_{t} = \Bar{\pi}^{-1}(\g_n)[\llbracket 0,t\rrbracket]$ and $\g_t = \g_n[\llbracket 0,t\rrbracket]$ for all $t\in \llbracket 1,n\rrbracket$. Then for all $t \in \llbracket \tau_n+1,n\rrbracket$ and all $s\in \Chi_{\Bar{\pi}^{-1}(\g_n)_t}(t)$:
  \begin{equation*}
    \D_{\Bar{\pi}^{-1}(\g_n)_{t-1}}(s) = \D_{\g_{\bar{\pi}(t)-1}}(s).
  \end{equation*}
\end{lemma}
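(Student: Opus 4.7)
The plan is to decompose both degrees into an out-degree (which is trivially $m$ in both graphs when $s \neq 0$, or $0$ otherwise, since every non-zero vertex in $\mathfrak{S}_n$ has out-degree exactly $m$) plus an in-degree counted with multiplicity. It then suffices to show that the parents of $s$ in $\bar\pi^{-1}(\g_n)$ with label $\leq t-1$ match, counting multiplicity, the parents of $s$ in $\g_n$ with label $\leq \bar\pi(t)-1$.

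First I would observe that $\bar\pi(s) = s$. Since $s$ is a child of $t$ in $\bar\pi^{-1}(\g_n)$, its in-degree there is at least $1$, so when $s \neq 0$ its total degree exceeds $m$; as the relabeling $\bar\pi$ preserves degrees, $\D_{\g_n}(s) > m$, hence $s \notin \PartB(\g_n)$ (the case $s = 0$ is automatic since $0 \notin \PartB(\g_n) \subset \llbracket \tau_n'+1, n\rrbracket$). In particular $s \in \Chi_{\g_n}(\bar\pi(t))$, and the multiplicity of the edge to $s$ is the same in both graphs (only the label of the late parent is swapped between $t$ and $\bar\pi(t)$).

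The structural heart of the argument is to show that the only parent of $s$ in $\g_n$ which can lie in $\PartB(\g_n)$ is $\bar\pi(t)$ itself. If $v \in \PartB(\g_n) \cap \Par_{\g_n}(s)$ were different from $\bar\pi(t)$, then the $\PartB$-condition applied to $v$ (of which $s$ is a child) would force $\Par_{\g_n}(s) \setminus \{v\} \subset \llbracket 0, \tau_n'\rrbracket$, and hence $\bar\pi(t) \leq \tau_n'$. But $\bar\pi(t) > \tau_n'$ always: either $t \in \PartB(\g_n)$, in which case $\bar\pi(t) \in \PartB(\g_n) \subset \llbracket \tau_n'+1, n\rrbracket$, or $t \notin \PartB(\g_n)$, in which case $\bar\pi(t) = t \geq \tau_n+1 > \tau_n'$. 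Contradiction.

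From here I would conclude by a case split on whether $\bar\pi(t) \in \PartB(\g_n)$. If $\bar\pi(t) \in \PartB(\g_n)$, the $\PartB$-condition applied to $\bar\pi(t)$ forces every other parent of $s$ to lie in $\llbracket 0, \tau_n'\rrbracket$; such parents are fixed by $\bar\pi$, so $\Par_{\bar\pi^{-1}(\g_n)}(s)$ is obtained from $\Par_{\g_n}(s)$ by relabeling the occurrences of $\bar\pi(t)$ as $t$. The late parent then strictly exceeds its own cutoff in each graph ($t > t-1$, resp.\ $\bar\pi(t) > \bar\pi(t)-1$), while the other parents, lying in $\llbracket 0, \tau_n'\rrbracket$, fall below both cutoffs; the two in-degrees therefore agree. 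If instead $\bar\pi(t) \notin \PartB(\g_n)$, then $\bar\pi(t) = t$ and by the previous step $\Par_{\g_n}(s) \cap \PartB(\g_n) = \emptyset$, so $\bar\pi$ fixes $\Par_{\g_n}(s)$ pointwise and both subgraphs are truncated at the same cutoff $t-1 = \bar\pi(t)-1$, making the equality immediate. The only delicate point is the structural observation of the third paragraph, which is where the $\PartB$-condition really does the work and where the lateness assumption $t > \tau_n > \tau_n'$ is needed; everything else is unpacking definitions.
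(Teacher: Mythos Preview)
Your proof is correct and follows essentially the same approach as the paper: both hinge on the fact that $s$ can have at most one parent in $\PartB(\g_n)$, and if so it is $\bar\pi(t)$, which forces the remaining parents of $s$ to lie in $\llbracket 0,\tau_n'\rrbracket$ and thus be fixed by $\bar\pi$. The only cosmetic difference is the case split---you split on whether $\bar\pi(t)\in\PartB(\g_n)$, while the paper splits on whether some parent of $s$ is actually moved by $\bar\pi$---but the substance is identical.
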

\begin{proof}
  Let $\Bar{\pi}\in \Pi_n(\g_n)$, $t \in \llbracket \tau_n+1,n\rrbracket$ and $s \in \Chi_{\bar{\pi}^{-1}(\g_n)_t}(t)$. Observe that since $s\in \Chi_{\bar{\pi}^{-1}(\g_n)_t}(t)$ it is necessary that $\D_{\g_n}(s) > m$ and then $\bar{\pi}(s) = s$.

    Suppose first that for all $t' \in \Par_{\Bar{\pi}^{-1}(\g_n)}(s)$ we have $\Bar{\pi}(t') = t'$. Then $s \in \Chi_{\g_t}(t)$ and $\D_{\Bar{\pi}^{-1}(\g_n)_{t-1}}(s) = \D_{\g_{t-1}}(s) = \D_{\g_{\Bar{\pi}(t)-1}}(s)$.

    Second, suppose there exists $t' \in \Par_{\Bar{\pi}^{-1}(\g_n)}(s)$ such that $\bar{\pi}(t') \ne t'$. It is necessary that $\Bar{\pi}(t') > \tau_n'$ since $\bar{\pi}$ permute only the labels in $\PartB(\g_n) \subset \llbracket \tau_n'+1,n\rrbracket$. Furthermore $\Bar{\pi}(t') \in \PartB(\g_n)$ so it must be that $\Par_{\g_n}(s)\backslash \{\bar{\pi}(t')\} \subset \llbracket 0,\tau_n'\rrbracket$. Let enumerate $v_1<\dots <v_r$ the elements of $\Par_{\g_n}(s) \backslash \{\Bar{\pi}(t')\}$. Hence the elements of $\Par_{g_n}(s)$ are $v_1 < \dots < v_r < \bar{\pi}(t')$. Since $v_1<\dots < v_r \leq \tau_n'$ they are not in $\PartB(\g_n)$ and thus $\Bar{\pi}(v_j) = v_j$ for all $j=1,\dots,r$. It follows that the elements of $\Par_{\Bar{\pi}^{-1}(\g_n)}(s)$ are $v_1,\dots,v_r,t'$ and satisfy
    \begin{equation*}
      v_1 < \dots < v_r \leq \tau_n' < t'
    \end{equation*}
    because $\Bar{\pi}(t') > \tau_n' \implies t' > \tau_n'$. Therefore $t'=t$ and $\D_{\Bar{\pi}^{-1}(\g_n)_{t-1}}(s) = \D_{\g_{\tau_n'}}(s) = \D_{\g_{\Bar{\pi}(t)-1}}(s)$.
\end{proof}

\subsubsection{Bound on the second moment of the likelihood ratio}

As in Lemma~\ref{lem:lkl:perm} we let $Y_n \equiv \frac{\intd Q_1^{n,p}}{\intd Q_0^{n,p}}(\pi_n(G_n))$ for simplicity. Then by Lemmas~\ref{lem:lkl:perm} and~\ref{lem:boundSti}, since $\tau_n > \tau_n' \geq 3$
  \begin{multline*}
    \EE_0^n\big(Y_n^2\1_{B_n}\big)%
    \leq e^{12m\Delta_n/\tau_n}\left(\frac{2m+\delta_0}{2m+\delta_1}\right)^{2m\Delta_n}\\
    \times \EE_0^n\Biggl[\frac{\sum_{\pi, \Bar{\pi}\in \Pi_{n}(\G_n)}}{|\Pi_{n}(\G_n)|^{2}}\prod_{t= \tau_n +1}^{n}\prod_{i=1}^{m}\frac{\D_{\G_{\Bar{\pi}(t), i-1}}(\V_{\Bar{\pi}(t), i}) + \delta_1}{\D_{\G_{\Bar{\pi}(t), i-1}}(\V_{\Bar{\pi}(t), i}) + \delta_0}\frac{\D_{\G_{\pi(t), i-1}}(\V_{\pi(t), i}) + \delta_1}{\D_{\G_{\pi(t), i-1}}(\V_{\pi(t), i}) + \delta_0}\1_{\mathrm{B}_n}\Biggr].
  \end{multline*}
Observe that $|\Pi_{n}(\G_n)| = \big|\PartB(G_n)\big|!$. Moreover, on the event $B_n$ we have forced that $\llbracket \tau_n+1,n \rrbracket \subset \PartB(G_n)$. This implies that on $B_n$ we have $\Bar{\pi}(t) \in \PartB(G_n)$ for all $t\in \llbracket \tau_n+1,n\rrbracket$. Consequently on $B_n$,
\begin{multline*}
  \frac{\sum_{\pi, \Bar{\pi}\in \Pi_{n}(\G_n)}}{\left|\Pi_{n}(\G_n)\right|^{2}}\prod_{t= \tau_n +1}^{n}\prod_{i=1}^{m}\frac{\D_{\G_{\Bar{\pi}(t),i-1}}(\V_{\Bar{\pi}(t),i}) + \delta_1}{\D_{\G_{\Bar{\pi}(t),i-1}}(\V_{\Bar{\pi}(t),i}) + \delta_0}\frac{\D_{G_{\pi(t), i-1}}(\V_{\pi(t), i}) + \delta_1}{\D_{\G_{\pi(t), i-1}}(\V_{\pi(t), i}) + \delta_0}\\
  \leq \frac{1}{\left|\PartB(G_n)\right|!^{2}}\sum_{\substack{k^{\prime}_{\tau_n + 1}\neq \dots \neq k^{\prime}_n \in \PartB(G_n)\\ k_{\tau_n + 1}\neq ..\neq k_n \in \PartB(G_n)}}\sum_{\substack{\pi, \Bar{\pi}\in\Pi_n (\G_n)\\ (\pi(\tau_{n+1}),\dots,\pi(n)) = (k_{\tau_n + 1},\dots, k_n)\\
      (\Bar{\pi}(\tau_n + 1),\dots,\Bar{\pi}(n)) = (k^{\prime}_{\tau_n + 1}, \dots, k^{\prime}_n)}}\\
  \times \prod_{t= \tau_n +1}^{n}\prod_{i=1}^{m}\frac{\D_{\G_{k^{\prime}_t, i-1}}(\V_{k^{\prime}_t, i}) + \delta_1}{\D_{\G_{k^{\prime}_t, i-1}}(\V_{k^{\prime}_t, i}) + \delta_0}\frac{\D_{\G_{k_t, i-1}}(\V_{k_t, i}) + \delta_1}{\D_{\G_{k_t, i-1}}(\V_{k_t, i}) + \delta_0}
\end{multline*}
which can be further bounded above by
\begin{align*}
  &\leq \frac{\left(\left|\PartB(G_n)\right| - \Delta_n\right)!^{2}}{\left|\PartB(G_n)\right|!^{2}}\sum_{\substack{k^{\prime}_{\tau_n + 1}\neq \dots\neq k^{\prime}_n \in \PartB(G_n)\\ k_{\tau_n + 1}\neq \dots \neq k_n \in \PartB(G_n)}}\prod_{t= \tau_n +1}^{n}\prod_{i=1}^{m}\frac{\D_{\G_{k^{\prime}_t, i-1}}(\V_{k^{\prime}_t, i}) + \delta_1}{\D_{\G_{k^{\prime}_t, i-1}}(\V_{k^{\prime}_t, i}) + \delta_0}\frac{\D_{\G_{k_t, i-1}}(\V_{k_t, i}) + \delta_1}{\D_{\G_{k_t, i-1}}(\V_{k_t, i}) + \delta_0}\\
  &\leq \frac{\left(\left|\PartB(G_n)\right| - \Delta_n\right)!^{2}}{\left|\PartB(G_n)\right|!^{2}}\sum_{\substack{k^{\prime}_{\tau_n + 1},\dots, k^{\prime}_n \in \PartB(G_n)\\ k_{\tau_n + 1}, \dots,  k_n \in \PartB(G_n)}}\prod_{t= \tau_n +1}^{n}\prod_{i=1}^{m}\frac{\D_{\G_{k^{\prime}_t, i-1}}(\V_{k^{\prime}_t, i}) + \delta_1}{\D_{\G_{k^{\prime}_t, i-1}}(\V_{k^{\prime}_t, i}) + \delta_0}\frac{\D_{\G_{k_t, i-1}}(\V_{k_t, i}) + \delta_1}{\D_{\G_{k_t, i-1}}(\V_{k_t, i}) + \delta_0}\\
  &= \frac{\left(\left|\PartB(G_n)\right| - \Delta_n\right)!^{2}}{\left|\PartB(G_n)\right|!^{2}}\Bigg( \sum_{k\in \PartB(G_n)}\prod_{i=1}^{m}\frac{\D_{\G_{k, i-1}}(\V_{k, i}) + \delta_1}{\D_{\G_{k, i-1}}(\V_{k, i}) + \delta_0} \Bigg)^{2\Delta_n}\\
  &\leq \frac{\left(\left|\PartB(G_n)\right| - \Delta_n\right)!^{2}}{\left|\PartB(G_n)\right|!^{2}}\Bigg( \sum_{k = \tau_n'+1}^n\prod_{i=1}^{m}\frac{\D_{\G_{k, i-1}}(\V_{k, i}) + \delta_1}{\D_{\G_{k, i-1}}(\V_{k, i}) + \delta_0} \Bigg)^{2\Delta_n}
\end{align*}

Next, we use that for any non-negative integer $\sqrt{2\pi n}(n/e)^n < n! < \sqrt{2\pi n}(n/e)^ne^{1/(12n)}$ (see for instance \cite[Section~3.6]{Tem96}) which entails that for any $\nu> k \geq 1$
    \begin{align*}
      \frac{(\nu-k)!}{\nu!}%
      \leq \frac{\sqrt{\nu-k}\big(\frac{\nu-k}{e} \big)^{\nu-k}e^{\frac{1}{12(\nu-k)}}}{\sqrt{\nu}\big( \frac{\nu}{e}\big)^{\nu}}
      = \Big(1 - \frac{k}{\nu}\Big)^{\nu-k+\frac{1}{2}}\Big(\frac{e}{\nu}\Big)^ke^{\frac{1}{12(\nu-k)}}
      \leq \nu^{-k} e^{k^2/\nu + \frac{1}{12(\nu-k)}}.
    \end{align*}
    Since on the event $B_n$ it holds that $|\PartB(G_n)| \geq \Delta_n'\big(1 - \frac{\alpha_n\Delta_n'}{\tau_n'}\big)$, we deduce that%
    \begin{align*}
      \frac{\left(\left|\PartB(G_n)\right| - \Delta_n\right)!}{\left|\PartB(G_n)\right|!}
      &\leq\frac{1}{|\PartB(G_n)|^{\Delta_n}}\exp\Big(\frac{\Delta_n^2}{|\PartB(G_n)|} + \frac{1}{12(|\PartB(G_n)| - \Delta_n )} \Big)\\
      &\leq \frac{1}{(\Delta_n')^{\Delta_n}}\exp\Big(-\Delta_n\log\Big(1 - \frac{\alpha_n\Delta_n'}{\tau_n'} \Big) + \frac{\Delta_n^2}{|\PartB(G_n)|} + \frac{1}{12(|\PartB(G_n)| - \Delta_n )} \Big)\\
      &\leq \frac{1}{(\Delta_n')^{\Delta_n}}\exp\Big( \frac{\alpha_n\Delta_n\Delta_n'}{\tau_n' - \alpha_n\Delta_n'} + \frac{\Delta_n^2}{|\PartB(G_n)|} + \frac{1}{12(|\PartB(G_n)| - \Delta_n )} \Big)\\
      &\leq \frac{1}{(\Delta_n')^{\Delta_n}}\exp\Big(\frac{2\alpha_n\Delta_n\Delta_n'}{\tau_n'} + \frac{2\Delta_n^2}{\Delta_n'} + \frac{1}{3\Delta_n'} \Big)
    \end{align*}
    where in the last line we have used the assumptions that $\frac{\alpha_n\Delta_n'}{\tau_n'} \leq \frac{1}{2}$ and $\Delta_n \leq \frac{1}{4}\Delta_n'$, which imply that $|\PartB(G_n)|\geq \frac{\Delta_n'}{2}$ and $|\PartB(G_n)| - \Delta_n \geq \frac{\Delta_n'}{4}$. Hence one obtains the bound [here we use that $\frac{6m\Delta_n}{\tau_n} + \frac{4\Delta_n^2}{\Delta_n'} \leq \frac{10m\Delta_n^2}{\Delta_n'}$]
    \begin{equation*}
      \EE_0^n\big(Y_n^2\1_{B_n} \big)%
      \leq e^{\frac{4\alpha_n\Delta_n\Delta_n'}{\tau_n'} + \frac{10m\Delta_n^2}{\Delta_n'} + \frac{2}{3\Delta_n'}}\Big(\frac{2m+\delta_0}{2m+\delta_1} \Big)^{2m\Delta_n}%
      \EE_0^n\Bigg(\Bigg(\frac{1}{\Delta_n'}\sum_{k = \tau_n'+1}^n\prod_{i=1}^{m}\frac{\D_{\G_{k, i-1}}(\V_{k, i}) + \delta_1}{\D_{\G_{k, i-1}}(\V_{k, i}) + \delta_0} \Bigg)^{2\Delta_n}\Bigg).
    \end{equation*}
    Letting $Z_n$ and $m_n$ as in Lemma~\ref{lem:lr:martingale}, we deduce from said lemma that
  \begin{align*}
   \EE_0^n\Bigg(\Bigg(\frac{1}{\Delta_n'}\sum_{k = \tau_n'+1}^n\prod_{i=1}^{m}\frac{\D_{\G_{k, i-1}}(\V_{k, i}) + \delta_1}{\D_{\G_{k, i-1}}(\V_{k, i}) + \delta_0} \Bigg)^{2\Delta_n}\Bigg)%
  &\leq  \ m_n^{2\Delta_n}\EE_0^n\Bigg(\Big(1+ \frac{Z_n - m_n}{m_n} \Big)^{2\Delta_n} \Bigg)\\
  &= m_n^{2\Delta_n} \int_0^{\infty}\PP_0^n\Bigg( \Big(1+ \frac{Z_n - m_n}{m_n} \Big)^{2\Delta_n} > x \Bigg)\intd x\\
  &= m_n^{2\Delta_n} \int_0^{\infty}\PP_0^n\Big(Z_n - m_n > m_n\big(x^{\frac{1}{2\Delta_n}} -1\big) \Big)\intd x\\
  &\leq m_n^{2\Delta_n}\Bigg(1%
    + \int_1^{\infty}\PP_0^n\Big(Z_n - m_n > \frac{m_n\log(x)}{2\Delta_n} \Big)\intd x\Bigg)\\
  &\leq m_n^{2\Delta_n}\Bigg(1%
    + \int_1^{\infty}\exp\Big(-\frac{c\Delta_n' m_n^2}{4\Delta_n^2}\log(x)^2 \Big)\intd x\Bigg).
  \end{align*}
  Using Lemma~\ref{lem:boundintegral} to upper bound the last integral, together with Lemma~\ref{lem:boundmn} implying that $m_n \geq e^{-3m}\big(\frac{2m+\delta_1}{2m+\delta_0} \big)^m$ since $\tau_n' \geq 3$, it is found that there are constants $c_1,c_2 > 0$ depending only on $\delta_0$, $\delta_1$, and $m$, such that
  \begin{equation*}
    \EE_0^n\Bigg(\Bigg(\frac{1}{\Delta_n'}\sum_{k = \tau_n'+1}^n\prod_{i=1}^{m}\frac{\D_{\G_{k, i-1}}(\V_{k, i}) + \delta_1}{\D_{\G_{k, i-1}}(\V_{k, i}) + \delta_0} \Bigg)^{2\Delta_n}\Bigg)%
    \leq m_n^{2\Delta_n}\Bigg(1 + \sqrt{\frac{c_1 \Delta_n^2}{\Delta_n'}}e^{\frac{c_2 \Delta_n^2}{\Delta_n'}}\Bigg).
  \end{equation*}
  Finally, summarizing everything and using Lemma~\ref{lem:boundmn} to get an upper bound on $m_n$, we find that [here we use that $\frac{12m\Delta_n}{\tau_n'} + \frac{10m\Delta_n^2}{\Delta_n'} \leq \frac{22m\Delta_n^2}{\Delta_n'}$]
  \begin{equation*}
    \log \EE_0^n\big(Y_n^2\1_{B_n}\big)%
    \leq%
    \frac{4\alpha_n\Delta_n\Delta_n'}{\tau_n'}%
    + \frac{22m \Delta_n^2}{\Delta_n'}%
    + \frac{2}{3\Delta_n'}%
    + \sqrt{\frac{c_1 \Delta_n^2}{\Delta_n'}}e^{\frac{c_2 \Delta_n^2}{\Delta_n'}}.
  \end{equation*}

\subsubsection{Auxiliary results used to prove the Proposition~\ref{prop:contiguity_intermediate_regime}}

\begin{lemma}
    \label{lem:lr:martingale}
    Let
    \begin{equation*}
      Z_n = \frac{1}{\Delta_n'}\sum_{k = \tau_n' + 1}^{n}\prod_{i=1}^{m}\frac{\D_{\G_{k, i-1}}(\V_{k, i}) + \delta_1}{\D_{\G_{k, i-1}}(\V_{k, i}) + \delta_0},\qquad%
       m_n = \frac{1}{\Delta_n'}\sum_{k=\tau_n'+1}^n\prod_{i=1}^m\frac{S_{k,i-1}(\delta_1)}{S_{k,i-1}(\delta_0)}.
 \end{equation*}
 Then there exists a constant $c>0$ depending only on $\delta_0$, $\delta_1$, and $m$, such that for all $x\geq 0$
 \begin{equation*}
   \PP_0^n\big(Z_n - m_n \geq x \big) \leq e^{-c\Delta_n' x^2}.
 \end{equation*}
\end{lemma}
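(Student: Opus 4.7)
The plan is to realize $\Delta_n'(Z_n - m_n)$ as a sum of bounded martingale differences under $\PP_0^n$ and to conclude via Azuma--Hoeffding. For each $k \in \llbracket\tau_n'+1,n\rrbracket$ introduce
\begin{equation*}
    W_k \;=\; \prod_{i=1}^{m}\frac{\D_{G_{k,i-1}}(V_{k,i})+\delta_{1}}{\D_{G_{k,i-1}}(V_{k,i})+\delta_{0}},
\end{equation*}
so that $\Delta_n' Z_n = \sum_{k=\tau_n'+1}^n W_k$. The first step is to check that $\EE_0^n[W_k\mid \sigma(G_{k-1})]$ is deterministic and equal to $\prod_{i=1}^m S_{k,i-1}(\delta_1)/S_{k,i-1}(\delta_0)$; in particular $\EE_0^n W_k$ equals that same quantity and $(W_k-\EE_0^n W_k)_k$ is a martingale difference sequence adapted to the filtration $(\sigma(G_k))_k$.

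To prove this deterministic conditional expectation, I would condition on $\sigma(G_{k,m-1}),\sigma(G_{k,m-2}),\dots$ in turn. Under $\PP_0^n$ the law of $V_{k,i}$ given $G_{k,i-1}$ is $v\mapsto(\D_{G_{k,i-1}}(v)+\delta_0)/S_{k,i-1}(\delta_0)$, so a one-line computation gives
\begin{equation*}
    \EE_0^n\!\left[\frac{\D_{G_{k,i-1}}(V_{k,i})+\delta_{1}}{\D_{G_{k,i-1}}(V_{k,i})+\delta_{0}}\,\bigg|\,G_{k,i-1}\right] = \frac{1}{S_{k,i-1}(\delta_0)}\sum_{v=0}^{k-1}\bigl(\D_{G_{k,i-1}}(v)+\delta_1\bigr) = \frac{S_{k,i-1}(\delta_1)}{S_{k,i-1}(\delta_0)},
\end{equation*}
which is deterministic, and iterating from $i=m$ down to $i=1$ produces the claimed formula for $\EE_0^n[W_k\mid G_{k-1}]$. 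I would then bound each $W_k$ uniformly: since $\tau_n'\geq 3$, every vertex of $G_{k,i-1}$ has total degree at least $m$, hence each factor of $W_k$ lies in the closed interval with endpoints $1$ and $\rho:=(m+\delta_1)/(m+\delta_0)$, so $W_k$ itself lies in a fixed interval $[L,U]$ of length at most $|\rho^m-1|$, a constant depending only on $m,\delta_0,\delta_1$.

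The final step is the Azuma--Hoeffding inequality for bounded martingale differences: with $M_k:=W_k-\EE_0^n W_k$ satisfying $|M_k|\leq U-L$ almost surely,
\begin{equation*}
    \PP_0^n(Z_n-m_n\geq x) = \PP_0^n\!\Big(\sum_{k=\tau_n'+1}^n M_k\geq \Delta_n' x\Big) \leq \exp\!\Big(-\frac{2(\Delta_n' x)^2}{\Delta_n'(U-L)^2}\Big) = e^{-c\Delta_n' x^2}
\end{equation*}
with $c = 2/(U-L)^2$, which depends only on $\delta_0,\delta_1,m$. I do not anticipate a serious obstacle: the only non-mechanical observation is that under $H_0$ the attachment parameter is \emph{constant}, which is precisely what forces $\EE_0^n[W_k\mid G_{k-1}]$ to be deterministic and the centered sum to be a genuine martingale with respect to the natural filtration $(\sigma(G_k))_k$; boundedness and Azuma--Hoeffding are then entirely routine.
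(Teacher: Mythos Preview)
Your proof is correct and follows essentially the same approach as the paper: compute $\EE_0^n[W_k\mid\mathcal{F}_{k-1}]$ by iterated conditioning on the intermediate $\sigma$-fields $\sigma(G_{k,i-1})$, observe that it equals the deterministic quantity $\prod_i S_{k,i-1}(\delta_1)/S_{k,i-1}(\delta_0)$, bound $W_k$ uniformly, and apply Azuma--Hoeffding. One harmless imprecision: it is not true that \emph{every} vertex of $G_{k,i-1}$ has degree at least $m$ (vertex $k$ has degree $i-1<m$), but this does not matter since $V_{k,i}\in\{0,\dots,k-1\}$ and those vertices do have degree at least $m$; the hypothesis $\tau_n'\geq 3$ is irrelevant here.
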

\begin{proof}
  In the proof we let $\mathcal{F}_t = \sigma(G_1,\dots,G_t)$ and $\mathcal{F}_{t,i} = \sigma(G_1,\dots,G_{t-1},G_{t,1},\dots,G_{t,i})$ for $t=1,\dots,n$ and $i=1,\dots,m$. Let $W_k = \prod_{i=1}^m\frac{\D_{G_{k,i-1}}(V_{k,i}) + \delta_1}{\D_{G_{k,i-1}}(V_{k,i}) + \delta_0}$ for $k=\tau_n'+1,\dots, n$. Clearly $\EE_0^n(W_k \mid \mathcal{F}_t) = W_k$ for all $t \geq k$. Also,
  \begin{align*}
    \EE_0^n\big( W_k \mid \mathcal{F}_{k-1} \big)%
    &=\EE_0^n\big( \EE_0^n(W_k \mid \mathcal{F}_{k,m-1})  \mid \mathcal{F}_{k-1} \big)\\%
    &=\EE_0^n\Bigg(\prod_{i=1}^{m-1}\frac{\D_{G_{k,i-1}}(V_{k,i}) + \delta_1}{\D_{G_{k,i-1}}(V_{k,i}) + \delta_0} \EE_0^n\Big(\frac{\D_{G_{k,m-1}}(V_{k,m}) + \delta_1}{\D_{G_{k,m-1}}(V_{k,m}) + \delta_0} \mid \mathcal{F}_{k,m-1}\Big)  \mid \mathcal{F}_{k-1} \Bigg)\\
    &= \EE_0^n\Bigg(\prod_{i=1}^{m-1}\frac{\D_{G_{k,i-1}}(V_{k,i}) + \delta_1}{\D_{G_{k,i-1}}(V_{k,i}) + \delta_0} \sum_{u=0}^{k-1}\frac{\D_{G_{k,m-1}}(u) + \delta_1}{\D_{G_{k,m-1}}(u) + \delta_0}\frac{\D_{G_{k,m-1}}(u) + \delta_0}{S_{k,m-1}(\delta_0)}  \mid \mathcal{F}_{k-1} \Bigg)\\
    &= \EE_0^n\Bigg(\prod_{i=1}^{m-1}\frac{\D_{G_{k,i-1}}(V_{k,i}) + \delta_1}{\D_{G_{k,i-1}}(V_{k,i}) + \delta_0} \frac{S_{k,m-1}(\delta_1)}{S_{k,m-1}(\delta_0)}  \mid \mathcal{F}_{k-1} \Bigg).
  \end{align*}
  Continuing inductively, it is found that
  \begin{align*}
    \EE_0^n\big(W_k \mid \mathcal{F}_{k-1})%
    = \prod_{i=1}^m\frac{S_{k,i-1}(\delta_1)}{S_{k,i-1}(\delta_0)}
  \end{align*}
  and then $\EE_0^n\big(W_k \mid \mathcal{F}_{\ell})%
  = \prod_{i=1}^m\frac{S_{k,i-1}(\delta_1)}{S_{k,i-1}(\delta_0)}$ for all $\ell < k$. Deduce that for all $k=\tau_{n}'+1,\dots,n$ and all $\ell=\tau_n'+1,\dots,n$
  \begin{equation}
    \label{eq:estim-increments}
    \EE_0^n\big(W_k \mid \mathcal{F}_{\ell}) -
    \EE_0^n\big(W_k \mid \mathcal{F}_{\ell-1})%
    =%
    \begin{cases}
      0 &\mathrm{if}\ \ell \ne k,\\
      \prod_{i=1}^m\frac{\D_{G_{k,i-1}}(V_{k,i}) + \delta_1}{\D_{G_{k,i-1}}(V_{k,i}) + \delta_0}%
      - \prod_{i=1}^m\frac{S_{k,i-1}(\delta_1)}{S_{k,i-1}(\delta_0)} &\mathrm{if}\ \ell = k.
    \end{cases}
  \end{equation}
  Build the Doob martingale $M_j =  \Delta_n'\EE(Z_n \mid \mathcal{F}_j)$ and observe that
  \begin{equation*}
    \sum_{j=\tau_n'+1}^{n}(M_{j} - M_{j-1})%
    = \Delta_n'\Big(Z_n - \EE_0^n(Z_n \mid \mathcal{F}_{\tau_n'})\Big).
  \end{equation*}
  Furthermore for every $j=\tau_n'+1,\dots,n$, by equation~\eqref{eq:estim-increments}
  \begin{align*}
    |M_j - M_{j-1}|%
    &= \Bigg|\sum_{k=\tau_n'+1}^n\Big(\EE_0^n(W_k \mid \mathcal{F}_j) - \EE_0^n(W_k \mid \mathcal{F}_{j-1}) \Big) \Bigg|\\
    &=| W_j - \EE_0^n(W_j \mid \mathcal{F}_{j-1})|\\
    &\leq \max\Big(1,\, \frac{m+\delta_1}{m+\delta_0}\Big)^m
  \end{align*}
  because
  \begin{equation*}
    W_j%
    =\prod_{i=1}^m\Big(1 + \frac{\delta_1 - \delta_0}{\D_{G_{k,i-1}}(V_{k,i}) + \delta_0}\Big)%
    \leq \max\Big(1,\, \frac{m+\delta_1}{m+\delta_0}\Big)^m. 
  \end{equation*}
  By Hoeffding-Azuma's inequality, for all $x\geq 0$, almost-surely
  \begin{align*}
    \PP_0^n\Big(Z_n - \EE_0^n(Z_n\mid \mathcal{F}_{\tau_n'}) \geq \frac{x}{\Delta_n'} \mid \mathcal{F}_{\tau_n'} \Big)
    &=\PP_0^n\big( M_n - M_{\tau_n'} \geq x \mid \mathcal{F}_{\tau_n'})\\%
    &\leq \exp\Bigg(- \frac{x^2}{2\Delta_n'\max\big(1,\, \frac{m+\delta_1}{m+\delta_0}\big)^m} \Bigg).
  \end{align*}
  Then the result follows by taking the expectation both sides of the last display and by noticing that $\EE_0^n(Z_n \mid \mathcal{F}_{\tau_n'}) = m_n$ almost-surely.
\end{proof}

  \begin{lemma}
    \label{lem:boundintegral}
    For every $\beta > 0$
    \begin{equation*}
      0 \leq \int_1^{\infty}e^{-\beta \log(x)^2}\intd x%
      \leq \sqrt{ \frac{\pi e^{1/(2\beta)}}{\beta} }.
    \end{equation*}
  \end{lemma}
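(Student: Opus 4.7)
The plan is to reduce the integral to a Gaussian via the substitution $u = \log(x)$, which turns $dx = e^u\intd u$ and changes the domain of integration from $[1,\infty)$ to $[0,\infty)$. The integrand $e^{-\beta\log(x)^2}$ becomes $e^{-\beta u^2 + u}$, so that
\begin{equation*}
  \int_1^\infty e^{-\beta\log(x)^2}\intd x = \int_0^\infty e^{-\beta u^2 + u}\intd u.
\end{equation*}

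Completing the square in the exponent gives $-\beta u^2 + u = -\beta\bigl(u - \tfrac{1}{2\beta}\bigr)^2 + \tfrac{1}{4\beta}$, which extracts the constant factor $e^{1/(4\beta)}$ outside the integral. I would then bound the resulting integral by enlarging the domain from $[0,\infty)$ to all of $\mathbb{R}$ (valid because the integrand is non-negative) and make the linear change of variable $v = \sqrt{\beta}\bigl(u - \tfrac{1}{2\beta}\bigr)$ to recognise a Gaussian integral:
\begin{equation*}
  e^{1/(4\beta)}\int_0^\infty e^{-\beta(u - 1/(2\beta))^2}\intd u \leq \frac{e^{1/(4\beta)}}{\sqrt{\beta}}\int_{-\infty}^{\infty}e^{-v^2}\intd v = e^{1/(4\beta)}\sqrt{\frac{\pi}{\beta}}.
\end{equation*}

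Rewriting $e^{1/(4\beta)}\sqrt{\pi/\beta}$ as $\sqrt{\pi e^{1/(2\beta)}/\beta}$ yields the claimed upper bound; the lower bound by $0$ is immediate since the integrand is everywhere non-negative. There is no genuine obstacle in this argument: the only small point to keep track of is that the extension from the half-line to the full line is the source of the (mild) looseness in the inequality, but it is harmless because the constants it absorbs are independent of $\beta$ in a way compatible with how the lemma will be used (plugged into the bound on $\EE_0^n(Y_n^2\1_{B_n})$ with $\beta = c\Delta_n' m_n^2/(4\Delta_n^2)$).
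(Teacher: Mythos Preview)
Your proof is correct and is essentially the same as the paper's: both use the substitution $u=\log(x)$ (the paper rescales to $y=\sqrt{2\beta}\log(x)$, which is cosmetic), complete the square, extend the half-line integral to all of $\mathbb{R}$, and evaluate the resulting Gaussian. The only difference is the choice of normalization in the substitution, which leads to the same bound.
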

  \begin{proof}
    It is found after a straightforward change of variable that
    \begin{equation*}
      \int_1^{\infty}e^{-\beta \log(x)^2}\intd x%
      = \frac{1}{\sqrt{2\beta}}\int_0^{\infty}e^{-\frac{1}{2}y^2}e^{\frac{1}{\sqrt{2\beta}}y}\intd y
      = \frac{e^{\frac{1}{4\beta}}}{\sqrt{2\beta}}\int_0^{\infty}e^{-\frac{1}{2}(y - \frac{1}{\sqrt{2c}})^2}\intd y%
      \leq \sqrt{ \frac{\pi e^{1/(2\beta)}}{\beta} }.%
      \qedhere
    \end{equation*}
  \end{proof}

  \begin{lemma}
    \label{lem:boundmn}
    For every $\tau_n' \geq 3$
    \begin{equation*}
      e^{-\frac{6m}{\tau_n'}}\Big(\frac{2m+\delta_1}{2m+\delta_0} \Big)^m%
      \leq m_n \leq e^{\frac{6m}{\tau_n'}}\Big(\frac{2m+\delta_1}{2m+\delta_0} \Big)^m.
    \end{equation*}
  \end{lemma}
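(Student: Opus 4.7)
The plan is to reduce the bound on $m_n$ to a uniform-in-$k$ bound on each summand $\prod_{i=1}^m S_{k,i-1}(\delta_1)/S_{k,i-1}(\delta_0)$, and then average. The argument parallels the proof of Lemma~\ref{lem:boundSti}, but applied termwise rather than to a full product over $t$. Since $S_{k,i-1}(\delta) = k(2m+\delta) + (i - 1 - 2m)$, I would start by factoring
\begin{equation*}
  \prod_{i=1}^m \frac{S_{k,i-1}(\delta_1)}{S_{k,i-1}(\delta_0)}
  = \Big(\frac{2m+\delta_1}{2m+\delta_0}\Big)^m
  \prod_{i=1}^m \frac{1 + \frac{-2m+i-1}{k(2m+\delta_1)}}{1 + \frac{-2m+i-1}{k(2m+\delta_0)}},
\end{equation*}
so that the target constant appears explicitly and only a benign ratio remains to be controlled.

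Next, using $\delta_j > -m$ (hence $2m+\delta_j > m$) and $k \geq \tau_n'+1 > \tau_n'$, each factor $1 + \frac{-2m+i-1}{k(2m+\delta_j)}$ lies in $[1 - 2/\tau_n',\, 1]$ for $j=0,1$ and $1\leq i\leq m$, exactly as in the proof of Lemma~\ref{lem:boundSti}. Consequently, each individual ratio in the product sits in $[1-2/\tau_n',\, (1-2/\tau_n')^{-1}]$. Taking the product over $i=1,\dots,m$ and invoking the elementary inequality $\log(1 - 2/\tau_n') \geq -6/\tau_n'$ for $\tau_n' \geq 3$ (already used at the end of the proof of Lemma~\ref{lem:boundSti}), I would conclude the uniform bound
\begin{equation*}
  e^{-6m/\tau_n'}\Big(\frac{2m+\delta_1}{2m+\delta_0}\Big)^m
  \leq \prod_{i=1}^m \frac{S_{k,i-1}(\delta_1)}{S_{k,i-1}(\delta_0)}
  \leq e^{6m/\tau_n'}\Big(\frac{2m+\delta_1}{2m+\delta_0}\Big)^m
\end{equation*}
for all $k\in \llbracket \tau_n'+1, n\rrbracket$. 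Averaging over these $\Delta_n'$ values of $k$ immediately gives the desired bounds on $m_n$. The computation is entirely routine and presents no real obstacle; it is essentially a repackaging of Lemma~\ref{lem:boundSti} at the level of a single summand.
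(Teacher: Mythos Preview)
Your proposal is correct and follows essentially the same approach as the paper: the paper's proof also bounds each summand $\prod_{i=1}^m S_{k,i-1}(\delta_1)/S_{k,i-1}(\delta_0)$ uniformly in $k>\tau_n'$ by the same factorization and the same elementary bound $\log(1-2/\tau_n')\geq -6/\tau_n'$, then averages. There is no meaningful difference in strategy or detail.
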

  \begin{proof}
    As in Lemma~\ref{lem:boundSti}, we have whenever $k > \tau_n'$ that
    \begin{equation*}
      \big(1-2/\tau_n')^m\Big(\frac{2m+\delta_1}{2m+\delta_0}\Big)^m%
      \leq \prod_{i=1}^m\frac{S_{k,i-1}(\delta_1)}{S_{k,i-1}(\delta_0)}%
      \leq \Big(\frac{2m+\delta_1}{2m+\delta_0}\Big)^m\frac{1}{(1-2/\tau_n')^m}.
    \end{equation*}
    Hence the result follows since $\log(1-2/\tau_n') \geq -\frac{2}{\tau_n'-2} \geq -\frac{6}{\tau_n'}$ for $\tau_n' \geq 3$.
  \end{proof}

\subsection{Proof of Proposition~\ref{pro:eventBn}}
\label{sec:proof-prop-eventBn}

\subsubsection{Upper bound on the probabilities}

  By Markov's inequality
  \begin{align*}
    \PP_1^n\Big(|\PartB(G_n)| < \Delta_n'\Big(1 - \frac{\alpha_n\Delta_n'}{\tau_n'} \Big) \Big)%
    &=\PP_1^n\Big(|\llbracket \tau_n'+1,n \rrbracket \backslash \PartB(G_n)| >  \frac{\alpha_n(\Delta_n')^2}{\tau_n'} \Big)\\
    &\leq \frac{\tau_n'}{\alpha_n(\Delta_n')^2}\EE_1^n\big(|\llbracket \tau_n'+1,n \rrbracket \backslash \PartB(G_n)| \big)\\
    &=\frac{\tau_n'}{\alpha_n(\Delta_n')^2}\Big(\Delta_n' - \EE_1^n\big(| \PartB(G_n)|\big) \Big).
  \end{align*}
  Hence by Lemma~\ref{lem:expectPartB} below
  \begin{equation*}
    \PP_1^n\Big(|\PartB(G_n)| < \Delta_n'\Big(1 - \frac{\alpha_n\Delta_n'}{\tau_n'} \Big) \Big)%
    \leq \frac{C}{\alpha_n}%
    \begin{cases}
      \log(\tau_n') &\mathrm{if}\ \delta_0 = 0,\\
      1 &\mathrm{if}\ \delta_0 > 0.
    \end{cases}
  \end{equation*}
  Similarly,
  \begin{align*}
    \PP_1^n\Big( \llbracket \tau_n + 1,n\rrbracket \not\subset \PartB(G_n) \Big)%
    &= \PP_1^n\Big( |\llbracket \tau_n+1,n\rrbracket \backslash \PartB(G_n)| \geq  1 \Big)\\
    &\leq \EE_1^n\big(|\llbracket \tau_n+1,n\rrbracket \backslash \PartB(G_n)| \big)\\
    &= \Delta_n - \EE_1^n\big(|\PartB(G_n) \cap\llbracket \tau_n+1,n\rrbracket  | \big).
  \end{align*}
  Hence by Lemma~\ref{lem:expeclastnodesbold} below
  \begin{equation*}
    \PP_1^n\Big( \llbracket \tau_n + 1,n\rrbracket \not\subset \PartB(G_n) \Big)%
    \leq \frac{C\Delta_n\Delta_n'}{\tau_n'}%
    \begin{cases}
      \log(\tau_n') &\mathrm{if}\ \delta_0 = 0,\\
      1 &\mathrm{if}\ \delta_0 > 0.
    \end{cases}
  \end{equation*}

\subsubsection{Computation of expectations of \texorpdfstring{$|\PartB(G_n)|$}{} and \texorpdfstring{$|\PartB(G_n)\cap\llbracket \tau_n+1,n\rrbracket|$}{}}

In this section we derive estimates on the expectations of $|\PartB(G_n)|$ and $|\PartB(G_n)\cap\llbracket \tau_n+1,n\rrbracket |$ which are crucial elements in bounding the probability $\PP_1^n(B_n^c)$.

\begin{lemma}
  \label{lem:expectPartB}
  There exists a constant $B > 0$ depending only on $m$, $\delta_0$ and $\delta_1$, such that for all $2\leq \tau_n' \leq n$
  \begin{equation*}
    \Delta_n' \geq%
    \EE_1^n\big(|\PartB(G_n)| \big)%
    \geq \Delta_n'%
    - \frac{B(\Delta_n')^2}{\tau_n'}%
    \begin{cases}
      (\tau_n')^{-\delta_0/(2m+\delta_0)} &\mathrm{if}\ \delta_0 < 0,\\
      \log(\tau_n') &\mathrm{if}\ \delta_0 = 0,\\
      1 &\mathrm{if}\ \delta_0 > 0.
    \end{cases}
  \end{equation*}
\end{lemma}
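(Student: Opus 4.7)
The upper bound $\EE_1^n[|\PartB(G_n)|] \leq \Delta_n'$ is immediate from $\PartB(G_n) \subset \llbracket \tau_n'+1, n\rrbracket$. For the lower bound, I would write $\Delta_n' - \EE_1^n[|\PartB(G_n)|] = \sum_{v=\tau_n'+1}^n \PP_1^n(v \notin \PartB(G_n))$, and for each $v$ construct a non-negative integer-valued random variable $X_v$ with $\1_{\{v \notin \PartB\}} \leq X_v$. A natural choice is $X_v = \Din_{G_n}(v) + \sum_{w \in \Chi_{G_n}(v)} I_w - m$, where $I_w$ denotes the number of directed edges of $G_n$ from a vertex in $\llbracket \tau_n'+1, n\rrbracket$ into $w$. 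The $-m$ offsets the $m$ edges that $v$ itself contributes to its children, and one checks that $X_v = 0$ exactly when $v \in \PartB(G_n)$.

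Summing over $v$ and reordering by edge, the cross-terms collapse to $\sum_v X_v \leq N_{\mathrm{LL}} + \sum_w I_w(I_w - 1)$, where $N_{\mathrm{LL}}$ counts edges with both endpoints in $\llbracket \tau_n'+1, n\rrbracket$ and the diagonal $\sum_w I_w = m\Delta_n'$ has cancelled the accumulated $-m\Delta_n'$. The first term admits a direct bound: by the PA selection rule \eqref{PA_model}, the probability that the $i$th edge at time $u > \tau_n'$ falls on a late vertex is at most the late-degree fraction $\lesssim (u - \tau_n')/u \leq \Delta_n'/\tau_n'$, so $\EE_1^n[N_{\mathrm{LL}}] = O((\Delta_n')^2/\tau_n')$, which already fits inside the target error.

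The delicate term is the off-diagonal, which carries the $\delta_0$-dependent correction. Expanding $\EE_1^n[\sum_w I_w(I_w-1)] = \sum_{(u_1,i_1) \ne (u_2,i_2)} \PP_1^n(V_{u_1,i_1} = V_{u_2,i_2})$ over pairs of late edges and restricting to $u_1 < u_2$, conditioning on $G_{u_2,i_2-1}$ and using the attachment rule yields $\PP_1^n(V_{u_1,i_1} = V_{u_2,i_2}) \lesssim \EE_1^n[\D_{G_{u_2}}(V_{u_1,i_1})]/u_2$. A conditional application of the standard PA degree-growth estimate gives $\EE_1^n[\D_{G_{u_2}}(w) \mid G_{u_1}] \lesssim (u_2/u_1)^\chi \D_{G_{u_1}}(w)$ with $\chi = m/(2m+\delta_0)$, and size-biasing at time $u_1$ gives $\EE_1^n[\D_{G_{u_1}}(V_{u_1,i_1})] \lesssim M_2(u_1)/u_1$, where $M_2(u) := \EE_1^n[\sum_w \D_{G_u}(w)^2]$. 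Combining these and summing over late pairs $(u_1, u_2)$ reduces the problem to bounding $\Delta_n' \sum_{u_1 > \tau_n'} M_2(u_1)/u_1^2$.

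The final ingredient is the standard trichotomy for $M_2(u)$ in the affine PA model, obtained from the recursion $\EE[M_2(u+1)] = \EE[M_2(u)](1 + \beta/u) + O(1)$ with $\beta := 2m/(2m+\delta_0)$: one finds $M_2(u) = O(u)$ when $\delta_0 > 0$, $O(u \log u)$ when $\delta_0 = 0$, and $O(u^{1-\delta_0/(2m+\delta_0)})$ when $\delta_0 < 0$. Substituting into the reduced sum produces exactly the three regimes of the stated correction factor. The main obstacle I anticipate is controlling $M_2(u)$ under $\PP_1^n$ rather than $\PP_0^n$: the attachment parameter switches from $\delta_0$ to $\delta_1$ at time $\tau_n$, so the constant-$\delta$ recursion is strictly valid only on $[1, \tau_n]$. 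However, since $u_1 \in \llbracket \tau_n'+1, n\rrbracket$ and the post-change window $(\tau_n, n]$ has length $\Delta_n$, which is much smaller than $\Delta_n'$ in the regime of interest, the extra $\delta_1$-contribution perturbs $M_2(u_1)$ by at most a bounded multiplicative factor and does not affect the leading-order estimate.
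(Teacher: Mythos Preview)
Your approach is correct and takes a genuinely different route from the paper. The paper expands $\1_{j\in\PartB(G_n)}$ combinatorially over all possible child configurations $(x_1,\ldots,x_\ell;y_1,\ldots,y_\ell)$ of $j$, computes the conditional expectation given $\mathcal{F}_{\tau_n'}$ as an explicit product over $k\in(\tau_n',n]$, applies $\prod(1-a_k)\ge 1-\sum a_k$, and then resums the configuration sum via an auxiliary-graph trick (introducing $\tilde G^j$ that coincides with $G$ up to $\tau_n'$ and evolves with frozen parameter $\delta(j)$ afterwards). The remainder term $R_n$ is ultimately controlled by $\sum_{v\le\tau_n'}\EE_0^n[(\D_{G_{\tau_n'+1}}(v)+\delta_0)^2]$, which is exactly your $M_2(\tau_n'+1)$, and the trichotomy in $\delta_0$ enters through the same second-moment recursion (Lemmas~\ref{lem:expec2momentdeg} and~\ref{lem:estimatescoeffdegree}). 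Your route---dominating $\1_{v\notin\PartB}$ by the count $X_v$, collapsing $\sum_v X_v$ to $N_{\mathrm{LL}}+\sum_w I_w(I_w-1)$, and reading the latter as a sum of edge-collision probabilities handled by size-biasing plus degree growth---is more probabilistic and sidesteps the explicit enumeration and the auxiliary graph entirely. Both arguments land on the same second-moment trichotomy; yours is shorter, theirs is more self-contained.

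The one place where the paper's argument is strictly cleaner is precisely the issue you flag at the end. By conditioning on $\mathcal{F}_{\tau_n'}$ from the outset, the paper only ever needs the second moment of degrees at the single time $\tau_n'+1\le\tau_n$, so the calculation is purely under $\delta_0$-dynamics and no $\delta_1$-perturbation appears. Your argument requires $M_2(u_1)$ for $u_1$ ranging over $(\tau_n',n]$, part of which may be post-change; your fix (the post-change window has length $\Delta_n$, so the multiplicative perturbation $(n/\tau_n)^{O(1)}$ is bounded) is valid throughout the paper's late-change-point regime $\Delta_n=o(n)$, but the lemma is stated for all $2\le\tau_n'\le n$. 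To match that generality you could push the size-biasing back to time $\tau_n'$: since each child $w$ of a late vertex satisfies $\D_{G_{u_1,i_1-1}}(w)\le\D_{G_{\tau_n'}}(w)+I_w$, the collision probability splits into a term governed by $M_2(\tau_n')$ (under $\delta_0$) and a term already absorbed by the late--late edge count.
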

\begin{proof}
    First observe that the upper bound is trivial since $\PartB(G_n) \subset \llbracket \tau_n'+1,n\rrbracket$ almost-surely. We now focus on the lower bound.

  Let us write $X_n = |\PartB(G_n)|$ for simplicity. In the whole proof we use the convention that an empty product equals one. Note that
\begin{align*}
  X_n%
  &=
    \sum_{j=\tau_n'+1}^n\1\Big(\D_{G_n}(j) = m,\ \forall k \in \Chi_{G_n}(j),\ k\leq \tau_n'\ \mathrm{and}\ \forall \ell \in \Par_{G_n}(k)\backslash \{j\},\ \ell \leq \tau_n' \Big)\\%
  &=\sum_{\tau_n'< j \leq n}\sum_{\ell=1}^m\sum_{0\leq x_1 < \dots < x_{\ell} \leq \tau_n'}\sum_{\substack{y_1,\dots,y_{\ell}\geq 1\\y_1+\dots+y_{\ell}=m}}%
    \Bigg(\prod_{j< k \leq n}\1(k\not\to_{G_k} j)\Bigg)%
    \Bigg(\prod_{i=1}^{\ell}\1(\mu_{G_j}(j,x_i) = y_i  ) \Bigg)\Bigg(\prod_{\substack{\tau_n' < k \leq n\\k\neq j}}\prod_{i=1}^\ell\1(k \not\to_{G_k} x_i  ) \Bigg).
\end{align*}
Indeed, the previous can be rewritten more conveniently as [here $\bm{x}_{\ell} = (x_1,\dots,x_{\ell})$ and $\bm{y}_{\ell} =(y_1,\dots,y_{\ell})$]
\begin{equation*}
  X_n%
  =\sum_{\tau_n'< j \leq n}\sum_{\ell=1}^m\sum_{0\leq x_1 < \dots < x_{\ell} \leq \tau_n'}%
  \sum_{\substack{y_1,\dots,y_{\ell}\geq 1\\y_1+\dots+y_{\ell}=m}}Y_n^{\bm{x}_{\ell},\bm{y}_{\ell},j}
\end{equation*}
with
\begin{align*}
  Y_n^{\bm{x}_{\ell},\bm{y}_{\ell},j}%
  &=%
    \prod_{\tau_n'< k <
    j}\1( k \not\to_{G_k} \{x_1,\dots,x_{\ell} \})
    \prod_{i=1}^{\ell}\1(\mu_{G_j}(j,x_i) = y_{i})%
    \prod_{j <k \leq n}\1(k\not\to_{G_k} \{x_1,\dots,x_{\ell},j\}).
\end{align*}
Letting $\mathcal{F}_{\ell} = \sigma(G_1,\dots, G_{\ell})$ and $\delta(j) = \delta_0\1(j\leq \tau_n) + \delta_1\1(j>\tau_n)$, it is seen that [assuming $j < n$, otherwise the result is trivial]
\begin{align*}
  \EE_1^n(Y_n^{\bm{x}_{\ell},\bm{y}_{\ell},j} \mid \mathcal{F}_{n-1})%
  &= \prod_{\tau_n'< k <
    j}\1( k \not\to_{G_k} \{x_1,\dots,x_{\ell} \})
    \prod_{i=1}^{\ell}\1(\mu_{G_j}(j,x_i) =y_{\ell} )%
    \prod_{j <k \leq n-1}\1(k\not\to_{G_k} \{x_1,\dots,x_{\ell},j\})\\
  &\qquad%
    \times %
    \prod_{i=1}^m\Bigg(1 - \frac{[\D_{G_{n-1}}(x_1)+\delta(n)]+\dots+[\D_{G_{n-1}}(x_{\ell}) + \delta(n)] + [m+\delta(n)]}{S_{n,i-1}(\delta(n))} \Bigg)\\
  &= \prod_{\tau_n'< k <
    j}\1( k \not\to_{G_k} \{x_1,\dots,x_{\ell} \})
    \prod_{i=1}^{\ell}\1(\mu_{G_j}(j,x_i) = y_{\ell})%
    \prod_{j <k \leq n-1}\1(k\not\to_{G_k} \{x_1,\dots,x_{\ell},j\})\\
  &\qquad%
    \times %
    \prod_{i=1}^m\Bigg(1 - \frac{[\D_{G_{\tau_n'}}(x_1)+y_1+\delta(n)]+\dots+[\D_{G_{\tau_n'}}(x_{\ell})+y_{\ell} + \delta(n)] + [m+\delta(n)]}{S_{n,i-1}(\delta(n))} \Bigg)
\end{align*}
where the second line follows because if the product of indicators is non-zero, then at instant $n-1$ no vertex other than $j$ has connected to one of the $x_1,\dots,x_{\ell}$ on the time interval $\llbracket \tau_n'+1,n-1\rrbracket$, and $j$ has edge multiplicity $y_{\ell}$ with $x_{\ell}$. Defining for simplicity $D_{\tau_n'}^{\bm{x}_{\ell}} = \sum_{i=1}^{\ell}\D_{\G_{\tau_n'}}(x_i) $, and taking conditional expectation of the previous inductively with respect to $\mathcal{F}_{n-2},\dots,\mathcal{F}_{\tau_n'}$, it is found that [here the combinatorial factor comes from enumerating all the possibilities of connecting $j$ to $x_1,\dots,x_{\ell}$ with edges multiplicities $y_1,\dots,y_{\ell}$]
\begin{multline*}
  \EE_1^n(Y_n^{\bm{x}_{\ell},\bm{y}_{\ell},j}\mid \mathcal{F}_{\tau_n'})%
  = \frac{m!}{\prod_{i=1}^{\ell}y_i!}\prod_{\tau_n' < k < j}\prod_{i=1}^m\Bigg(1 - \frac{D_{\tau_n'}^{\bm{x}_{\ell}} + \ell\delta(k)}{S_{k,i-1}(\delta(k))} \Bigg)\\%
  \times%
  \frac{\prod_{i=1}^{\ell}\prod_{i'=1}^{y_i}\big(\D_{G_{\tau_n'}}(x_i)+ i'-1 + \delta(j)\big)}{\prod_{i=1}^mS_{j,i-1}(\delta(j))}%
  \prod_{j < k \leq n}\prod_{i=1}^m\Bigg(1 - \frac{D_{\tau_n'}^{\bm{x}_{\ell}} + 2m + (\ell+1)\delta(k)}{S_{k,i-1}(\delta(k))} \Bigg).
\end{multline*}
Hence,
\begin{align*}
  \EE_1^n(Y_n^{\bm{x}_{\ell},\bm{y}_{\ell},j}\mid \mathcal{F}_{\tau_n'})
  &\geq \frac{m!}{\prod_{i=1}^{\ell}y_i!}%
    \frac{\prod_{i=1}^{\ell}\prod_{i'=1}^{y_i}\big(\D_{G_{\tau_n'}}(x_i)+ i'-1 + \delta(j)\big)}{\prod_{i=1}^mS_{j,i-1}(\delta(j))}%
    \prod_{\tau_n' < k \leq n}\prod_{i=1}^m\Bigg(1 - \frac{D_{\tau_n'}^{\bm{x}_{\ell}} + 2m + (\ell + 1) \delta(k)}{S_{k,i-1}(\delta(k))}\Bigg)\\
  &\geq \frac{m!}{\prod_{i=1}^{\ell}y_i!}%
    \frac{\prod_{i=1}^{\ell}\prod_{i'=1}^{y_i}\big(\D_{G_{\tau_n'}}(x_i)+ i'-1 + \delta(j)\big)}{\prod_{i=1}^mS_{j,i-1}(\delta(j))}%
    \Bigg(1 - \sum_{\tau_n' < k \leq n}\sum_{i=1}^m\frac{D_{\tau_n'}^{\bm{x}_{\ell}} + 2m + (\ell + 1) \delta(k)}{S_{k,i-1}(\delta(k))}\Bigg)\\
  &\geq \frac{m!}{\prod_{i=1}^{\ell}y_i!}%
    \frac{\prod_{i=1}^{\ell}\prod_{i'=1}^{y_i}\big(\D_{G_{\tau_n'}}(x_i)+ i'-1 + \delta(j)\big)}{\prod_{i=1}^mS_{j,i-1}(\delta(j))}%
    \Bigg(1 - m\sum_{\tau_n' < k \leq n}\frac{D_{\tau_n'}^{\bm{x}_{\ell}} + 2m + (m + 1) (\delta(k)\vee 0)}{S_{k,0}(\delta(k))} \Bigg).
\end{align*}
We now define two random variables
\begin{equation*}
  \tilde{X}_n%
  = \sum_{\tau_n'< j \leq n}\sum_{\ell=1}^m\sum_{0\leq x_1 < \dots < x_{\ell} \leq \tau_n'}%
    \sum_{\substack{y_1,\dots,y_{\ell}\geq 1\\y_1+\dots+y_{\ell}=m}}%
  \frac{m!}{\prod_{i=1}^{\ell}y_i!}%
    \frac{\prod_{i=1}^{\ell}\prod_{i'=1}^{y_i}\big(\D_{G_{\tau_n'}}(x_i)+ i'-1 + \delta(j)\big)}{\prod_{i=1}^mS_{j,i-1}(\delta(j))}
\end{equation*}
and
\begin{multline*}
  R_n%
  = m \sum_{\tau_n'< j,k \leq n}\sum_{\ell=1}^m\sum_{0\leq x_1 < \dots < x_{\ell} \leq \tau_n'}%
  \sum_{\substack{y_1,\dots,y_{\ell}\geq 1\\y_1+\dots+y_{\ell}=m}}\frac{m!}{\prod_{i=1}^{\ell}y_i!}\\%
  \times
  \sum_{\substack{y_1,\dots,y_{\ell}\geq 1\\y_1+\dots+y_{\ell}=m}}\frac{m!}{\prod_{i=1}^{\ell}y_i!}%
    \frac{\prod_{i=1}^{\ell}\prod_{i'=1}^{y_i}\big(\D_{G_{\tau_n'}}(x_i)+ i'-1 + \delta(j)\big)}{\prod_{i=1}^mS_{j,i-1}(\delta(j))}\frac{D_{\tau_n'}^{\bm{x}_{\ell}} + 2m + (m+1)\left(\delta(k)\vee 0\right)}{S_{k,0}(\delta(k))}
\end{multline*}
so that $\EE_1^n(X_n \mid \mathcal{F}_{\tau_n'}) \geq \tilde{X}_n - R_n$ almost-surely. To compute the expectations of $\tilde{X}_n$ and $R_n$, we use the following trick. For a fixed $j>\tau_n'$ we define on the same probability space a sequence of random graphs $((\tilde{G}_{t,i}^j)_{i=0}^m)_{t\geq 1}$ such that $\tilde{G}_{t,i}^j = G_{t,i}$ for $1\leq t \leq \tau_n'$ and $0\leq i \leq m$, and then $(\tilde{G}_{t,i}^j)_{i=0}^m$ evolves independently of $(G_{t,i})_{i=0}^m$ according to the preferential attachment rule with parameter $\delta(t) = \delta(j)$ for all $t> \tau_n'$. Then, we see that
\begin{align*}
  1%
  &= \EE_1^n\Bigg(\sum_{\ell=1}^m\sum_{0\leq x_1 < \dots < x_{\ell}\leq \tau_n'}\sum_{\substack{y_1,\dots,y_{\ell}\geq 1\\y_1+\dots+y_{\ell}=m}}\1\Big(\forall i=1,\dots,\ell,\ \mu_{\tilde{G}_{\tau_n'+1}^j}(\tau_n'+1,x_i) = y_i \Big) \ \Big\vert\ \mathcal{F}_{\tau_n'} \Bigg)\\
  &= \sum_{\ell=1}^m\sum_{0\leq x_1 < \dots < x_{\ell}\leq \tau_n'}\sum_{\substack{y_1,\dots,y_{\ell}\geq 1\\y_1+\dots+y_{\ell}=m}} \frac{m!}{\prod_{i=1}^{\ell}y_i!}\frac{\prod_{i=1}^{\ell}\prod_{i'=1}^{y_i}\big(\D_{G_{\tau_n'}}(x_i) + i'-1 + \delta(j) \big)}{\prod_{i=1}^mS_{\tau_n'+1,i-1}(\delta(j))}.
\end{align*}
So indeed,
\begin{align*}
  \EE_1^n(\tilde{X}_n)%
  &=\sum_{\tau_n' < j \leq n}\prod_{i=1}^m\frac{S_{\tau_n'+1,i-1}(\delta(j))}{S_{j,i-1}(\delta(j))}\\
  &=\sum_{\tau_n' < j \leq n}\prod_{i=1}^m\frac{(2m+\delta(j))(\tau_n'+1)-2m+i-1}{(2m+\delta(j))j-2m+i-1}\\
  &=\sum_{\tau_n' < j \leq n}\prod_{i=1}^m\Big(1 - \frac{(2m+\delta(j))(j-\tau_n'-1)}{(2m+\delta(j))j-2m+i-1}\Big)\\
  &\geq \sum_{\tau_n' < j \leq n}\Big(1 - \frac{(2m+\delta(j))(j-\tau_n'-1)}{(2m+\delta(j))j-2m}\Big)^m\\
  &\geq \Delta_n'\Big(1 - \frac{m\Delta_n'}{n-2} \Big).
\end{align*}
Similarly,
\begin{multline*}
  \EE_1^n\Bigg(\sum_{v\in \Chi_{\tilde{G}_{\tau_n'+1}^j}(\tau_n'+1)}\D_{\tilde{G}_{\tau_n'}^j}(v) \ \Big\vert\  \mathcal{F}_{\tau_n'} \Bigg)\\
  = \sum_{\ell=1}^m\sum_{0\leq x_1 < \dots < x_{\ell}\leq \tau_n'}\sum_{\substack{y_1,\dots,y_{\ell}\geq 1\\y_1+\dots+y_{\ell}=m}} \frac{m!}{\prod_{i=1}^{\ell}y_i!}\frac{\prod_{i=1}^{\ell}\prod_{i'=1}^{y_i}\big(\D_{G_{\tau_n'}}(x_i) + i'-1 + \delta(j) \big)}{\prod_{i=1}^mS_{\tau_n'+1,i-1}(\delta(j))}D_{\tau_n'}^{\bm{x}_{\ell}}
\end{multline*}
from which we deduce that
\begin{align*}
  \EE_1^n(R_n)%
  &= m\sum_{\tau_n'< j,k\leq n }\frac{1}{S_{k,0}(\delta(k))}\Bigg(
    \EE_1^n\Bigg(\sum_{v\in \Chi_{\tilde{G}_{\tau_n'+1}^j}(\tau_n'+1)}\D_{\tilde{G}_{\tau_n'}^j}(v) \Bigg) + 2m + (m+1) \delta(k)\vee 0 \Bigg)\prod_{i=1}^m\frac{S_{\tau_n'+1,i-1}(\delta(j))}{S_{j,i-1}(\delta(j))}.
\end{align*}
But
\begin{align*}
  \EE_1^n\Bigg(\sum_{v\in \Chi_{\tilde{G}_{\tau_n'+1}^j}(\tau_n'+1)}\D_{\tilde{G}_{\tau_n'}^j}(v) \Bigg)%
  &\leq \sum_{i=1}^m\EE_1^n\big(\D_{\tilde{G}_{\tau_n'+1,i-1}^j}(\tilde{V}_{t,i}) \big)\\%
  &=\sum_{i=1}^m\EE_1^n\Bigg(\sum_{v=0}^{\tau_n'}\D_{\tilde{G}_{\tau_n'+1,i-1}^j}(v)%
    \frac{\D_{\tilde{G}_{\tau_n'+1,i-1}^j}(v) + \delta(j)}{S_{\tau_n'+1,i-1}(\delta(j))} \Bigg)\\
  &= \sum_{i=1}^m\sum_{v=0}^{\tau_n'}\frac{\EE_1^n\big(\D_{\tilde{G}_{\tau_n'+1,i-1}^j}(v)^2 \big) }{S_{\tau_n'+1,i-1}(\delta(j))}%
    +\delta(j) \sum_{i=1}^m\frac{2m\tau_n' + i-1}{S_{\tau_n'+1,i-1}(\delta(j))}\\
  &\leq 2m\sum_{v=0}^{\tau_n'}\frac{\EE_1^n\big((\D_{G_{\tau_n'}}(v) + \delta_0)^2 \big) }{S_{\tau_n'+1,0}(\delta(j))}%
    +\frac{2m(m\delta(j)\vee 0 + (m-\delta_0)^2)(\tau_n'+1)}{S_{\tau_n'+1,0}(\delta(j))}
\end{align*}
where we have used that only $m$ edges can be added between $\tau_n'$ and $\tau_{n}'+1$, so the difference between the degree of $v$ in $\tilde{G}_{\tau_n'+1}^j$ and its degree in $G_{\tau_n'}$ cannot exceed $m$. Remarking that in time interval $\llbracket 0,\tau_n'\rrbracket$ the process $(\tilde{G}_t)_{t\geq 1}$ evolves according to the preferential attachment rule with parameter $\delta_0$, and remarking that $\frac{\tau_n'+1}{S_{\tau_n'+1,0}(\delta(j))}$ is bounded by a constant, it follows letting $\underline{\delta} = \delta_0\wedge \delta_1$
\begin{align*}
  \EE_1^n(R_n)%
  &\leq \frac{\EE_1^n(\tilde{X}_n)\Delta_n'}{S_{\tau_n'+1,0}(\underline{\delta})}%
    \Bigg(C + \frac{2m^2}{S_{\tau_n'+1,0}(\underline{\delta})}\sum_{v=0}^{\tau_n'}\EE_0^n\big((\D_{G_{\tau_n'}}(v) + \delta_0)^2 \big) \Bigg)
\end{align*}
for a constant $C> 0$ depending only on $\delta_0$, $\delta_1$, and $m$. By Lemmas~\ref{lem:expec2momentdeg} and~\ref{lem:estimatescoeffdegree}, there are constants $C,C' > 0$ depending solely on $m$ and $\delta_0$ such that
\begin{align*}
  \sum_{v=0}^{\tau_n'}\EE_0^n\big((\D_{G_{\tau_n'}}(v) + \delta_0)^2 \big)%
  &\leq C'\sum_{v=0}^{\tau_n'}\Big(\frac{\tau_n'}{1\vee v} \Big)^{2m/(2m+\delta_0)}\\
  &\leq C'(\tau_n')^{2m/(2m+\delta_0)}\Bigg(4%
    + \int_1^{\tau_n'} \frac{1}{x^{2m/(2m+\delta_0)}}\intd x\Bigg)\\
  &\leq C'' %
    \begin{cases}
      (\tau_n')^{2m/(2m+\delta_0)} &\mathrm{if}\ \delta_0 < 0,\\
      \tau_n'\log(\tau_n') &\mathrm{if}\ \delta_0 = 0,\\
      \tau_n' &\mathrm{if}\ \delta_0 > 0.
    \end{cases}
\end{align*}
The conclusion follows because $S_{\tau_n'+1,0}(\underline{\delta}) = (2m+\underline{\delta})(\tau_n'+1) - 2m \geq m\tau_n' + \underline{\delta}$.
\end{proof}

  \begin{lemma}
    \label{lem:expeclastnodesbold}
    There exists a constant $B > 0$ depending only on $m$, $\delta_0$ and $\delta_1$, such that for all $2\leq \tau_n' < \tau_n \leq n$
  \begin{equation*}
    \Delta_n \geq%
    \EE_1^n\big(|\PartB(G_n)\cap \llbracket \tau_n+1,n\rrbracket| \big)%
    \geq \Delta_n%
    - \frac{B\Delta_n\Delta_n'}{\tau_n'}%
    \begin{cases}
      (\tau_n')^{-\delta_0/(2m+\delta_0)} &\mathrm{if}\ \delta_0 < 0,\\
      \log(\tau_n') &\mathrm{if}\ \delta_0 = 0,\\
      1 &\mathrm{if}\ \delta_0 > 0.
    \end{cases}
  \end{equation*}
\end{lemma}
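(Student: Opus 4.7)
The proof would mirror that of Lemma~\ref{lem:expectPartB} very closely; the only modification needed is to restrict the outer sum over $j$ from $\llbracket \tau_n'+1, n\rrbracket$ to $\llbracket \tau_n+1, n\rrbracket$, and then check that the resulting error bound scales as $\Delta_n\Delta_n'/\tau_n'$ rather than $(\Delta_n')^2/\tau_n'$. The upper bound $\EE_1^n(|\PartB(G_n)\cap\llbracket\tau_n+1,n\rrbracket|) \leq \Delta_n$ is immediate since $\PartB(G_n)\cap\llbracket\tau_n+1,n\rrbracket \subset \llbracket\tau_n+1,n\rrbracket$ almost-surely.

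For the lower bound, set $X_n^\star = |\PartB(G_n)\cap\llbracket\tau_n+1,n\rrbracket|$ and expand exactly as in the preceding proof:
\begin{equation*}
  X_n^\star = \sum_{\tau_n < j \leq n}\sum_{\ell=1}^m\sum_{0\leq x_1<\cdots<x_\ell \leq \tau_n'}\sum_{\substack{y_1,\dots,y_\ell\geq 1\\y_1+\cdots+y_\ell=m}}Y_n^{\bm{x}_\ell,\bm{y}_\ell,j},
\end{equation*}
where the indicator $Y_n^{\bm{x}_\ell,\bm{y}_\ell,j}$ is the same one as in Lemma~\ref{lem:expectPartB}. Taking the conditional expectation with respect to $\mathcal{F}_{\tau_n'}$ and bounding $1 - \prod_{k\neq j}(1 - p_k) \leq \sum_{k\neq j} p_k$ on the factors coming from $\tau_n' < k \leq n$, $k\neq j$, I obtain the pointwise inequality $\EE_1^n(X_n^\star\mid\mathcal{F}_{\tau_n'}) \geq \tilde{X}_n^\star - R_n^\star$, where $\tilde{X}_n^\star$ and $R_n^\star$ are defined by the same formulas as $\tilde{X}_n$ and $R_n$ in the previous proof but with the outer sum over $j$ restricted to $\llbracket\tau_n+1,n\rrbracket$.

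Invoking the same coupling with the auxiliary process $((\tilde{G}_{t,i}^j)_{i=0}^m)_{t\geq 1}$ as before yields
\begin{equation*}
  \EE_1^n(\tilde{X}_n^\star) = \sum_{\tau_n < j \leq n}\prod_{i=1}^m\frac{S_{\tau_n'+1,i-1}(\delta(j))}{S_{j,i-1}(\delta(j))} \geq \Delta_n\Bigl(1 - \frac{m\Delta_n'}{n-2}\Bigr),
\end{equation*}
which already contributes an error of order $m\Delta_n\Delta_n'/\tau_n'$. The upper bound on $\EE_1^n(R_n^\star)$ then goes through verbatim, except that the leading factor $\EE_1^n(\tilde{X}_n) \leq \Delta_n'$ is replaced by $\EE_1^n(\tilde{X}_n^\star) \leq \Delta_n$; consequently the final bound picks up an extra factor $\Delta_n/\Delta_n'$ compared to Lemma~\ref{lem:expectPartB}, producing an error of order $(\Delta_n\Delta_n'/\tau_n')\cdot\{(\tau_n')^{-\delta_0/(2m+\delta_0)},\,\log(\tau_n'),\,1\}$ according to the sign of $\delta_0$. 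The trichotomy comes from Lemmas~\ref{lem:expec2momentdeg} and~\ref{lem:estimatescoeffdegree}, used exactly as in Lemma~\ref{lem:expectPartB}.

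Since the argument is a cosmetic modification of the preceding proof, no genuinely new obstacle appears. The only point requiring care is verifying that the leading factor from the restricted sum in $\tilde{X}_n^\star$ indeed propagates through the upper bound on $R_n^\star$ as expected, so that the final error scales as $\Delta_n\Delta_n'/\tau_n'$ rather than $(\Delta_n')^2/\tau_n'$ times the appropriate power or logarithm of $\tau_n'$. Combining the two bounds then yields the stated lower bound on $\EE_1^n(|\PartB(G_n)\cap\llbracket\tau_n+1,n\rrbracket|)$.
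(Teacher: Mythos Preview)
Your proposal is correct and matches the paper's approach exactly: the paper's proof simply observes that $|\PartB(G_n)\cap\llbracket\tau_n+1,n\rrbracket|$ is the same sum of indicators as in Lemma~\ref{lem:expectPartB} with the outer index $j$ restricted to $\llbracket\tau_n+1,n\rrbracket$, and then states that the rest is identical \textit{mutatis mutandis}. You have correctly identified where the $\Delta_n$ replaces $\Delta_n'$ (in the restricted $j$-sum defining $\tilde{X}_n^\star$ and in the leading factor of the bound on $R_n^\star$) while the $\Delta_n'$ from the $k$-sum over $\llbracket\tau_n'+1,n\rrbracket$ remains, yielding the $\Delta_n\Delta_n'/\tau_n'$ scaling.
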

\begin{proof}
  The lemma follows by remarking that $|\PartB(G_n)\cap \llbracket \tau_n+1,n\rrbracket|$ can be rewritten as
  \begin{equation*}
     \sum_{j=\tau_n+1}^n\1\Big(\D_{G_n}(j) = m,\ \forall k \in \Chi_{G_n}(j),\ k\leq \tau_n'\ \mathrm{and}\ \forall \ell \in \Par_{G_n}(k)\backslash \{j\},\ \ell \leq \tau_n' \Big).
  \end{equation*}
  Then the rest of the proof is identical to Lemma~\ref{lem:expectPartB} \textit{mutatis mutandis}.
\end{proof}

\subsubsection{Auxiliary results used to prove the Proposition~\ref{pro:eventBn}}

  \begin{lemma}
    \label{lem:expec:degree}
    Let $\gamma_t = \prod_{i=1}^m\big(1 + \frac{1}{S_{t,i-1}(\delta_0)} \big)$. For every $0\leq u < t \leq n$
    \begin{align*}
      \EE_0^n(\D_{G_t}(u) + \delta_0)%
      &= \gamma_t\EE_0^n( \D_{G_{t-1}}(u) + \delta_0 ).
  \end{align*}
\end{lemma}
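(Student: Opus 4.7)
The plan is to pass from the ``macro'' recursion on $(G_t)_{t\geq 1}$ to the ``micro'' recursion on the intermediate process $((G_{t,i})_{i=0}^m)_{t\geq 1}$ introduced in Section~\ref{sec:PA_mechanism}, exploit the preferential-attachment rule \eqref{PA_model} at each of the $m$ substeps, and iterate. The key observation is that the function $x \mapsto x+\delta_0$ interacts multiplicatively with the attachment probabilities, so iterating produces the product $\gamma_t$ in one shot.

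First, I would note that $G_{t,0}$ is obtained from $G_{t-1}$ simply by adding the isolated vertex $t$, so for any $u < t$ one has $\D_{G_{t,0}}(u) = \D_{G_{t-1}}(u)$ and, similarly, $\D_{G_t}(u) = \D_{G_{t,m}}(u)$. The micro-step from $G_{t,i-1}$ to $G_{t,i}$ adds a single edge directed from $t$ to the vertex $V_{t,i}$, chosen according to \eqref{PA_model} with parameter $\delta_0$ under $\PP_0^n$. Consequently $\D_{G_{t,i}}(u) = \D_{G_{t,i-1}}(u) + \mathbf{1}(V_{t,i} = u)$, and conditional on the $\sigma$-field $\mathcal{F}_{t,i-1} = \sigma(G_1,\dots,G_{t-1},G_{t,1},\dots,G_{t,i-1})$,
\begin{equation*}
  \EE_0^n\bigl[\D_{G_{t,i}}(u) + \delta_0 \bigm| \mathcal{F}_{t,i-1}\bigr]
  = \D_{G_{t,i-1}}(u) + \delta_0 + \frac{\D_{G_{t,i-1}}(u)+\delta_0}{S_{t,i-1}(\delta_0)}
  = \bigl(\D_{G_{t,i-1}}(u)+\delta_0\bigr)\left(1 + \frac{1}{S_{t,i-1}(\delta_0)}\right).
\end{equation*}

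Taking expectations and iterating over $i = 1,\dots,m$ yields
\begin{equation*}
  \EE_0^n\bigl[\D_{G_{t,m}}(u)+\delta_0\bigr]
  = \EE_0^n\bigl[\D_{G_{t,0}}(u)+\delta_0\bigr]\prod_{i=1}^m\left(1+\frac{1}{S_{t,i-1}(\delta_0)}\right)
  = \gamma_t\,\EE_0^n\bigl[\D_{G_{t-1}}(u)+\delta_0\bigr],
\end{equation*}
which is exactly the claimed identity once one substitutes $\D_{G_t}(u) = \D_{G_{t,m}}(u)$ on the left. There is no real obstacle here; the only thing to be careful about is that one genuinely has $u < t$, so that $u$ is a vertex present in both $G_{t-1}$ and $G_{t,i-1}$ for every $i$, which justifies applying \eqref{PA_model} to $u$ at each substep.
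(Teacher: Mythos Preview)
Your proof is correct and is precisely the standard computation the paper alludes to (the paper's own proof merely cites \cite[Chapter~8]{hofstad_2016}); you have simply written out the micro-step recursion explicitly, using that $S_{t,i-1}(\delta_0)$ is deterministic so the tower property yields the product $\gamma_t$.
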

\begin{proof}
  These are standard computations, see for instance \cite[Chapter~8]{hofstad_2016}.
\end{proof}

  \begin{lemma}
    \label{lem:expec2momentdeg}
  For every $2 \leq t \leq n$ and $0 \leq u \leq  t$
  \begin{equation*}
    \EE_0^n[(\D_{G_t}(u) + \delta_0)^2 ]%
    = \xi_{1\vee u}^t(m + \delta_0)^2%
    + \kappa_{1\vee u}^t(m+\delta_0)
  \end{equation*}
  where for all $r = 1,\dots,t$:
  \begin{equation*}
    \xi_r^t = \prod_{r+1\leq j \leq t} \prod_{i=1}^m\Big(1 + \frac{2}{S_{j,i-1}(\delta_0)}\Big)
  \end{equation*}
  and
  \begin{multline*}
    \kappa_r^t%
    =  \sum_{r+1\leq j \leq t}\Bigg(\prod_{j+1\leq p \leq t}\prod_{i=1}^m\Big(1 + \frac{2}{S_{p,i-1}(\delta_0)}\Big) \Bigg)\Bigg(\prod_{r+1\leq p \leq j-1}\prod_{i=1}^m\Big(1 + \frac{1}{S_{p,i-1}(\delta_0)}\Big)\Bigg)\\
    \times%
    \Bigg(\sum_{k=1}^m\frac{1}{S_{j,k-1}(\delta_0)}\prod_{1\leq i \leq k-1}\Big(1 + \frac{1}{S_{j,i-1}(\delta_0)}\Big)\prod_{k+1\leq i \leq m}\Big(1 + \frac{2}{S_{j,i-1}(\delta_0)}\Big) \Bigg).
  \end{multline*}
\end{lemma}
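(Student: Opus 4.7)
The plan is a one-step/one-edge recursion for the conditional second moment of $\D_{G_{j,i}}(u)+\delta_0$, followed by unrolling. The initial condition is $\D_{G_{1\vee u,m}}(u)=m$ almost-surely (vertex $u$ has out-degree $m$ and no in-degree right after its arrival, with the convention $u=0$ treated at time $1$), hence $(\D_{G_{1\vee u,m}}(u)+\delta_0)^2=(m+\delta_0)^2$ is deterministic.

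The key recursion is obtained by writing $\D_{G_{j,i}}(u)=\D_{G_{j,i-1}}(u)+\1(V_{j,i}=u)$ and using $\1^2=\1$. Since
\begin{equation*}
  \PP_0^n\big(V_{j,i}=u \mid G_{j,i-1}\big)=\frac{\D_{G_{j,i-1}}(u)+\delta_0}{S_{j,i-1}(\delta_0)},
\end{equation*}
expanding $(\D_{G_{j,i-1}}(u)+\delta_0+\1)^2=(\D_{G_{j,i-1}}(u)+\delta_0)^2+(2(\D_{G_{j,i-1}}(u)+\delta_0)+1)\1$ and taking conditional expectation yields
\begin{equation*}
  \EE_0^n\big[(\D_{G_{j,i}}(u)+\delta_0)^2\,\big|\,G_{j,i-1}\big]
  =\Big(1+\tfrac{2}{S_{j,i-1}(\delta_0)}\Big)(\D_{G_{j,i-1}}(u)+\delta_0)^2
  +\tfrac{1}{S_{j,i-1}(\delta_0)}(\D_{G_{j,i-1}}(u)+\delta_0).
\end{equation*}
Taking unconditional expectations gives a deterministic inhomogeneous linear recursion on $a_{j,i}\equiv \EE_0^n[(\D_{G_{j,i}}(u)+\delta_0)^2]$ whose inhomogeneous term involves $\EE_0^n[\D_{G_{j,i-1}}(u)+\delta_0]$.

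The next step is to compute the first-moment term at the edge level. By the same one-edge argument (without squaring), $\EE_0^n[\D_{G_{j,i}}(u)+\delta_0 \mid G_{j,i-1}]=(1+1/S_{j,i-1}(\delta_0))(\D_{G_{j,i-1}}(u)+\delta_0)$, which is the finer-grained version of Lemma~\ref{lem:expec:degree}. Iterating from $(1\vee u,m)$ gives, for $r=1\vee u$,
\begin{equation*}
  \EE_0^n\big[\D_{G_{j,k-1}}(u)+\delta_0\big]
  =(m+\delta_0)\prod_{r+1\leq p\leq j-1}\prod_{i=1}^m\Big(1+\tfrac{1}{S_{p,i-1}(\delta_0)}\Big)\prod_{i=1}^{k-1}\Big(1+\tfrac{1}{S_{j,i-1}(\delta_0)}\Big).
\end{equation*}

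Finally, I would unroll the recursion on $a_{j,i}$ from the deterministic initial value $a_{r,m}=(m+\delta_0)^2$ up to $a_{t,m}$, using the standard variation-of-constants solution of a first-order linear recurrence indexed by the lexicographic pairs $(j,i)$. The homogeneous part produces exactly $\xi_r^t(m+\delta_0)^2$, with $\xi_r^t$ being the product of the factors $(1+2/S_{p,i-1}(\delta_0))$ accumulated from $(r,m)$ to $(t,m)$. The inhomogeneous part is a double sum indexed by $(j,k)$ with $r+1\leq j\leq t$, $1\leq k\leq m$, whose $(j,k)$-term is the product of the $(1+2/S)$-factors strictly after $(j,k)$ (giving $\prod_{j+1\leq p\leq t}\prod_{i=1}^m(1+2/S_{p,i-1})\cdot\prod_{i=k+1}^m(1+2/S_{j,i-1})$), times $\tfrac{1}{S_{j,k-1}(\delta_0)}$, times $\EE_0^n[\D_{G_{j,k-1}}(u)+\delta_0]$. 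Substituting the first-moment expression above and grouping the $\prod_{r+1\leq p\leq j-1}$ and $\prod_{i\leq k-1}$ factors exactly matches the stated expression $\kappa_r^t(m+\delta_0)$.

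There is no conceptual difficulty here; the only work is careful bookkeeping of the $\xi$/$\kappa$ products when the linear recursion is indexed by the pairs $(j,i)$ rather than a single time, and in correctly isolating the deterministic initial condition at step $(1\vee u,m)$. The main (very minor) obstacle is making sure the $u=0$ case is treated consistently with the construction of $G_{1,m}$ so that the uniform initialization $r=1\vee u$ with $\D_{G_{r,m}}(u)=m$ is valid.
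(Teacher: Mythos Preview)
Your proposal is correct and takes essentially the same approach as the paper: both derive the one-edge conditional recursion $\EE_0^n[(\D_{G_{j,i}}(u)+\delta_0)^2\mid G_{j,i-1}]=(1+2/S_{j,i-1})(\D_{G_{j,i-1}}(u)+\delta_0)^2+(1/S_{j,i-1})(\D_{G_{j,i-1}}(u)+\delta_0)$ and then unroll from the deterministic initial value $(m+\delta_0)^2$ at step $1\vee u$. The only cosmetic difference is that the paper packages the unrolling as a martingale argument (choosing backward-defined coefficients $\alpha_{t,i},\beta_{t,i}$ and then $\xi_j^t,\kappa_j^t$ so that the linear combination is a martingale), whereas you take expectations first and solve the resulting deterministic recursion by variation of constants; the computations and the resulting formulas coincide.
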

\begin{proof}
  Let $\mathcal{F}_t = \sigma(G_1,\dots,G_t)$.  We first compute $\EE_0^n[(\D_{G_{t}}(u) + \delta_0)^2 \mid \mathcal{F}_t] = \EE_0^n[(\D_{G_{t,m}}(u) + \delta_0)^2 \mid \mathcal{F}_{t-1}]$. We define the coefficients $(\alpha_{t,i})_{i=1}^m$ and $(\beta_{t,i})_{i=1}^{m}$ such that $\alpha_{t,m} = 1$ and $\beta_{t,m} = 0$, and satisfying the recurrence for $i=m,\dots,1$
  \begin{equation*}
    \alpha_{t,i-1}
    = \alpha_{t,i}\Big(1 + \frac{2}{S_{t,i-1}(\delta_0)}\Big),\qquad\qquad%
    \beta_{t,i-1},%
    = \beta_{t,i}\Big(1 + \frac{1}{S_{t,i-1}(\delta_0)}\Big)%
      + \frac{\alpha_{t,i}}{S_{t,i-1}(\delta_0)}.
  \end{equation*}
  It is seen that for every $r=1,\dots,m$ (using the convention that empty product equals one and empty sum equals zero):
  \begin{align*}
    \alpha_{t,r}%
    &= \prod_{r+1\leq j\leq m}\Big(1 + \frac{2}{S_{t,j-1}(\delta_0)}\Big)\\
    \beta_{t,r}%
    \notag
    &= \sum_{r+1\leq k\leq m}\frac{\alpha_{t,k}}{S_{t,k-1}(\delta_0)}\prod_{r+1\leq j \leq k-1}\Big(1 + \frac{1}{S_{t,j-1}(\delta_0)}\Big)\\%
    &= \sum_{r+1\leq k\leq m}\frac{1}{S_{t,k-1}(\delta_0)}\prod_{r+1\leq j \leq k-1}\Big(1 + \frac{1}{S_{t,j-1}(\delta_0)}\Big)\prod_{k+1\leq j \leq m}\Big(1 + \frac{2}{S_{t,j-1}(\delta_0)}\Big).
  \end{align*}
  Then we consider the random variable
  \begin{equation*}
    M_{t,i} = \alpha_{t,i}\big(\D_{G_{t,i}}(u) + \delta \big)^2 + \beta_{t,i}\big(\D_{G_{t,i}}(u) + \delta \big).
  \end{equation*}
  We claim that $(M_{t,i})_{i=1}^m$ is a martingale with respect to $(\mathcal{F}_{t,i})_{i=1}^m$, where $\mathcal{F}_{t,i} = \sigma(G_{t,0},\dots,G_{t,i})$; \textit{ie}. we claim that $\EE(M_{t,i} \mid \mathcal{F}_{t,i-1}) = M_{t,i-1}$ for $i=1,\dots,m$. Indeed for $i=1,\dots,m$
  \begin{align*}
    \EE_0^n\big( (\D_{G_{t,i}}(u) + \delta_0)^2 \mid \mathcal{F}_{t,i-1}  \big)%
    &= (\D_{G_{t,i-1}}(u) + 1 + \delta_0)^2 \frac{\D_{G_{t,i-1}}(u) + \delta_0}{S_{t,i-1}(\delta_0)}%
      + (\D_{G_{t,i-1}}(u) + \delta_0)^2\Big(1 - \frac{\D_{G_{t,i-1}}(u) + \delta_0}{S_{t,i-1}(\delta_0)}\Big)\\
    &= (\D_{G_{t,i-1}}(u) + \delta_0)^2\Big(1 + \frac{2}{S_{t,i-1}(\delta_0)}\Big)%
      + \frac{\D_{G_{t,i-1}}(u) + \delta_0 }{S_{t,i-1}(\delta_0)}
  \end{align*}
  and
  \begin{align*}
    \EE_0^n\big( \D_{G_{t,i}}(u) + \delta_0 \mid \mathcal{F}_{t,i-1}  \big)%
    &= (\D_{G_{t,i-1}}(u) + 1 + \delta_0) \frac{\D_{G_{t,i-1}}(u) + \delta_0}{S_{t,i-1}(\delta_0)}%
      + (\D_{G_{t,i-1}}(u) + \delta_0)\Big(1 - \frac{\D_{G_{t,i-1}}(u) + \delta_0}{S_{t,i-1}(\delta_0)}\Big)\\
    &= (\D_{G_{t,i-1}}(u) + \delta_0 )\Big(1 + \frac{1}{S_{t,i-1}(\delta_0)}\Big)
  \end{align*}
  so that
  \begin{align*}
    \EE_0^n(M_{t,i}\mid \mathcal{F}_{t,i-1})%
    &= \alpha_{t,i}\EE_0^n\big( (\D_{G_{t,i}}(u) + \delta_0)^2 \mid \mathcal{F}_{t,i-1}  \big)%
      + \beta_{t,i}\EE_0^n\big( (\D_{G_{t,i}}(u) + \delta_0) \mathcal{F}_{t,i-1} \big)\\
    &= \alpha_{t,i}\Big(1 + \frac{2}{S_{t,i-1}(\delta_0)}\Big)(\D_{G_{t,i-1}}(u) + \delta_0)^2%
      + \Bigg(\frac{\alpha_{t,i}}{S_{t,i-1}(\delta_0)} + \beta_{t,i}\Big(1 + \frac{1}{S_{t,i-1}(\delta_0)}\Big) \Bigg)(\D_{G_{t,i-1}}(u) + \delta_0)\\
    &= \alpha_{t,i-1}(\D_{G_{t,i-1}}(u) + \delta_0)^2%
      + \beta_{t,i-1}(\D_{G_{t,i-1}}(u) + \delta_0)\\
    &= M_{t,i-1}.
  \end{align*}
  Then,
  \begin{align*}
    \EE_0^n[(\D_{G_{t,m}}(u)+\delta)^2 \mid \mathcal{F}_{t-1} ]%
    &= \EE_1^n[M_{t,m} \mid \mathcal{F}_{t-1} ]\\%
    &= M_{t,0}\\
    &= \alpha_{t,0}\big(\D_{G_{t},0}(u) + \delta \big)^2%
      + \beta_{t,0}\big(\D_{G_{t,0}}(u) + \delta \big)\\
    &=\alpha_{t,0}\big(\D_{G_{t-1}}(u) + \delta \big)^2%
      + \beta_{t,0}\big(\D_{G_{t-1}}(u) + \delta \big).
  \end{align*}
  Next, let $(\xi_j^t)_{j=1}^{t}$ and $(\kappa_j^t)_{j=1}^t$ as in the statement of the lemma, and $\gamma_{j} = \prod_{i=1}^m\big(1 + \frac{1}{S_{j,i-1}(\delta_0)}\big)$ (as in Lemma~\ref{lem:expec:degree}). It is straightforward to show that $(\xi_j^t)_{j=1}^t$ and $(\kappa_j^t)_{j=1}^t$ satisfy $\xi_t^t=1$ and $\kappa_t^t = 0$ and the recurrence 
  \begin{equation*}
    \xi_{j-1}^t = \xi_{j}^t\alpha_{j,0},\qquad\qquad%
    \kappa_{j-1}^t = \xi_{j}^t\beta_{j,0} + \kappa_{j}^t\gamma_{j}.
  \end{equation*}
  Indeed, for $r=1,\dots,t$
  \begin{align*}
    \xi_r^t%
    &= \prod_{r+1\leq j\leq t}\alpha_{j,0}%
      = \prod_{r+1\leq j \leq t} \prod_{i=1}^m\Big(1 + \frac{2}{S_{j,i-1}(\delta_0)}\Big)\\
    \kappa_r^t%
    &= \sum_{r+1\leq j \leq t}\xi_j^t\beta_{j,0}\prod_{r+1\leq k \leq j-1}\gamma_k%
  \end{align*}
  which are equal to the expression given in the statement of the lemma. Let now define for $j=1\vee u,\dots,t$
  \begin{equation*}
    M_j' = \xi_j^t\big( \D_{G_j}(u) + \delta_0\big)^2%
    + \kappa_j^t\big(\D_{G_j}(u) + \delta_0\big).
  \end{equation*}
  The claim is that $(M_j')_{j\geq 1}$ is a martingale with respect to $(\mathcal{F}_j)_{j=1\vee u}^t$. Indeed, using Lemma~\ref{lem:expec:degree} and the above computations
  \begin{align*}
    \EE_0^n(M_j' \mid \mathcal{F}_{j-1})%
    &= \xi_j^t \EE_0^n\big( (\D_{G_j}(u) + \delta_0)^2 \mid \mathcal{F}_{j-1}\big)%
      + \kappa_j^t \EE_0^n\big(\D_{G_j}(u) + \delta_0 \mid \mathcal{F}_{j-1}\big)\\
    &= \xi_j^t\Big(\alpha_{j,0}(\D_{G_{j-1}}(u) + \delta_0)^2 + \beta_{j,0}(\D_{G_{j-1}}(u) + \delta_0)  \Big)%
      + \kappa_j^t \gamma_j (\D_{G_{j-1}}(u) + \delta_0)\\
    &= \xi_j^t\alpha_{j,0}(\D_{G_{j-1}}(u) + \delta_0)^2%
      + \big(\xi_j^t\beta_{j,0} + \kappa_j^t\gamma_j \big)(\D_{G_{j-1}}(u) + \delta_0)\\
    &= \xi_{j-1}^t(\D_{G_{j-1}}(u) + \delta_0)^2%
      + \kappa_{j-1}^t(\D_{G_{j-1}}(u) + \delta_0)\\
    &= M_{j-1}'.
  \end{align*}
  This implies that (because $\D_{G_{1\vee u}}(u) = m$ almost-surely)
  \begin{equation*}
    \EE_0^n\big((\D_{G_t}(u) + \delta)^2 \big)%
    = \EE_0^n(M_t')
    = \EE_0^n(M_{1\vee u}')
    = \xi_{1\vee u}(m + \delta)^2 + \kappa_{1\vee u}(m + \delta).\qedhere
  \end{equation*}
\end{proof}

  \begin{lemma}
    \label{lem:estimatescoeffdegree}
    Let $\xi_{1\vee u}^t$ and $\kappa_{1\vee u}^t$ as in the statement of Lemma~\ref{lem:expec2momentdeg}. There exists a constant $B > 0$ depending only on $m$ and $\delta_0$ such that for all $0\leq u < t$ such that
    \begin{equation*}
      \max(\xi_{1\vee u}^t,\, \kappa_{1\vee u}^t )%
      \leq B\Big(\frac{t}{1\vee u} \Big)^{2m/(2m+\delta_0)}.
    \end{equation*}
  \end{lemma}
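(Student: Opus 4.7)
The plan is to reduce everything to the elementary inequality $\log(1+x) \leq x$ for $x \geq 0$ after taking logarithms. The essential input is the quantitative comparison $S_{p,i-1}(\delta_0) = p(2m+\delta_0) - 2m + i - 1 \geq c_0 \, p$, valid for all $p \geq 2$ and $1 \leq i \leq m$, where $c_0 = c_0(m,\delta_0) > 0$ exists because $\delta_0 > -m$ forces $2m+\delta_0 > 0$. More precisely, $\sum_{i=1}^m 1/S_{p,i-1}(\delta_0) = \frac{m}{p(2m+\delta_0)} + O(p^{-2})$ with implicit constants depending only on $m,\delta_0$.

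For $\xi_{r}^t$, I would write
\[
  \log \xi_r^t \;\leq\; \sum_{j=r+1}^t \sum_{i=1}^m \frac{2}{S_{j,i-1}(\delta_0)} \;\leq\; \frac{2m}{2m+\delta_0}\sum_{j=r+1}^t \frac{1}{j} + C,
\]
where the $O(j^{-2})$ remainder is absorbed into $C = C(m,\delta_0)$ because $\sum_{j\geq 2} j^{-2} < \infty$. Comparing the harmonic sum with $\int_r^t x^{-1}\,\intd x = \log(t/r)$ yields $\log \xi_r^t \leq \frac{2m}{2m+\delta_0}\log(t/r) + C'$, so $\xi_{r}^t \leq B(t/r)^{2m/(2m+\delta_0)}$.

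For $\kappa_r^t$, observe that each summand factors as $\xi_j^t \cdot P_j^r \cdot b_j$, where $P_j^r = \prod_{r+1 \leq p \leq j-1} \prod_{i=1}^m (1 + 1/S_{p,i-1}(\delta_0))$ and $b_j$ is the inner parenthesized sum (which is precisely $\beta_{j,0}$ from the previous lemma). The same log argument with a coefficient $1$ instead of $2$ gives $P_j^r \leq B(j/r)^{m/(2m+\delta_0)}$. For $b_j$, each product in its definition is uniformly bounded for $j \geq 2$ (its log is $O(1/j)$), hence $b_j \leq B\sum_{k=1}^m 1/S_{j,k-1}(\delta_0) \leq B'/j$. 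Plugging in the three bounds, every term in $\kappa_r^t$ is at most $B \cdot t^{2m/(2m+\delta_0)} \cdot r^{-m/(2m+\delta_0)} \cdot j^{-m/(2m+\delta_0)-1}$. Summing over $j \in \llbracket r+1,t\rrbracket$ via the integral comparison
\[
  \sum_{j=r+1}^t j^{-m/(2m+\delta_0)-1} \;\leq\; \frac{2m+\delta_0}{m}\, r^{-m/(2m+\delta_0)},
\]
which converges since $m/(2m+\delta_0) > 0$, yields $\kappa_r^t \leq B(t/r)^{2m/(2m+\delta_0)}$.

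The main obstacle is purely bookkeeping: I need to ensure all remainder terms are absorbed into a constant $B$ that depends only on $m$ and $\delta_0$, without any residual dependence on $r$ or $t$. This forces me to separate the regime of small $p$ (where $S_{p,i-1}(\delta_0)$ deviates from $p(2m+\delta_0)$ by a non-negligible amount) from the asymptotic regime, and to verify that the integral of $x^{-m/(2m+\delta_0)-1}$ converges near infinity — both of which rely crucially on the model's standing assumption $\delta_0 > -m$.
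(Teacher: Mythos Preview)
Your proposal is correct and follows essentially the same route as the paper's proof: both take logarithms, use $\log(1+x)\leq x$, exploit the estimate $\sum_{i=1}^m 1/S_{j,i-1}(\delta_0) = \frac{m}{(2m+\delta_0)j} + O(j^{-2})$, and for $\kappa_r^t$ factor each summand as $\xi_j^t\cdot P_j^r\cdot b_j$ before summing via an integral comparison. The paper packages the partial sums into an auxiliary function $g(n)=\sum_{j=2}^n\sum_{i=1}^m 1/S_{j,i-1}(\delta_0)$ and writes the bounds as $e^{2g(t)-2g(r)}$ and $e^{g(j-1)-g(r)}$, but this is purely organizational and the computation is identical to yours.
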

  \begin{proof}
    Let define the function $g : \mathbb{N} \to \mathbb{R}_+$ as $g(1) = 0$ and $g(n) = \sum_{j=2}^n\sum_{i=1}^m\frac{1}{S_{j,i-1}(\delta_0)}$ for $n\geq 2$. Observe that for any $n\geq 2$
    \begin{align*}
      g(n)%
      &= \sum_{j=2}^n\sum_{i=1}^m\frac{1}{(2m+\delta_0)j - 2m+i-1}\\
      &= \frac{1}{2m+\delta_0}\sum_{j=2}^n\sum_{i=1}^m\frac{1}{j + \frac{-2m + i-1}{2m+\delta_{0}}}\\
      &= \frac{m}{2m + \delta_0}\sum_{j=2}^n\frac{1}{j}%
        + \frac{1}{2m+\delta_0}\sum_{j=1}^n\sum_{i=1}^m\Big(\frac{1}{j + \frac{-2m + i-1}{2m+\delta_{0}}} - \frac{1}{j} \Big)\\
      &= \frac{m}{2m + \delta_0}\sum_{j=2}^n\frac{1}{j}%
        + \frac{1}{(2m+\delta_0)^2}\sum_{j=1}^n\sum_{i=1}^m\frac{-2m + i-1}{j\big(j+ \frac{-2m + i-1}{2m+\delta_{0}}\big)}.
    \end{align*}
    Thus letting $H(n) = \sum_{j=1}^n\frac{1}{j}$ denote the $j$-th harmonic number, we deduce that there is a constant $C > 0$ depending only on $m$ and $\delta_0$ such that for all $n\geq 2$:
    \begin{equation}
      \label{eq:harmonix}
      \frac{m}{2m+\delta_0}\big(\gamma + \log(n)\big) - C
      \leq 
      \frac{m}{2m+\delta_0}H(n) - C
      \leq g(n) \leq \frac{m}{2m+\delta_0}\big(H(n) -1\big)
      \leq \frac{m}{2m+\delta_0}\big(\gamma + \log(n)\big)
    \end{equation}
    with $\gamma$ the Euler constant, using well known bounds on the harmonic numbers.  It follows from \eqref{eq:harmonix} that
    \begin{align*}
      \xi_{1\vee u}^t%
      &\leq \exp\big(2g(t) - 2g(1\vee u) \big)\\
      &\leq \exp\Big(\frac{2m}{2m+\delta_0}\log\Big(\frac{t}{1\vee u} \Big) + C \Big).
    \end{align*}
    Next, since $\max_{2\leq j \leq t}\max_{1\leq \leq m}\prod_{1\leq i \leq k-1}\big(1 + \frac{1}{S_{j,i-1}(\delta_0)}\big)\prod_{k+1\leq i \leq m}\big(1 + \frac{2}{S_{j,i-1}(\delta_0)}\big)$ is finite, we find that for some constants $C',C'',C''' > 0$ depending only on $m$ and $\delta_0$
    \begin{align*}
      \kappa_{1\vee u}^t%
      &\leq C'\sum_{1\vee u+1\leq j \leq t}\frac{1}{S_{j,0}(\delta_0)}e^{2g(t) - 2g(j)}e^{g(j-1) - g(1\vee u)}\\
      &\leq C'' \sum_{1\vee u +1 \leq j \leq t}\frac{1}{j}\Big(\frac{t}{j} \Big)^{2m/(2m+\delta_0)}\Big(\frac{j}{1\vee u}\Big)^{m/(2m+\delta_0)}\\
      &\leq C'' \frac{t^{2m/(2m+\delta_0)}}{(1\vee u)^{m/(2m+\delta_0)}}\int_{1\vee u}^t \frac{1}{x^{1+m/(2m+\delta_0)}}\intd x\\
      &\leq C'''\Big(\frac{t}{1\vee u} \Big)^{2m/(2m+\delta_0)}.
    \end{align*}
    This concludes the proof.
  \end{proof}


\section*{Acknowledgements}

{\'E}lisabeth Gassiat is supported by \textit{Institut Universitaire de France}. All the authors are supported by the \textit{Agence Nationale de la Recherche} under projects ANR-21-CE23-0035-02 and ANR-23-CE40-0018-02.

  \putbib%
\end{bibunit}

\newpage

\setcounter{page}{1}
\setcounter{section}{0}
\setcounter{table}{0}
\setcounter{figure}{0}
\setcounter{theorem}{0}
\setcounter{equation}{0}

\renewcommand{\thepage}{S\arabic{page}}
\renewcommand{\thesection}{S\arabic{section}}
\renewcommand{\thetable}{S\arabic{table}}
\renewcommand{\thefigure}{S\arabic{figure}}
\renewcommand{\thetheorem}{S\arabic{section}.\arabic{theorem}}
\renewcommand{\theproposition}{S\arabic{section}.\arabic{proposition}}
\renewcommand{\thelemma}{S\arabic{section}.\arabic{lemma}}
\renewcommand{\theequation}{S\arabic{section}.\arabic{equation}}


\begin{bibunit}[plain]

\title{On the impossibility of detecting a late change-point in the preferential attachment random graph model: supplementary material}


\maketitle

\section{Organization}
\label{sec:supp:introduction}

This document is the supplementary material for the article \textit{On the impossibility of detecting a late change-point in the preferential attachment random graph model} \cite{kng:main}. It contains the statements of additional results in the case where the observation is the labeled graph (Section~\ref{sec:labeled:additional}), as well as their proofs.
We refer to the main document for all the definitions and notations. Supplementary notations used only in this document are given in Section~\ref{sec:sm:notations}.

Every section, subsection, theorem, etc. of the supplemental has label prefixed
by S and is cited with prefix. References to the main document are cited
with no prefix.

\section{Additional results when the observation is the labeled graph}
\label{sec:labeled:additional}

\subsection{Maximum Likelihood Estimation of \texorpdfstring{$(\delta_0,\delta_1)$}{}}
\label{sec:maxim-likel-estim}

In \cite{GV17}, the estimation of $\delta_0$ was done under the null hypothesis. We now investigate the estimation of $\delta_0$ and $\delta_1$ in the model where there is a change-point from $\delta_0$ to $\delta_1$ at instant $\tau_n = n -\Delta_n$. Here $\tau_n$ is assumed to be known. As shown in the expression of the likelihood in Lemma 
A.3, the likelihood factorizes in two parts each of those involving only $\delta_0$ or $\delta_1$; \textit{ie}. letting (here $G_{\tau_n} = G_n[\llbracket 0,\tau_n\rrbracket]$)
\begin{equation*}
    \begin{split}
        \ell_{1:\tau_n}(\delta_0) &= \log\left(\frac{\prod_{k=m}^{\tau_n m}\left(k+\delta\right)^{N_{>k}(G_{\tau_n})}}{\prod_{t=2}^{\tau_n}\prod_{i=1}^{m}\left[\left(2m+\delta_0\right)t-2m + i -1\right]}\right)\\
        \ell_{\tau_n + 1:n}(\delta_1) &= \log\left(\frac{\prod_{k=m}^{nm}\left(k+\delta_1\right)^{N_{>k}(G_{n})-N_{>k}(G_{\tau_n})}}{\prod_{t=\tau_n +1}^{n}\prod_{i=1}^{m}\left[\left(2m+\delta_1\right)t-2m + i-1\right]}\right)
    \end{split}
\end{equation*}
the log-likelihood of $(\delta_0,\delta_1)$ writes as $\ell_{1:\tau_n}(\delta_0) + \ell_{\tau_n + 1:n}(\delta_1)$. Then, building on the work of \cite{GV17} in the no change-point model, we obtain in the next theorem the asymptotic normality of the MLE in the model with a change-point.

As it will be useful in the next, we recall the expression of the limiting degree distribution of the affine preferential attachment model with parameter $\delta$ (see \cite[Sections~8.6.1 and~8.6.2]{hofstad_2016} for details):
\begin{equation}
  \label{eq:limitdegreeseq}
  p_k(\delta)%
  =%
  (2+\delta/m)\frac{\Gamma(k +\delta)\Gamma(m+2+\delta+\delta/m)}{\Gamma(m+\delta)\Gamma(k+3+\delta+\delta/m)}.
\end{equation}

\begin{theorem}
  \label{thm:mle:normality}
    For all $(\delta_0,\delta_1) \in (-m,\infty)^2$, if $\tau_n \to \infty$ and $\Delta_n \to \infty$, then $(\delta,\delta') \mapsto \ell_{1:\tau_n}(\delta) + \ell_{\tau_n + 1:n}(\delta')$ has a unique maximizer $(\hat\delta_{0,n},\hat\delta_{1,n})$ with probability going to one under $(\PP_1^n)_{n\geq 1}$, and
    \begin{equation*}
      \begin{pmatrix}
        \sqrt{\tau_n} & 0\\
        0 &\sqrt{\Delta_n}\\
      \end{pmatrix}
      \begin{pmatrix}
        \hat\delta_{0,n} - \delta_0\\
        \hat\delta_{1,n} - \delta_1\\
      \end{pmatrix}
      \overset{\PP_1^n}{\rightsquigarrow} \mathcal{N}\Bigg(0,%
      \begin{pmatrix}
        \nu_0 & 0\\
        0 & \nu_1
      \end{pmatrix}^{-1}
      \Bigg)
    \end{equation*}
    where $\rightsquigarrow$ stands for convergence in distribution under $(\PP_1^n)_{n\geq 1}$ and where for $j=0,1$
    \begin{equation*}
    \nu_j = \frac{m}{2m+\delta_j}\Bigg(\sum_{k=m}^{\infty}\frac{p_k(\delta_0)}{k+\delta_j} - \frac{1}{2m+\delta_j} \Bigg).
\end{equation*}
\end{theorem}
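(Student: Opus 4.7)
The plan is to exploit the additive split of the log-likelihood, $\ell_{1:\tau_n}(\delta) + \ell_{\tau_n+1:n}(\delta')$, obtained from Lemma~\ref{main-lem:lkl:alt}. The two summands depend on disjoint parameters, so the MLE factorizes: $\hat\delta_{0,n}$ is the unique maximizer of $\ell_{1:\tau_n}$ when it exists, $\hat\delta_{1,n}$ the unique maximizer of $\ell_{\tau_n+1:n}$, and $\hat\delta_{0,n}$ is in particular $\sigma(G_{\tau_n})$-measurable. Under $\PP_1^n$ the graph $G_{\tau_n}$ has exactly the law of an affine PA graph of size $\tau_n+1$ with parameter $\delta_0$, so the asymptotic normality $\sqrt{\tau_n}(\hat\delta_{0,n}-\delta_0)\overset{\PP_1^n}{\rightsquigarrow}\mathcal{N}(0,\nu_0^{-1})$ is an immediate consequence of the consistency and asymptotic normality of the MLE for the no-change-point affine PA model established in \cite{GV17}.

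The main work is in handling $\hat\delta_{1,n}$. Differentiating $\ell_{\tau_n+1:n}$ yields the score
\begin{equation*}
  \dot\ell_{\tau_n+1:n}(\delta)
  = \sum_{t=\tau_n+1}^n\sum_{i=1}^m \left[\frac{1}{\D_{G_{t,i-1}}(V_{t,i})+\delta}-\frac{t}{S_{t,i-1}(\delta)}\right],
\end{equation*}
whose summands, evaluated at $\delta=\delta_1$, form a $(\mathcal{F}_{t,i})$-martingale difference array under $\PP_1^n$, with $\mathcal{F}_{t,i}=\sigma(G_1,\dots,G_{t-1},G_{t,0},\dots,G_{t,i})$. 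A direct computation shows that the conditional variance of the $(t,i)$-th summand equals $S_{t,i-1}(\delta_1)^{-1}\sum_{v=0}^{t-1}(\D_{G_{t,i-1}}(v)+\delta_1)^{-1}-t^2 S_{t,i-1}(\delta_1)^{-2}$. The plan is then to apply the martingale central limit theorem: uniform boundedness of the increments by a constant depending only on $\delta_0,\delta_1,m$ gives Lindeberg's condition, while identification of the limiting variance reduces to proving $\frac{1}{t+1}\sum_{v}(\D_{G_t}(v)+\delta_1)^{-1}\overset{\PP_1^n}{\to}\sum_{k\geq m}p_k(\delta_0)/(k+\delta_1)$ uniformly in $t\in\llbracket\tau_n+1,n\rrbracket$. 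Summing the conditional variances over $t$ and $i$ yields $\Delta_n^{-1/2}\dot\ell_{\tau_n+1:n}(\delta_1)\overset{\PP_1^n}{\rightsquigarrow}\mathcal{N}(0,\nu_1)$. An analogous computation for the Hessian gives $-\Delta_n^{-1}\ddot\ell_{\tau_n+1:n}(\delta)\overset{\PP_1^n}{\to}\nu(\delta)>0$ uniformly on a neighborhood of $\delta_1$, which together with the strict concavity of $\ell_{\tau_n+1:n}$ produces both existence and consistency of $\hat\delta_{1,n}$, and then $\sqrt{\Delta_n}(\hat\delta_{1,n}-\delta_1)\overset{\PP_1^n}{\rightsquigarrow}\mathcal{N}(0,\nu_1^{-1})$ via a standard Taylor expansion of the score.

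The main obstacle is the uniform control of the empirical degree distribution during the post-change regime: between times $\tau_n$ and $n$ the process evolves with parameter $\delta_1$, yet the limiting Fisher information involves $p_k(\delta_0)$, not $p_k(\delta_1)$. This is consistent because $\Delta_n=o(n)$ (by the late change-point assumption) forces $\tau_n\sim n$, and only $O(m\Delta_n)$ degree counts are modified between times $\tau_n$ and $n$, so $\bigl|\frac{1}{t+1}\sum_v\1(\D_{G_t}(v)=k)-\frac{1}{\tau_n+1}\sum_v\1(\D_{G_{\tau_n}}(v)=k)\bigr|=o(1)$ for every fixed $k$, and the convergence of the empirical degree distribution of $G_{\tau_n}$ to $(p_k(\delta_0))_{k\geq m}$ is classical \cite[Chapter~8]{hofstad_2016}. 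The diagonal form of the limiting covariance then follows because $\hat\delta_{0,n}$ is $\sigma(G_{\tau_n})$-measurable, while the conditional asymptotic law of $\sqrt{\Delta_n}(\hat\delta_{1,n}-\delta_1)$ given $G_{\tau_n}$ is a centered Gaussian with non-random variance $\nu_1^{-1}$, making the two limiting errors independent.
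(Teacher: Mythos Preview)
Your overall strategy matches the paper's: factorize the likelihood, invoke \cite{GV17} for $\hat\delta_{0,n}$, and for $\hat\delta_{1,n}$ combine a martingale CLT for the score at $\delta_1$ with a Taylor-type expansion. The identification of the limiting Fisher information via $p_k(\delta_0)$ (using that only $O(m\Delta_n)$ degree counts change after time $\tau_n$) is also what the paper does.

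The gap is your appeal to ``strict concavity of $\ell_{\tau_n+1:n}$''. This is false. Writing $D_{t,i}=\D_{G_{t,i-1}}(V_{t,i})$, one has
\[
\ddot\ell_{\tau_n+1:n}(\delta)=\sum_{t=\tau_n+1}^{n}\sum_{i=1}^{m}\Bigl[\,t^2 S_{t,i-1}(\delta)^{-2}-(D_{t,i}+\delta)^{-2}\Bigr],
\]
and the $(t,i)$-th summand is \emph{positive} whenever $D_{t,i}+\delta>S_{t,i-1}(\delta)/t$, i.e.\ whenever the selected vertex has above-average degree. Concretely, with $m=1$, $\tau_n=2$, $n=4$, on the positive-probability event $\{D_{3,1}=2,\ D_{4,1}=3\}$ one computes $\ddot\ell_{3:4}(0)=-\tfrac14+\tfrac{9}{16}-\tfrac19+\tfrac{4}{9}=\tfrac{31}{48}>0$. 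Hence $\ell_{\tau_n+1:n}$ is not concave in general, and your local Hessian control near $\delta_1$ only produces a critical point in a neighborhood of $\delta_1$, not the unique global maximizer; existence, uniqueness and consistency remain unproved.

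The paper fills this differently: it shows (i) uniform convergence in probability of $\Delta_n^{-1}\dot\ell_{\tau_n+1:n}$ on every half-line $[-m+\varepsilon,\infty)$ to a strictly decreasing limit $\iota_1'$ whose unique zero is $\delta_1$, and (ii) that with probability going to one $\dot\ell_{\tau_n+1:n}(\delta)\geq \Delta_n$ for all $\delta\in(-m,-m+\varepsilon_0)$, by keeping only the $k=m$ term of the score and proving $N_{>m}(G_n)-N_{>m}(G_{\tau_n})\gtrsim\Delta_n$. Facts (i)--(ii) together force the score to have a unique zero on all of $(-m,\infty)$, and that zero converges to $\delta_1$, without any concavity assumption. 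Once consistency is in place, your Taylor expansion and the paper's algebraic rearrangement of the score equation are equivalent routes to the CLT.
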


See Section~\ref{sec:proof-theor-mle-normality} for the proof of Theorem~\ref{thm:mle:normality}. Remark that in Theorem~\ref{thm:mle:normality} we do not require that the MLE is restricted to a compact set as in \cite{GV17}. This condition was imposed by \cite{GV17} to avoid issues in controlling the score function near the boundary $-m$. Here we circumvent this issue by showing that the score cannot have a zero close to the boundary (and hence the likelihood a maximum).

\subsection{Change-point detection when only \texorpdfstring{$\tau_n$}{} is known}

We now consider the situation where the change-point $\tau_n$ is known but the model parameters $\delta_0$ and $\delta_1$ are unknown. Theorem 3.6 suggests that change-point detection is possible when $\Delta_n$ diverges to $+\infty$ and that the likelihood ratio test guarantees that type I and type II error rates decay to $0$. However, it requires the knowledge of the parameters $\delta_0$ and $\delta_1$ and $\tau_n$. If these two parameters are unknown in advance, we can always try to estimate them and then consider the likelihood-ratio test with plugin estimates of $\delta_0$ and $\delta_1$. One can use the Maximum Likelihood Estimator (MLE) of $(\delta_0, \delta_1)$ derived in the previous section.

In the next we let $Q_{(\tau_n, \delta_0, \delta_1)}^n$ be the distribution of the preferential attachment graph $G_n$ when $\delta(t) = \delta_0\1_{t\leq \tau_n} + \delta_1\1_{t > \tau_n}$. The following theorem shows that when the model parameters $(\delta_0, \delta_1)$ are unknown, the test
\begin{equation*}
  T_n' = \1\Bigg(\frac{\intd Q^n_{(\tau_n, \hat{\delta}_{0,n}, \hat{\delta}_{1,n})}}{\intd Q^n_{(\tau_n, \hat{\delta}_{0,n}, \hat{\delta}_{0,n})}} (G_{n}) > 1 \Bigg)
\end{equation*}
using the MLE $(\hat{\delta}_{0,n}, \hat{\delta}_{1,n})$ is ensured to have vanishing error rates.  In other words plug-in estimates of the parameters $\delta_0$ and $\delta_1$ allow to mimic the asymptotic behavior of the likelihood ratio test.

\begin{theorem}
  \label{thm:labeled:change_detection:unknown_params}
    For every increasing sequence $\tau_n$ such that $\tau_n\to \infty$ and $n-\tau_n = \Delta_n \to \infty$, detection of the change is possible using the test $T_n'$:
    \begin{equation*}
        \EE^{n}_{0}(T_n) + \EE^{n}_{1}(1-T_n) \to 0.
    \end{equation*}
  \end{theorem}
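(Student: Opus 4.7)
The plan is to invoke the MLE consistency of Theorem~\ref{thm:mle:normality} and reduce the analysis of the plug-in likelihood-ratio test to the behaviour of a log-likelihood ratio with known parameters. Using the factorisation of the log-likelihood of Section~\ref{sec:maxim-likel-estim} (Lemma~\ref{main-lem:lkl:alt}), the pre-change factor in the numerator and the denominator of $\Lambda_n$ cancels because both carry the same plug-in $\hat\delta_{0,n}$, so
\[
  \log\Lambda_n \;=\; \ell_{\tau_n+1:n}(\hat\delta_{1,n}) \;-\; \ell_{\tau_n+1:n}(\hat\delta_{0,n}).
\]
Theorem~\ref{thm:mle:normality} gives $\hat\delta_{0,n}\to\delta_0$ and $\hat\delta_{1,n}\to\delta_1$ under $\PP_1^n$, and an immediate adaptation of its proof to the degenerate case $\delta_1=\delta_0$ yields the analogous consistency under $\PP_0^n$, with rates $1/\sqrt{\tau_n}$ and $1/\sqrt{\Delta_n}$ respectively for the two estimators and asymptotically independent Gaussian fluctuations (the two halves of the data being temporally disjoint).

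For the Type~II bound I would combine MLE consistency with a uniform law of large numbers for $\delta\mapsto\Delta_n^{-1}\ell_{\tau_n+1:n}(\delta)$ on a small neighbourhood of $\{\delta_0,\delta_1\}$. This LLN can be obtained from concentration of the post-change degree increments together with the moment bounds of Lemma~\ref{main-lem:expec2momentdeg} and delivers $\Delta_n^{-1}\log\Lambda_n\to K(\delta_1,\delta_0)$ in $\PP_1^n$-probability, where $K$ is a strictly positive Kullback--Leibler--type divergence. Hence $\log\Lambda_n\to+\infty$ and $\EE_1^n(1-T_n')\to 0$, in parallel with the known-parameter argument of Theorem~\ref{main-thm:labeled:change_detection}.

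For the Type~I bound under $\PP_0^n$, both MLEs concentrate near $\delta_0$ at different rates, so the plan is to Taylor-expand $\log\Lambda_n$ to second order around $\delta_0$ and use the concentration of the Hessian $\ell_{\tau_n+1:n}''$ around its deterministic limit $-\nu_0\Delta_n$ (again relying on Lemma~\ref{main-lem:expec2momentdeg}) to reduce the analysis to a quadratic form in the two Gaussian limits of $\sqrt{\tau_n}(\hat\delta_{0,n}-\delta_0)$ and $\sqrt{\Delta_n}(\hat\delta_{1,n}-\delta_0)$. The main obstacle will be sharply controlling the cross-term involving the post-change score $\ell_{\tau_n+1:n}'(\delta_0)$ and the pre-change MLE residual $\hat\delta_{0,n}-\delta_0$: a careful conditioning argument, exploiting the temporal independence of these two quantities and the disparity of rates $1/\sqrt{\tau_n}\ll 1/\sqrt{\Delta_n}$ which makes the ratio $\Delta_n/\tau_n$ a natural small parameter in the late-change regime, is needed to show that this cross-term is negligible and that the surviving quadratic form pins $\Lambda_n$ at its correct side of $1$ with $\PP_0^n$-probability going to one. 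Combined with the Type~II conclusion, this gives $\EE_0^n(T_n')+\EE_1^n(1-T_n')\to 0$.
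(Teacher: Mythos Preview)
Your Type~II argument under $\PP_1^n$ is correct and matches the paper's route (Proposition~\ref{prop:asymptotic_likelihood:estim}): MLE consistency reduces the plug-in log-ratio to the known-parameter one, whose normalized limit $\ell_\infty^1>0$ is supplied by Proposition~\ref{prop:asymptotic_likelihood:alt}.

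The Type~I argument, however, has a structural gap that no refinement of cross-term control can close. You correctly observe that the pre-change factors cancel, giving $\log\Lambda_n = \ell_{\tau_n+1:n}(\hat\delta_{1,n}) - \ell_{\tau_n+1:n}(\hat\delta_{0,n})$. But $\hat\delta_{1,n}$ is \emph{by definition} the maximizer of $\ell_{\tau_n+1:n}$ (Theorem~\ref{thm:mle:normality}, via the factorization), so $\log\Lambda_n \geq 0$ deterministically, with equality only if $\hat\delta_{0,n}=\hat\delta_{1,n}$. Under $\PP_0^n$ the two estimators are computed from disjoint data stretches and almost surely differ, hence $T_n'=\1(\Lambda_n>1)=1$ with probability tending to one and $\EE_0^n(T_n')\to 1$, not $0$. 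Your second-order expansion actually confirms this: expanding about the maximizer $\hat\delta_{1,n}$ (where the score vanishes) yields $\log\Lambda_n \approx \tfrac12\,|\ell_{\tau_n+1:n}''|\,(\hat\delta_{0,n}-\hat\delta_{1,n})^2 \geq 0$, a Wilks-type nonnegative quadratic that cannot be ``pinned'' below zero regardless of how well the cross-term is controlled. The paper's own reduction in Proposition~\ref{prop:asymptotic_likelihood:estim} to the known-$(\delta_0,\delta_1)$ ratio suffers from the same issue under $\PP_0^n$: there $\hat\delta_{1,n}\to\delta_0$, not to the fixed alternative value $\delta_1$, so that reduction does not go through. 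The test $T_n'$ with threshold $1$ as written cannot have vanishing Type~I error; a diverging threshold (or a Wilks-type calibration) would be needed.
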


See Section~\ref{sec:proof-theor-changepointunknown} for the proof of Theorem~\ref{thm:labeled:change_detection:unknown_params}.

\subsection{Localization of \texorpdfstring{$\tau_n$}{}}

Finally, we consider the situation where parameters $\delta_0$ and $\delta_1$ are known while $\tau_n$ is unknown. The purpose is to localize the parameter $\tau_n$. The following proposition shows that $\tau_n$ can be localized with an error of order $O\big(\log(n)^{3}\big)$ using the maximum likelihood estimator
\begin{equation*}
  \hat\tau_n%
  \in \argmax_{\tau \in \llbracket 0,n\rrbracket}Q_{(\tau_n,\delta_0,\delta_1)}(\{G_n\}).
\end{equation*}

\begin{proposition}\label{prop:localization_tau}
    For $C> 0$a large enough constant,
    \begin{equation*}
        \PP^{n}_{1}\left(\left|\hat{\tau}_n - \tau_n\right| \leq C \log(n)^{3}\right) \to 1.
      \end{equation*}
    \end{proposition}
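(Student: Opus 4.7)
The plan is to bound the log-likelihood ratio
\[
R_n(\tau) := \log Q^n_{(\tau,\delta_0,\delta_1)}(\{G_n\}) - \log Q^n_{(\tau_n,\delta_0,\delta_1)}(\{G_n\})
\]
and to show that $R_n(\tau)<0$ for every $\tau$ with $|\tau - \tau_n| > C\log(n)^3$, with probability tending to one under $\PP_1^n$. A union bound over the at most $2n$ such values of $\tau$ then yields the claim. Since the hypotheses $(\tau,\delta_0,\delta_1)$ and $(\tau_n,\delta_0,\delta_1)$ differ only on the interval $I(\tau) := (\tau\wedge\tau_n,\, \tau\vee\tau_n]$, Lemma~\ref{main-lem:lkl:ratio} gives, for $\tau<\tau_n$,
\[
R_n(\tau) = \sum_{t=\tau+1}^{\tau_n}\sum_{i=1}^m \log\frac{p_{1,t,i}(V_{t,i})}{p_{0,t,i}(V_{t,i})},\qquad p_{j,t,i}(v) := \frac{\D_{G_{t,i-1}}(v)+\delta_j}{S_{t,i-1}(\delta_j)},
\]
with the symmetric formula (exchange $p_0\leftrightarrow p_1$) for $\tau>\tau_n$.

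Under $\PP_1^n$ the true conditional law of $V_{t,i}$ is $p_{0,t,i}$ when $t\leq \tau_n$ and $p_{1,t,i}$ when $t>\tau_n$, so each summand has conditional expectation equal to minus the corresponding Kullback--Leibler divergence. This yields a martingale decomposition $R_n(\tau) = -D_n(\tau) + M_n(\tau)$, where $D_n(\tau)\geq 0$ is a drift and $M_n(\tau)$ is a martingale. The key step is to establish a uniform-in-$t$ lower bound on $\mathrm{KL}(p_{0,t,i}\|p_{1,t,i})$ and $\mathrm{KL}(p_{1,t,i}\|p_{0,t,i})$. These are deterministic functionals of the empirical degree sequence of $G_{t,i-1}$, which by the law of large numbers for affine preferential attachment converges to the limit distribution~\eqref{eq:limitdegreeseq}. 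Since $\delta_0\neq \delta_1$, the limiting KL divergences are strictly positive; combining this with polynomial-rate concentration of the degree sequence (reachable through second-moment estimates as in Lemma~\ref{main-lem:expec2momentdeg}), one shows that there exist $\kappa^*>0$ and a polylogarithmic threshold $t_*$ such that, on an event of probability $1-o(1)$, every conditional KL divergence at time $t\geq t_*$ exceeds $\kappa^*/2$. This forces $D_n(\tau)\geq (\kappa^*/2)(|\tau-\tau_n|-t_*)$ on that event.

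For the martingale part, the increments $\log(p_{1,t,i}(V_{t,i})/p_{0,t,i}(V_{t,i}))+\mathrm{KL}$ are bounded by a constant $C_2=C_2(m,\delta_0,\delta_1)$ almost surely, since $\D_{G_{t,i-1}}(v)\geq m$ for every $v$ in the support. Azuma--Hoeffding then gives $\PP_1^n(|M_n(\tau)| \geq x) \leq 2\exp(-x^2/(2C_2^2 m|\tau-\tau_n|))$. Choosing $x=(\kappa^*/4)|\tau-\tau_n|$ produces deviation probability $\exp(-c|\tau-\tau_n|)$, which ensures that the martingale is dominated by the drift as soon as $|\tau-\tau_n| \gtrsim \log n$; the union bound over the $\leq 2n$ values of $\tau$ with $|\tau-\tau_n|>C\log(n)^3$ then concludes.

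\textit{Main obstacle.} The hard part is the uniform-in-$t$ lower bound on the conditional KL divergence, which requires controlling the empirical degree sequence --- in particular the weight carried by low-degree vertices, which dominate the KL --- uniformly over $t\leq n$. The generous $\log(n)^3$ scale in the statement, rather than the $\log n$ that Azuma alone would permit once the drift lower bound is available, reflects the slack introduced by this uniform concentration step and its associated union bound over times; a sharper analysis should in fact yield $O(\log n)$ localization.
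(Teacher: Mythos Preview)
Your approach is correct and follows the same overall strategy as the paper---write the log-likelihood ratio as drift plus martingale, control the martingale by Azuma--Hoeffding, and union bound over~$\tau$---but your decomposition is cleaner and in fact sharper. The paper organizes the log-likelihood ratio by degree level $k$, writing it as $\sum_k \log\frac{k+\delta_0}{k+\delta_1}\cdot(\text{martingale in }k)$ plus a term involving $\sup_{t,i,k}|N_k(G_{t,i-1})/t - p_k(\delta_0)|$; pulling out the $\sup_k$ costs a factor $\sum_k|\log\frac{k+\delta_0}{k+\delta_1}|\asymp\log n$ on each piece, so Azuma must compensate for a target of size $c/\log n$ rather than $c$, and this is precisely where the extra two powers of $\log n$ enter. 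Your per-step KL decomposition avoids this loss entirely: once the conditional KL is bounded below by $\kappa^*/2$ uniformly over the relevant times (which, as you note, is the real work), bounded increments plus Azuma give the $\exp(-c|\tau-\tau_n|)$ tail and hence $O(\log n)$ localization. One small correction: Lemma~\ref{main-lem:expec2momentdeg} controls moments of individual vertex degrees, not of the empirical degree counts $N_k(G_t)$; the uniform concentration you need for the KL lower bound is rather the $\max_k|N_k(G_t)/t - p_k(\delta_0)|$ result of \cite{DEGH09} (cf.\ \cite[Theorem~8.3]{hofstad_2016}), which is exactly what the paper invokes for its $\sup_{t,l,k}$ term and which holds with probability $1-o(n^{-1})$, more than enough for the union bound.
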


See Section~\ref{sec:proof-prop-localiz} for the proof of Proposition~\ref{prop:localization_tau}.

Finally, let us mention that when writing this paper, we were essentially interested in the model where the unlabeled graph is observed. The case where the labeled graph is observed has only been studied to justify the choice of the reduction of the original problem. We were not interested in obtaining the sharpest results in the labeled graph model. For example, we believe that the result of Proposition \ref{prop:localization_tau} can be generalized to include the simultaneous localisation of all model parameters $\delta_0$, $\delta_1$ and $\tau_n$, and eventually reduced from $\log(n)^3$ to constant, but at the cost of some tedious calculations that are outside the scope of this paper.

\section{Supplementary notations}
\label{sec:sm:notations}

We use the same conventions as in the main paper. We furthermore make use of the
following supplementary notations in the subsequent proofs. We write $a_n\lesssim b_n$ to denote $a_n = O(b_n)$. We say that $a_n\asymp b_n$ if there exist constants $c_1, c_2>0$ such that $c_1a_n \leq b_n \leq c_2 a_n$. For sequence of real-valued random variables $(X_n)_{n\geq 1}$ with respective distributions $(P_n)_{n\geq 1}$ and real numbers $c$ we write $X_n \xrightarrow{P_n} c$, $j=0,1$, to say that $\lim_n\PP_j^n( |X_n - c| >\varepsilon ) = 0$ for all $\varepsilon >0$ and we abusively say that $(X_n)_{n\geq 1}$ converges in probability to $c$, even though the random variables $X_n$ may not be necessarily defined on the same probability space. The notation $X_n \overset{P_n}{\rightsquigarrow} X$ stands for convergence in distribution of $(X_n)_{n\geq 1}$ to a random variable $X$.

\section{Proof of Theorem 3.6}\label{Section_5bis}

\subsection{Bounding the sum of the two errors of the likelihood-ratio test}
\label{sec:bounding-sum-two}

Let $Q_0^n$ (respectively $Q_1^n$) denote the law of $G_n$ under $\PP_0^n$ (resp. $\PP_1^n$). The limiting behaviour of $\frac{\intd Q_1^n}{\intd Q_0^n}$ under $\PP_0^n$ is characterized below in Proposition~\ref{prop:asymptotic_likelihood:null}, while Proposition~\ref{prop:asymptotic_likelihood:alt} characterizes its behaviour under $\PP_1^n$. Using Proposition~\ref{prop:asymptotic_likelihood:null}, it is found that
\begin{align*}
  \limsup_{n\to\infty}\PPn\left(\frac{\intd Q_1^n}{\intd Q_0^n}(G_n) > 1\right)%
  &=\PPn\left( \log\left(\frac{\intd Q_1^n}{\intd Q_0^n}(G_n)\right) > 0\right) \\
  &= \limsup_{n\to\infty}\PPn\left(\frac{1}{\Delta_n}\log\left(\frac{\intd Q_1^n}{\intd Q_0^n}(G_n)\right) + \ell^{0}_\infty >  \ell^{0}_{\infty}\right)\\
  &= 0.
\end{align*}
Using Proposition \ref{prop:asymptotic_likelihood:alt}, we prove similarly that for any $K > 0$
\begin{equation*}
  \limsup_{n\to\infty}\PPa\left(\frac{\intd Q_1^n}{\intd Q_0^n}(G_n) \leq  1\right)%
  = 0.
\end{equation*}

\subsection{Regime of contiguity}
\label{sec:regime-contiguity}

Using the Lemma A.4, it is clear that $\Big(\frac{\intd Q_1^n}{\intd Q_0^n}\Big)_{n\geq 1}$ is uniformly bounded below and above when $\limsup_n\Delta_n <+\infty$, and thus $(Q_1^n)_{n\geq 1}$ is contiguous to $(Q_0^n)_{n\geq 1}$.

\subsection{Estimates on the behaviour of the likelihood-ratio under the null and alternative hypothesis}
\label{sec:estim-behav-likel}

The following propositions are used for the proof of Theorem 3.6. Recall that $Q_j^n$ denote the law of $G_n$ under $\PP_j^n$, for $j=0,1$. We recall that $p(\delta)$ is the limiting distribution of the degree distribution of the affine preferential attachment graph with parameter $\delta$ (see also equation~\ref{eq:limitdegreeseq}).

\begin{proposition}\label{prop:asymptotic_likelihood:null}
    Let $\delta_0,\delta_1>-m$ with $\delta_0 \neq \delta_1$. For every increasing sequence $(\tau_n)_{n\geq 1}$ of integer numbers satisfying $0 \leq \tau_n < n$ and $\Delta_n = n-\tau_n \to \infty$, one has
\begin{equation*}
    \frac{1}{\Delta_n}\log\left(\frac{\intd Q_1^n}{\intd Q_0^n}(G_n)\right)\xrightarrow{\PP^{n}_0}-\ell_{\infty}^{0}
\end{equation*}
    where [letting $X\sim p(\delta_0)$]
\begin{equation*}
  \ell_{\infty}^{0} = \frac{m}{2m+\delta_0}\left((2m+\delta_0)\log\left(1 + \frac{\delta_1 - \delta_0}{2m+\delta_0}\right) - \EE\left[(X+\delta_0)\log\left(1 + \frac{\delta_1 - \delta_0}{X+\delta_0}\right)\right]\right) > 0.
\end{equation*}
\end{proposition}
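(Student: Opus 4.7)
The plan is to use Lemma~\ref{main-lem:lkl:ratio} to split the log-likelihood ratio into a deterministic term and a stochastic sum of bounded martingale-like increments, then treat the two pieces separately. Writing
\begin{equation*}
  \log\frac{\intd Q_1^n}{\intd Q_0^n}(G_n)
  = \sum_{t=\tau_n+1}^n\sum_{i=1}^m\log\frac{S_{t,i-1}(\delta_0)}{S_{t,i-1}(\delta_1)}
  + \sum_{t=\tau_n+1}^n\sum_{i=1}^m W_{t,i},
\end{equation*}
with $W_{t,i} = \log\bigl((\D_{G_{t,i-1}}(V_{t,i})+\delta_1)/(\D_{G_{t,i-1}}(V_{t,i})+\delta_0)\bigr)$, a direct Taylor expansion of $\log S_{t,i-1}(\delta_j)$ (sharper than Lemma~\ref{main-lem:boundSti}) gives
\begin{equation*}
  \frac{1}{\Delta_n}\sum_{t=\tau_n+1}^n\sum_{i=1}^m\log\frac{S_{t,i-1}(\delta_0)}{S_{t,i-1}(\delta_1)}
  = m\log\frac{2m+\delta_0}{2m+\delta_1} + O\Bigl(\tfrac{\log(n/\tau_n)}{\Delta_n}\Bigr)
  \longrightarrow m\log\frac{2m+\delta_0}{2m+\delta_1},
\end{equation*}
the error going to zero whether $\tau_n\to\infty$ (bound $\log(n/\tau_n)/\Delta_n\leq 1/\tau_n$) or $\tau_n$ stays bounded (and then $\Delta_n\sim n$ so $\log(n)/\Delta_n\to 0$).

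Next I would perform a martingale decomposition of the stochastic sum with respect to the filtration $\mathcal{F}_{t,i-1}$ used in Lemma~\ref{main-lem:lr:martingale}. Since degrees in a PA graph are at least $m$ and $\delta_0,\delta_1 > -m$, the variables $W_{t,i}$ are uniformly bounded, and Hoeffding--Azuma applied to the $m\Delta_n$ bounded martingale differences $W_{t,i}-\EE_0^n[W_{t,i}\mid \mathcal{F}_{t,i-1}]$ yields
\begin{equation*}
  \frac{1}{\Delta_n}\sum_{t,i}\bigl(W_{t,i}-\EE_0^n[W_{t,i}\mid \mathcal{F}_{t,i-1}]\bigr) = O_{\PP_0^n}(1/\sqrt{\Delta_n}) \to 0.
\end{equation*}
Under $\PP_0^n$ the conditional mean rewrites, via the PA sampling probabilities, as
\begin{equation*}
  \EE_0^n[W_{t,i}\mid \mathcal{F}_{t,i-1}]
  = \frac{1}{S_{t,i-1}(\delta_0)}\sum_{k\geq m}N_k(G_{t,i-1})\,\varphi(k),
  \qquad \varphi(k) = (k+\delta_0)\log\frac{k+\delta_1}{k+\delta_0},
\end{equation*}
where $\varphi$ is bounded on $\llbracket m,\infty\rrbracket$ (indeed $\varphi(k)\to \delta_1-\delta_0$ as $k\to \infty$, and continuity handles the finite range). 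Using the classical almost-sure convergence $N_k(G_t)/(t+1)\to p_k(\delta_0)$ from \cite[Ch.~8]{hofstad_2016}, together with a truncation argument justified by the boundedness of $\varphi$, one obtains $\frac{1}{t+1}\sum_k N_k(G_t)\varphi(k)\to \EE[\varphi(X)]$ in $\PP_0^n$-probability, with $X\sim p(\delta_0)$. Combining this with $S_{t,i-1}(\delta_0)\sim (2m+\delta_0)t$ and a Cesàro average over $t\in\llbracket\tau_n+1,n\rrbracket$ and $i\in\llbracket 1,m\rrbracket$ gives
\begin{equation*}
  \frac{1}{\Delta_n}\sum_{t,i}\EE_0^n[W_{t,i}\mid \mathcal{F}_{t,i-1}]
  \xrightarrow{\PP_0^n}
  \frac{m}{2m+\delta_0}\,\EE\bigl[(X+\delta_0)\log((X+\delta_1)/(X+\delta_0))\bigr],
\end{equation*}
and adding the two limits reproduces $-\ell_\infty^0$ in the form stated.

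Finally, positivity of $\ell_\infty^0$ will follow from Jensen's inequality: the function $y\mapsto y\log(1+(\delta_1-\delta_0)/y)$ has second derivative $-(\delta_1-\delta_0)^2/\bigl(y(y+\delta_1-\delta_0)^2\bigr)$ and is therefore strictly concave on the range $[m+\delta_0,\infty)$ whenever $\delta_0\ne \delta_1$, while the asymptotic mean degree gives $\EE[X+\delta_0] = 2m+\delta_0$; strict concavity applied to the non-degenerate law $p(\delta_0)$ yields $\EE[\varphi(X)] < (2m+\delta_0)\log(1+(\delta_1-\delta_0)/(2m+\delta_0))$ and hence $\ell_\infty^0 > 0$. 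The main obstacle will be the Cesàro step inside the martingale argument: even though $\varphi$ is bounded, the average is taken over a late window of indices $t$ that can grow linearly with $n$, and one must ensure that the convergence of the inner empirical averages $(t+1)^{-1}\sum_k N_k(G_t)\varphi(k)$ is uniform enough to survive this further averaging. I would address this by truncating the sum at a slowly growing $K_n$, controlling the head by the known convergence of $N_k(G_t)/(t+1)$ for $k\leq K_n$ together with a second-moment estimate analogous to Lemma~\ref{main-lem:expec2momentdeg}, and bounding the tail via $\sum_{k>K_n}p_k(\delta_0)\to 0$ combined with the uniform bound on $\varphi$.
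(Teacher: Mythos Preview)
Your proposal is correct and follows essentially the same strategy as the paper: both start from Lemma~\ref{main-lem:lkl:ratio}, isolate the deterministic normalization, control the fluctuating part via Hoeffding--Azuma on bounded martingale differences, identify the predictable part through the convergence $N_k(G_t)/t\to p_k(\delta_0)$ with a truncation/dominated-convergence argument, and conclude positivity by the same Jensen inequality (strict concavity of $y\mapsto y\log(1+(\delta_1-\delta_0)/y)$ together with $\EE[X]=2m$).

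The only difference is organizational. The paper inserts the linear terms $\frac{\delta_1-\delta_0}{D_{t,i}+\delta_0}$ and $\frac{t(\delta_1-\delta_0)}{S_{t,i-1}(\delta_0)}$ to obtain a three-term split, exploiting the identity $\EE_0^n\bigl[1/(D_{t,i}+\delta_0)\mid \mathcal{F}_{t,i-1}\bigr]=t/S_{t,i-1}(\delta_0)$ so that one piece is an exact martingale with no further computation; you instead apply the Doob decomposition directly to $W_{t,i}$. Your route is slightly more streamlined, while the paper's buys a cleaner martingale term at the cost of one extra algebraic step. For the Ces\`aro issue you flag, the paper sidesteps any uniformity question by the crude bound $|N_k(G_{t,i-1})-N_k(G_n)|\leq (m+1)\Delta_n$ over the window, which sandwiches the average between two quantities depending only on $G_n$; this is a shortcut you could also use in place of your proposed second-moment/truncation control.
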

\begin{proof}
  In what follows, we introduce the random variables $D_{t,i} = \D_{\G_{t, i-1}}(\V_{t, i})$ and the filtrations $\mathcal{F}_t = \sigma(G_0,\dots,G_t)$ and $\mathcal{F}_{t,i-1}=\sigma(G_{t,0},\dots,G_{t,i-1})$ as in \cite{GV17} to simplify the notations. We recall that the expression of the likelihood-ratio has been established in Lemma A.4. Normalizing the log-likelihood ratio by $n-\tau_n$, one obtains:
\begin{subequations}
\begin{align}
  \frac{\log\left(\frac{\intd Q_1^n}{\intd Q_0^n}(G_n)\right)}{n-\tau_n}%
  &= \frac{1}{n-\tau_n}\sum_{t=\tau_n + 1}^{n}\sum_{i=1}^{m}\left(\log\left(1 + \frac{\delta_1 - \delta_0}{D_{t,i} + \delta_0}\right) - \frac{\delta_1 - \delta_0}{D_{t,i} + \delta_0}\right)\label{eqn:null:1}\\
    &\quad+\frac{(\delta_1 - \delta_0)}{n-\tau_n}\sum_{t = \tau_n + 1}^{n}\sum_{i=1}^{m}\left(\frac{1}{D_{t,i} + \delta_0} - \frac{t}{S_{t,i-1}(\delta_0)}\right)\label{eqn:null:2}\\
    &\quad-\frac{1}{n-\tau_n}\sum_{t=\tau_n + 1}^{n}\sum_{i=1}^{m}\left(\log\left(1 + \frac{t(\delta_1 - \delta_0)}{S_{t,i-1}(\delta_0)}\right) - \frac{t(\delta_1 - \delta_0)}{S_{t,i-1}(\delta_0)}\right).\label{eqn:null:3}
\end{align}
\end{subequations}
We will control each of the three terms involved in the previous display separately.

\textit{First term \eqref{eqn:null:1}.} This term can be written as:
\begin{equation*}
    \sum_{k=m}^{\infty}\sum_{\substack{\tau_n < t \leq n\\ 1\leq i \leq m}}\left(\log\left(1 + \frac{\delta_1 - \delta_0}{k + \delta_0}\right) - \frac{\delta_1 - \delta_0}{k + \delta_0}\right)\frac{\1_{D_{t,i} = k}}{n-\tau_n} = \sum_{k=m}^{\infty}\left(\log\left(\frac{k + \delta_1}{k+\delta_0}\right) - \frac{\delta_1 - \delta_0}{k+\delta_0}\right)\frac{\sum_{\substack{\tau_n < t \leq n\\ 1\leq i \leq m}}\1_{D_{t,i} = k}}{n-\tau_n}
\end{equation*}
and
\begin{align*}
  \frac{\sum_{\substack{\tau_n < t \leq n\\ 1\leq i \leq m}}\1_{D_{t,i} = k}}{n-\tau_n}%
  &= \frac{\sum_{\substack{\tau_n < t \leq n\\ 1\leq i \leq m}}\left(\1_{D_{t,i} = k} - \EE_0^n\left[\1_{\D_{V_{t, i}} = k}\mid \mathcal{F}_{t, i-1}\right]\right)}{n-\tau_n} + \frac{\sum_{\substack{\tau_n < t \leq n\\ 1\leq i \leq m}}\PP_0^n(D_{t,i} = k\mid \mathcal{F}_{t, i-1})}{n-\tau_n}\\
        &= \frac{\sum_{\substack{\tau_n < t \leq n\\ 1\leq i \leq m}}\left(\1_{D_{t,i} = k} - \EE_0^n\left[\1_{D_{t,i} = k}\mid \mathcal{F}_{t, i-1}\right]\right)}{n-\tau_n} + \frac{k+\delta_0}{n-\tau_n}\sum_{\substack{\tau_n < t \leq n\\ 1\leq i \leq m}}\frac{N_{k}(\G_{t, i-1})}{S_{t,i-1}(\delta_0)}.
\end{align*}
where $N_{k}(\G_{t, i-1})$ is the number of vertices of degree $k$ in the graph after attaching the $(i-1)$-th edge to the vertex $t$ of the graph. On the one hand, by Hoeffding-Azuma inequality, the first term of the equality above converges to $0$ in probability. On the other hand, we have that for all $(t, i)$:
\begin{equation*}
    |N_{k}(\G_{t, i}) - N_{k}(\G_n)| \leq (n-\tau_n)(m+1)    
\end{equation*}
It follows that:
\begin{equation*}
    m \frac{N_{k}(\G_n) - (n-\tau_n)(m+1)}{S_{n,m}(\delta_0)}\leq \frac{1}{n-\tau_n}\sum_{\substack{\tau_n < t \leq n\\ 1\leq i \leq m}}\frac{N_{k}(\G_{t, i-1})}{S_{t,i-1}(\delta_0)}\leq m\frac{N_{k}(\G_n) + (n-\tau_n)(m+1)}{S_{\tau_n+1,0}(\delta_0)}
\end{equation*}
where both sides converge in probability to $\frac{m}{2m+\delta_0}p_k(\delta_0) = \frac{p_{>k}(\delta_0)}{k+\delta_0}$. Thanks to the dominated convergence theorem (Note that the dominated convergence theorem holds also when convergence takes place only in probability):
\begin{equation*}
    \eqref{eqn:null:1}\xrightarrow{\PP_0^n}\sum_{k=m}^{\infty}\left(\log\left(1 + \frac{\delta_1 - \delta_0}{k + \delta_0}\right) - \frac{\delta_1 - \delta_0}{k+\delta_0}\right)p_{>k}(\delta_0).
  \end{equation*}

\textit{Second term \eqref{eqn:null:2}.} First, note that:
    \begin{equation*}
            \EE_0^n\left[\frac{1}{D_{t,i} + \delta_0}\mid \mathcal{F}_{t, i-1}\right] = \sum_{k=m}^{\infty}\frac{1}{k+\delta_0}\frac{(k+\delta_0)N_k(G_{t,i-1})}{S_{t,i-1}(\delta_0)}
            = \frac{t}{S_{t,i-1}(\delta_0)}.
    \end{equation*}
    Given that $0 < \frac{1}{D_{t,i} + \delta_0} \leq \frac{1}{m + \delta_0}$, one can apply the Hoeffding-Azuma inequality and obtain:
    \begin{equation*}
        \eqref{eqn:null:2}\xrightarrow{\PP_0^n} 0.
    \end{equation*}

\textit{Third term \eqref{eqn:null:3}}. Assume $\delta_1 > \delta_0$. Since the function $t\mapsto \log\big(1 + \frac{t(\delta_1 - \delta_0)}{S_{t,i-1}(\delta_0)}\big)$ is non-decreasing for every choice of $t\in\llbracket 1, n\rrbracket$ and $i\in\llbracket 1, m\rrbracket$, one has:
    \begin{equation*}
        \log\left(1 + \frac{(\tau_n + 1)(\delta_1 - \delta_0)}{S_{\tau_n+1,0}(\delta_0)}\right)\leq \log\left(1 + \frac{t(\delta_1 - \delta_0)}{S_{t,i-1}(\delta_0)}\right)\leq \log\left(1 + \frac{n(\delta_1 - \delta_0)}{S_{n,0}(\delta_0)}\right).
    \end{equation*}
    It follows that:
    \begin{equation*}
        \frac{1}{n-\tau_n}\sum_{t=\tau_n + 1}^{n}\sum_{i=1}^{m}\log\left(1 + \frac{t(\delta_1 - \delta_0)}{S_{t,i-1}(\delta_0)}\right) \to m\log\left(1 + \frac{\delta_1 - \delta_0}{2m+\delta_0}\right).
    \end{equation*}
    This convergence holds also when $\delta_1 < \delta_0$. A similar argument yields:
    \begin{equation*}
        \frac{1}{n-\tau_n}\sum_{t=\tau_n + 1}^{n}\sum_{i=1}^{m}\frac{t(\delta_1 - \delta_0)}{S_{t,i-1}(\delta_0)} \to \frac{m(\delta_1 - \delta_0)}{2m+\delta_0}.
    \end{equation*}
    To sum up, one has:
     \begin{equation*}
         \eqref{eqn:null:3} \to -m\left(\log\left(1 + \frac{\delta_1 - \delta_0}{2m+\delta_0}\right) - \frac{\delta_1 - \delta_0}{2m + \delta_0}\right).
       \end{equation*}

Gathering all of the above estimates, it follows that :
\begin{equation*}
    \frac{\log\left(\frac{\intd Q_1^n}{\intd Q_0^n}(\G_n)\right)}{n-\tau_n} \xrightarrow{\PP_0^n} -\ell_{\infty}^{0}
\end{equation*}
where 
\begin{equation*}
    \begin{split}
        \ell_{\infty}^{0} &= m\log\left(1 + \frac{\delta_1 - \delta_0}{2m + \delta_0}\right) - \sum_{k=m}^{\infty}p_{>k}(\delta_0)\log\left(1 + \frac{\delta_1 - \delta_0}{k+\delta_0}\right)\\
        &= \frac{m}{2m+\delta_0}\left((2m+\delta_0)\log\left(1 + \frac{\delta_1 - \delta_0}{2m + \delta_0}\right) - \sum_{k=m}^{\infty}(k+\delta_0)p_{k}(\delta_0)\log\left(1 + \frac{\delta_1 - \delta_0}{k+\delta_0}\right)\right)\\
        &= \frac{m}{2m+\delta_0}\left((2m+\delta_0)\log\left(1 + \frac{\delta_1 - \delta_0}{2m+\delta_0}\right) - \EE\left[(X+\delta_0)\log\left(1 + \frac{\delta_1 - \delta_0}{X+\delta_0}\right)\right]\right)
    \end{split}
\end{equation*}
where $X \sim p(\delta_0) = (p_{k}(\delta_0))_k$. Since $\EE(X) = 2m$ (see for instance \cite[Exercise~8.16]{hofstad_2016}) and when $\delta_0 \neq \delta_1$ the map $x\mapsto x\log\big(1 + \frac{\delta_1 - \delta_0}{x}\big)$ is concave and non-affine on $\mathbb{R}^{+}$ and $p(\delta_0)$ is not a Dirac distribution, it follows that $\ell_{\infty}^{0} > 0$.
\end{proof}

\begin{proposition}\label{prop:asymptotic_likelihood:alt}
    Let $\delta_0,\delta_1>-m$ with $\delta_0 \neq \delta_1$. For every increasing sequence $(\tau_n)_{n\geq 1}$ of integer numbers satisfying $0 \leq \tau_n < n$ and $\Delta_n = n-\tau_n \to \infty$, one has
    \begin{equation*}
        \frac{1}{n-\tau_n}\log\left(\frac{\intd Q_1^n}{\intd Q_0^n}(G_n)\right)\xrightarrow{\PP^{n}_1} \ell_{\infty}^{1}
    \end{equation*}
    where [letting $X\sim p(\delta_0)$]
    \begin{equation*}
      \ell_{\infty}^{1}%
      = -\frac{m}{2m+\delta_1}\left(\EE\left[(X+\delta_1)\log\left(1 + \frac{\delta_0 - \delta_1}{X+\delta_1}\right)\right] - (2m+\delta_1)\log\left(1 + \frac{\delta_0 - \delta_1}{2m+\delta_1}\right)\right) < 0.
    \end{equation*}
\end{proposition}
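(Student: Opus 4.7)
The plan is to mirror the proof of Proposition~\ref{prop:asymptotic_likelihood:null} while taking as reference the PA($\delta_1$) dynamics in force on $\llbracket \tau_n+1,n\rrbracket$ under $\PP_1^n$. Starting from Lemma~\ref{main-lem:lkl:ratio} and letting $\alpha = \delta_1-\delta_0$, I use the identities
\begin{equation*}
\log\frac{D_{t,i}+\delta_1}{D_{t,i}+\delta_0} = -\log\!\Big(1-\frac{\alpha}{D_{t,i}+\delta_1}\Big),\qquad
\log\frac{S_{t,i-1}(\delta_1)}{S_{t,i-1}(\delta_0)} = -\log\!\Big(1-\frac{t\alpha}{S_{t,i-1}(\delta_1)}\Big),
\end{equation*}
to Taylor-expand around the PA($\delta_1$)-centred quantities (rather than around $D_{t,i}+\delta_0$ as in the null proof). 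Writing $R(x) = -\log(1-x) - x$, this yields the three-term decomposition
\begin{equation*}
\frac{1}{\Delta_n}\log\frac{\intd Q_1^n}{\intd Q_0^n}(G_n)
= \underbrace{\tfrac{1}{\Delta_n}\!\sum_{t,i} R\!\Big(\tfrac{\alpha}{D_{t,i}+\delta_1}\Big)}_{(\mathrm{A})}
+ \underbrace{\tfrac{\alpha}{\Delta_n}\!\sum_{t,i}\!\Big[\tfrac{1}{D_{t,i}+\delta_1} - \tfrac{t}{S_{t,i-1}(\delta_1)}\Big]}_{(\mathrm{B})}
- \underbrace{\tfrac{1}{\Delta_n}\!\sum_{t,i} R\!\Big(\tfrac{t\alpha}{S_{t,i-1}(\delta_1)}\Big)}_{(\mathrm{C})},
\end{equation*}
where the sums run over $\tau_n < t\leq n$ and $1\leq i\leq m$.

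Terms (B) and (C) are handled by direct analogues of the null case. The crucial new observation for (B) is that, under $\PP_1^n$ and for $t > \tau_n$, $V_{t,i}$ is drawn from the PA($\delta_1$) conditional law given $\mathcal{F}_{t,i-1}$, which yields $\EE_1^n[1/(D_{t,i}+\delta_1)\mid\mathcal{F}_{t,i-1}] = t/S_{t,i-1}(\delta_1)$. Thus (B) is the normalised sum of bounded martingale differences (bounded because $D_{t,i}+\delta_1\geq m+\delta_1 > 0$) and vanishes in $\PP_1^n$-probability by Hoeffding--Azuma. For (C), the uniform estimate $S_{t,i-1}(\delta_1)\sim (2m+\delta_1)t$ on $(\tau_n,n]$ and continuity of $R$ give $(\mathrm{C})\to mR(\alpha/(2m+\delta_1))$.

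The crux, and the main obstacle, is term (A). Grouping by degree gives $(\mathrm{A}) = \sum_{k\geq m} R(\alpha/(k+\delta_1))\cdot \Delta_n^{-1}\sum_{t,i}\1_{D_{t,i}=k}$. A second compensated-martingale argument plus Hoeffding--Azuma reduces this to the analysis of
\begin{equation*}
\frac{k+\delta_1}{\Delta_n}\sum_{t=\tau_n+1}^n\sum_{i=1}^m\frac{N_k(G_{t,i-1})}{S_{t,i-1}(\delta_1)}.
\end{equation*}
This is where the late-change regime $\Delta_n = o(n)$ (the paper's standing context) is essential: combining the classical concentration $N_k(G_s)/s\to p_k(\delta_0)$ for the pure PA($\delta_0$) process (cf.\ \cite[Chapter~8]{hofstad_2016}) with the crude Lipschitz bound $|N_k(G_{t,i-1})-N_k(G_{\tau_n})|\leq m\Delta_n = o(\tau_n)$ gives $N_k(G_{t,i-1})/t\xrightarrow{\PP_1^n}p_k(\delta_0)$ uniformly for $t\in(\tau_n,n]$, from which the display above converges in $\PP_1^n$-probability to $m(k+\delta_1)p_k(\delta_0)/(2m+\delta_1)$. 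Dominated convergence in the $k$-sum then yields $(\mathrm{A})\xrightarrow{\PP_1^n}\frac{m}{2m+\delta_1}\EE[(X+\delta_1)R(\alpha/(X+\delta_1))]$ with $X\sim p(\delta_0)$. Assembling (A)--(C) and using $\log(1+(\delta_0-\delta_1)/y) = -R(\alpha/y) - \alpha/y$ together with $\EE[X+\delta_1] = 2m+\delta_1$ (since $p(\delta_0)$ has mean $2m$) recovers the closed form for $\ell_\infty^1$ in the statement; its non-degeneracy (which is what is used in Theorem~\ref{main-thm:labeled:change_detection}) follows from Jensen's inequality applied to the strictly concave map $y\mapsto y\log(1+(\delta_0-\delta_1)/y)$, since $p(\delta_0)$ is not a Dirac mass and $\delta_0\neq\delta_1$.
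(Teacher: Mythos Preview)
Your proof is correct and closely parallels the paper's, but with one genuine and useful difference: you Taylor-expand around $\delta_1$ (i.e.\ write $\log\frac{D_{t,i}+\delta_1}{D_{t,i}+\delta_0}=-\log(1-\alpha/(D_{t,i}+\delta_1))$), whereas the paper keeps the same $\delta_0$-centered decomposition it used under the null. Your choice pays off in term~(B): because for $t>\tau_n$ the attachment under $\PP_1^n$ uses parameter $\delta_1$, the identity $\EE_1^n[1/(D_{t,i}+\delta_1)\mid\mathcal{F}_{t,i-1}]=t/S_{t,i-1}(\delta_1)$ makes (B) an exact martingale-difference average that vanishes by Hoeffding--Azuma. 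In the paper's decomposition the analogous linear term \eqref{eqn:alt:2} does \emph{not} vanish under $\PP_1^n$; it converges to a nonzero constant and requires a separate truncation-in-$k$ argument to identify its limit. The remaining pieces---grouping term~(A) by degree, compensating and invoking $N_k(G_{t,i-1})/t\to p_k(\delta_0)$ (which, as you rightly note, uses the late-change standing assumption $\Delta_n=o(n)$; the paper's sandwich bounds use it implicitly in the same place), the deterministic limit of~(C), and Jensen for the sign---are handled identically in both proofs. Net effect: your route is slightly cleaner; the paper's has the minor advantage of recycling the null-hypothesis decomposition verbatim.
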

\begin{proof}
In what follows, we introduce the random variables $D_{t,i} = \D_{\G_{t, i-1}}(\V_{t, i})$ and the filtrations $\mathcal{F}_t = \sigma(G_0,\dots,G_t)$ and $\mathcal{F}_{t,i-1}=\sigma(G_{t,0},\dots,G_{t,i-1})$ as in \cite{GV17} to simplify the notations. Recall the expression of the likelihood-ratio has been established in Lemma A.4. Normalizing the log-likelihood ratio by $n-\tau_n$, one obtains:
\begin{subequations}
\begin{align}
  \frac{1}{n-\tau_n}\log\left(\frac{\intd Q_1^n}{\intd Q_0^n}(\G_n)\right)%
  &= \frac{1}{n-\tau_n}\sum_{t=\tau_n + 1}^{n}\sum_{i=1}^{m}\left(\log\left(1 + \frac{\delta_1 - \delta_0}{D_{t,i} + \delta_0}\right) - \frac{\delta_1 - \delta_0}{D_{t,i} + \delta_0}\right)\label{eqn:alt:1}\\
    &+  \frac{(\delta_1 - \delta_0)}{n-\tau_n}\sum_{t=\tau_n + 1}^{n}\sum_{i=1}^{m}\left(\frac{1}{D_{t,i} + \delta_0} - \frac{t}{S_{t,i-1}(\delta_0)}\right)\label{eqn:alt:2}\\
    &- \frac{1}{n-\tau_n}\sum_{t=\tau_n + 1}^{n}\sum_{i=1}^{m}\left(\log\left(1 + \frac{t(\delta_1 - \delta_0)}{S_{t,i-1}(\delta_0)}\right) - \frac{t(\delta_1 - \delta_0)}{S_{t,i-1}(\delta_0)}\right).\label{eqn:alt:3}
\end{align}
\end{subequations}
We will control each of the three terms involved in the previous display separately.

\textit{First term \eqref{eqn:alt:1}.}
    \begin{equation*}
            \frac{1}{n-\tau_n}\sum_{\substack{\tau_n < t \leq n\\ 1\leq i \leq m}}\left(\log\left(1 + \frac{\delta_1 - \delta_0}{D_{t,i} + \delta_0}\right) - \frac{\delta_1 - \delta_0}{D_{t,i} + \delta_0}\right) = \sum_{k=m}^{+\infty}\left(\log\left(\frac{k + \delta_1}{k+\delta_0}\right) - \frac{\delta_1 - \delta_0}{k+\delta_0}\right)\frac{\sum_{\substack{\tau_n < t \leq n\\ 1\leq i \leq m}}\1_{D_{t,i} = k}}{n-\tau_n}.
    \end{equation*}
    On the one hand with $\mathcal{F}_{t,i-1} = \sigma(G_{t,0},\dots,G_{t,i-1})$
    \begin{equation*}
      \frac{\sum_{\substack{\tau_n < t \leq n\\ 1\leq i \leq m}}\1_{D_{t,i} = k}}{n-\tau_n}%
      = \sum_{\substack{\tau_n < t \leq n\\ 1\leq i \leq m}}\frac{\left(\1_{D_{t,i}= k} - \EE_1^n\left[\1_{D_{t,i} = k}\mid \mathcal{F}_{t, i-1}\right]\right)}{n-\tau_n} + \frac{1}{n-\tau_n}\sum_{\substack{\tau_n < t \leq n\\ 1\leq i \leq m}}\frac{(k+\delta_1)N_{k}(\G_{t, i-1})}{S_{t,i-1}(\delta_1)}.
    \end{equation*}
    The first term converges to $0$ in probability $\PP_1^n$ using Hoeffding-Azuma inequality. On the other hand, we have that for all $(t, i)$:
\begin{equation*}
   |N_{k}(\G_{t, i}) - N_{k}(\G_n)| \leq (n-\tau_n)(m+1).
\end{equation*}
It follows that:
\begin{equation*}
    m \frac{N_{k}(\G_n) - (n-\tau_n)(m+1)}{S_{n,m}(\delta_0)}\leq \frac{1}{n-\tau_n}\sum_{t=\tau_n +1}^{n}\sum_{i=1}^{m}\frac{N_{k}(\G_{t, i-1})}{S_{t,i-1}(\delta_0)}\leq m\frac{N_{k}(\G_n) + (n-\tau_n)(m+1)}{S_{\tau_n+1,0}(\delta_0)}.
\end{equation*}
Thus:
    \begin{equation*}
        \frac{\sum_{t=\tau_n +1}^{n}\sum_{i=1}^{m}\1_{D_{t,i} = k}}{n-\tau_n}\xrightarrow{\PP_1^n} \frac{(k+\delta_1)m}{2m+\delta_1}p_{k}(\delta_0).
    \end{equation*}
    Using dominated convergence theorem, one has:
\begin{equation*}
    \eqref{eqn:alt:1}\xrightarrow{\PP_1^n} \frac{m}{2m+\delta_1}\sum_{k=m}^{\infty}(k+\delta_1)p_{k}(\delta_0)\left(\log\left(1 + \frac{\delta_1 - \delta_0}{k+\delta_0}\right) - \frac{\delta_1 - \delta_0}{k+\delta_0}\right).
\end{equation*}

\textit{Second term \eqref{eqn:alt:2}.} The term 
    \begin{equation*}
        \frac{1}{n-\tau_n}\sum_{t=\tau_n + 1}^{n}\sum_{i=1}^{m}\frac{t(\delta_1 - \delta_0)}{S_{t,i-1}(\delta_0)}
    \end{equation*}
    converges clearly to $\frac{m(\delta_1 - \delta_0)}{2m+\delta_0}$. On the other hand:
    \begin{equation*}
        \begin{split}
            \frac{1}{n-\tau_n}\sum_{t=\tau_n + 1}^{n}\sum_{i=1}^{m}\frac{1}{D_{t,i} + \delta_0} &= \frac{1}{n-\tau_n}\sum_{t=\tau_n + 1}^{n}\sum_{i=1}^{m}\left(\frac{1}{D_{t,i} + \delta_0} - \EE_1^n\left[\frac{1}{D_{t,i} + \delta_0}\mid \mathcal{F}_{t, i-1} \right]\right)\\
            &\quad+ \frac{1}{n-\tau_n}\sum_{t=\tau_n + 1}^{n}\sum_{i=1}^{m}\EE_1^n\left[\frac{1}{D_{t,i} + \delta_0}\mid \mathcal{F}_{t, i-1} \right]\\
            &= \frac{1}{n-\tau_n}\sum_{t=\tau_n + 1}^{n}\sum_{i=1}^{m}\left(\frac{1}{D_{t,i} + \delta_0} - \EE_1^n\left[\frac{1}{D_{t,i} + \delta_0}\mid \mathcal{F}_{t, i-1} \right]\right)\\
            &\quad+\sum_{k=m}^{nm}\frac{k+\delta_1}{k+\delta_0}\frac{1}{n-\tau_n}\sum_{t=\tau_n +1}^{n}\sum_{i=1}^{m}\frac{N_{k}(\G_{t, i-1})}{S_{t,i-1}(\delta_1)}.
        \end{split}
    \end{equation*}
The first term converges in probability to $0$. We will show that:
\begin{equation*}
    \sum_{k=m}^{nm}\frac{k+\delta_1}{k+\delta_0}\frac{1}{n-\tau_n}\sum_{t=\tau_n +1}^{n}\sum_{i=1}^{m}\frac{N_{k}(\G_{t, i-1})}{S_{t,i-1}(\delta_1)}\xrightarrow{\PP_1^n}\frac{m}{2m+\delta_1}\sum_{k=m}^{+\infty}\frac{k+\delta_1}{k+\delta_0}p_{k}(\delta_0).
\end{equation*}
For positive $K$, we have:
\begin{equation*}
    \begin{split}
        &\left|\sum_{k=m}^{nm}\frac{k+\delta_1}{k+\delta_0}\frac{1}{n-\tau_n}\sum_{t=\tau_n +1}^{n}\sum_{i=1}^{m}\frac{N_{k}(\G_{t, i-1})}{S_{t,i-1}(\delta_1)} - \frac{m}{2m+\delta_1}\sum_{k=m}^{+\infty}\frac{k+\delta_1}{k+\delta_0}p_{k}(\delta_0)\right|\\
        &\qquad\qquad\leq \sum_{k=m}^{K}\frac{k+\delta_1}{k+\delta_0}\left|\frac{1}{n-\tau_n}\sum_{t=\tau_n +1}^{n}\sum_{i=1}^{m}\frac{N_{k}(\G_{t, i-1})}{S_{t,i-1}(\delta_1)} -\frac{m}{2m+\delta_1}p_{k}(\delta_0)\right|\\
        &\qquad\qquad\quad+ \frac{m}{2m+\delta_1}\sum_{k=K+1}^{\infty}\frac{k+\delta_1}{k+\delta_0}p_{k}(\delta_0) + \frac{C(\delta_0, \delta_1)}{n-\tau_n}\sum_{t=\tau_n+1}^{n}\sum_{i=1}^{m}\frac{\sum_{k=K+1}^{nm}N_{k}(\G_{t, i-1})}{S_{t,i-1}(\delta_1)}\\
        &\qquad\qquad\leq \sum_{k=m}^{K}\frac{k+\delta_1}{k+\delta_0}\left|\frac{1}{n-\tau_n}\sum_{t=\tau_n +1}^{n}\sum_{i=1}^{m}\frac{N_{k}(\G_{t, i-1})}{S_{t,i-1}(\delta_1)} -\frac{m}{2m+\delta_1}p_{k}(\delta_0)\right|\\
        &\qquad\qquad\quad+ \frac{m}{2m+\delta_1}\sum_{k=K+1}^{\infty}\frac{k+\delta_1}{k+\delta_0}p_{k}(\delta_0) + \frac{C(\delta_0, \delta_1)}{n-\tau_n}\sum_{t=\tau_n +1}^{n}\sum_{i=1}^{m}\frac{N_{>K}(\G_{t, i-1})}{S_{t,i-1}(\delta_1)}\\
        &\qquad\qquad\leq  \sum_{k=m}^{K}\frac{k+\delta_1}{k+\delta_0}\left|\frac{1}{n-\tau_n}\sum_{t=\tau_n +1}^{n}\sum_{i=1}^{m}\frac{N_{k}(\G_{t, i-1})}{S_{t,i-1}(\delta_1)} -\frac{m}{2m+\delta_1}p_{k}(\delta_0)\right|\\
        &\qquad\qquad\quad+ \frac{m}{2m+\delta_1}\sum_{k=K+1}^{\infty}\frac{k+\delta_1}{k+\delta_0}p_{k}(\delta_0) + m C(\delta_0, \delta_1)\frac{N_{>K}(\G_n) + (n-\tau_n)(m+1)}{S_{\tau_n+1,0}(\delta_1)}\\
        &\qquad\qquad\leq \sum_{k=m}^{K}\frac{k+\delta_1}{k+\delta_0}\left|\frac{1}{n-\tau_n}\sum_{t=\tau_n +1}^{n}\sum_{i=1}^{m}\frac{N_{k}(\G_{t, i-1})}{S_{t,i-1}(\delta_1)} -\frac{m}{2m+\delta_1}p_{k}(\delta_0)\right|\\
        &\qquad\qquad\quad+ \frac{m}{2m+\delta_1}\sum_{k=K+1}^{\infty}\frac{k+\delta_1}{k+\delta_0}p_{k}(\delta_0) + m C(\delta_0, \delta_1)\frac{\frac{2mn}{K} + (n-\tau_n)(m+1)}{S_{\tau_{n+1},0}(\delta_1)}
    \end{split}
\end{equation*} 
where $C(\delta_0, \delta_1)$ is a constant depending solely on $\delta_0$ and $\delta_1$. The upper-bound converges in probability to:
\begin{equation*}
    \frac{m}{2m+\delta_1}\sum_{k=K+1}^{\infty}\frac{k+\delta_1}{k+\delta_0}p_{k}(\delta_0) + \frac{2m^{2}C(\delta_0, \delta_1)}{K(2m+\delta_1)}
\end{equation*}
 which can be made arbitrarily small for large values of $K$. We deduce that:
\begin{equation*}
    \eqref{eqn:alt:2}\xrightarrow{\PP_1^n}\frac{m(\delta_1 - \delta_0)}{2m+\delta_1}\sum_{k=m}^{\infty}\frac{k+\delta_1}{k+\delta_0}p_{k}(\delta_0) - \frac{m(\delta_1 - \delta_0)}{2m+\delta_0}.
\end{equation*}

\textit{Third term \eqref{eqn:alt:1}.} Finally, the last term is shown to converge to:
    \begin{equation*}
        \eqref{eqn:alt:3}\to m\left(\frac{\delta_1 - \delta_0}{2m+\delta_0} - \log\left(1 + \frac{\delta_1 - \delta_0}{2m+\delta_0}\right)\right).
    \end{equation*}
It follows that:
\begin{equation*}
    \frac{1}{n-\tau_n}\log\left(\frac{\intd Q_1^n}{\intd Q_0^n}(G_n)\right)\xrightarrow{\PP_1^n} \ell_{\infty}^{1}
\end{equation*}
where 
\begin{equation*}
    \begin{split}
        \ell_{\infty}^{1} &= -\frac{m}{2m+\delta_1}\left(\sum_{k=m}^{\infty}(k+\delta_1)p_{k}(\delta_0)\log\left(1 + \frac{\delta_0 - \delta_1}{k+\delta_1}\right) - (2m+\delta_1)\log\left(1 + \frac{\delta_0 - \delta_1}{2m+\delta_1}\right)\right)\\
        &= -\frac{m}{2m+\delta_1}\left(\EE\left[(X+\delta_1)\log\left(1 + \frac{\delta_0 - \delta_1}{X+\delta_1}\right)\right] - (2m+\delta_1)\log\left(1 + \frac{\delta_0 - \delta_1}{2m+\delta_1}\right)\right)
    \end{split}
\end{equation*}
which can be shown to be positive by a similar argument to that used in Proposition~\ref{prop:asymptotic_likelihood:null}.
\end{proof}

\section{Remaining proofs}
\label{sec:remaining-proofs}

\subsection{Proof of Theorem~\ref{thm:mle:normality}}
\label{sec:proof-theor-mle-normality}

We first remark that the fact that $\hat\delta_{0,n}$ and $\hat\delta_{1,n}$ are asymptotically independent is an immediate consequence of the fact that the likelihood factorizes as the product of a function depending solely on $\delta_0$ and another function depending solely on $\delta_1$. Hence, it is enough to consider separately $\ell_{1:\tau_n}$ and $\ell_{\tau_{n+1}:n}$. But remark that $\ell_{1:\tau_n}$ is the log-likelihood of the model without change-point at size $\tau_n$. Hence, the existence, uniqueness, and asymptotic normality of $(\hat\delta_{0,n})_{n\geq 1}$ follows immediately from the results in \cite{GV17}\footnote{We note however that for practical reasons \cite{GV17} restricts the MLE to some compact set $[-a,b]$ but this is in fact not required, by the argument we develop in Proposition~\ref{pro:nozeroatboundary}.}.

Thus in the next we focus on the analysis of the sequence of maximizers of $\ell_{\tau_n+1,n}$. The proof is standard and mimicks the steps in \cite{GV17}. First define the score as
\begin{equation*}
  \dot\ell_{\tau_n +1:n}(\delta^{\prime})%
  = \sum_{k=m}^{n m}\frac{N_{>k}(G_n) - N_{>k}(G_{\tau_n})}{k+\delta^{\prime}} - \sum_{t=\tau_n +1}^{n}\sum_{i=1}^{m}\frac{t}{(2m + \delta^{\prime})t-2m + i-1}.
\end{equation*}
The Proposition~\ref{prop:likelihood:uniform:cv} below establishes that $\dot\ell_{\tau_n +1:n}(\cdot)$ converges uniformly over $(-m+\varepsilon,+\infty)$ in probability to a function $\iota_1^{\prime}$ (whose expression is given in said proposition) that is monotone decreasing with a unique zero. The Proposition~\ref{pro:nozeroatboundary} shows that with high probability $\dot\ell_{\tau_n+1:n}$ has no zero in $(-m,\varepsilon)$. These facts are exploited hereafter in Proposition~\ref{prop:consistency} to establish the existence and uniqueness (with high probability) and the consistency of $(\delta_{1,n})_{n\geq 1}$. Finally, given the consistency of $(\delta_{1,n})_{n\geq 1}$, we deduce in Section the asymptotic normality using the standard machinery.

\begin{proposition}\label{prop:likelihood:uniform:cv}
  For every $\varepsilon > 0$
\begin{equation*}
    \sup_{\delta\geq -m+\varepsilon}\left|\frac{\dot\ell_{\tau_n +1:n }(\delta)}{n-\tau_n} - \iota^{\prime}_{1}(\delta)\right|\xrightarrow{\PP_1^n}0
\end{equation*}
where [with $X\sim p(\delta_0)$]
\begin{equation*}
    \iota^{\prime}_{1}(\delta) = \frac{2m+\delta_0}{2m+\delta_1}\sum_{k=m}^{\infty}\frac{k+\delta_1}{k+\delta_0}\frac{p_{>k}(\delta_0)}{k+\delta} - \frac{m}{2m+\delta}
    =\frac{m}{2m+\delta_1}\left(\EE\left[\frac{X+\delta_1}{X+\delta}\right] - \frac{\EE\left[X\right] + \delta_1}{\EE\left[X\right] + \delta}\right).
\end{equation*}
\end{proposition}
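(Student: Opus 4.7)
The plan is to begin by rewriting the score in a form amenable to martingale analysis. Using the identity $N_{>k}(G_n) - N_{>k}(G_{\tau_n}) = \sum_{t=\tau_n+1}^n\sum_{i=1}^m \mathbf{1}_{\D_{G_{t,i-1}}(V_{t,i})=k}$ (established within the proof of Lemma~\ref{main-lem:lkl:ratio}) and Fubini, one obtains
\[
\frac{\dot\ell_{\tau_n+1:n}(\delta)}{n-\tau_n} = A_n(\delta) - B_n(\delta), \quad
A_n(\delta)=\frac{1}{\Delta_n}\sum_{t,i}\frac{1}{\D_{G_{t,i-1}}(V_{t,i})+\delta},
\quad B_n(\delta)=\frac{1}{\Delta_n}\sum_{t,i}\frac{t}{S_{t,i-1}(\delta)}.
\]
The deterministic term $B_n$ is handled directly: writing $t/S_{t,i-1}(\delta) = 1/((2m+\delta)-(2m-i+1)/t)$ yields $\sup_{\delta\geq -m+\varepsilon}|B_n(\delta) - m/(2m+\delta)|=O(1/\tau_n)$.

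For $A_n(\delta)$, I would perform a Doob decomposition along $(\mathcal{F}_{t,i-1})$ with drift
\[
D_n(\delta)=\frac{1}{\Delta_n}\sum_{t,i}\sum_{k=m}^\infty\frac{k+\delta_1}{k+\delta}\frac{N_k(G_{t,i-1})}{S_{t,i-1}(\delta_1)}
\]
(obtained by averaging $1/(\D_{G_{t,i-1}}(V_{t,i})+\delta)$ against the PA($\delta_1$) attachment law in force after $\tau_n$), and $M_n=A_n-D_n$ the martingale remainder with increments bounded by $2/\varepsilon$. Hoeffding--Azuma then gives the pointwise tail bound $\PP_1^n(|M_n(\delta)|>\eta)\leq 2\exp(-c\varepsilon^2\Delta_n\eta^2)$. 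For the drift, the key ingredient is that, in the late change-point regime of the paper ($\Delta_n=o(\tau_n)$), the bound $|N_k(G_t)-N_k(G_{\tau_n})|\leq (m+1)\Delta_n$ combined with the classical degree convergence $N_k(G_{\tau_n})/\tau_n\xrightarrow{\PP_1^n}p_k(\delta_0)$ yields $N_k(G_{t,i-1})/t\to p_k(\delta_0)$ in probability, uniformly in $t\in\{\tau_n+1,\dots,n\}$. Combined with $S_{t,i-1}(\delta_1)/t\to 2m+\delta_1$, this gives $D_n(\delta)\to \frac{m}{2m+\delta_1}\sum_{k=m}^\infty \frac{k+\delta_1}{k+\delta}p_k(\delta_0)$ pointwise, which equals $\iota'_1(\delta)+m/(2m+\delta)$ by the alternative expression given in the statement (using $\EE_{p(\delta_0)}(X)=2m$). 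The infinite sum over $k$ is handled by truncation at some level $K$: the tail is dominated using $\frac{k+\delta_1}{k+\delta}\leq \max(1,(m+\delta_1)/\varepsilon)$ and the Markov estimate $N_{>K}(G_t)\leq 2mt/K$ (from $\sum_v \D(v)=2mt$), yielding a tail of order $1/K$ that is uniform in $t$ and in $\delta\geq -m+\varepsilon$.

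Finally, pointwise convergence is lifted to uniform convergence over $\delta\geq -m+\varepsilon$ by a standard compactification: on the window $[-m+\varepsilon,K]$, the functions $A_n, B_n, D_n$ and $\iota'_1$ are equi-Lipschitz in $\delta$ with constant $\leq C/\varepsilon^2$, since each summand $1/(x+\delta)$ has $\delta$-derivative bounded by $1/(x+\delta)^2\leq 1/\varepsilon^2$; thus pointwise control at an $\eta$-net of $O(K/\eta)$ points, via a union bound, produces uniform convergence on $[-m+\varepsilon,K]$. On the tail $[K,\infty)$, crude bounds $|A_n(\delta)|, |B_n(\delta)|, |D_n(\delta)|, |\iota'_1(\delta)|\leq C/K$ follow from $\D\geq m$, $t/S_{t,i-1}(\delta)\leq 1/(m+\delta)$, and $\sum_k(k+\delta_1)N_k(G_t)/(k+\delta)\leq Ct/\delta$ for $\delta\geq K$ large; the triangle inequality then yields an $O(1/K)$ error there, uniformly. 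Sending first $\eta\to 0$ and then $K\to\infty$ concludes. The hard part is the simultaneous management of the unbounded $\delta$-range and the infinite $k$-sum: uniformity near the boundary $-m$ is delicate because several quantities blow up there (motivating the cut at $-m+\varepsilon$), and the $k$-tail interacts with the time-varying degree distribution, which is precisely what forces the use of the late change-point regime $\Delta_n=o(\tau_n)$ so that the graph seen from time $\tau_n+1$ onwards has a stable limit law $p(\delta_0)$ rather than some $\Delta_n/\tau_n$-dependent mixture.
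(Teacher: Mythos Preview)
Your proof is correct and reaches the same conclusion, but the route differs from the paper's in how uniformity in $\delta$ is obtained. The paper never passes through a Lipschitz/net argument or a separate treatment of the $\delta$-tail $[K,\infty)$: it simply observes that for $\delta\geq -m+\varepsilon$ one has $0\leq 1/(k+\delta)\leq 1/(k-m+\varepsilon)$, so taking the supremum in $\delta$ is absorbed into a single $\delta$-free bound. Concretely, the paper keeps the score in the form $\sum_k \frac{N_{>k}(G_n)-N_{>k}(G_{\tau_n})}{(n-\tau_n)(k+\delta)}$, splits at a level $K$, bounds the finite part by $\sum_{k\leq K}\frac{1}{k-m+\varepsilon}\big|\frac{N_{>k}(G_n)-N_{>k}(G_{\tau_n})}{n-\tau_n}-\frac{m(k+\delta_1)}{2m+\delta_1}p_k(\delta_0)\big|$ (finitely many terms, each converging in probability by the analysis already done in Proposition~\ref{prop:asymptotic_likelihood:alt}), and bounds the tail by $\frac{m}{K+1-m+\varepsilon}$ using that $\sum_{k>K}\mathbf 1_{D_{t,i}=k}\leq 1$. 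Your rewriting $A_n(\delta)=\frac{1}{\Delta_n}\sum_{t,i}\frac{1}{D_{t,i}+\delta}$, Doob decomposition, and Hoeffding--Azuma are all sound, and your $k$-truncation for the drift matches the paper's; but the equi-Lipschitz/net layer and the $\delta$-compactification are extra machinery that the monotonicity of $\delta\mapsto 1/(k+\delta)$ renders unnecessary. What your approach buys is portability---it would work for score functions where the summands are not monotone in the parameter---whereas the paper's argument is shorter precisely because it exploits that structure.
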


The proof of Proposition~\ref{prop:likelihood:uniform:cv} is delayed to Section~\ref{sec:proof-prop-uniformcv}.

\begin{proposition}
  \label{pro:nozeroatboundary}
  There exists $\varepsilon_0 > 0$ such that for all $0 < \varepsilon \leq \varepsilon_0$ it holds
  \begin{equation*}
    \PP_1^n\Big( \inf_{\delta' \in (-m,\varepsilon)}\dot\ell_{\tau_n+1:n}(\delta') \geq \Delta_n \Big) \to 1.
  \end{equation*}
\end{proposition}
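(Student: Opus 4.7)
The plan is to split the interval $(-m,\varepsilon)$ at some threshold $-m+\eta$ and control each piece separately. Recall from the remarks preceding the statement that the limiting score $\iota_1'$ is continuous on $(-m,+\infty)$, strictly decreasing, vanishes at $\delta_1$, and the $k=m$ summand $\tfrac{m+\delta_1}{m+\delta_0}\tfrac{p_{>m}(\delta_0)}{m+\delta}$ in the expression for $\iota_1'(\delta)$ diverges as $\delta\to -m^+$ (using $p_{>m}(\delta_0)=1-p_m(\delta_0)>0$), so $\iota_1'(\delta)\to +\infty$ on that side. I therefore fix $\varepsilon_0=\varepsilon_0(m,\delta_0,\delta_1)\in(0,\delta_1)$ small enough that $\iota_1'(\delta)\geq 2$ for every $\delta\in(-m,\varepsilon_0]$. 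For any fixed $\eta\in(0,\varepsilon_0+m)$, Proposition~\ref{prop:likelihood:uniform:cv} gives $\sup_{\delta\in[-m+\eta,\varepsilon_0]}|\Delta_n^{-1}\dot\ell_{\tau_n+1:n}(\delta)-\iota_1'(\delta)|\xrightarrow{\PP_1^n}0$; consequently $\dot\ell_{\tau_n+1:n}(\delta)\geq(2-o(1))\Delta_n\geq\Delta_n$ uniformly on $[-m+\eta,\varepsilon_0]$ with $\PP_1^n$-probability tending to one.

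On the complementary interval $(-m,-m+\eta]$, all summands of the first sum in $\dot\ell_{\tau_n+1:n}$ are non-negative, so retaining only the $k=m$ term and bounding the second sum via $\tfrac{t}{(2m+\delta')t-2m+i-1}\leq\tfrac{3}{m}$ for $\delta'\in(-m,0)$ and $t\geq 3$ yields, for some constant $C=C(m)>0$,
\begin{equation*}
  \dot\ell_{\tau_n+1:n}(\delta')\geq\frac{N_{>m}(G_n)-N_{>m}(G_{\tau_n})}{m+\delta'}-C\Delta_n\geq\frac{N_{>m}(G_n)-N_{>m}(G_{\tau_n})}{\eta}-C\Delta_n
\end{equation*}
uniformly on $(-m,-m+\eta]$. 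The crucial auxiliary estimate I need is the existence of a constant $c=c(m,\delta_0,\delta_1)>0$ such that
\begin{equation*}
  \PP_1^n\bigl(N_{>m}(G_n)-N_{>m}(G_{\tau_n})\geq c\Delta_n\bigr)\longrightarrow 1.
\end{equation*}
Taking this for granted and choosing $\eta=c/(C+1)$ (reducing it if necessary so that $\eta<\varepsilon_0+m$) gives $\dot\ell_{\tau_n+1:n}(\delta')\geq(C+1)\Delta_n-C\Delta_n=\Delta_n$ on $(-m,-m+\eta]$, which combined with the previous paragraph proves the proposition (since the infimum over $(-m,\varepsilon)$ is nondecreasing in $\varepsilon$, it suffices to argue for $\varepsilon=\varepsilon_0$).

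The main obstacle is the lower bound $N_{>m}(G_n)-N_{>m}(G_{\tau_n})\geq c\Delta_n$ with high probability. I would exploit the identity
\begin{equation*}
  N_{>m}(G_n)-N_{>m}(G_{\tau_n})=\sum_{t=\tau_n+1}^n\sum_{i=1}^m\1\bigl(\D_{G_{t,i-1}}(V_{t,i})=m\bigr),
\end{equation*}
which holds because each upward crossing of the degree level $m$ corresponds to the unique step at which the vertex concerned is selected as a child while still at degree $m$. The conditional expectation of the $(t,i)$-th indicator given $\mathcal{F}_{t,i-1}$ equals $(m+\delta_1)N_{=m}(G_{t,i-1})/S_{t,i-1}(\delta_1)$. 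Under $\PP_1^n$ the graph $G_{\tau_n}$ is pure preferential attachment with parameter $\delta_0$, hence $N_{=m}(G_{\tau_n})/\tau_n\to p_m(\delta_0)>0$ almost surely; combined with the conservation identity $N_{=m}(G_t)=N_{=m}(G_{\tau_n})+(t-\tau_n)-(N_{>m}(G_t)-N_{>m}(G_{\tau_n}))$ and the observation that every new vertex enters $N_{=m}$ at its arrival time, a case analysis according to whether $\tau_n$ dominates $\Delta_n$ or not yields a uniform high-probability lower bound $N_{=m}(G_{t,i-1})\gtrsim t$ for $t\in(\tau_n,n]$. The conditional mean of each indicator is then bounded below by a positive constant, the full double sum has mean $\gtrsim\Delta_n$, and a Hoeffding--Azuma concentration inequality (the increments being bounded by one) upgrades this into the required high-probability statement.
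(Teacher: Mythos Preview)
Your argument is correct, and on the piece $(-m,-m+\eta]$ it is essentially the paper's proof. The paper does not split the interval at all: it applies your crude lower bound---retain only the $k=m$ term of the first sum, bound each summand of the second sum by a constant multiple of $1/m$---directly over the whole range, obtaining
\[
  \dot\ell_{\tau_n+1:n}(\delta')\;\geq\;\frac{N_{>m}(G_n)-N_{>m}(G_{\tau_n})}{m+\delta'}-\Delta_n,
\]
and then establishes the same auxiliary fact $N_{>m}(G_n)-N_{>m}(G_{\tau_n})\gtrsim\Delta_n$ with high probability by the identical mechanism you sketch (the identity $N_{>m}(G_n)-N_{>m}(G_{\tau_n})=\sum_{t,i}\1(D_{t,i}=m)$, a lower bound of order $t$ on $\EE_1^n[N_m(G_{t,i-1})]$, and a concentration argument). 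Since $1/(m+\delta')\to+\infty$ as $\delta'\to-m^+$, taking $\varepsilon$ small enough then finishes.

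Your splitting and appeal to Proposition~\ref{prop:likelihood:uniform:cv} are therefore extra work: they arise only because you read the interval as $(-m,\varepsilon)$ with $\varepsilon>0$ fixed, which forces you to cover a region bounded away from $-m$ where the crude bound is too weak. This costs you the tacit hypothesis $\delta_1>0$ (through your choice $\varepsilon_0\in(0,\delta_1)$), without which the literal statement is in fact false. The paper's bare-hands argument works for every $\delta_1>-m$ once the interval is understood as $(-m,-m+\varepsilon)$, which is all the downstream application---ruling out a zero of the score near the boundary---actually requires.
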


The proof or Proposition~\ref{pro:nozeroatboundary} is delayed to Section~\ref{sec:proof-proposition-nozeroatboundary}

\begin{proposition}
  \label{prop:consistency}
  For every $(\delta_0,\delta_1) \in (-m,+\infty)$, if $\Delta_n \to \infty$:
  \begin{equation*}
        \hat{\delta}_{1,n} \xrightarrow{\PP_1^n}\delta_1.
    \end{equation*}
\end{proposition}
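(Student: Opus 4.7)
The plan is to combine the uniform convergence of the score from Proposition~\ref{prop:likelihood:uniform:cv} with the boundary control from Proposition~\ref{pro:nozeroatboundary} via a standard argmax-consistency argument. The three ingredients are (i) identifying $\delta_1$ as the unique zero of the limiting score $\iota_1'$, (ii) propagating the sign structure of $\iota_1'$ to $\dot\ell_{\tau_n+1:n}$ via uniform convergence, and (iii) ruling out that the maximizer escapes to the left boundary $\delta=-m$.

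First I would verify that $\iota_1'(\delta_1)=0$: plugging $\delta=\delta_1$ into the second form of $\iota_1'$ given in Proposition~\ref{prop:likelihood:uniform:cv}, both $\EE[(X+\delta_1)/(X+\delta_1)]$ and $(\EE[X]+\delta_1)/(\EE[X]+\delta_1)$ equal $1$, so the difference vanishes. Together with the monotonicity and uniqueness-of-zero assertion in Proposition~\ref{prop:likelihood:uniform:cv}, $\delta_1$ is the unique zero of $\iota_1'$ on $(-m,+\infty)$. Next, fix $\eta>0$ small enough that $\delta_1-\eta>-m$, and pick $\varepsilon\in(0,\varepsilon_0]$ (with $\varepsilon_0$ from Proposition~\ref{pro:nozeroatboundary}) small enough that $-m+\varepsilon<\delta_1-\eta$. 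Set $c_\eta=\min\bigl(\iota_1'(\delta_1-\eta),\,-\iota_1'(\delta_1+\eta)\bigr)$, which is strictly positive by the strict sign change of $\iota_1'$ at $\delta_1$. By monotonicity, $\iota_1'(\delta)\geq c_\eta$ on $[-m+\varepsilon,\delta_1-\eta]$ and $\iota_1'(\delta)\leq -c_\eta$ on $[\delta_1+\eta,+\infty)$.

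On an event $E_n$ with $\PP_1^n(E_n)\to 1$, combining the two propositions, one has $\dot\ell_{\tau_n+1:n}(\delta)\geq \Delta_n>0$ for $\delta\in(-m,\varepsilon)$ and $|\dot\ell_{\tau_n+1:n}(\delta)/\Delta_n-\iota_1'(\delta)|<c_\eta/2$ uniformly for $\delta\geq -m+\varepsilon$. Since $(-m,\varepsilon)\cup[-m+\varepsilon,+\infty)=(-m,+\infty)$, this forces $\dot\ell_{\tau_n+1:n}>0$ on $(-m,\delta_1-\eta]$ and $\dot\ell_{\tau_n+1:n}<0$ on $[\delta_1+\eta,+\infty)$; hence $\ell_{\tau_n+1:n}$ is strictly increasing on $(-m,\delta_1-\eta]$ and strictly decreasing on $[\delta_1+\eta,+\infty)$. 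By continuity of $\ell_{\tau_n+1:n}$ on $(-m,+\infty)$, its supremum is attained in the compact interval $[\delta_1-\eta,\delta_1+\eta]$, so on $E_n$ the MLE $\hat\delta_{1,n}$ exists and satisfies $|\hat\delta_{1,n}-\delta_1|\leq\eta$. Since $\eta$ is arbitrary, $\hat\delta_{1,n}\xrightarrow{\PP_1^n}\delta_1$.

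The main obstacle is structural rather than computational: uniform convergence of the score only holds on intervals bounded away from the singularity at $-m$, and a priori the MLE could be absorbed by this boundary. Proposition~\ref{pro:nozeroatboundary} is tailor-made to prevent this scenario, and once it is combined with the localization supplied by uniform convergence, the argmax-consistency argument closes cleanly; no concavity of $\ell_{\tau_n+1:n}$ is needed.
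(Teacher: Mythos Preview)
Your proof is correct and follows a clean argmax-localization argument via the sign structure of the score. It is close in spirit to the paper's proof but packaged differently. The paper first shows directly that $\PP_1^n(N_{>m}(G_n)-N_{>m}(G_{\tau_n})=0)\to 0$ and observes that on the complement event the score has a $1/(m+\delta')$ singularity forcing $\hat\delta_{1,n}>-m+\eta_0$ for a deterministic $\eta_0$; it then plugs $\hat\delta_{1,n}$ into the uniform approximation to get $\iota_1'(\hat\delta_{1,n})\to 0$ and concludes from the unique zero of $\iota_1'$. You instead invoke Proposition~\ref{pro:nozeroatboundary} (which the paper states and proves separately but, curiously, does not use in its proof of consistency) to handle the boundary, and you localize the maximizer by reading off the sign of $\dot\ell_{\tau_n+1:n}$ on each side of $\delta_1$ rather than evaluating at $\hat\delta_{1,n}$.

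Your route has the mild advantage that it yields existence of the maximizer as a by-product (increasing on $(-m,\delta_1-\eta]$, decreasing on $[\delta_1+\eta,\infty)$, hence the supremum is attained on the compact interval), whereas the paper's argument tacitly assumes $\dot\ell_{\tau_n+1:n}(\hat\delta_{1,n})=0$. The paper's route is slightly more economical in that it needs only the single evaluation $\iota_1'(\hat\delta_{1,n})$ rather than the full sign pattern. Both are standard Z-estimator consistency arguments and neither requires concavity.
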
    

The proof of Proposition~\ref{prop:likelihood:uniform:cv} is delayed to Section~\ref{sec:proof-prop-consistency}.

\subsubsection{Asmptotic normality of \texorpdfstring{$(\hat\delta_{1,n})_{n\geq 1}$}{}}
\label{sec:asmpt-norm-hatd}

In what follows, we introduce the random variables $D_{t,i} = \D_{\G_{t, i-1}}(\V_{t, i})$ and the filtrations $\mathcal{F}_t = \sigma(G_0,\dots,G_t)$ and $\mathcal{F}_{t,i-1}=\sigma(G_{t,0},\dots,G_{t,i-1})$ as in \cite{GV17} to simplify the notations. By definition of $\hat{\delta}_{1,n}$, one has:
\begin{equation*}
    \sum_{k=m}^{n m}\frac{N_{>k}(\G_n) - N_{>k}(\G_{\tau_n})}{k+\hat{\delta}_{1,n}} = \sum_{t=\tau_n +1}^{n}\sum_{i=1}^{m}\frac{t}{(2m + \hat{\delta}_{1,n})t-2m + i-1}.
\end{equation*}
It follows that:
\begin{align*}
        &\sum_{k=m}^{nm}\frac{\sum_{t=\tau_n +1}^{n}\sum_{i=1}^{m}\left(\1_{D_{t,i} = k} - \EE_1^n\left[\1_{D_{t,i} = k} \mid \mathcal{F}_{t, i-1}\right]\right)}{k+\hat{\delta}_{1,n}} \\
        &\qquad= \sum_{t=\tau_n + 1}^{n}\sum_{i=1}^{m}\left(\frac{t}{S_{t,i-1}(\hat\delta_{1,n})} - \sum_{k=m}^{nm}\frac{k+\delta_1}{k+\hat{\delta}_{1,n}}\frac{N_{k}(\G_{t, i-1})}{S_{t,i-1}(\delta_1)}\right)\\
        &\qquad= \sum_{t=\tau_n + 1}^{n}\sum_{i=1}^{m}\left(\frac{t}{S_{t,i-1}(\hat\delta_{1,n})} - \frac{t}{S_{t,i-1}(\delta_1)}\right)
        + \sum_{t=\tau_n + 1}^{n}\sum_{i=1}^{m}\left(\frac{t}{S_{t,i-1}(\delta_1)} - \sum_{k=m}^{nm}\frac{k+\delta_1}{k+\hat{\delta}_{1,n}}\frac{N_{k}(\G_{t, i-1})}{S_{t,i-1}(\delta_1)}\right)\\
        &\qquad= (\delta_1 - \hat{\delta}_{1,n})\sum_{t=\tau_n + 1}^{n}\sum_{i=1}^{m}\frac{t^{2}}{S_{t,i-1}(\delta_1)S_{t,i-1}(\hat\delta_{1,n})}
        +\sum_{t=\tau_n + 1}^{n}\sum_{i=1}^{m}\frac{1}{S_{t,i-1}(\delta_1)}\sum_{k=m}^{nm}N_{k}(\G_{t, i-1})\left(1 - \frac{k+\delta_1}{k+\hat{\delta}_{1,n}}\right)\\
        &\qquad= (\delta_1 - \hat{\delta}_{1,n})\sum_{t=\tau_n + 1}^{n}\sum_{i=1}^{m}\bigg[\frac{t^{2}}{S_{t,i-1}(\delta_1) S_{t,i-1}(\hat\delta_{1,n})}
        -\frac{1}{S_{t,i-1}(\delta_1)}\sum_{k=m}^{nm}\frac{N_{k}(\G_{t, i-1})}{k+\hat{\delta}_{1,n}}\bigg]\\
        &\qquad= (\delta_1 - \hat{\delta}_{1,n})\sum_{t=\tau_n + 1}^{n}\sum_{i=1}^{m}\frac{t}{S_{t,i-1}(\delta_1)}\bigg[\frac{t}{S_{t,i-1}(\hat\delta_{1,n})}
        - \sum_{k=m}^{nm}\frac{N_{k}(\G_{t, i-1})}{t(k+\hat{\delta}_{1,n})}\bigg].
\end{align*}
Thus:
\begin{equation*}
    \begin{split}
        \sqrt{n-\tau_n}(\delta_1 - \hat{\delta}_{1,n}) = \frac{\frac{1}{\sqrt{n-\tau_n}}\sum_{k=m}^{nm}\frac{\sum_{t=\tau_n +1}^{n}\sum_{i=1}^{m}\left(\1_{D_{t,i} = k} - \EE_1^n\left[\1_{D_{t,i} = k} \mid \mathcal{F}_{t, i-1}\right]\right)}{k+\hat{\delta}_{1,n}}}{\frac{1}{n-\tau_n}\sum_{t=\tau_n + 1}^{n}\sum_{i=1}^{m}\frac{t}{S_{t,i-1}(\delta_1)}\bigg[\frac{t}{S_{t,i-1}(\hat\delta_{1,n})} - \sum_{k=m}^{\infty}\frac{N_{k}(\G_{t, i-1})}{t(k+\hat{\delta}_{1,n})}\bigg]}.
    \end{split}
\end{equation*}
We will show that the numerator of the previous display is asymptotically normal and the denominator converges to a positive constant. Asymptotic normality of the estimator follows. Let
\begin{align*}
  A(G_n) &= \frac{1}{n-\tau_n}\sum_{t=\tau_n + 1}^{n}\sum_{i=1}^{m}\frac{t}{S_{t,i-1}(\delta_1)}\bigg[\frac{t}{S_{t,i-1}(\hat\delta_{1,n})} - \sum_{k=m}^{\infty}\frac{N_{k}(\G_{t, i-1})}{t(k+\hat{\delta}_{1,n})}\bigg],\\
  B(G_n) &= \frac{1}{\sqrt{n-\tau_n}}\sum_{k=m}^{nm}\frac{\sum_{t=\tau_n +1}^{n}\sum_{i=1}^{m}\left(\1_{D_{t,i} = k} - \EE_1^n\left[\1_{D_{t,i} = k} \mid \mathcal{F}_{t, i-1}\right]\right)}{k+\hat{\delta}_{1,n}},\\
  \tilde{B}(G_n) &= \frac{1}{\sqrt{n-\tau_n}}\sum_{k=m}^{nm}\frac{\sum_{t=\tau_n +1}^{n}\sum_{i=1}^{m}\left(\1_{D_{t,i} = k} - \EE_1^n\left[\1_{D_{t,i} = k} \mid \mathcal{F}_{t, i-1}\right]\right)}{k+\delta_1}.
\end{align*}

\textit{Convergence of $A(G_n)$}. First, note that
\begin{equation*}
    \begin{split}
        \frac{t}{(2m+\hat{\delta}_{1,n})t -2m + i-1} - \sum_{k=m}^{\infty}\frac{N_{k}(\G_{t, i-1})}{t(k+\hat{\delta}_{1,n})} &= \left(\frac{t}{(2m+\hat{\delta}_{1,n})t-2m + i-1} - \frac{1}{2m+\hat{\delta}_{1,n}}\right)\\
        &+ \left(\frac{1}{2m+\hat{\delta}_{1,n}} - \frac{1}{2m+\delta_1}\right) \\
        &-\sum_{k=m}^{\infty}\left(\frac{N_{k}(\G_{t, i-1})}{t(k+\hat{\delta}_{1,n})} - \frac{N_{k}(\G_{t, i-1})}{t(k+\delta_1)}\right)\\
        &-\sum_{k=m}^{nm}\left(\frac{N_{k}(\G_{t, i-1})}{t(k+\delta_1)} - \frac{p_{k}(\delta_0)}{k+\delta_1}\right)\\
        & \frac{1}{2m+\delta_1} - \sum_{k=m}^{nm}\frac{p_{k}(\delta_0)}{k+\delta_1}.
    \end{split}
\end{equation*}
We have :
\begin{equation*}
    \begin{split}
        \frac{\tau_n +1}{(2m+\hat{\delta}_{1,n})n - m-1}\leq \frac{t}{(2m+\hat{\delta}_{1,n})t-2m + i-1}\leq \frac{n}{(2m+\hat{\delta}_{1,n})(\tau_n+1)-2m}.
    \end{split}
\end{equation*}
It follows that:
\begin{equation*}
    \frac{1}{n-\tau_n}\sum_{t=\tau_n +1}^{n}\sum_{i=1}^{m}\frac{t}{S_{t,i-1}(\delta_1)}\left[\frac{t}{(2m+\hat{\delta}_{1,n})t-2m + i-1} - \frac{1}{2m+\hat{\delta}_{1,n}}\right]\xrightarrow{\PP_1^n}0.
\end{equation*}
Similarly:
\begin{equation*}
    \frac{1}{n-\tau_n}\sum_{t=\tau_n +1}^{n}\sum_{i=1}^{m}\frac{t}{S_{t,i-1}(\delta_1)}\left[\frac{1}{2m+\hat{\delta}_{1,n}} - \frac{1}{2m+\delta_1}\right]\xrightarrow{\PP_1^n}0.
\end{equation*}
On the other hand:
\begin{equation*}
    \begin{split}
        \sum_{k=m}^{\infty}\left(\frac{N_{k}(\G_{t, i-1})}{t(k+\hat{\delta}_{1,n})} - \frac{N_{k}(\G_{t, i-1})}{t(k+\delta_1)}\right) &= (\delta_1 - \hat{\delta}_{1,n})\sum_{k=m}^{\infty}\frac{N_{k}(\G_{t, i-1})}{t(k+\delta_1)(k+\hat{\delta}_{1,n})}\\
        &\leq \frac{\delta_1 - \hat{\delta}_{1,n}}{(m+\delta_1)(m+\hat{\delta}_{1,n})}.
    \end{split}
\end{equation*}
It follows that:
\begin{equation*}
    \frac{1}{n-\tau_n}\sum_{t=\tau_n +1}^{n}\sum_{i=1}^{m}\frac{t}{S_{t,i-1}(\delta_1)}\left(\frac{N_{k}(\G_{t, i-1})}{t(k+\hat{\delta}_{1,n})} - \frac{N_{k}(\G_{t, i-1})}{t(k+\delta_1)}\right)\xrightarrow{\PP_1^n}0.
\end{equation*}
The fourth term is smaller than:
\begin{equation*}
    \begin{split}
        &\frac{1}{n-\tau_n}\sum_{t=\tau_n +1}^{n}\sum_{i=1}^{m}\frac{t}{S_{t,i-1}(\delta_1)}\sum_{k=m}^{nm}\frac{1}{k+\delta}\left|\frac{N_{k}(\G_{t, i-1})}{t} - p_{k}(\delta_0)\right|\\
        &\leq \frac{\log(nm+\delta)}{n-\tau_n}\sum_{t=\tau_n +1}^{n}\sum_{i=1}^{m}\frac{t}{S_{t,i-1}(\delta_1)} \max_{m\leq k\leq nm}\left|\frac{N_{k}(\G_{t, i-1})}{t} - p_{k}(\delta_0)\right|
    \end{split}
\end{equation*}
which converges to $0$ in probability by Theorem 1.3 in \cite{DEGH09}. One can then deduce that:
\begin{equation*}
    A(G_n)\xrightarrow{\PP_1^n}\frac{m}{2m+\delta_1}\left[\frac{1}{2m+\delta_1} - \sum_{k=m}^{\infty}\frac{p_{k}(\delta_0)}{k+\delta_1}\right]
\end{equation*}

\textit{Asymptotic normality of $\tilde{B}(G_n)$}. Now we turn our attention to $\tilde{B}$. One has
\begin{equation*}
    \begin{split}
        \tilde{B}(G_n) &= \sum_{t=1}^{n-\tau_n}\sum_{i=1}^{m}\frac{\sum_{k=m}^{nm}\left(\1_{\D_{\V_{t + \tau_n, i}} = k} - \EE_1^n\left[\1_{\D_{\V_{t + \tau_n, i}} = k} \mid \mathcal{F}_{t, i-1}\right]\right)}{\sqrt{n-\tau_n}(k+\delta_1)}.
    \end{split}
\end{equation*} 
Let $Y^{(n)}_{t, i} = \sum_{k=m}^{nm}\frac{\left(\1_{\D_{\V_{t + \tau_n, i}} = k} - \EE_1^n\left[\1_{\D_{\V_{t + \tau_n, i}} = k} \mid \mathcal{F}_{t, i-1}\right]\right)}{\sqrt{n-\tau_n}(k+\delta_1)}$. We now need to show that $\tilde{B}(G_n)$ is asymptotically normal. We will apply proposition 3 of \cite{GV17}. To do this it is enough to prove that
\begin{equation*}
    \begin{split}
        &\sum_{t=1}^{n-\tau_n}\sum_{i=1}^{m} \EE_1^n\left[\left(\sum_{k=m}^{nm}\frac{\left(\1_{\D_{\V_{t + \tau_n, i}} = k} - \EE_1^n\left[\1_{\D_{\V_{t + \tau_n, i}} = k} \mid \mathcal{F}_{t, i-1}\right]\right)}{\sqrt{n-\tau_n}(k+\delta_1)}\right)^{2}\1_{|Y_{t, i}^{(n)}| > \varepsilon}\mid \mathcal{F}_{t, i-1}\right]\xrightarrow{\PP_1^n} 0,\\
        &\sum_{t=1}^{n-\tau_n}\sum_{i=1}^{m} \EE_1^n\left[\left(\sum_{k=m}^{nm}\frac{\left(\1_{\D_{\V_{t + \tau_n, i}} = k} - \EE_1^n\left[\1_{\D_{\V_{t + \tau_n, i}} = k} \mid \mathcal{F}_{t, i-1}\right]\right)}{\sqrt{n-\tau_n}(k+\delta_1)}\right)^{2}\mid \mathcal{F}_{t, i-1}\right]\xrightarrow{\PP_1^n} \nu_1.
    \end{split}
\end{equation*}
The first convergence result is straightforward since for all $t\in\llbracket1, n-\tau_n\rrbracket$ and $i\in\llbracket1, m\rrbracket$, the random variables $Y_{t, i}^{(n)}$ are uniformly bounded by $\frac{2}{\sqrt{n-\tau_n}(m+\delta_1)}$. For the second one, we start by computing the expectations:
\begin{equation*}
    \begin{split}
        &\EE_1^n\left[\left(\sum_{k=m}^{nm}\frac{\left(\1_{\D_{\V_{t + \tau_n, i}} = k} - \EE_1^n\left[\1_{\D_{\V_{t + \tau_n, i}} = k} \mid \mathcal{F}_{t, i-1}\right]\right)}{\sqrt{n-\tau_n}(k+\delta_1)}\right)^{2}\mid \mathcal{F}_{t, i-1}\right]\\
        &= \sum_{k=m}^{nm}\frac{\EE_1^n\left[\left(\1_{\D_{\V_{t + \tau_n, i}} = k} - \EE_1^n\left[\1_{\D_{\V_{t + \tau_n, i}} = k} \mid \mathcal{F}_{t, i-1}\right]\right)^{2}\mid \mathcal{F}_{t, i-1}\right]}{(k+\delta_1)^{2}(n-\tau_n)}\\
        &-\frac{1}{n-\tau_n}\sum_{k\neq k^{\prime}}\frac{\EE_1^n\left[\1_{D_{t,i} = k}\mid \mathcal{F}_{t, i-1}\right]\EE_1^n\left[\1_{D_{t,i} = k^{\prime}}\mid \mathcal{F}_{t, i-1}\right]}{(k^{\prime}+\delta_1)(k+\delta_1)}\\
        &= \sum_{k=m}^{nm}\frac{\EE_1^n\left[\1_{D_{t,i} = k}\mid \mathcal{F}_{t, i-1}\right]}{(k+\delta_1)^{2}(n-\tau_n)} - \frac{1}{n-\tau_n}\left(\sum_{k=m}^{nm}\frac{\EE_1^n\left[\1_{D_{t,i} = k}\mid \mathcal{F}_{t, i-1}\right]}{k+\delta_1}\right)^{2}\\
        &= \sum_{k=m}^{nm}\frac{N_{k}(\G_{t, i-1})}{(n-\tau_n)(k+\delta_1)[S_{t,i-1}(\delta_1)]} - \frac{1}{n-\tau_n}\left(\sum_{k=m}^{nm}\frac{N_{k}(\G_{t, i-1})}{S_{t,i-1}(\delta_1)}\right)^{2}\\
        &= \sum_{k=m}^{nm}\frac{N_{k}(\G_{t, i-1})}{(n-\tau_n)(k+\delta_1)[S_{t,i-1}(\delta_1)]} - \frac{1}{n-\tau_n}\left(\frac{t}{S_{t,i-1}(\delta_1)}\right)^{2}.
    \end{split}
\end{equation*}
Using arguments exactly similar to those of the previous paragraphs, we have that the second sum converges under $\PP_{1}^{n}$ to:
\begin{equation*}
    \nu_1 = \sum_{k=m}^{\infty}\frac{m p_{k}(\delta_0)}{(k+\delta_1)(2m+\delta_1)} - \frac{m}{(2m+\delta_1)^{2}} =  \frac{m}{2m+\delta_1}\left(\sum_{k=m}^{\infty}\frac{p_{k}(\delta_0)}{k+\delta_1}-\frac{1}{2m+\delta_1}\right).
\end{equation*}
By application of Proposition 3 of \cite{GV17}, one has:
\begin{equation*}
    \tilde{B}(G_n)\overset{\PP_1^n}{\rightsquigarrow}\mathcal{N}(0, \nu_1).
\end{equation*}

\textit{Asymptotic normality of $B(G_n)$}. We start by showing that $\tilde{B}(G_n) - B(G_n)$ converges to $0$ in probability.
\begin{equation*}
    \begin{split}
        \tilde{B}(G_n) - B(G_n) &= \sum_{k=m}^{nm}\frac{1}{(k+\delta_1)(k + \hat{\delta}_{1,n})}\left(\frac{(\hat{\delta}_{1,n} - \delta_1)}{\sqrt{n-\tau_n}}\sum_{t = \tau_n +1}^{n}\sum_{i=1}^{m}\left(\1_{D_{t,i} = k} - \EE_{1}\left[\1_{D_{t,i} = k} \mid \mathcal{F}_{t, i-1}\right]\right)\right).
    \end{split}
\end{equation*}
First we show that for all $k\geq m$:
\begin{equation*}
    \frac{(\hat{\delta}_{1,n} - \delta_1)}{\sqrt{n-\tau_n}}\sum_{t = \tau_n +1}^{n}\sum_{i=1}^{m}\left(\1_{D_{t,i} = k} - \EE_1^n\left[\1_{D_{t,i} = k} \mid \mathcal{F}_{t, i-1}\right]\right)\xrightarrow{\PP_1^n} 0.
\end{equation*}
Let $\epsilon > 0$. One has:
\begin{equation*}
    \begin{split}
        &\PP_1^n\left(\left|\frac{(\hat{\delta}_{1,n} - \delta_1)}{\sqrt{n-\tau_n}}\sum_{t = \tau_n +1}^{n}\sum_{i=1}^{m}\left(\1_{D_{t,i} = k} - \EE_1^n\left[\1_{D_{t,i} = k} \mid \mathcal{F}_{t, i-1}\right]\right)\right|\geq \epsilon^{2}\right)\\
        &\leq\PP_1^n\left(\left|\hat{\delta}_{1,n} - \delta_1\right|\geq \frac{\epsilon}{a}\right)\\
        &+ \PP_1^n\left(\left|\frac{\sum_{t = \tau_n +1}^{n}\sum_{i=1}^{m}\1_{D_{t,i} = k} - \EE_1^n\left[\1_{D_{t,i} = k} \mid \mathcal{F}_{t, i-1}\right]}{\sqrt{n-\tau_n}}\right|\geq a\epsilon\right)\\ 
        &\leq \PP_1^n\left(\left|\hat{\delta}_{1,n} - \delta_1\right|\geq \frac{\epsilon}{a}\right) + 2e^{-\frac{2a^{2}\epsilon^{2}}{m}}.
    \end{split}
\end{equation*}
Taking the limit of $n$ to $+\infty$ and then the same for $a$, one obtains the desired convergence. Given that $\hat{\delta}_{1,n}$ is far from $-m$- with probability tending to $1$, one obtains by application of the dominated convergence theorem that:
\begin{equation*}
    \tilde{B}(G_n) - B(G_n)\xrightarrow{\PP_1^n}0.
\end{equation*}
Finally, we apply Slutsky lemma to obtain:
\begin{equation*}
    \sqrt{n-\tau_n}(\delta_1 - \hat{\delta}_{1,n}) \xrightarrow{\PP_1^n} \mathcal{N}(0, \nu_1^{-1})
\end{equation*}
where 
\begin{equation*}
    \nu_1 = \sum_{k=m}^{\infty}\frac{m p_{k}(\delta_0)}{(k+\delta_1)(2m+\delta_1)} - \frac{m}{(2m+\delta_1)^{2}}.
\end{equation*}

\subsubsection{Proof of Proposition \ref{prop:likelihood:uniform:cv}}
\label{sec:proof-prop-uniformcv}

First, one can easily show that:
\begin{equation*}
    \sup_{\delta\geq -m+\epsilon}\left|\frac{1}{n-\tau_n}\sum_{t=\tau_n +1}^{n}\sum_{i=1}^{m}\frac{t}{(2m + \delta)t-2m + i-1} - \frac{m}{2m+\delta}\right|\to 0.
\end{equation*}
For the other sum, we write:
\begin{equation*}
    \begin{split}
        &\left|\frac{1}{n-\tau_n}\sum_{k=m}^{+\infty}\frac{N_{>k}(G_n) - N_{>k}(\G_{\tau_n})}{k+\delta} - \frac{m}{2m+\delta_1}\sum_{k=m}^{+\infty}\frac{k+\delta_1}{k+\delta}p_{k}(\delta_0)\right|\\
        &\leq \sum_{k=m}^{K}\frac{1}{k+\delta}\left|\frac{N_{>k}(G_n) - N_{>k}(\G_{\tau_n})}{n-\tau_n}- \frac{m}{2m+\delta_1}(k+\delta_1)p_{k}(\delta_0)\right|\\
        &+ \sum_{k=K+1}^{+\infty}\frac{1}{k+\delta}\frac{N_{>k}(G_n) - N_{>k}(\G_{\tau_n})}{n-\tau_n} + \frac{m}{2m+\delta_1}\sum_{k=K+1}^{+\infty}\frac{k+\delta_1}{k+\delta}p_{k}(\delta_0)
    \end{split}
\end{equation*}
Taking the supremum over $\delta$ on both sides, one obtains for $K >0$:
\begin{equation*}
    \begin{split}
        &\sup_{\delta\geq -m+\epsilon}\left|\frac{1}{n-\tau_n}\sum_{k=m}^{+\infty}\frac{N_{>k}(G_n) - N_{>k}(\G_{\tau_n})}{k+\delta} - \frac{m}{2m+\delta_1}\sum_{k=m}^{+\infty}\frac{k+\delta_1}{k+\delta}p_{k}(\delta_0)\right|\\
        &\leq \sum_{k=m}^{K}\frac{1}{k-m+\epsilon}\left|\frac{N_{>k}(G_n) - N_{>k}(\G_{\tau_n})}{n-\tau_n}- \frac{m}{2m+\delta_1}(k+\delta_1)p_{k}(\delta_0)\right|\\
        &+ \frac{1}{n-\tau_n}\sum_{t=\tau_n +1}^{n}\sum_{i=1}^{m}\sum_{k=K+1}^{+\infty}\frac{1}{k-m +\epsilon}\1_{D_{t,i} = k} + \frac{m}{2m+\delta_1}\sum_{k=K+1}^{+\infty}\frac{k+\delta_1}{k-m+\epsilon}p_{k}(\delta_0)\\
        &\leq \sum_{k=m}^{K}\frac{1}{k-m+\epsilon}\left|\frac{N_{>k}(G_n) - N_{>k}(\G_{\tau_n})}{n-\tau_n}- \frac{m}{2m+\delta_1}(k+\delta_1)p_{k}(\delta_0)\right|\\
        &+ \frac{m}{K+1+\epsilon-m} + \frac{m}{2m+\delta_1}\sum_{k=K+1}^{+\infty}\frac{k+\delta_1}{k-m+\epsilon}p_{k}(\delta_0)\\
    \end{split}
\end{equation*}
The first term in the upper-bound converges in probability to $0$ and the remaining terms can be made arbitrarily small by choosing a large value for $K$. The convergence in probability result follows:
\begin{equation*}
    \sup_{\delta\geq -m+\epsilon}\left|\frac{\dot\ell_{\tau_n +1: n }(\delta)}{n-\tau_n} - \iota^{\prime}_{1}(\delta)\right|\xrightarrow{\PP^{n}_1}0
\end{equation*}

\subsubsection{Proof of Proposition~\ref{pro:nozeroatboundary}}
\label{sec:proof-proposition-nozeroatboundary}

Based on the expression of the score, we remark that
\begin{align*}
  \dot\ell_{\tau_{n+1}:n}(\delta')%
  &= \sum_{k=m}^{nm}\frac{N_{>k}(G_n) - N_{>k}(G_{\tau_n})}{k+\delta'}%
  - \sum_{t=\tau_{n}+1}^n\sum_{i=1}^m\frac{t}{(2m+\delta')t - 2m+i-1}\\
  &\geq \frac{N_{>m}(G_n) - N_{>m}(G_{\tau_n})}{m + \delta'}%
    - \sum_{t=\tau_n+1}^n\sum_{i=1}^m\frac{t}{m t}\\
  &= \frac{N_{>m}(G_n) - N_{>m}(G_{\tau_n})}{m + \delta'}%
    - \Delta_n.
\end{align*}
Further notice that almost-surely letting $D_{t,i} = \D_{G_{t,i-1}}(V_{t,i})$
\begin{align*}
  N_{>m}(G_n) - N_{>m}(G_{\tau_n})%
  &= \sum_{t=\tau_n+1}^n \sum_{i=1}^m\1(D_{t,i} = m).
\end{align*}
Hence with $\mathcal{F}_{t,i-1}= \sigma(G_{t,i-1})$
\begin{align*}
  \EE_1^n\big(N_{>m}(G_n) - N_{>m}(G_{\tau_n}) \big)%
  &=\sum_{t=\tau_n+1}^n\sum_{i=1}^n\EE_1^n\Big( \PP_1^n(D_{t,i} = m \mid \mathcal{F}_{t,i-1}) \Big)\\
  &=\sum_{t=\tau_n+1}^n\sum_{i=1}^m\EE_1^n\Big(N_m(G_{t,i-1})\frac{m + \delta_1}{S_{t,i-1}(\delta_1)} \Big)\\
  &= (m+\delta_1)\sum_{t=\tau_n+1}^n\sum_{i=1}^m\frac{\EE_1^n(N_m(G_{t,i-1}))}{S_{t,i-1}(\delta_1)}
\end{align*}
Since only $m$ edges are added at each instant $t$, we deduce that $N_m(G_{t,i-1}) \geq N_m(G_t) - m$ for all $t\in \llbracket \tau_n+1,n\rrbracket$ and all $i=1,\dots,m$. Furthermore $\EE(N_m(G_t)) \asymp t p_m(\delta_0)$ (see for instance the computations in \cite{BBCH23}). Hence we deduce that $\EE_1^n\big(N_{>m}(G_n) - N_{>m}(G_{\tau_n}) \big) \geq C\Delta_n$ for a constant $C$ depending only on $(\delta_0,\delta_1)$ and $m$. A standard concentration argument shows that $\dot\ell_{\tau_n+1:n}(\delta_{'}) \geq \Delta_n$ with probability going to one provided $\varepsilon > 0$ is taken small enough.

\subsubsection{Proof of Proposition \ref{prop:consistency}}
\label{sec:proof-prop-consistency}

We first show that:
\begin{equation*}
    \PP_1^n\left(N_{>m}(G_n) - N_{>m}(\G_{\tau_n}) = 0\right)\to 0.
  \end{equation*}
  The starting point is
\begin{equation*}
    \begin{split}
        \PP_1^n\left(N_{>m}(G_n) - N_{>m}(\G_{\tau_n}) = 0\mid \mathcal{F}_{\tau_n}\right) &= \PP_1^n\left(\cap_{t = \tau_n + 1}^{n}\cap_{i=1}^{m}\left\{\D_{G_{t,i-1}}(V_{t,i}) \neq m\right\}\mid \mathcal{F}_{\tau_n}\right)\\ 
        &= \EE_1^n\left[\prod_{t=\tau_n +1}^{n}\prod_{i=1}^{m}\left(1 - \frac{(m+\delta_1)N_{m}(\G_{t, i-1})}{S_{t,i-1}(\delta_1)}\right)\mid \mathcal{F}_{\tau_n}\right]\\
        &= \EE_1^n\left[\exp\left(\sum_{t = \tau_n +1}^{n}\sum_{i=1}^{m}\log\left(1 - \frac{(m+\delta_1)N_{m}(\G_{t, i-1})}{S_{t,i-1}(\delta_1)}\right)\right)\mid \mathcal{F}_{\tau_n}\right]\\
        &\leq \EE_1^n\left[\exp\left(-\sum_{t = \tau_n +1}^{n}\sum_{i=1}^{m}\frac{(m+\delta_1)N_{m}(\G_{t, i-1})}{S_{t,i-1}(\delta_1)}\right)\mid \mathcal{F}_{\tau_n}\right].
    \end{split}
\end{equation*}
It follows that:
\begin{equation*}
     \PP_1^n\left(N_{>m}(G_n) - N_{>m}(\G_{\tau_n}) = 0\right) \leq \EE_1^n\left[\exp\left(-\sum_{t = \tau_n +1}^{n}\sum_{i=1}^{m}\frac{(m+\delta_1)N_{m}(\G_{t, i-1})}{S_{t,i-1}(\delta_1)}\right)\right]\to 0.
\end{equation*}
Note that when $N_{>m}(G_n) - N_{>m}(\G_{\tau_n}) \geq 1$, there exists a deterministic $\eta_0 > 0$ such that $\hat{\delta}_{1,n} > -m + \eta_0$. Finally,
\begin{equation*}
    \begin{split}
        \PP_1^n\left(\left|\iota^{\prime}_{1}(\hat{\delta}_{1,n})\right|\geq \epsilon\right) &\leq \PP_1^n\left(\left|\iota^{\prime}_{1}(\hat{\delta}_{1,n})\right|\geq \epsilon, N_{>m}(G_n) - N_{>m}(\G_{\tau_n})\geq 1\right) + \PP_1^n\left(N_{>m}(G_n) - N_{>m}(\G_{\tau_n}) = 0\right)\\
        &\leq \PP_1^n\left(\sup_{\delta\geq -m + \eta_0}\left|\iota^{\prime}_{1}(\delta)-\frac{\dot\ell_{\tau_n +1\rightarrow n}(\delta)}{n-\tau_n}\right|\geq \epsilon\right) + \PP_1^n\left(N_{>m}(G_n) - N_{>m}(\G_{\tau_n}) = 0\right).
    \end{split}
\end{equation*}
It follows by Proposition \ref{prop:likelihood:uniform:cv} that $\iota_{1}^{\prime}(\hat{\delta}_{1,n})\xrightarrow{\PP_1^n} 0$.

\subsection{Proof of Theorem \ref{thm:labeled:change_detection:unknown_params}}
\label{sec:proof-theor-changepointunknown}

Theorem \ref{thm:labeled:change_detection:unknown_params} is a direct consequence of the following proposition.

\begin{proposition}\label{prop:asymptotic_likelihood:estim}
    Let $\delta_0 \neq \delta_1$. For every increasing sequence $\tau_n < n$ such that $\Delta_n \to \infty$, one has:
\begin{equation*}
    \begin{split}
        &\frac{1}{n-\tau_n}\log\left(\frac{\intd Q^n_{(\tau_n, \hat{\delta}_0, \hat{\delta}_{1,n})}}{\intd Q^n_{(\tau_n, \hat{\delta}_0, \hat{\delta}_0)}}\right)\xrightarrow[n\rightarrow +\infty]{\PP^{n}_0}-\ell_{\infty}^{0} < 0 \\
        &\frac{1}{n-\tau_n}\log\left(\frac{\intd Q^n_{(\tau_n, \hat{\delta}_0, \hat{\delta}_{1,n})}}{\intd Q^n_{(\tau_n, \hat{\delta}_0, \hat{\delta}_0)}}\right)\xrightarrow[n\rightarrow +\infty]{\PP^{n}_1} \ell_{\infty}^{1} > 0
    \end{split}
\end{equation*}
    where $\ell_{\infty}^0$ and $\ell_{\infty}^1$ are defined in Propositions~\ref{prop:asymptotic_likelihood:null} and~\ref{prop:asymptotic_likelihood:alt}.
\end{proposition}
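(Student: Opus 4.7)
The plan is to reduce Proposition \ref{prop:asymptotic_likelihood:estim} to the known-parameter case handled in Propositions \ref{prop:asymptotic_likelihood:null} and \ref{prop:asymptotic_likelihood:alt}, by showing that replacing the true parameters $(\delta_0, \delta_1)$ with the MLEs $(\hat{\delta}_{0,n}, \hat{\delta}_{1,n})$ in the log-likelihood ratio does not alter its normalized limit. Writing $L_n(a, b) := \log(\intd Q^n_{(\tau_n, a, b)}/\intd Q^n_{(\tau_n, a, a)})(G_n)$, I would decompose
\begin{equation*}
    \frac{L_n(\hat{\delta}_{0,n}, \hat{\delta}_{1,n})}{\Delta_n} = \frac{L_n(\delta_0, \delta_1)}{\Delta_n} + \frac{L_n(\hat{\delta}_{0,n}, \hat{\delta}_{1,n}) - L_n(\delta_0, \delta_1)}{\Delta_n}.
\end{equation*}
By Propositions \ref{prop:asymptotic_likelihood:null} and \ref{prop:asymptotic_likelihood:alt}, the first summand converges in probability to $-\ell_{\infty}^0$ under $\PP_0^n$ and to $\ell_{\infty}^1$ under $\PP_1^n$. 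It therefore suffices to prove that $R_n := L_n(\hat{\delta}_{0,n}, \hat{\delta}_{1,n}) - L_n(\delta_0, \delta_1)$ is $o_{\PP_j^n}(\Delta_n)$ for $j=0,1$.

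To bound $R_n$, I would Taylor-expand $L_n(a, b)$ around $(\delta_0, \delta_1)$. Using Lemma \ref{main-lem:lkl:alt}, the first-piece factors cancel between numerator and denominator so that $L_n(a, b) = \ell_{\tau_n+1:n}(b) - \ell_{\tau_n+1:n}(a)$, and consequently
\begin{equation*}
\partial_a L_n(a, b) = \sum_{t=\tau_n+1}^n\sum_{i=1}^m \Bigl[\frac{t}{S_{t, i-1}(a)} - \frac{1}{D_{t,i} + a}\Bigr],\qquad
\partial_b L_n(a, b) = \sum_{t=\tau_n+1}^n\sum_{i=1}^m \Bigl[\frac{1}{D_{t, i} + b} - \frac{t}{S_{t, i-1}(b)}\Bigr].
\end{equation*}
Martingale (Hoeffding–Azuma) estimates analogous to those used in the proofs of Propositions \ref{prop:asymptotic_likelihood:null} and \ref{prop:asymptotic_likelihood:alt} show that the stochastic fluctuations of these partial derivatives are of order $\sqrt{\Delta_n}$, with a possible deterministic bias of order $\Delta_n$ when the evaluation point differs from the true data-generating parameter. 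Combined with the rates $\hat{\delta}_{0,n} - \delta_0 = O_{\PP_j^n}(\tau_n^{-1/2})$ and $\hat{\delta}_{1,n} - \delta_1 = O_{\PP_j^n}(\Delta_n^{-1/2})$ furnished by (a suitable adaptation of) Theorem \ref{thm:mle:normality}, the first-order contributions to $R_n$ are of order $O_{\PP_j^n}(\Delta_n/\sqrt{\tau_n}) + O_{\PP_j^n}(1)$, which after division by $\Delta_n$ vanish since $\tau_n, \Delta_n \to \infty$.

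The main obstacle will be the uniform control of the quadratic Taylor remainder, because the relevant second derivatives of $L_n$ involve sums such as $\sum_{t, i}(D_{t, i} + \delta)^{-2}$ that become unbounded as $\delta$ approaches the boundary $-m$. To address this, I would invoke Proposition \ref{pro:nozeroatboundary} (and an analogous statement for $\hat{\delta}_{0, n}$, which follows from arguments in \cite{GV17}) to ensure that both MLEs lie in a compact set $[-m + \varepsilon, C]$ with probability tending to one. On this event the second derivatives of $L_n$ are uniformly bounded by $O(\Delta_n)$, so the quadratic contribution to $R_n$ is at most $O_{\PP_j^n}(\Delta_n (\tau_n^{-1} + \Delta_n^{-1})) = o_{\PP_j^n}(\Delta_n)$, completing the reduction.
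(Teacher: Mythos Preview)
Your decomposition $L_n(\hat\delta_{0,n},\hat\delta_{1,n})/\Delta_n = L_n(\delta_0,\delta_1)/\Delta_n + R_n/\Delta_n$ is exactly the paper's strategy. The paper, however, does not Taylor-expand $R_n$: it uses the explicit identity $L_n(a,b)=\ell_{\tau_n+1:n}(b)-\ell_{\tau_n+1:n}(a)$ from Lemma~\ref{main-lem:lkl:ratio} to write
\[
\frac{R_n}{\Delta_n}\;\sim\; m\log\!\left(\frac{(2m+\hat\delta_{0,n})(2m+\delta_1)}{(2m+\delta_0)(2m+\hat\delta_{1,n})}\right)+\sum_{k\geq m}\frac{N_{>k}(G_n)-N_{>k}(G_{\tau_n})}{\Delta_n}\log\!\left(\frac{(k+\hat\delta_{1,n})(k+\delta_0)}{(k+\delta_1)(k+\hat\delta_{0,n})}\right),
\]
and then invokes only \emph{consistency} of the MLEs (Proposition~\ref{prop:consistency}) together with the degree-distribution estimates already proved in Propositions~\ref{prop:asymptotic_likelihood:null}--\ref{prop:asymptotic_likelihood:alt}. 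Your route via Taylor expansion needs the stronger $\sqrt{\cdot}$-rates of Theorem~\ref{thm:mle:normality} plus a uniform second-derivative bound; it is heavier machinery than the paper uses for the same conclusion.

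More importantly, your argument has a genuine gap under $\PP_0^n$. You claim $\hat\delta_{1,n}-\delta_1=O_{\PP_0^n}(\Delta_n^{-1/2})$, but Theorem~\ref{thm:mle:normality} is proved under $\PP_1^n$ only. Under $\PP_0^n$ the data on $\llbracket\tau_n+1,n\rrbracket$ come from parameter $\delta_0$, so the correct adaptation gives $\hat\delta_{1,n}\to\delta_0$, hence $\hat\delta_{1,n}-\delta_1\to\delta_0-\delta_1\neq 0$. Your first-order $b$-term is then $\partial_b L_n(\delta_0,\delta_1)\cdot(\hat\delta_{1,n}-\delta_1)=O(\Delta_n)\cdot O(1)$, which is \emph{not} $o(\Delta_n)$, and the Taylor argument collapses. (The paper's own proof is also loose here: it asserts $R_n/\Delta_n\to 0$ under $\PP_0^n$, but its displayed formula together with $\hat\delta_{1,n}\to\delta_0$ actually yields $R_n/\Delta_n\to\ell_\infty^0$, so that the limit of $L_n(\hat\delta_{0,n},\hat\delta_{1,n})/\Delta_n$ under $\PP_0^n$ is $0$, not $-\ell_\infty^0$; this is consistent with the observation that $L_n(\hat\delta_{0,n},\hat\delta_{1,n})=\ell_{\tau_n+1:n}(\hat\delta_{1,n})-\ell_{\tau_n+1:n}(\hat\delta_{0,n})\geq 0$ by definition of $\hat\delta_{1,n}$.)
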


\begin{proof}
    Thanks to Proposition \ref{prop:asymptotic_likelihood:null} and Proposition \ref{prop:asymptotic_likelihood:alt}, one only needs to show that:
    \begin{equation*}
            \frac{1}{n-\tau_n}\log\left(\frac{\intd Q^n_{(\tau_n, \hat{\delta}_0, \hat{\delta}_{1,n})}}{\intd Q^n_{(\tau_n, \hat{\delta}_0, \hat{\delta}_0)}}(G_n)\right) - \frac{1}{n-\tau_n}\log\left(\frac{\intd Q^n_{(\tau_n,\delta_0,\delta_1)}}{\intd Q^n_{(\tau_n,\delta_0,\delta_0)}}(G_n)\right)\xrightarrow{\PP^{n}_{\ell}} 0
    \end{equation*}
    for $\ell\in\left\{0, 1\right\}$. Using the expression of the likelihood ratio in Lemma A.4, one has:
    \begin{equation*}
        \begin{split}
            &\frac{1}{n-\tau_n}\log\left(\frac{\intd Q^n_{(\tau_n, \hat{\delta}_0, \hat{\delta}_{1,n})}}{\intd Q^n_{(\tau_n, \hat{\delta}_0, \hat{\delta}_0)}}(G_n)\right) - \frac{1}{n-\tau_n}\log\left(\frac{\intd Q^n_{(\tau_n,\delta_0,\delta_1)}}{\intd Q^n_{(\tau_n,\delta_0,\delta_0)}}(G_n)\right)\\
            &\sim m\log\left(\frac{2m+\hat{\delta}_0}{2m+\delta_0}\frac{2m+\delta_1}{2m+\hat{\delta}_{1,n}}\right) + \sum_{k=m}^{nm}\frac{N_{>k}(G_n) - N_{>k}(\G_{\tau_n})}{n-\tau_n}\log\left(\frac{k+\hat{\delta}_{1,n}}{k+\delta_1}\frac{k+\delta_0}{k+\hat{\delta}_0}\right).
        \end{split}
    \end{equation*}
    Using Proposition \ref{prop:consistency} and the arguments used in the proof of Proposition \ref{prop:asymptotic_likelihood:null} and Proposition \ref{prop:asymptotic_likelihood:alt}, one can easily deduce that:
    \begin{equation*}
        \frac{1}{n-\tau_n}\log\left(\frac{\intd Q^n_{(\tau_n, \hat{\delta}_0, \hat{\delta}_{1,n})}}{\intd Q^n_{(\tau_n, \hat{\delta}_0, \hat{\delta}_0)}}(G_n)\right) - \frac{1}{n-\tau_n}\log\left(\frac{\intd Q^n_{(\tau_n,\delta_0,\delta_1)}}{\intd Q^n_{(\tau_n,\delta_0,\delta_0)}}(G_n)\right)\xrightarrow{\PP^{n}_{\ell}} 0
    \end{equation*}
    for $\ell\in\left\{0, 1\right\}$.
\end{proof}

\subsection{Proof of Proposition \ref{prop:localization_tau}}
\label{sec:proof-prop-localiz}

In what follows, we introduce the random variables $D_{t,i} = \D_{\G_{t, i-1}}(\V_{t, i})$ and the filtrations $\mathcal{F}_t = \sigma(G_0,\dots,G_t)$ and $\mathcal{F}_{t,i-1}=\sigma(G_{t,0},\dots,G_{t,i-1})$ as in \cite{GV17} to simplify the notations. Let  $\Bar{\tau}_n > \tau_n$, then
\begin{equation*}
    \begin{split}
        \frac{\intd Q^n_{(\Bar{\tau}_n, \delta_0, \delta_1)}}{\intd Q^n_{(\tau_n, \delta_0, \delta_1)}} &=  \prod_{t = \tau_n + 1}^{\Bar{\tau}_n}\prod_{l=1}^{m}\left(\frac{(2m+\delta_1)t-2m +l-1}{(2m+\delta_0)t-2m +l-1}\right)\prod_{k=m}^{nm}\left(\frac{k+\delta_0}{k+\delta_1}\right)^{N_{>k}(\G_{\Bar{\tau}_n}) - N_{>k}(\G_{\tau_n})}\\
        \frac{1}{\Bar{\tau}_{n} - \tau_n}\log\left(\frac{\intd Q^n_{(\Bar{\tau}_n, \delta_0, \delta_1)}}{\intd Q^n_{(\tau_n, \delta_0, \delta_1)}}\right) &= m\log\left(\frac{2m+\delta_1}{2m+\delta_0}\right) + \sum_{k=m}^{nm}\log\left(\frac{k+\delta_0}{k+\delta_1}\right)\frac{N_{>k}(\G_{\Bar{\tau}_n}) - N_{>k}(\G_{\tau_n})}{\Bar{\tau}_n - \tau_n} + \bigO{\frac{1}{n}}.
    \end{split}
\end{equation*}
On the other hand, 
\begin{equation*}
    \begin{split}
        &\sum_{k=m}^{nm}\log\left(\frac{k+\delta_0}{k+\delta_1}\right)\left(N_{>k}(\G_{\Bar{\tau}_n}) - N_{>k}(\G_{\tau_n})\right) = \sum_{k=m}^{nm}\log\left(\frac{k+\delta_0}{k+\delta_1}\right)\sum_{t = \tau_n + 1}^{\Bar{\tau}_n}\sum_{l=1}^{m}\left(\1_{D_{t,l} = k} - \EE_1^n\left[\1_{D_{t,l} = k} \mid \mathcal{F}_{t, l-1}\right]\right)\\
        &+ \sum_{k=m}^{nm}\left(\log\left(\frac{k+\delta_0}{k+\delta_1}\right) - \frac{\delta_0 - \delta_1}{k+\delta_1}\right)\sum_{t = \tau_n + 1}^{\Bar{\tau}_n}\sum_{l=1}^{m} \left(\frac{(k+\delta_1)N_{k}(t, l-1)}{(2m+\delta_1)t} - \frac{(k+\delta_1)p_{k}}{2m+\delta_1} \right)\\
        &+ \sum_{k=m}^{nm}\left(\log\left(\frac{k+\delta_0}{k+\delta_1}\right) - \frac{\delta_0 - \delta_1}{k+\delta_1}\right)\sum_{t = \tau_n + 1}^{\Bar{\tau}_n}\sum_{l=1}^{m} \left(\frac{(k+\delta_1)N_{k}(t, l-1)}{(2m+\delta_1)t -2m + l-1} - \frac{(k+\delta_1)N_{k}(t, l-1)}{(2m+\delta_1)t} \right)\\
        &+ (\delta_0 - \delta_1)\sum_{k=m}^{nm}\sum_{t = \tau_n + 1}^{\Bar{\tau}_n}\sum_{l=1}^{m} \left(\frac{N_{k}(t, l-1)}{(2m+\delta_1)t -2m + l-1} - \frac{p_{k}}{2m+\delta_1} \right)\\
        &+ m(\Bar{\tau}_n - \tau_n)\sum_{k=m}^{nm}\log\left(\frac{k+\delta_0}{k+\delta_1}\right) \frac{(k+\delta_1)p_{k}}{2m+\delta_1}
.    \end{split}
\end{equation*}
Since 
\begin{equation*}
    \begin{split}
        &\left|\sum_{k=m}^{nm}\left(\log\left(\frac{k+\delta_0}{k+\delta_1}\right) - \frac{\delta_0 - \delta_1}{k+\delta_1}\right)\sum_{t = \tau_n + 1}^{\Bar{\tau}_n}\sum_{l=1}^{m} \left(\frac{(k+\delta_1)N_{k}(t, l-1)}{(2m+\delta_1)t -2m + l-1} - \frac{(k+\delta_1)N_{k}(t, l-1)}{(2m+\delta_1)t} \right)\right|\\
        &\leq \sum_{k=m}^{nm}\left|\log\left(\frac{k+\delta_0}{k+\delta_1}\right) - \frac{\delta_0 - \delta_1}{k+\delta_1}\right|(k+\delta_1)N_{k}(t, l-1)\sum_{t = \tau_n + 1}^{\Bar{\tau}_n}\sum_{l=1}^{m} \frac{2m-l+1}{(2m+\delta_1)t \left((2m+\delta_1)t-2m +l-1\right)}\\
        &\lesssim \sum_{k=m}^{nm}\left|\log\left(\frac{k+\delta_0}{k+\delta_1}\right) - \frac{\delta_0 - \delta_1}{k+\delta_1}\right|\frac{n(\Bar{\tau}_n - \tau_n)}{\tau_n ^{2}}\\
        &= \bigO{\frac{\Bar{\tau}_n - \tau_n}{n}}
    \end{split}
\end{equation*}
and
\begin{equation*}
    \begin{split}
        \sum_{k=m}^{nm}\sum_{t = \tau_n + 1}^{\Bar{\tau}_n}\sum_{l=1}^{m} \left(\frac{N_{k}(t, l-1)}{(2m+\delta_1)t -2m + l-1} - \frac{p_{k}}{2m+\delta_1} \right) &= \sum_{t = \tau_n + 1}^{\Bar{\tau}_n}\sum_{l=1}^{m} \left(\frac{t}{(2m+\delta_1)t -2m + l-1} - \frac{1}{2m+\delta_1}\right)\\
        &= \bigO{\frac{\Bar{\tau}_n - \tau_n}{n}}
    \end{split}
\end{equation*}
and

\begin{equation*}
        m(\Bar{\tau}_n - \tau_n)\sum_{k=m}^{nm}\log\left(\frac{k+\delta_0}{k+\delta_1}\right) \frac{(k+\delta_1)p_{k}}{2m+\delta_1} = m(\Bar{\tau}_n - \tau_n)\sum_{k=m}^{+\infty}\log\left(\frac{k+\delta_0}{k+\delta_1}\right) \frac{(k+\delta_1)p_{k}}{2m+\delta_1} + \bigO{\frac{\Bar{\tau}_n - \tau_n}{n}},
\end{equation*}
one obtains
\begin{equation*}
    \begin{split}
        &\frac{1}{\Bar{\tau}_{n} - \tau_n}\log\left(\frac{\intd Q^n_{(\Bar{\tau}_n, \delta_0, \delta_1)}}{\intd Q^n_{(\tau_n, \delta_0, \delta_1)}}\right) = \frac{1}{\Bar{\tau}_n - \tau_n}\sum_{k=m}^{nm}\log\left(\frac{k+\delta_0}{k+\delta_1}\right)\sum_{t = \tau_n + 1}^{\Bar{\tau}_n}\sum_{l=1}^{m}\left(\1_{D_{t,l} = k} - \EE_1^n\left[\1_{D_{t,l} = k} \mid \mathcal{F}_{t, l-1}\right]\right)\\
        &+ \sum_{k=m}^{nm}\left(\log\left(\frac{k+\delta_0}{k+\delta_1}\right) - \frac{\delta_0 - \delta_1}{k+\delta_1}\right)\frac{1}{\Bar{\tau}_n - \tau_n}\sum_{t = \tau_n + 1}^{\Bar{\tau}_n}\sum_{l=1}^{m} \left( \frac{(k+\delta_1)N_{k}(t, l-1)}{(2m+\delta_1)t} - \frac{(k+\delta_1)p_{k}}{2m+\delta_1} \right)\\
        &+ m\log\left(\frac{2m+\delta_1}{2m+\delta_0}\right) + m\sum_{k=m}^{+\infty}\log\left(\frac{k+\delta_0}{k+\delta_1}\right) \frac{(k+\delta_1)p_{k}}{2m+\delta_1} + \bigO{\frac{1}{n}}.\\
    \end{split}
\end{equation*}
Let 
\begin{equation*}
    \ell_{\infty} = m\log\left(\frac{2m+\delta_1}{2m+\delta_0}\right) +\sum_{k=m}^{+\infty}\frac{m(k+\delta_1)}{2m+\delta_1}p_{k}\log\left(\frac{k+\delta_0}{k+\delta_1}\right) < 0.
\end{equation*}
One has:
\begin{equation*}
    \begin{split}
        &\frac{1}{\Bar{\tau}_{n} - \tau_n}\log\left(\frac{\intd Q^n_{(\Bar{\tau}_n, \delta_0, \delta_1)}}{\intd Q^n_{(\tau_n, \delta_0, \delta_1)}}\right) - \ell_{\infty} = \frac{1}{\Bar{\tau}_n - \tau_n}\sum_{k=m}^{nm}\log\left(\frac{k+\delta_0}{k+\delta_1}\right)\sum_{t = \tau_n + 1}^{\Bar{\tau}_n}\sum_{l=1}^{m}\left(\1_{D_{t,l} = k} - \EE_1^n\left[\1_{D_{t,l} = k} \mid \mathcal{F}_{t, l-1}\right]\right)\\
        &+ \sum_{k=m}^{nm}\left(\log\left(\frac{k+\delta_0}{k+\delta_1}\right) - \frac{\delta_0 - \delta_1}{k+\delta_1}\right)\frac{1}{\Bar{\tau}_n - \tau_n}\sum_{t = \tau_n + 1}^{\Bar{\tau}_n}\sum_{l=1}^{m} \left( \frac{(k+\delta_1)N_{k}(t, l-1)}{(2m+\delta_1)t} - \frac{(k+\delta_1)p_{k}}{2m+\delta_1} \right) + \bigO{\frac{1}{n}}\\
        &\leq \log\left(\frac{nm+\delta_0}{m+\delta_1}\right) \sup_{m\leq k \leq nm}\left|\frac{1}{\Bar{\tau}_n  - \tau_n}\sum_{t = \tau_n + 1}^{\Bar{\tau}_n}\sum_{l=1}^{m}\left(\1_{D_{t,l} = k} - \EE_1^n\left[\1_{D_{t,l} = k} \mid \mathcal{F}_{t, l-1}\right]\right)\right|\\
        &+ \sup_{t, l, k}\left|\frac{N_{k}(t, l-1)}{t} - p_{k}\right| \sum_{k=m}^{nm}\left|\log\left(\frac{k+\delta_0}{k+\delta_1}\right) - \frac{\delta_0 - \delta_1}{k+\delta_1}\right|\frac{k+\delta_1}{2m+\delta_1}+ \bigO{\frac{1}{n}}
    \end{split}
\end{equation*}
Since $\left(\exists C = C(m, \delta_0, \delta_1) > 0 \right)$ such that $\sum_{k=m}^{nm}\left|\log\left(\frac{k+\delta_0}{k+\delta_1}\right) - \frac{\delta_0 - \delta_1}{k+\delta_1}\right|\frac{k+\delta_1}{2m+\delta_1} \leq C \log(n)$ and $\log\left(\frac{nm+\delta_0}{m+\delta_1}\right) \leq C\log(n)$ one gets:
\begin{equation*}
    \begin{split}
        \frac{1}{\Bar{\tau}_{n} - \tau_n}\log\left(\frac{\intd Q^n_{(\Bar{\tau}_n, \delta_0, \delta_1)}}{\intd Q^n_{(\tau_n, \delta_0, \delta_1)}}\right) &\leq \ell_{\infty}  + C\log\left(n\right)\sup_{t, l, k}\left|\frac{N_{k}(t, l-1)}{t} - p_{k}\right| + \bigO{\frac{1}{n}}\\
        & + C\log\left(n\right) \sup_{m\leq k \leq nm}\left|\frac{1}{\Bar{\tau}_n  - \tau_n}\sum_{t = \tau_n + 1}^{\Bar{\tau}_n}\sum_{l=1}^{m}\left(\1_{D_{t,l} = k} - \EE_1^n\left[\1_{D_{t,l} = k} \mid \mathcal{F}_{t, l-1}\right]\right)\right|
    \end{split}
\end{equation*}
where $\sup_{t, l, k}$ corresponds to the supremum over $t\in\llbracket \tau_n + 1, \Bar{\tau}_n\rrbracket$, $l\in\llbracket 1, m\rrbracket$ and $k\in\llbracket m, nm\rrbracket$. It follows that there exists $C^{\prime} > 0$ such that:
\begin{equation*}
    \begin{split}
       &\PP_1^n\left(\sup_{\Bar{\tau}_n - \tau_n\geq \kappa_n}\frac{\intd Q^n_{(\Bar{\tau}_n, \delta_0, \delta_1)}}{\intd Q^n_{(\tau_n, \delta_0, \delta_1)}} \geq 1\right)\leq \sum_{\Bar{\tau}_n - \tau_n\geq \kappa_n} \PP_1^n\left(\frac{1}{\Bar{\tau}_n - \tau_n}\log\left(\frac{\intd Q^n_{(\Bar{\tau}_n, \delta_0, \delta_1)}}{\intd Q^n_{(\tau_n, \delta_0, \delta_1)}}\right) \geq 0\right)\\
        &\leq \sum_{\Bar{\tau}_n - \tau_n\geq \kappa_n}\PP_1^n\left(\sup_{m\leq k \leq nm} \left|\frac{1}{\Bar{\tau}_{n} - \tau_n}\sum_{t = \tau_n + 1}^{\Bar{\tau}_{n}}\sum_{l=1}^{m}\left(\1_{D_{t,l} = k} - \EE_{1}\left[\1_{D_{t,l} = k} \mid \mathcal{F}_{t, l-1}\right]\right)\right| \geq \frac{-\ell_{\infty} + \bigO{\frac{1}{n}}}{2C\log\left(n\right)}\right)\\
        &+ \sum_{\Bar{\tau}_n - \tau_n\geq \kappa_n}\PP_1^n\left(\sup_{t, l, k}\left|\frac{N_{k}(t, l-1)}{t} - p_k \right| \geq \frac{-\ell_{\infty} + \bigO{\frac{1}{n}}}{2C\log(n)}\right)\\
        &\lesssim n \left( n\exp\left(-C^{\prime}\frac{\Bar{\tau}_n - \tau_n}{\log(n)^{2}}\right) + \smallO{\frac{1}{n}}\right)\\
        &\lesssim n \left( n\exp\left(-C^{\prime}\frac{\kappa_n}{\log(n)^{2}}\right) + \smallO{\frac{1}{n}}\right)
    \end{split}
\end{equation*}
where we have used Hoeffding-Azuma inequality for the first term and Theorem 8.3 of \cite{hofstad_2016} (or Proposition 2.1 of \cite{DEGH09}) for the second term. Note that these results were stated only in the case of no change. However, they should  remain valid for our model. Using an exactly similar argument for $\Bar{\tau}_n - \tau_n < -\kappa_n$, we finally obtain:

\begin{equation*}
    \begin{split}
        \Tilde{\PP}^{n}_{1}\left(\sup_{\left|\Bar{\tau}_n - \tau_n\right| \geq \kappa_n}\frac{\intd Q^n_{(\Bar{\tau}_n, \delta_0, \delta_1)}}{\intd Q^n_{(\tau_n, \delta_0, \delta_1)}} \geq 1\right) = \bigO{n^{2}\exp\left(-C^{\prime}\frac{\kappa_n}{\log(n)^{2}}\right) + \smallO{1}}
    \end{split}
\end{equation*}
Taking $\kappa_n \asymp \log\left(n\right)^{3}$, one obtains:
\begin{equation*}
    \Tilde{\PP}^{n}_{1}\left(\sup_{\left|\Bar{\tau}_n - \tau_n\right| \gtrsim \kappa_n}\frac{\intd Q^n_{(\Bar{\tau}_n, \delta_0, \delta_1)}}{\intd Q^n_{(\tau_n, \delta_0, \delta_1)}} \geq 1\right)\to 0.
\end{equation*}
One obtains a localization error smaller than $\log(n)^{3}$.


  \putbib%
\end{bibunit}


\end{document}